\documentclass[english]{smfart}

\usepackage[english]{babel}
 \usepackage{amsfonts}
\usepackage{epsfig,graphics}
\usepackage{amssymb}
\usepackage{amscd}
\usepackage{enumerate}
\usepackage{smfthm}
\usepackage{calrsfs}

\author{Charles Frances}
\address{IRMA, 7 rue Ren\'e Descartes, 67000 Strasbourg.}
\email{cfrances@math.unistra.fr}
\urladdr{}
\title[Isometry group of Lorentz manifolds]{Isometry group of Lorentz manifolds: A coarse perspective}
%

\newtheorem{theoreme}{Theorem}[section]
\newtheorem*{theono}{Theorem}
\newtheorem{fact}[theoreme]{Fact}

\newtheorem{proposition}[theoreme]{Proposition}

\newtheorem{lemme}[theoreme]{Lemma}

\newtheorem{remarque}[theoreme]{Remark}

\newtheorem{corollaire}[theoreme]{Corollary}

\newtheorem{thmx}{Theorem}
\newtheorem{corox}[thmx]{Corollary}

\newcommand{\Ker}{\operatorname{Ker }}

\newcommand{\Hom}{\operatorname{Hom }}
\newcommand{\iso}{\operatorname{Iso }}
\newcommand{\xk}{{\bf X}_{\kappa}}

\newcommand{\lig}{{\Lambda}}
\newcommand{\eg}{{E_{\Lambda}}}
\newcommand{\dig}{{{\sc{d}}_{\Lambda}}}
\newcommand{\carg}{{{\sc{card}}_{\Lambda}}}

\newcommand{\ad}{\operatorname{ad}}

\newcommand{\SL}{\operatorname{SL}}
\newcommand{\SU}{\operatorname{SU}}

\newcommand{\GL}{\operatorname{GL}}
\newcommand{\PSL}{\operatorname{PSL}}

\newcommand{\SO}{\operatorname{SO}}
\newcommand{\OO}{\operatorname{O}}

\newcommand{\PO}{\operatorname{PO}}

\newcommand{\AdS}{\operatorname{\mathbb{ADS}}}
\newcommand{\dS}{\operatorname{\mathbb{DS}}}

\newcommand{\Homot}{\operatorname{Homot}}
\newcommand{\Iso}{\operatorname{Iso}}
\newcommand{\diff}{\operatorname{Diff}}
\newcommand{\homot}{\operatorname{Homot}}
\newcommand{\Isloc}{\operatorname{\isloc}}

\newcommand{\Ad}{\operatorname{Ad}}

\newcommand{\cald}{\mathcal{D}}
\newcommand{\calc}{\mathcal{C}}
\newcommand{\calf}{\mathcal{F}}
\newcommand{\ocalf}{\overline{\mathcal{F}}}

\newcommand{\calm}{\mathcal{M}}
\newcommand{\calo}{\mathcal{O}}
\newcommand{\calw}{\mathcal{W}}

\newcommand{\tm}{\tilde{{M}}}
\newcommand{\tw}{\tilde{{w}}}

\newcommand{\dd}{{\mathcal D}}

\newcommand{\hcalm}{\hat{\mathcal{M}}}

\def\kg{{\operatorname{{\kappa}^{\text{g}}}}}
\def\okg{{\operatorname{\overline{\kappa}^{\text{g}}}}}
\def\hkg{{\operatorname{{\hat{\kappa}}^{\text{g}}}}}

\def\NN{\mathbb{N}}
\def\TT{\mathbb{T}}
\def\HH{\mathbb{H}}

\def\RR{\mathbb{R}}

\def\ZZ{\mathbb{Z}}
\def\CC{\mathbb{C}}

\def\OO{\operatorname{O}}

\newcommand{\mint}{{M^{\rm int}}}
\newcommand{\hmint}{{{\hat M}^{\rm int}}}
\newcommand{\mreg}{{M^{reg}}}
\newcommand{\hmreg}{{{\hat{M}}^{reg}}}

\newcommand{\RP}{{\operatorname{{\mathbb R}{P}}}}

\newcommand{\hm}{{\hat{M}}}
\newcommand{\hx}{{\hat{x}}}
\newcommand{\hy}{{\hat{y}}}


\newcommand{\kil}{\operatorname{{\mathfrak{kill}}}}
\newcommand{\kiloc}{\operatorname{{\mathfrak{kill}^{loc}}}}
\newcommand{\isloc}{\operatorname{{{Is}^{loc}}}}
\newcommand{\heis}{{\operatorname{\mathfrak{heis}}}}

\newcommand{\Asdim}{{\operatorname{{Asdim}}}}
\newcommand{\onabla}{\overline{\nabla}}

\newcommand{\liek}{{\mathfrak{k}}}

\newcommand{\lieu}{{\mathfrak{u}}}
\newcommand{\lieg}{{\mathfrak{g}}}

\newcommand{\liea}{{\mathfrak{a}}}
\newcommand{\liei}{{\mathfrak{i}}}

\newcommand{\lies}{{\mathfrak{s}}}
\newcommand{\liez}{{\mathfrak{z}}}

\newcommand{\liem}{{\mathfrak{m}}}

\newcommand{\oo}{{\mathfrak{o}}}


\newcommand{\Oun}{{\operatorname{O}(1,n)}}
\newcommand{\oun}{{\operatorname{\mathfrak{o}}(1,n)}}



\begin{document}
\frontmatter

\begin{abstract}
We prove a structure theorem for the isometry group $\Iso(M,g)$ of a compact Lorentz manifold, under the assumption 
that a closed subgroup has exponential growth. We don't assume anything about the identity component of $\Iso(M,g)$, so that 
 our results apply for discrete isometry groups. We infer a full classification of lattices that can act isometrically 
 on compact Lorentz manifolds.
 Moreover, without any growth hypothesis, we prove a Tits alternative for discrete subgroups of $\Iso(M,g)$.
\end{abstract}
%

\maketitle

\section{Introduction}

In this article, we are interested in the question of which groups can appear as the group of
isometries of a compact pseudo-Riemannian manifold $(M, g)$. Although this question makes sense, and has been considered, for 
 more general classes of
geometric structures,  it is striking to note that a complete answer  is  only known in a very small number of cases.
  One of the most natural examples, for which we have a complete picture,  is that of Riemannian structures 
   (all the structures considered in this article are assumed to be smooth, {\it i.e} of class $C^{\infty}$).
   A famous theorem of Myers and Steenrod \cite{myers-steenrod}, ensures that the group of isometries of a 
   compact Riemannian manifold is a compact Lie group, the Lie topology coinciding moreover with the $C^0$ topology. 
    Conversely, for any compact Lie group $G$, one can construct a  compact Riemannian manifold $(M, g)$ for which
     $\Iso (M, g) = G$. This was proved independently in  \cite{bedford}, \cite{saerens}. This settles completely the Riemannian 
     case.
    
    Regarding compact Lorentzian manifolds, which are the subject of this article, things get substantially more complicated.
  Indeed, although it is always true that the group of isometries of a Lorentzian manifold is a
Lie transformation group, a new phenomenon appears: even for compact manifolds $ (M, g) $, the group $ \Iso (M, g) $
  may well not be compact. For  instance, it is quite possible that this group is infinite discrete. 

Seminal works are owed to R. Zimmer, which paved the way  for the study of the isometry group of Lorentz manifolds.
 A first important contribution was  \cite{zimmer.iso}, where it is shown that any  connected, simple Lie group $G$, acting
  isometrically on a compact Lorentz manifold, must be 
   locally isomorphic to $\operatorname{SL}(2,\RR)$. Several deep contributions followed, \cite{gromov}, \cite{kowalsky}, before S. Adams, G. Stuck, 
   and independently A. Zeghib obtained, about ten years later, a complete classification of all possible Lie algebras for the group
   $ \Iso (M, g) $. Their  theorem can be stated as follows:
    
   
  \begin{theono}\cite{adams.stuck}, \cite{adams.stuck2}, \cite{zeghibhomogene}, \cite{zeghibidentity}.
  \label{thm.adams-zeghib}
  {Let $(M,g)$ be a compact Lorentz manifold. Then the Lie algebra ${\mathfrak{Iso}}(M,g)$ is of the form 
 $\lies \oplus \liea \oplus \liek$ where:}
 \begin{enumerate}
  {\item $\liek$ is trivial, or the Lie algebra of a compact semisimple group.}
  {\item $\liea$ is an abelian algebra (maybe trivial).}
  {\item $\lies$ is trivial, $\mathfrak{sl}(2,\RR)$, a Heisenberg algebra $\heis(2d+1)$, or an  oscillator algebra.}
 \end{enumerate}

\end{theono}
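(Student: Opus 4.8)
The plan is to dissect $\lieg := {\mathfrak{Iso}}(M,g)$ along its Levi decomposition $\lieg = \mathfrak{l} \ltimes \lier$, with $\mathfrak{l}$ a Levi factor and $\lier = \rad(\lieg)$ the radical, using throughout the isotropy representation: fixing $x \in M$, the map $X \mapsto (\nabla X)_x$ injects the isotropy subalgebra $\lieg_x := \{X \in \lieg : X_x = 0\}$ into $\oo(T_xM,g_x) \cong \oo(1,n-1)$, a Lie algebra of real rank $1$. If $\Iso(M,g)^0$ is compact there is nothing to prove: $\lieg$ is then the Lie algebra of a compact group, namely $\liez(\lieg) \oplus [\lieg,\lieg]$ with $[\lieg,\lieg]$ compact semisimple, which is the asserted form with $\lies = 0$. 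So I assume $\Iso(M,g)^0$ noncompact --- the noncompactness being carried by degenerate (lightlike) orbits --- and treat $\mathfrak{l}$ and $\lier$ in turn.

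\emph{The semisimple part.} Write $\mathfrak{l} = \mathfrak{l}_{nc} \oplus \mathfrak{l}_c$ as the sum of its noncompact and compact simple ideals. Since $\lieg$ acts faithfully and isometrically on $M$, every noncompact simple ideal of $\mathfrak{l}_{nc}$ is locally isomorphic to $\sld$ by Zimmer's theorem. The work is to show there is at most one such ideal and that it is a direct Lie-algebra summand: two copies of $\SL(2,\RR)$, or one copy with noncompact centralizer in $\lieg$, or one acting nontrivially on $\lier$, would each force at a suitable point more noncompactness in the isotropy than the rank-$1$ algebra $\oo(1,n-1)$ can carry; the delicate case is that of the degenerate orbits, controlled via Kowalsky's analysis of orbits of $\RR$-split one-parameter subgroups together with Gromov's centralizer/stabilizer theorem. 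One concludes that $\mathfrak{l}_{nc}$ is $0$ or a single $\sld$ that splits off, that $\mathfrak{l}_c$ is an arbitrary compact semisimple algebra destined for $\liek$, and that an $\sld$-summand, when present, excludes any Heisenberg or oscillator contribution from the radical (together they would overflow the single available timelike direction).

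\emph{The radical.} Here the Lorentzian signature is essential, and here lies the main obstacle. Let $\lien = \operatorname{nil}(\lieg)$ be the nilradical; the heart of the argument is that $\lien$ is at most $2$-step nilpotent with $\lien/\liez(\lien)$ symplectic, so that $\lien$ is a Heisenberg algebra $\heis(2d+1)$ plus a central abelian ideal. My approach is dynamical: $g$ determines a canonical volume form, so the isometric $\lien$-action is volume-preserving and Poincar\'e-recurrent, whereas a genuinely $\geq 3$-step nilpotent flow drifts along its orbits at distinct polynomial rates; reconciling recurrence with this drift forces the transverse structure to produce nested lightlike subspaces that a single negative direction cannot sustain --- equivalently, Gromov's theory pins the local Killing algebra at a generic point to a model space whose nilradical is Heisenberg. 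Granting this, I would study the action of a maximal torus of $\Ad(\lieg)$ on $\lien$: recurrence forces every $\Ad$-semisimple element to have purely imaginary spectrum on $\lien$, hence to rotate the symplectic $2$-planes, and a nontrivial such rotation yields precisely an oscillator algebra $\heis(2d+1) \rtimes \RR$, while in its absence $\lier$ is abelian modulo the compact ideals already absorbed into $\liek$. Reassembling --- the compact semisimple ideals into $\liek$, the residual abelian directions into $\liea$, and the unique distinguished noncompact block ($0$, $\sld$, Heisenberg, or oscillator) as $\lies$, a direct summand of what remains --- gives the stated decomposition.
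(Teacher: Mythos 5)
A preliminary remark: the paper does not prove this statement at all — it is quoted as background, with the proof residing in \cite{adams.stuck}, \cite{adams.stuck2}, \cite{zeghibhomogene}, \cite{zeghibidentity} — so your proposal has to be measured against those proofs, and against them it has genuine gaps: the steps you defer with ``the work is to show'' and ``granting this'' are precisely the content of the theorem. On the semisimple side, the claim that a second noncompact factor, or a noncompact centralizer, ``would force more noncompactness in the isotropy than the rank-one algebra can carry'' is not an argument — the isotropy at one point does not see the whole group, and ruling out a second $\sld$-factor, proving that the surviving one is a direct ideal, and showing it excludes Heisenberg/oscillator pieces of the radical is where \cite{adams.stuck} and \cite{zeghibidentity} spend most of their effort. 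On the radical side, the sentence ``Gromov's theory pins the local Killing algebra at a generic point to a model space whose nilradical is Heisenberg'' is the statement to be proved, not a tool, and ``recurrence versus polynomial drift produces nested lightlike subspaces'' is too vague to check. Worse, ``recurrence forces every $\Ad$-semisimple element to have purely imaginary spectrum on $\lien$'' is not a consequence of volume preservation: derivatives of isometries of compact Lorentz manifolds can and do grow exponentially (that is the subject of the present paper), and a split element acting on the nilradical with nonzero real exponents is exactly the $\operatorname{Aff}(\RR)$-type configuration whose exclusion (or rather, promotion to a full $\sld$) is a theorem in its own right — it is the content of \cite{adams.stuck2}, \cite{zeghibidentity}, generalized here as Theorem \ref{thm.sl2}.

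What is missing is the engine that makes the radical analysis close in the cited proofs: an invariant symmetric bilinear form on the Killing algebra. One considers the map $(X,Y) \mapsto g_x(X(x),Y(x))$ and, evaluating at well-chosen recurrent points and passing to limits, produces a nil-invariant (and eventually $\Ad$-invariant on the relevant subquotient) symmetric form of index at most one on $\lieg$; the algebraic classification of Lie algebras carrying such forms is what yields exactly the list $\mathfrak{sl}(2,\RR)$, $\heis(2d+1)$, oscillator — oscillator algebras being precisely the non-abelian solvable algebras with a bi-invariant Lorentz scalar product. Zeghib's alternative route replaces this by the geometry of totally geodesic lightlike foliations and asymptotically stable distributions (the machinery recalled in Section \ref{sec.limit.set} of this paper). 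Your sketch invokes neither mechanism, so while the division into Levi factor and radical and the appeal to Zimmer and Kowalsky are a reasonable roadmap, the proposal as written asserts rather than proves the theorem's core.
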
 
  
 Altough this classification is only at the Lie algebra level, we have a fairly good picture of the possibilities for the 
 identity component $\Iso^o(M,g)$  
 (see \cite{zeghibidentity} for a discussion about this identity component). 
 
 Few attemps were done to go beyhond the understanding of  $\Iso^o(M,g)$, and complete the picture for the full
 group of isometries of a compact Lorentz manifold $(M,g)$. 
 An intermediate situation was studied in \cite{zeghibpic}, where  the authors assume
  that $\Iso^o(M,g)$ is compact (but still nontrivial), and the 
  discrete part  $\Iso(M,g)/\Iso^o(M,g)$ is infinite. 
   
   In the purely discrete case, the most advanced results were obtained by R. Zimmer, who proved in 
    \cite[Theorem D]{zimmer.iso} that no discrete, infinite, subgroup of $\Iso(M,g)$
 can have Kazhdan's property $(T)$. For instance, this rules out the possibility that $\Iso(M,g)$ is isomorphic
  to a lattice in some higher rank simple Lie group, or in the group $\operatorname{Sp}(1,n)$, $n \geq 2$.
  Besides this  result,  and to the best of our knowledge, almost nothing was known when the  group $\Iso(M,g)$ is
 infinite discrete. Our aim here, is to begin filling this gap.   Theorems \ref{thm.main} and \ref{thm.tits} below are a 
 step in this direction.

  
  \subsection{Lorentzian isometry groups with exponential growth}
  
  We begin our general study of the isometry group of a compact Lorentz manifold, by
   making  a growth assumption, namely there exists  a closed, compactly
  generated subgroup of $\Iso(M,g)$ having exponential growth.
  In this case, we get a  pretty clear picture of the situation: The group $ \Iso (M, g) $ is either a compact extension of $ \PSL (2, \RR) $, or 
  a compact extension
   of a {\it Kleinian group} (a Kleinian group is any discrete subgroup of $\PO(1,d)$, $d \geq 2$). In particular, 
    whenever $\Iso(M,g)$ is discrete, and contains a finitely generated subgroup of 
  exponential growth,  then $\Iso(M,g)$ must be virtually isomorphic to a Kleinian group, what reduces substantially the possibilities.
 
 \begin{thmx}
 \label{thm.main}
  Let $(M,g)$ be a smooth, compact,  $(n+1)$-dimensional Lorentz manifold, with $n \geq 2$.  Assume that the isometry group 
   $\Iso(M,g)$ contains a 
  closed, compactly generated 
  subgroup with exponential growth.  Then  we are in exactly one of the following cases:
  \begin{enumerate}
   \item{ The group  $\Iso(M,g)$ is virtually a Lie group extension of 
   $\PSL(2,\RR)$ by a compact Lie group. }
   \item{There exists  a discrete subgroup $\Lambda \subset \PO(1,d)$, for $2 \leq d \leq n$, 
    such that 
     $\Iso(M,g)$ is virtually a Lie group  extension of $\Lambda$ by a compact Lie group.} 
  \end{enumerate}

 \end{thmx}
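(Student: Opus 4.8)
\emph{Plan of proof.} Write $G=\Iso(M,g)$; by classical results it is a Lie transformation group, acting freely and properly on the bundle $\hat M\xrightarrow{\pi} M$ of $g$-orthonormal frames, whose structure group is $\OO(1,n)$. The argument begins by splitting according to Theorem~\ref{thm.adams-zeghib}: among the algebras $\lies\oplus\liea\oplus\liek$ allowed there, the only summand which is not of type $R$ (i.e.\ which has $\operatorname{ad}$-eigenvalues off the unit circle) is $\lies=\sld$, since abelian, Heisenberg, oscillator and compact semisimple algebras are all of type $R$, and a direct sum of type-$R$ algebras is again of type $R$. Hence $\mathfrak{Iso}(M,g)$ either contains an $\sld$ summand, or is of type $R$ — hence amenable of polynomial growth. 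These two cases are treated separately.

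\emph{The $\sld$ case.} Suppose $\sld\subset\mathfrak{Iso}(M,g)$. Then $G^o$ contains a subgroup locally isomorphic to $\SL(2,\RR)$, and, using Zeghib's description of the compact Lorentz manifolds admitting a locally faithful isometric action of such a group, one shows that $\liea$ is trivial and $\liek$ is at most the algebra of a compact factor, so that $G^o$ is, up to finite index, the almost-direct product of a group locally isomorphic to $\PSL(2,\RR)$ with a compact group. A centralizer/normalizer argument — the centralizer of the $\PSL(2,\RR)$-factor in $G$ is compact-by-finite, since any further non-compact isometries would contradict Theorem~\ref{thm.adams-zeghib} applied to that centralizer — then yields that $G$ itself is virtually a Lie group extension of $\PSL(2,\RR)$ by a compact Lie group. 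This is alternative~(1).

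\emph{The type $R$ case.} Assume now $\sld\not\subset\mathfrak{Iso}(M,g)$, so $\mathfrak{Iso}(M,g)=\liea\oplus\liek\oplus\lies_0$ with $\lies_0\in\{0,\heis(2d+1),\mathfrak{osc}\}$. The first delicate point is to exploit the exponential-growth hypothesis — rather than mere non-compactness of $G$ — to deduce that \textbf{$\Iso^o(M,g)$ is compact}: one invokes the precise classification, within each type-$R$ case, of the possible manifolds, and checks that the model manifolds carrying a non-compact abelian, Heisenberg or oscillator algebra of Killing fields have isometry group of polynomial growth, so exponential growth forces $\liea=\lies_0=0$. Granting this, $G$ is an infinite discrete-by-compact group, and the subgroup $H$ supplies a sequence $g_k\to\infty$ in $G$. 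Fixing $\hat x\in\hat M$ and passing to a subsequence with $\pi(g_k\hat x)\to y$ in $M$, comparison with a fixed frame over $y$ produces the holonomy sequence $p_k\in\OO(1,n)$ with $p_k\to\infty$; since $\rk_{\RR}\OO(1,n)=1$, a Cartan decomposition gives $p_k=l_k\exp(t_kY)r_k$ with $t_k\to+\infty$ and $l_k,r_k$ convergent, while $\exp(tY)$ acts on $T_yM\cong\RR^{1,n}$ with eigenvalues $e^t,1,\dots,1,e^{-t}$, the extreme eigenlines being isotropic and the eigenvalue-$1$ subspace spacelike. Running the Lorentz-dynamics analysis of the stable/unstable behaviour of $(g_k)$, and using that the $g_k$ are isometries so that the exponential contraction can only be absorbed by degenerate directions, one obtains a $G$-invariant, totally geodesic, \emph{lightlike} foliation $\calf$ on $M$ of codimension $d$ with $2\leq d\leq n$, whose transverse structure is a complete Riemannian metric of constant curvature $-1$.

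\emph{Conclusion, and the main obstacle.} The action of $G$ on the transverse hyperbolic structure of $\calf$ yields a continuous homomorphism $\rho:G\to\Iso(\HH^d)=\PO(1,d)$; its kernel $K$ — the isometries preserving every leaf of $\calf$ and acting trivially transversally — is compact, because $M$ and $\Iso^o(M,g)$ are. The image $\Lambda:=\rho(G)$ is discrete: a non-discrete image would have positive-dimensional closure, hence contain a one-parameter parabolic or hyperbolic subgroup of $\PO(1,d)$, whose pull-back along $\rho$ would enlarge $\Iso^o(M,g)$ by a non-compact one-parameter group of isometries — impossible since $\Iso^o(M,g)$ has been shown to be compact. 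Thus, after passing to a finite-index subgroup, $G$ fits in an extension $1\to K\to G\to\Lambda\to 1$ with $K$ compact and $\Lambda\subset\PO(1,d)$ discrete, $2\leq d\leq n$: this is alternative~(2). Finally (1) and (2) exclude each other, since the identity component in case~(1) is non-compact and non-discrete, whereas in case~(2) it is compact. I expect the two hard steps to be: (i) promoting the local ``North--South'' behaviour of a single holonomy sequence — together with the abundance of escaping sequences supplied by exponential growth — into a genuine $G$-invariant lightlike foliation of well-defined dimension with a \emph{hyperbolic} transverse geometry (this is where the rigidity of the frame-bundle geometry and the classification of lightlike foliations on compact Lorentz manifolds are needed); and (ii) the preliminary reduction, in the type-$R$ case, from ``$G$ non-compact of exponential growth'' to ``$\Iso^o(M,g)$ compact''.
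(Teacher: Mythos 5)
There is a genuine gap, and it sits exactly where the real difficulty of the theorem lies. Your reduction of the ``type $R$'' case rests on the claim that exponential growth of a closed, compactly generated subgroup forces $\Iso^o(M,g)$ to be compact, justified by saying that the model manifolds with abelian/Heisenberg/oscillator identity component have isometry groups of polynomial growth. This conflates the identity component with the full group: the exponential-growth subgroup in the hypothesis may be entirely \emph{discrete}, and nothing in Theorem~\ref{thm.adams-zeghib} (which only classifies the Lie algebra $\mathfrak{Iso}(M,g)$) constrains the discrete part $\Iso(M,g)/\Iso^o(M,g)$ or relates its growth to the structure of $\Iso^o(M,g)$. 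Ruling out, say, an oscillator or Heisenberg identity component coexisting with a discrete subgroup of exponential growth is not a quotable classification fact; in this paper it only comes out at the end of the whole machinery (limit sets, local Killing algebras, the warped-product structure and the analysis of Section~\ref{sec.factor.flat}). The same misuse of the Lie-algebra theorem appears in your $\sld$ case: an infinite discrete centralizer of the $\PSL(2,\RR)$-factor would not contradict Theorem~\ref{thm.adams-zeghib} at all, so your ``centralizer is compact-by-finite'' step is unjustified (the paper instead uses local freeness of the $\PSL(2,\RR)^{(m)}$-action with Lorentz orbits to bound the number of components, via \cite{zeghibpic}).

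The second, central gap is the sentence ``running the Lorentz-dynamics analysis \dots one obtains a $G$-invariant, totally geodesic, lightlike foliation of codimension $d$ whose transverse structure is a complete hyperbolic metric.'' Zeghib's Theorem~\ref{thm.zeghib} produces, per escaping sequence, a \emph{codimension-one} lightlike geodesic foliation; these foliations are not $G$-invariant, there is no transverse metric of curvature $-1$, and no result in the literature upgrades a single holonomy sequence to such a structure. In the paper the hyperbolic space enters through the coarse embedding $\dd_x:\Iso(M,g)\to\OO(1,n)$ and, in the flat case, through the linear part of the holonomy acting on a Lorentz subspace $E\subset\RR^{1,k}$ spanned by asymptotically stable lightlike directions --- not through a transverse hyperbolic geometry on $M$. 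Bridging from ``a closed subgroup has exponential growth'' to a proper homomorphism onto a discrete subgroup of $\PO(1,d)$ requires the chain: exponential growth of derivatives (Proposition~\ref{prop.growth.derivatives}, a volume argument in $\HH^n$), an Oseledec-type argument giving an infinite limit set (Corollary~\ref{coro.infinite.limit}), Gromov's integrability theorem producing isotropy algebras $\oo(1,k)$ (Theorem~\ref{thm.exponential.killing}), a compact locally homogeneous $\kiloc$-orbit with a proper restriction morphism (Theorem~\ref{thm.reduction}), the warped-product description of the universal cover, and a Carri\`ere--Klingler-type completeness theorem (Theorem~\ref{thm.complete}); none of this is replaced by your sketch, and the final dichotomy also needs the extension result of Theorem~\ref{thm.sl2} to exclude groups like $\ZZ\ltimes_\lambda\RR$. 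As it stands, your proposal assumes the two hardest steps you yourself flag as (i) and (ii).
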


 By a {\it Lie group extension}, we mean a  group extension $1 \to K \to G \to H \to 1$ in the usual sense, where each group $K,G,H$ is a 
 Lie group, and each arrow a Lie group homomorphism.
 
 Let us make a few comments about the theorem. Although the hypotheses relate to a subgroup of $\Iso(M,g)$, the conclusion
 holds for the full 
  isometry group. We do not make directly a growth assumption about $\Iso(M,g)$, because even if the
  Lorentz manifold $(M,g)$ is assumed to be compact, we don't know if $\Iso(M,g)$ itself is compactly generated (making the 
  notion of growth ill defined). 
  The closedness assumption about the subgroup of exponential growth is mandatory, to avoid for instance the ``trivial'' situation,  
    where a non abelian free group is embedded in a compact subgroup of $\Iso(M,g)$.

     It is easy to prove that for a compact Lorentz surface, the isometry group is either compact, or a compact extension of $\ZZ$, so
    that the assumptions of the theorem are never satisfied in dimension $2$, hence our hypothesis $n+1 \geq 3$ in the statement.
     Actually, when  $M$ is $3$-dimensional, Theorem \ref{thm.main} and its proof show that $(M,g)$ has to be of constant sectional 
     curvature $-1$ or $0$. In the first case, the isometry group is a finite extension of $\PSL(2,\RR)$, and in the second one, 
      $(M,g)$ is a flat $3$-torus. The group $\Lambda$ of Theorem \ref{thm.main} is then an arithmetic lattice in $\PO(1,2)$. All of this also 
      follows from the complete classification of compact $3$-dimensional Lorentz manifold having a noncompact 
      isometry group (see \cite{frances}).  In higher dimension, the proof of Theorem \ref{thm.main} suggests that the geometry of
       $(M,g)$ can be described completely. We will adress this issue in later work.
    
If we are in the first case  of Theorem \ref{thm.main}, then $\Iso(M,g)$ has finitely many connected components. The identity component 
$\Iso^o(M,g)$  is finitely covered by a product $K \times \PSL(2, \RR)^{(m)}$, where $K$ is a connected compact Lie group, and 
$\PSL(2, \RR)^{(m)}$ stands for the $m$-fold cover of $\PSL(2,\RR)$.
 Such a situation appears through the following well known construction. On the Lie group $\PSL(2,\RR)$, let us consider the Killing metric,
 namely the metric obtained by pushing the Killing form on ${\mathfrak sl}(2,\RR)$ by left translations.  This is a Lorentzian metric 
 of constant sectional curvature $-1$, which is actually  bi-invariant.
  If one considers a uniform lattice $\Gamma \subset \PSL(2,\RR)$, the quotient $\PSL(2,\RR)/\Gamma$ is a $3$-dimensional Lorentz manifold,
   and the left action of $\PSL(2,\RR)$ is isometric. Actually the isometry group of $\PSL(2,\RR)/\Gamma$ is virtually $\PSL(2,\RR)$.
    Taking products with compact Riemannian manifolds, provides examples of compact Lorentzian manifolds, having isometry group virtually isomorphic to 
    $K \times \PSL(2,\RR)$, for $K$ any compact Lie group.

Let us now investigate the second case of Theorem \ref{thm.main}. The situation is then reversed: The identity component $\Iso^o(M,g)$ and the
  discrete part $\Iso(M,g)/\Iso^o(M,g)$ is infinite, isomorphic to a (finite extension of a) Kleinian group. 
  Here is a classical construction illustrating this case. On $\RR^{n+1}$, $n \geq 2$, let us consider a Lorentzian quadratic form $q$.
    It defines a Lorentzian metric $g_0$ on $\RR^{n+1}$ which is flat and translation-invariant. If $\Gamma \simeq \ZZ^{n+1}$
     is the discrete subgroup of translations with integer coordinates, we get a flat Lorentzian metric $\overline{g}_0$
      on the torus $\TT^{n+1}=\RR^{n+1}/ \Gamma$.  The isometry group of $(\TT^{n+1}, {\overline g}_0)$ is easily seen to coincide with 
      $\OO(q,\ZZ) \ltimes \TT^{n+1}$.  If  $q$ is chosed to be a rational form (namely $q$ has rational coefficients), then due to a theorem of 
      A. Borel and Harish-Chandra, $\OO(q,\ZZ)$
       is   a lattice in $\OO(q) \simeq \OO(1,n)$ (in particular, it is finitely generated of exponential growth).  
        In section 
       \ref{sec.examples.lattices}, we will elaborate on this construction, and provide examples of compact Lorentzian manifolds 
       $(M,g)$ for which the group $\Iso(M,g)$ is {\it discrete} and isomorphic to $\OO(q,\ZZ)$ as above.  The question of which Kleinian 
        groups $\Lambda$ may appear in point $(2)$ of Theorem \ref{thm.main} is interesting would 
        certainly deserve further investigations.

Observe finally, that the statements of Theorem \ref{thm.main} also contains   known, but nontrivial facts, about isometric 
actions of connected Lie groups. For instance it is shown in  \cite{adams.stuck2}, \cite{zeghibidentity} that
 if the affine group of the line 
 $\operatorname{Aff}(\RR)$ acts faithfully and isometrically on a compact Lorentz manifold,  then the action extends 
 to an isometric action of a group $G$ locally isomorphic to $\PSL(2,\RR)$.  It is plain that if $\operatorname{Aff}(\RR)$
  acts isometrically, then we must be in the first case of  Theorem \ref{thm.main}, so that the theorem provides directly
   a subgroup locally isomorphic to $\PSL(2,\RR)$ in $\Iso(M,g)$. Actually, a generalization of \cite{adams.stuck2}, 
   \cite{zeghibidentity} will be needed during the proof of  Theorem \ref{thm.main} (see  
 Theorem \ref{thm.sl2}).

%
 \subsection{A Tits alternative for the isometry group}

 We now investigate what can be said if we remove the growth assumption made in Theorem \ref{thm.main}. It is then still possible 
  to describe the discrete, finitely generated subgroups of $\Iso(M,g)$. This is the content of our second main result.

\begin{thmx}[Tits alternative for discrete subgroups]
 \label{thm.tits}
 Let $(M^{n+1},g)$ be a smooth compact  $(n+1)$-dimensional Lorentz manifold, with $n \geq 2$. 
 
Then every discrete,   finitely generated, subgroup of $\Iso(M,g)$ either contains a free subgroup in two generators, in which case
 it is virtually  isomorphic to a discrete subgroup of $\PO(1,d)$, or is virtually nilpotent of growth degree $\leq n-1$.
\end{thmx}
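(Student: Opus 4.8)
The strategy is to deduce the Tits alternative for discrete subgroups from Theorem \ref{thm.main}, which already disposes of the case of exponential growth, combined with standard facts about subexponential growth and amenability. Let $\Gamma \subset \Iso(M,g)$ be discrete and finitely generated. The dichotomy I would organize the proof around is: either $\Gamma$ contains a closed (automatic, since $\Gamma$ is discrete), compactly generated — i.e. finitely generated — subgroup of exponential growth, or every finitely generated subgroup of $\Gamma$ has subexponential growth. In the first case, Theorem \ref{thm.main} applies to the subgroup of exponential growth: the conclusion there is global (it concerns all of $\Iso(M,g)$), and since $\Iso^o(M,g)$ is compact in case (2) — and in case (1) the group is a compact extension of $\PSL(2,\RR)$, which also contains free subgroups — one concludes that $\Iso(M,g)$, hence $\Gamma$, is virtually a (discrete) subgroup of $\PO(1,d)$; by the Tits alternative for linear groups (or directly, by ping-pong on the boundary $\partial \HH^d$), $\Gamma$ then contains a nonabelian free group, matching the first alternative of the statement.

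\medskip

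In the second case, I claim $\Gamma$ itself has subexponential growth. Indeed, a finitely generated group has subexponential growth iff all of its finitely generated subgroups do — in fact $\Gamma$ is itself one of its finitely generated subgroups, so this is immediate; the point of phrasing it via subgroups is to connect cleanly to the hypothesis of Theorem \ref{thm.main}. So assume $\Gamma$ has subexponential growth. Then $\Gamma$ is amenable, and in particular contains no nonabelian free subgroup, so we must land in the second alternative and prove $\Gamma$ is virtually nilpotent of growth degree $\le n-1$. The natural route is: run the structural machinery behind Theorem \ref{thm.main} (the analysis of the closure $\overline{\Gamma}$ inside the Lie group $\Iso(M,g)$, or of a well-chosen limit set / holonomy) to show that the Zariski-type closure of $\Gamma$, or rather $\Gamma$'s image under the relevant representation, is too constrained to have exponential growth unless it is essentially a Kleinian group — which is excluded here — and otherwise forces $\Gamma$ to be virtually nilpotent. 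Alternatively, and more in the spirit of a Tits alternative, one invokes Gromov's polynomial growth theorem: a finitely generated group of subexponential growth that additionally has polynomial growth is virtually nilpotent; the missing input is to rule out intermediate growth, which should follow from the fact that $\Gamma$ embeds discretely in a Lie group (by a theorem on discrete subgroups of Lie groups, via the adjoint representation and Jordan-type arguments, such a group is either of exponential growth or virtually nilpotent — there is no intermediate growth for linear groups, a result going back to Milnor–Wolf–Tits).

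\medskip

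For the growth bound $\le n-1$, the idea is dynamical: a virtually nilpotent discrete group of isometries of a compact Lorentz manifold acts on $M$ with controlled expansion, and the dimension of the "expanded" directions is bounded by the number of non-null directions available transverse to a degenerate distribution, which in Lorentz signature $(1,n)$ caps the degree at $n-1$. Concretely, one writes $\Gamma$ virtually nilpotent, passes to a finite-index nilpotent subgroup $N$, and studies the induced action of $N$ on the bundle of frames, using the Lorentz metric to bound the nilpotency class and the rank of the abelianization contributing to growth. This is essentially the same estimate that appears in the analysis of case (2) of Theorem \ref{thm.main} (where the Kleinian group sits in $\PO(1,d)$ with $d \le n$, and a virtually nilpotent — i.e. virtually abelian, being a Kleinian group of parabolic/unipotent type — subgroup of $\PO(1,d)$ has abelian rank at most $d-1 \le n-1$). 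So the bound $n-1$ is inherited from the bound $d \le n$ in Theorem \ref{thm.main} together with the structure of virtually nilpotent subgroups of $\OO(1,d)$.

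\medskip

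The main obstacle I anticipate is the case of a priori intermediate growth: Theorem \ref{thm.main} only gives information under an \emph{exponential} growth hypothesis, so to get a genuine dichotomy one must independently establish that a finitely generated discrete subgroup of $\Iso(M,g)$ cannot have intermediate growth. This is where one must use that $\Gamma$ is discrete in a \emph{Lie} group in an essential way — reducing, via the adjoint representation and the structure theory of Theorem~\ref{thm.adams-zeghib}, to linear groups, for which Milnor–Wolf–Tits excludes intermediate growth — rather than merely that $\Gamma$ acts on a compact Lorentz manifold. Getting the sharp bound $n-1$ (as opposed to some $n$ or $n+1$) is the second delicate point, and requires the fine Lorentz-geometric argument sketched above rather than a soft group-theoretic one.
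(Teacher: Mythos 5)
Your handling of the exponential--growth case is essentially the paper's argument: apply Theorem \ref{thm.main}, obtain a finite index subgroup $\Lambda'\subset\Lambda$ and a proper homomorphism $\rho:\Lambda'\to\PO(1,d)$ with discrete image and finite kernel, note that the image cannot be elementary (elementary Kleinian groups are virtually abelian, hence not of exponential growth), and produce the free subgroup by ping-pong on a pair of loxodromic elements; pulling it back to $\Lambda'$ finishes that case.

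The genuine gap is in the subexponential case. Your proposed exclusion of intermediate growth rests on reducing $\Gamma$ to a linear group via ``discreteness in the Lie group $\Iso(M,g)$, the adjoint representation and Theorem \ref{thm.adams-zeghib}''. This does not work: $\Iso(M,g)$ may have infinitely many connected components, Theorem \ref{thm.adams-zeghib} and the adjoint representation only see the Lie algebra, i.e.\ the identity component, and in the very situation the theorem is aimed at ($\Iso(M,g)$ discrete, or $\Iso^o(M,g)$ compact and centralized by $\Gamma$) the adjoint representation carries no information about $\Gamma$ at all --- a priori $\Gamma$ could be any finitely generated group, for instance one of intermediate growth, and that is exactly what must be ruled out. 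Moreover, even if polynomial growth were granted and Gromov's theorem invoked, your argument for the sharp bound on the growth degree is not a proof: the coarse embedding into $\HH^n$ combined with Lemma \ref{lem.growth} gives nothing (the hyperbolic space has exponential growth), and the ``count of non-null directions'' sketch is not substantiated; your alternative derivation of the bound from case (2) of Theorem \ref{thm.main} is unavailable here precisely because Theorem \ref{thm.main} requires exponential growth. The paper closes both holes with one external input: by Lemma \ref{lem.fondamental} the derivative cocycle coarsely embeds $\Lambda$ into $\HH^n$; subexponential growth makes $\Lambda$ amenable; and Tessera's theorem (Theorem \ref{thm.tessera}) asserts that a finitely generated amenable group coarsely embedding into $\HH^n$ is virtually nilpotent of growth degree at most $n-1$. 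This simultaneously excludes intermediate growth (no linearity needed) and yields the sharp bound $n-1$. Without Theorem \ref{thm.tessera}, or a comparably strong coarse-geometric obstruction, your subexponential case remains open.
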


In the previous statement, we say that a group $G_1$ is virtually isomorphic to a group $G_2$, if there 
exists a finite index subgroup
 $G_1' \subset G_1$, and a finite normal subgroup $G_1'' \subset G_1'$, such that $G_1'/G_1''$ is isomorphic 
 to $G_2$.
 
 Theorem \ref{thm.tits} is obtained by combining  Theorem \ref{thm.main}, together with 
  the recent results about coarse embeddings of amenable groups, obtained by R. Tessera in \cite{tessera} (see Section \ref{sec.deduce}).  

  Actually, a generalization of \cite[Theorem 3]{tessera1}, from the setting of finitely generated, to that of 
 compactly generated, unimodular amenable groups, would allow to prove a Tits alternative -- in its classical formulation -- for 
  all finitely generated subgroups of $\Iso(M,g)$ 
(without the discreteness assumption).  Also, it would  yield a statement similar to that of Theorem \ref{thm.tits}
  for finitely generated subgroups of $\Iso(M,g)/\Iso^o(M,g)$. 
 We defer those developpements to  subsequent works, when such  generalizations of \cite[Theorem 3]{tessera1} 
 will be available.

 \subsection{Lorentz isometric actions of lattices}
Another interesting  corollary  of Theorem \ref{thm.main}, is
a complete desciption 
 of lattices (in noncompact simple Lie groups), which can appear as discrete subgroups of $\Iso(M,g)$, where $(M,g)$ is a 
 compact Lorentz manifold.
 We will indeed obtain:

\begin{corox}
 \label{thm.lattices}
 Let $(M^{n+1},g)$ be a compact $(n+1)$-dimensional Lorentz manifold. Assume that a discrete
  subgroup $\Lambda \subset \Iso(M,g)$ is isomorphic to a lattice in a noncompact simple Lie group $G$.
  \begin{enumerate}
   \item Then $G$ is locally isomorphic to $\PO(1,k)$, for $2 \leq k \leq n$.
   \item If equality $k=n$ holds, then $(M^{n+1},g)$ is either a $3$-dimensional anti-de Sitter manifold, or 
   a flat Lorentzian torus, or a two-fold cover of a flat  Lorentzian  torus.
  \end{enumerate}

\end{corox}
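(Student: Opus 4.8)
The plan is to feed $\Lambda$ into Theorem~\ref{thm.main} and then invoke rigidity of lattices. A lattice in a noncompact simple Lie group $G$ is finitely generated, it is discrete — hence closed — in $\Iso(M,g)$, and it is not virtually solvable, so by the Tits alternative for linear groups it contains a nonabelian free subgroup and has exponential growth. Thus $\Lambda$ is a closed, compactly generated subgroup of $\Iso(M,g)$ with exponential growth, and Theorem~\ref{thm.main} applies: after replacing $\Iso(M,g)$ by a finite-index subgroup (which we keep denoting the same way) we obtain a compact normal Lie subgroup $K$ and an exact sequence $1\to K\to\Iso(M,g)\to H\to1$ with $H$ equal to $\PSL(2,\RR)$, or to a discrete subgroup of $\PO(1,d)$ with $2\le d\le n$. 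Writing $\Lambda'$ for the (finite-index, still a $G$-lattice) subgroup $\Lambda\cap\Iso(M,g)$ and $\bar\Lambda$ for its image in $H$, the kernel $\Lambda'\cap K$ is finite, being a discrete subgroup of a compact group, and a routine properness argument shows $\bar\Lambda$ is a discrete subgroup of $H$. Since $\PSL(2,\RR)=\PO(1,2)^o$, in both cases $\Lambda$ is virtually isomorphic (in the sense of the statement) to a discrete subgroup $\bar\Lambda$ of $\PO(1,d)$, $2\le d\le n$; passing to a torsion-free finite-index subgroup acting freely on the contractible $d$-manifold $\HH^d$, this already gives $\operatorname{vcd}(\Lambda)=\operatorname{vcd}(\bar\Lambda)\le d\le n$.

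Part~(1) is then proved by eliminating possibilities for $G$. If $G$ has Kazhdan's property~$(T)$ — which covers every $G$ of real rank $\ge2$, as well as $\operatorname{Sp}(n,1)$, $n\ge2$, and $F_4^{-20}$ — then $\Lambda$ has~$(T)$ and is an infinite discrete subgroup of $\Iso(M,g)$, contradicting \cite[Theorem D]{zimmer.iso}. Hence $G$ has no property~$(T)$, so $G$ is locally isomorphic to $\SO(1,m)$ or $\SU(1,m)$ for some $m\ge2$ (using $\SU(1,1)\cong\SO(1,2)^o$). The case $G\sim\SU(1,m)$, $m\ge2$, is excluded by rigidity of complex hyperbolic lattices: a non-cocompact such lattice contains a non-abelian nilpotent (Heisenberg) parabolic subgroup, which cannot embed in $\operatorname{Isom}(\HH^d_\RR)$ since the virtually nilpotent subgroups there are virtually abelian; a cocompact such lattice is excluded by the Carlson--Toledo superrigidity theorem, which forces any finite-kernel homomorphism into $\operatorname{Isom}(\HH^d_\RR)$ to have image virtually a Fuchsian group, contradicting $\operatorname{vcd}(\Lambda)=2m\ge4$. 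Therefore $G$ is locally isomorphic to $\SO(1,k)=\PO(1,k)^o$ for some $k\ge2$. Finally $\operatorname{vcd}(\Lambda)$ equals $k$ if $\Lambda$ is cocompact and $k-1$ otherwise; as $\operatorname{vcd}(\Lambda)\le n$, the cocompact case gives $k\le n$ at once, while in the non-cocompact case $k=n+1$ is impossible, for it would force $d=n$ and $\operatorname{vcd}(\bar\Lambda)=n=\dim\HH^n$, hence a torsion-free finite-index subgroup of $\bar\Lambda$ cocompact in $\operatorname{Isom}(\HH^n)$ (an open $n$-manifold has the homotopy type of an $(n-1)$-complex) and therefore word-hyperbolic, whereas $\bar\Lambda$ contains a finite-index subgroup of a maximal parabolic of $\Lambda$, which is virtually $\ZZ^{k-1}=\ZZ^n$, $n\ge2$. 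This proves~(1).

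For part~(2), suppose $k=n$. When $n=2$ (so $\dim M=3$) the conclusion is immediate from the classification of compact $3$-dimensional Lorentz manifolds with noncompact isometry group \cite{frances}: such an $(M^3,g)$ has constant curvature $-1$ or $0$, which gives precisely the $3$-dimensional anti-de Sitter manifolds, the flat $3$-tori, and their order-two quotients. Assume now $n\ge3$. With the notation above, $\operatorname{vcd}(\bar\Lambda)=\operatorname{vcd}(\Lambda)\in\{n,n-1\}$ and $\le d\le n$; if $\operatorname{vcd}=n$ then $d=n$, and if $\operatorname{vcd}=n-1$ the parabolic/word-hyperbolicity argument of the previous paragraph (now using $n-1\ge2$) excludes $d=n-1$, so again $d=n$. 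Thus the Kleinian factor has the maximal admissible dimension $d=n$, and in this saturating case the proof of Theorem~\ref{thm.main} forces the curvature of $(M^{n+1},g)$ to vanish identically. Being compact, complete and flat Lorentzian, $(M^{n+1},g)$ is then $\RR^{1,n}/\Gamma$ for a crystallographic group $\Gamma$, so by the Bieberbach theorems $\Gamma$ contains a finite-index lattice of translations $\ZZ^{n+1}$ and $M$ is finitely covered by $\TT^{n+1}$. Finally, since $\Iso(M,g)$ is governed by $\Gamma$ and its normaliser in the affine group of $\RR^{1,n}$, the requirement that it still contain a lattice of $\PO(1,n)$ forces the holonomy $\Gamma/\ZZ^{n+1}$ to have order $1$ or $2$, which yields the remaining two items of~(2).

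The main obstacle is part~(2) for $n\ge3$: the flatness of $(M^{n+1},g)$ is not a formal consequence of the statement of Theorem~\ref{thm.main}, but of the mechanism of its proof in the extremal case $d=n$, where one must show that a Kleinian factor of the largest admissible dimension can only arise from the flat model $(\RR^{1,n},\,\OO(1,n)\ltimes\RR^{n+1})$; the ensuing Bieberbach bookkeeping — deciding which compact flat Lorentzian manifolds keep a $\PO(1,n)$-lattice in their isometry group — is then routine. In part~(1) the only point needing an external input is the exclusion of cocompact $\SU(1,m)$-lattices, which rests on superrigidity of complex hyperbolic lattices; everything else is soft, combining property~$(T)$ with \cite[Theorem D]{zimmer.iso} and elementary dimension/parabolic counting.
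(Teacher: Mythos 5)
Your part (1) is essentially correct, and it is a genuinely (if mildly) different route from the paper's: you dispose of property $(T)$ lattices by quoting \cite[Theorem D]{zimmer.iso}, where the paper instead uses the $(T)$ fixed-point property on $\HH^d$ (deliberately, since it wants to recover Zimmer's result by independent means), and you replace the paper's coarse-geometric dimension counts (asymptotic dimension, Proposition \ref{prop.reseaux}, and the non-embedding of $\RR^n$ into $\HH^n$ of Proposition \ref{prop.nocoarse.Rn}) by a virtual cohomological dimension count plus a word-hyperbolicity/parabolic argument; both work. One small point to repair: Carlson--Toledo cannot be invoked for ``any finite-kernel homomorphism'' -- one must first argue that the Zariski closure of the image in $\PO(1,d)$ is reductive (the paper does exactly this before citing \cite{carlson}).

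Part (2) for $n\ge 3$, however, contains a genuine gap, which you yourself flag: you assert that in the extremal case the curvature of $(M,g)$ must vanish (or be constant) ``by the mechanism of the proof of Theorem \ref{thm.main}'', but you never carry this out, and it is precisely the content of the corollary. The paper's argument is concrete: Theorem \ref{thm.reduction} produces a compact, locally homogeneous $\Sigma\subset M$ whose isotropy contains $\oo(1,k')$, and the reduction of the frame bundle of $\Sigma$ to $\OO(1,k')\times L$ together with Corollary \ref{coro.subbundle} gives a coarse embedding of $\Lambda$ into $\OO(1,k')$; the argument of Proposition \ref{prop.reseaux} then forces $k'\ge n$, hence $k'=n$, and faithfulness of the isotropy representation forces $\dim\Sigma=n+1$, i.e.\ $\Sigma=M$. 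Thus $M$ is locally homogeneous with isotropy $\oo(1,n)$, hence of constant curvature, and Proposition \ref{prop.homogeneous.partial} (which rests on the completeness Theorem \ref{thm.complete}) yields either the $3$-dimensional anti-de Sitter case or the complete flat case. None of this is a formal consequence of the statement of Theorem \ref{thm.main}, so your proof of (2) is incomplete at its central step (your $n=2$ reduction to \cite{frances} is fine, but it does not help for $n\ge 3$).

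Moreover, even granting flatness, your concluding ``Bieberbach bookkeeping'' is not routine and, as written, is wrong: completeness of a compact flat Lorentz manifold is Carri\`ere's theorem \cite{carriere}, not automatic, and there is no Lorentzian Bieberbach theorem guaranteeing a finite-index lattice of translations -- there exist compact complete flat Lorentz manifolds with unipotent holonomy and nilpotent, non virtually abelian fundamental group (cf.\ \cite{fried.goldman}). The paper instead pins down the holonomy using the splitting $\RR^{n+1}=E\oplus F$ from Section \ref{sec.factor.flat} and the proper homomorphism of Lemma \ref{lem.properness}: maximality $d=n$ forces the linear part of $\Gamma$ to lie in $\pm\operatorname{Id}$, which is what yields the flat torus or its two-fold cover. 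You would need to supply both of these missing arguments for part (2) to stand.
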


We recover this way, but by other methods,  Zimmer's results (\cite[Theorem D]{zimmer.iso}) saying that 
lattices having property $(T)$ do not appear as discrete subgroups of $\Iso(M,g)$.
 The novelty in  Corollary \ref{thm.lattices}  is to cover the case of lattices in $\OO(1,k)$ or 
 $\operatorname{SU}(1,k)$, which was, to the best of our knowledge, 
 not known.

 \subsection{Coarse embeddings, ingredients of the proofs, and organisation of the paper}
%
%
%
%
%
 
Let us explain roughly what is the general strategy to prove our main Theorem \ref{thm.main}, and its corollaries.
 
The starting point of our study, and an essential tool throughout this article, is the notion of {\it coarse embedding}, introduced by
 M. Gromov in \cite{gromov}. Thus, Section \ref{sec.coarse} is devoted to recalling what this important notion is.  
 Gromov's fundamental observation was that the isometry group of a compact $(n+1)$-dimensional Lorentzian manifold admits a coarse 
 embedding into the real hyperbolic space $\HH^n$. This mere remark already provides important restrictions on the group 
  $\Iso(M,g)$, through basic coarse invariants such as the asymptotic dimension, or some growth properties. To give the reader a flavour
   of how those basic tools
   can be used in the context of isometric actions, we prove a particular case of Corollary \ref{thm.lattices}
   in Section \ref{sec.reseau1}. Then, we explain how to link
    our main Theorem \ref{thm.main} to the results of \cite{tessera}, in order 
    to obtain Theorem \ref{thm.tits}.
 
 Section \ref{sec.limit.set} is devoted to the notion of {\it limit set} associated to a coarse embedding into $\HH^n$,  and the related 
  limit set for the isometry group $\Iso(M,g)$. It is
   explained in Sections \ref{sec.expo.derivatives} and \ref{sec.exp.growth}, how our assumtion of a compactly generated subgroup of 
   exponential growth forces this limit set to be infinite. The crucial step here, is to prove that the growth assumption we made on a subgroup
    of $\Iso(M,g)$, implies exponential growth of derivatives for the action of $\Iso(M,g)$.
   
   This property is exploited further in  Section \ref{sec.exponential.killing}. Using Gromov's theory of rigid geometric structures, 
    one establishes a link between the limit set of the isometry group on the one hand, and local Killing fields on 
   the manifold $(M,g)$ on the other hand.  The general picture is  that when $\Iso(M,g)$ has a big (infinite) limit set,
    then the manifold $(M,g)$ must have a lot of local Killing fields. This statement is made precise in Theorem 
    \ref{thm.exponential.killing}.
    
 The abundance of local Killing fields proved in Section \ref{sec.exponential.killing}, allows us to exhibit, in 
 Section \ref{sec.compactorbits}, a compact Lorentz submanifold $\Sigma \subset M$, which is locally homogeneous 
 (with semisimple isotropy)  and left invariant by a finite index subgroup $\Iso'(M,g) \subset \Iso(M,g)$. This is the content 
 of Theorem \ref{thm.reduction}. The restriction morphism $\rho: \Iso'(M,g) \to \Iso(\Sigma,g)$ is proper 
 (it has closed image and compact kernel), what essentially reduces the proof of  Theorem \ref{thm.main} to 
  the setting of locally homogeneous manifolds.
  
 In Section \ref{sec.completeness}, we  thus tackle the problem of describing all compact Lorentz manifolds, which are 
  locally homogeneous 
 with semisimple isotropy, and have an isometry group of exponential growth. To this aim, we have to prove a completeness
 theorem for this class of  manifolds, akin to 
  the one obtained by Y. Carri\`ere and B. Klingler for compact Lorentz manifolds of constant sectional curvature. 
   This is done  in  Theorem \ref{thm.complete}.
   
 This completeness result opens the way for the final proof of Theorem \ref{thm.main}, which is achieved in the last Section 
 \ref{sec.proof-main}.

\section{Coarse embedding associated to an isometric action}
\label{sec.coarse}
\subsection{Coarse embeddings between metric spaces and groups}
\label{sec.coarse.metric}
Let $(X,d_X)$ and $(Y,d_Y)$ be two metric spaces.  A map $\alpha: X \to Y$ is called {\it a coarse embedding}
 if there exist two  nondecreasing functions $\phi^-: \RR_+ \to \RR$ and $\phi^+: \RR_+ \to \RR_+$ 
 satisfying $\lim_{ r \to + \infty}\phi^{\pm}(r)= + \infty$, such that for any $x,y$
  in $X$, one has:
  \begin{equation}
   \phi^-(d_X(x,y)) \leq d_Y(\alpha(x),\alpha(y)) \leq \phi^+(d_X(x,y))
  \end{equation}

Equivalently, for any pair of sequences $(x_k),(y_k)$ in $X$, one has 
$d_Y(\alpha(x_k),\alpha(y_k)) \to + \infty$  if and only if $d_X(x_k,y_k) \to + \infty$.  

The function $\phi^+$ (resp. $\phi^-$) is called the upper control (resp. the lower control).  In the case where 
$\phi^+$ and $\phi^-$ are affine functions, we recover the more popular notion of quasi-isometric embedding.
 The spaces $(X,d_X)$ and $(Y,d_Y)$ are said to be {\it coarsely equivalent} whenever there exists a second coarse embedding 
 $\beta: (Y,d_Y) \to (X,d_X)$ such that $\beta \circ \alpha$ is a bounded distance from identity.

It seems that this notion was considered for the first time  by M. Gromov in \cite[Section 4]{gromov}, under the name of 
{\it placement}. 
\subsubsection{Coarse embeddings between groups, adpated metrics}
\label{sec.coarse.groups}
We will often speak in this paper of coarse embeddings between groups, or from a group to a metric space. To
make this notion precise, say that 
 a distance $d$ on a topological group $G$ is called {\it an adapted metric}, when it is {\it right invariant}, {\it proper}
 (the  balls are relatively compact sets), and when {\it compact subsets have finite diameter} for $d$. Observe that we don't require $d$ to be continuous, so
 that the last condition is nontrivial. It is a classical result (see \cite{struble}) that any locally compact, second countable, 
 topological group admits an adapted metric (actually one can even find adapted metrics generating the topology of $G$).
 
 An important example of adapted metric will be the word metric on a second countable, locally compact, compactly generated group. Those are groups
  $G$, for which there exists a compact subset $S$, that we may assume symmetric, namely $S=S^{-1}$, such that 
  $G=\bigcup_{n \in \NN} S^n$. Here $S^n=S.S.\ldots.S$ denotes the set of $g \in G$ that can be written as a product $g=w_1....w_n$
   with $w_k \in S$ (and $S^0=\{1_G \}$).  One can define $\ell_S(g)=\min \{ n \in \NN, g \in S^n \}$. Then, defining 
   $d_S(g,h)=\ell_S(gh^{-1})$, we obtain an adpated metric on $G$, called 
   the word metric (associated to the generating set $S$). Observe that generally, this metric is not continuous.
 
 By a coarse embedding $\alpha: G_1 \to G_2$ between two (locally compact, second countable) topological groups, we 
 will mean in what follows that $\alpha$ is coarse between $(G_1,d_1)$ and $(G_2,d_2)$, with $d_1,d_2$ adapted metrics. The 
 notion does not depend on the choice of such metrics, because of the  following easy topological characterization:
 
 \begin{fact}
  \label{fact.coarse.groups}
  A map $\alpha: G_1 \to G_2$ between two topological groups, endowed with adapted metrics, is a coarse embedding, when for each pair of sequences
   $(f_k)$ and $(g_k)$ in $G_1$, $f_kg_k^{-1}$ stays in a compact subset of $G_1$ if and only if 
   $\alpha(f_k)\alpha(g_k)^{-1}$ stays in a compact subset of $G_2$.
 \end{fact}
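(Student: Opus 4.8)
The plan is to reduce the entire statement to the \emph{length functions} $\ell_i \colon g \mapsto d_i(g,1_{G_i})$ attached to the adapted metrics $d_i$ on $G_i$ ($i=1,2$), together with an elementary dictionary identifying relative compactness in $G_i$ with boundedness of $\ell_i$.

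First I would establish that dictionary: a subset $A \subseteq G_i$ is relatively compact if and only if $\ell_i$ is bounded on $A$. The ``if'' part is immediate from properness of $d_i$, since an $\ell_i$-bounded set is contained in a ball centred at $1_{G_i}$, which is relatively compact. For the ``only if'' part, $\overline{A}\cup\{1_{G_i}\}$ is compact, hence of finite $d_i$-diameter since $d_i$ is an adapted metric, and this bounds $\ell_i$ on $A$. It follows that a sequence $(h_k)$ in $G_i$ stays in a compact subset of $G_i$ if and only if $(\ell_i(h_k))_k$ is bounded. Invoking now right-invariance of $d_i$, which gives $\ell_i(fg^{-1})=d_i(f,g)$, I get that for sequences $(f_k),(g_k)$ in $G_1$ the sequence $(f_kg_k^{-1})$ stays in a compact subset of $G_1$ if and only if $(d_1(f_k,g_k))_k$ is bounded, and likewise $(\alpha(f_k)\alpha(g_k)^{-1})$ stays in a compact subset of $G_2$ if and only if $(d_2(\alpha(f_k),\alpha(g_k)))_k$ is bounded. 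Thus the condition appearing in the Fact is equivalent to the assertion: \emph{for every pair of sequences $(x_k),(y_k)$ in $G_1$, the real sequence $(d_1(x_k,y_k))_k$ is bounded if and only if $(d_2(\alpha(x_k),\alpha(y_k)))_k$ is bounded.}

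It then remains to match this ``bounded iff bounded'' property with the reformulation of coarse embedding recorded right after the definition, namely $d_1(x_k,y_k)\to+\infty \Leftrightarrow d_2(\alpha(x_k),\alpha(y_k))\to+\infty$ for all pairs of sequences; the equivalence between the two formulations is a routine passage-to-a-subsequence argument, valid in both directions. For instance, assuming ``bounded iff bounded'', if $d_1(x_k,y_k)\to+\infty$ but $d_2(\alpha(x_k),\alpha(y_k))$ does not tend to $+\infty$, I extract a subsequence along which $d_2(\alpha(x_k),\alpha(y_k))$ stays bounded; applied to that subsequence, ``bounded iff bounded'' forces $d_1$ to stay bounded along it, contradicting $d_1(x_k,y_k)\to+\infty$. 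Conversely, assuming the coarse-embedding reformulation, if $(d_1(x_k,y_k))_k$ is bounded while $(d_2(\alpha(x_k),\alpha(y_k)))_k$ is not, I extract a subsequence along which the latter tends to $+\infty$, and the reformulation forces $d_1$ to tend to $+\infty$ along it, a contradiction. The remaining cases (swapping the roles of $G_1$ and $G_2$) are identical, and this chain of equivalences is exactly the claim of the Fact.

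I do not expect a genuine obstacle here. The only point that really uses the precise definition of an adapted metric — and therefore deserves to be spelled out — is the ``only if'' half of the dictionary in the second step: since $d_i$ is \emph{not} assumed to be continuous, relative compactness of a set $A$ does not by itself give any control of $d_i$ on $A$, and it is precisely the requirement that compact subsets have finite diameter which makes relative compactness and $\ell_i$-boundedness coincide.
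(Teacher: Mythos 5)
Your proof is correct, and it is exactly the routine verification the paper has in mind: Fact \ref{fact.coarse.groups} is stated without proof as an ``easy topological characterization'', and your dictionary (properness gives that $\ell_i$-bounded sets are relatively compact, the finite-diameter-of-compacta condition gives the converse, right-invariance turns $\ell_1(f_kg_k^{-1})$ into $d_1(f_k,g_k)$) followed by the subsequence argument matching ``bounded iff bounded'' with the sequential reformulation of coarse embedding is the intended argument. You also correctly flag the only genuinely non-automatic point, namely that since adapted metrics need not be continuous, it is the requirement that compact sets have finite diameter (not compactness alone) which bounds $\ell_i$ on relatively compact sets.
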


%

\subsection{The derivative cocycle is a coarse embedding}
\label{sec.derivative.coarse}
\subsubsection{Lie topology on $\Iso(M,g)$}
\label{sec.topology}
Let $(M^{n+1},g)$ be a compact $(n+1)$-dimensional Lorentz manifold. We recall briefly how one makes $\iso(M,g)$ into a Lie transformation group.
 Let us 
call $\hat{M}$ the bundle of orthogonal frames on $M$, which is 
 a $\OO(1,n)$-principal bundle over $M$.  Notice that every $f \in \Iso(M,g)$ induces 
 naturally a diffeomorphism $\hat{f}: \hm \to \hm$, which moreover preserves {\it a parallelism} on $\hm$, coming from the 
 Levi-Civita connection of $g$. One then shows that $\Iso(M,g)$ acts freely on $\hm$,
  and orbits of $\Iso(M,g)$ on $\hm$ are closed submanifolds of $\hm$ (closed but of course generally not compact).
   This identification of $\Iso(M,g)$ as 
   a submanifold of $\hm$ is the way one defines the Lie group topology on $\Iso(M,g)$ (see 
   \cite[Cor VII.4.2]{sternberg} for a more detailed account). In particular $\Iso(M,g)$ is secound countable and locally compact, 
   hence 
   admits adapted metrics. Observe that for a compact Lorentz manifold $(M^{1+n},g)$, it is not clear (and maybe false, eventhough we don't have any example)
    that the group $\Iso(M,g)$ is compactly generated. 
   Using the fact that the exponential map of $g$ 
   locally linearizes isometries, it is not very hard to check that this Lie topology is actually the $C^0$-topology on 
   $\Iso(M,g)$, making $\Iso(M,g)$ a closed subgroup of ${\text{Homeo}}(M)$. The very definition of the Lie topology implies 
   that {\it $\Iso(M,g)$ acts properly on $\hm$}.
 \subsubsection{Coarse embedding of $\Iso(M,g)$ into $\OO(1,n)$}
 \label{sec.derivative.cocycle}
We  fix once for all a section $\sigma: M \to \hm$  such that { the image $\sigma(\hm)$ has compact 
closure} in $\hm$. Such sections exist since $M$ is compact.
  {\it In all the paper, we will always deal with such bounded sections.}
  
Having fixed a bounded section $\sigma: M \to \hm$ as above, we get for every $x \in M$ a map 
$$ {\dd}_x: \Iso(M,g) \to \OO(1,n)$$
defined by the following relation:
$$ \sigma(f(x))=\hat{f}(\sigma(x)).({\dd}_x(f))^{-1}.$$

The element ${\dd}_x(f)$ is nothing but the matrix of the tangent map $D_xf: T_xM \to T_{f(x)}M$, if we put the frames
$\sigma(x)$ and $\sigma(f(x))$ on $T_xM$ and $T_{f(x)}M$ respectively. 
 
In \cite{gromov}, M. Gromov made the following crucial observation: 

\begin{lemme}[see \cite{gromov} Sections 4.1.C and 4.1.D]
 \label{lem.fondamental}
 Let $(M^{n+1},g)$ be a compact Lorentz manifold. Then for every $x \in M$, the derivative cocycle
 ${\dd}_x: \Iso(M,g) \to \OO(1,n)$ is a coarse embedding. 
\end{lemme}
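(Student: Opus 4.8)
The plan is to use the properness of the $\Iso(M,g)$-action on the frame bundle $\hm$, combined with the fact that the chosen section $\sigma$ is bounded. By Fact \ref{fact.coarse.groups}, it suffices to show that for sequences $(f_k),(g_k)$ in $\Iso(M,g)$, the product $f_kg_k^{-1}$ stays in a compact subset of $\Iso(M,g)$ if and only if $\dd_x(f_k)\dd_x(g_k)^{-1}$ stays in a compact subset of $\OO(1,n)$. Here I equip $\OO(1,n)$ with any adapted metric; since $\OO(1,n)$ has finitely many components and a natural left-invariant Riemannian metric, ``staying in a compact set'' just means ``bounded'' in the obvious matrix-norm sense. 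The key elementary identity to record first is the cocycle-type relation: differentiating $\sigma(f(x)) = \hat f(\sigma(x))\cdot(\dd_x(f))^{-1}$ and the analogous relation for $g$ and for the composition, one gets $\dd_x(f_k)\dd_x(g_k)^{-1}$ expressed through how $\hat f_k$ and $\hat g_k$ move the frame $\sigma(x)$; more precisely $\hat{f_k}(\sigma(x))$ and $\hat{g_k}(\sigma(x))$ lie in the orbit of $\sigma(x)$, and the element of $\OO(1,n)$ comparing $\hat{f_k}(\sigma(x))$ with $\sigma(f_k(x))$ (resp. for $g_k$) is exactly $(\dd_x(f_k))^{-1}$.

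\medskip

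First I would prove the ``only if'' direction, which is the easy half: if $f_kg_k^{-1}$ lies in a compact subset $C \subset \Iso(M,g)$, then $\hat{f_k}(\sigma(x)) = \widehat{(f_kg_k^{-1})}\bigl(\hat{g_k}(\sigma(x))\bigr)$, and since $\widehat{f_kg_k^{-1}}$ ranges over the relatively compact set $\hat C$ while $\hat{g_k}(\sigma(x))$ may wander, one combines this with the $\sigma(g_k(x))$ term: writing out $\dd_x(f_k)\dd_x(g_k)^{-1}$ in terms of the principal-bundle action, the dependence on the (possibly unbounded) position $\hat{g_k}(\sigma(x))$ cancels, and what remains is controlled by the relatively compact sets $\hat C$ and $\overline{\sigma(M)}$. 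Hence $\dd_x(f_k)\dd_x(g_k)^{-1}$ stays bounded in $\OO(1,n)$.

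\medskip

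The ``if'' direction is the substantive one, and this is where properness of the action on $\hm$ enters. Suppose $\dd_x(f_k)\dd_x(g_k)^{-1}$ stays in a compact subset $L \subset \OO(1,n)$. From the defining relations, $\hat{f_k}(\sigma(x)) = \sigma(f_k(x))\cdot \dd_x(f_k)$ and likewise for $g_k$; combining, $\hat{f_k}(\sigma(x)) = \widehat{f_kg_k^{-1}}\bigl(\hat{g_k}(\sigma(x))\bigr)$ forces $\widehat{f_kg_k^{-1}}$ to send the point $\hat{g_k}(\sigma(x))$ to $\sigma(f_k(x))\cdot\dd_x(f_k)\dd_x(g_k)^{-1}\cdot(\text{something bounded})$, i.e. to a point lying in the fixed relatively compact set $\overline{\sigma(M)}\cdot L' \subset \hm$ for a suitable compact $L'\subset\OO(1,n)$. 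Thus $\widehat{f_kg_k^{-1}}$ carries the relatively compact set $\overline{\sigma(M)}$ (which contains $\hat{g_k}(\sigma(x))$, up to a bounded $\OO(1,n)$-twist, since $\hat{g_k}(\sigma(x))=\sigma(g_k(x))\cdot\dd_x(g_k)\dots$ — here one must be slightly careful and instead argue with the point $\hat{g_k}(\sigma(x))$ directly, noting it lies in $\overline{\sigma(M)}\cdot\dd_x(g_k)$, and feed $\dd_x(g_k)$ into the bounded factor only after pairing with $\dd_x(f_k)$) into a fixed compact set. By properness of the $\Iso(M,g)$-action on $\hm$, the set of isometries moving a given compact subset of $\hm$ to meet another fixed compact subset is relatively compact in $\Iso(M,g)$; therefore $\widehat{f_kg_k^{-1}}$, and hence $f_kg_k^{-1}$, stays in a compact subset of $\Iso(M,g)$.

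\medskip

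\textbf{Main obstacle.} The real care is bookkeeping in the ``if'' direction: the individual positions $\hat{f_k}(\sigma(x))$ and $\hat{g_k}(\sigma(x))$ are \emph{not} bounded (the orbits are closed but noncompact), and only the combination involving $\dd_x(f_k)\dd_x(g_k)^{-1}$ is. One must organize the identities so that every unbounded quantity is cancelled or absorbed before invoking compactness, and then apply properness of the $\Iso(M,g)$-action on $\hm$ to exactly one well-chosen pair of compact sets. Once the algebra is set up correctly, the rest is routine; I expect no genuinely deep step, only the need to be precise about which factors are bounded and which are not.
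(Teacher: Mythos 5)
Your proposal is correct and follows essentially the same route as the paper: it rests on the frame-bundle identity $\widehat{f_kg_k^{-1}}(\sigma(g_kx))=\sigma(f_kx)\cdot\dd_x(f_k)\dd_x(g_k)^{-1}$, the boundedness of the section $\sigma$, the properness of the $\Iso(M,g)$-action on $\hm$ (together with the properness of the right $\OO(1,n)$-action for the converse implication), and the reduction via Fact \ref{fact.coarse.groups}. The paper's proof is just a tidier packaging of the same bookkeeping you describe.
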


\begin{proof}
 The proof follows easily from the properness of the action of $\Iso(M,g)$ on $\hm$. 
  Indeed, given two sequences $(f_k)$ and $(g_k)$ of $\Iso(M,g)$, $f_kg_k^{-1}$ stays in a compact subset of
   $\Iso(M,g)$ if and only if ${\hat f}_k{\hat g}_k^{-1}\sigma(g_kx)$ stays in a compact set of $\hm$ (because 
   $\sigma(g_kx)$ is contained in a compact subset of $\hm$ and $\Iso(M,g)$ acts properly on $\hm$).
   But from the relation:
   $$ {\hat f}_k{\hat g}_k^{-1}\sigma(g_kx){\dd}_x(g_k){\dd}_x(f_k)^{-1}=\sigma(f_kx),$$
   and the properness of the right action of $\OO(1,n)$ on $\hm$, we see that our initial assertion is 
   equivalent to ${\dd}_x(g_k){\dd}_x(f_k)^{-1}$ (hence ${\dd}_x(f_k){\dd}_x(g_k)^{-1}$)  staying in a compact subset of $\OO(1,n)$, and we conclude by 
   Fact \ref{fact.coarse.groups}.
\end{proof}

\subsubsection{Coarse embedding into $\HH^n$}
\label{sec.embedding.hyp}
Let ${\mathbb H}^n$ denote the real hyperbolic space, and let $o \in {\mathbb H}^n$ be a base point. The group
 $\OO(1,n)$ acts isometrically on $\HH^n$. If $x \in M^{n+1}$ is given, we can derive from ${\dd}_x$
  a coarse embedding $\overline{\dd}_x: \Iso(M,g) \to (\HH^n, d_{hyp})$, defined by
   $\overline{\dd}_x(f)={\dd}_x(f)^{-1}.o$. The embedding $\overline{\dd}_x$ is coarse because so is ${\dd}_x$, and the 
    action of $\OO(1,n)$ on $\HH^n$ is proper. 


\subsection{Coarse embeddings and obstructions to isometric actions}

\subsubsection{Restriction to quasi-geodesic spaces of bounded geometry}
\label{sec.quasi-geodesic}
To get useful invariants under coarse equivalence of metric spaces, it is better to focus on spaces which are quasi-geodesic, and 
of bounded geometry.
 We recall that a metric space $(X,d_X)$ is quasi-geodesic if there exist $a,b >0$ such that each pair of points $(x,y)$
  in $X$ can be joigned by a $(a,b)$-quasi-geodesic. In other words, there exists, for any such pair $(x,y)$ an interval $[0,L]$, as well 
   as a map $\gamma: [0,L] \to X$ with $\gamma(0)=x$ and $\gamma(L)=y$, and for every $t,t' \in [0,L]$:
   $$ \frac{1}{a}|t'-t|-b \leq d_X(\gamma(t'),\gamma(t)) \leq a|t'-t|+b.$$
   
 A metric space $(X,d_X)$ has bounded geometry, if it is quasi-isometric to some $(X',d_{X'})$ satisfying:
 \begin{enumerate}
  \item $(X',d_{X'})$ is uniformly discrete, namely there exists $C>0$ such that $\inf_{x \not = x'}d_{X'}(x,x') \geq C$.
  \item For every $r>0$, there exists $n_r$ such that every ball of radius $r$ has at most $n_r$ elements.
 \end{enumerate}

For us, the basic examples of quasi-geodesic metric spaces of bounded geometry will be homogeneous Riemannian manifolds, for instance 
Euclidean space $\RR^n$ or real hyperbolic space $\HH^n$. Also very important is the case of a second countable, locally compact and 
 compactly generated group $G$, endowed with a word metric $d_S$. The very definition of the word metric makes $(G,d_S)$ a quasi-geodesic space.
  It may not be locally finite, but we can always consider $\Lambda \subset X$ be a $C$-metric lattice in $G$. It means that $d(x,x') \geq C$ if $x \not = x'$ in $\Lambda$, and also that there 
   is a constant $D>0$ such that any $x \in X$ is at distance at most $D$ from  $\Lambda$. Such lattices exist by Zorn lemma, and for $C>0$
    big enough, $(\Lambda,d_S)$ is locally finite, uniformly discrete,  and quasi-isometric to $(G,d_S)$. In particular
     $(\Lambda,d_S)$ is quasi-geodesic.

One important feature of the quasi-geodesic assumption is the following:
 
 \begin{lemme}
  \label{lem.linear}
  Let $(X,d_X)$ and $(Y,d_Y)$ be two  metric spaces, with $(X,d_X)$  quasi-geodesic. Then for any coarse embedding 
  $ \alpha: X \to Y$, the upper control function $\phi^+$ can be chosen affine.
 \end{lemme}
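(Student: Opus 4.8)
The plan is to use the defining quasi-geodesic constants $a,b>0$ of $X$ to "linearize" the given upper control $\phi^+$ along quasi-geodesics, by chopping a quasi-geodesic into unit-length pieces and applying the coarse-embedding inequality to consecutive sample points. Concretely, fix $x,y\in X$ at distance $R=d_X(x,y)$, and let $\gamma\colon[0,L]\to X$ be an $(a,b)$-quasi-geodesic from $x$ to $y$; from the lower bound $\tfrac1a L-b\le d_X(x,y)=R$ we get $L\le a(R+b)$. Pick sample points $t_0=0<t_1<\dots<t_N=L$ with $t_{i+1}-t_i\le 1$ and $N\le L+1\le a(R+b)+1$. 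For consecutive samples, $d_X(\gamma(t_i),\gamma(t_{i+1}))\le a\cdot 1+b=a+b$, hence $d_Y(\alpha\gamma(t_i),\alpha\gamma(t_{i+1}))\le \phi^+(a+b)$. Summing along the chain and using the triangle inequality in $Y$,
\begin{equation*}
 d_Y(\alpha(x),\alpha(y))\le N\,\phi^+(a+b)\le \bigl(a(R+b)+1\bigr)\,\phi^+(a+b).
\end{equation*}
The right-hand side is an affine function of $R=d_X(x,y)$, say $\psi^+(R)=cR+d$ with $c=a\,\phi^+(a+b)$ and $d=(ab+1)\phi^+(a+b)$; it is nondecreasing, tends to $+\infty$, and dominates $d_Y(\alpha(x),\alpha(y))$ for every pair $x,y$. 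Replacing $\phi^+$ by $\psi^+$ (and keeping the same $\phi^-$, which is unaffected) exhibits $\alpha$ as a coarse embedding with affine upper control.

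A minor point to address is that the inequality $d_Y(\alpha(x),\alpha(y))\le \phi^+(d_X(x,y))$ with the new affine $\psi^+$ must also be checked for pairs at small distance; this is automatic since $\psi^+$ is nondecreasing and the chain argument above already covers all $R\ge 0$ (for $R$ near $0$ one still has $N\ge 1$, so the bound $d_Y(\alpha(x),\alpha(y))\le\phi^+(a+b)\le\psi^+(R)$ holds as $\psi^+(0)=d\ge\phi^+(a+b)$). I should also note that we only need $\phi^+$ to be defined and finite at the single value $a+b$, which it is by hypothesis.

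The only real subtlety is bookkeeping: one must make sure the number of pieces $N$ is controlled \emph{linearly} in $d_X(x,y)$, which is exactly where the \emph{lower} quasi-geodesic bound $\tfrac1a|t'-t|-b\le d_X(\gamma(t'),\gamma(t))$ enters, to convert the abstract parameter length $L$ into something comparable to $d_X(x,y)$. Everything else is a routine telescoping of the triangle inequality; there is no serious obstacle here, and the argument is essentially the standard fact that quasi-geodesic domains force coarse embeddings to be Lipschitz-on-large-scale from above.
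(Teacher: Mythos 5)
Your argument is correct and is essentially the paper's own proof: chop an $(a,b)$-quasi-geodesic into pieces of parameter length at most $1$, bound each step in $Y$ by $\phi^+(a+b)$ via the coarse-embedding upper control, telescope with the triangle inequality, and convert the parameter length $L$ into $d_X(x,y)$ using the lower quasi-geodesic inequality $\tfrac{1}{a}L-b\le d_X(x,y)$. Your version is just a bit more meticulous about the additive constant and about pairs at small distance, which the paper glosses over harmlessly.
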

\begin{proof}
 By assumption, there exist $a,b>0$ such that between any pair of point $x$ and $y$, there exists a $(a,b)$-quasi-geodesic.
 Since $\alpha$ is a coarse embedding, there exists $c>0$ such that for every $x,y \in X$, 
 $d_X(x,y) \leq a+b$ implies $d_Y(\alpha(x),\alpha(y) \leq c$.
 Let $x,y \in X$, and $\gamma: [0,L] \to X$ be a $(a,b)$-quasi-geodesic between $x$ and $y$.  
  Then $d_Y(\alpha(x),\alpha(y)) \leq \Sigma_{i=0}^{E(L)}d_Y(\alpha(\gamma(i)), \alpha(\gamma(i+1))) \leq Lc$.
   But since $\gamma$ is a $(a,b)$-quasi-geodesic, we have $\frac{1}{a}L-b \leq d_X(x,y)$, so that $d_Y(\alpha(x),\alpha(y)) \leq ac\, d_X(x,y)+abc$.
\end{proof}

\subsubsection{Coarse embeddings and growth}
\label{sec.coarse.growth}

Let $(X,d_X)$ be a metric space of bounded geometry, that we assume first to be uniformly discrete.
 For any $x_0 \in X$, and $r >0$, we denote by $\beta_X(x_0,r)=|B(x_0,r)|$ (the number of points in $B(x_0,r)$).
 
 Given two functions $f: \RR_+ \to \RR_+$ and $g: \RR_+ \to \RR_+$, one says that $f \preceq g$ 
  when there exist constants $\lambda, \mu$ and $c$ such that $f(r) \leq \lambda g(\mu r+c)$, and $f \approx g$ when $f \preceq g$
   and $g \preceq f$. It is pretty clear that changing $x_0$ into $y_0$, one gets $\beta_X(x_0, \dot) \approx \beta_X(y_0,\dot)$. 
   
 Recall that $(X,d_X)$ is said to have  polynomial growth if
  there exists a constant $C >0$, and $d \in \NN$, such that $\beta_X(x_0,r) \leq Cr^d$ for $r$ big enough. The minimal $d$ for which 
  it holds is then called the growth degree of $(X,d_X)$. 
   
One says that $(X,d_X)$ has exponential growth, when there exists 
some $a>0$ such that $\beta_X(x_0,r) \geq e^{ar}$ for $r$ big enough.

The property $\beta_X(x_0, \dot) \approx \beta_X(y_0,\dot)$ ensures that this definition is independent of the choice of
$x_0$.

If $(X,d_X)$ is a space with bounded geometry, which is not locally finite, we take some $(X',d_{X'})$ which is quasi-isometric to $(X,d_X)$
 and locally finite, and define $\beta_X=\beta_{X'}$ (equality makes sense up to the relation $\approx$). 

\begin{lemme}
 \label{lem.growth}
 Let $(X,d_X)$ and $(Y,d_Y)$ be two quasi-geodesic metric spaces of bounded geometry.
  Then the growth function of $Y$ dominates that of $X$, namely $\beta_X \preceq \beta_Y$.
  In particular, if $(Y,d_Y)$ has polynomial growth of degree $d$, then $(X,d_X)$ has polynomial growth of degree $d' \leq d$,
   and if $(X,d_X)$ has exponential growth, then the same holds for $(Y,d_Y)$.
\end{lemme}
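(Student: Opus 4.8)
The statement is of course understood for a given coarse embedding $\alpha \colon X \to Y$. The plan is to reduce to uniformly discrete, locally finite models and then simply to count the fibres of $\alpha$.

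First I would invoke the very definition of the growth function in the bounded geometry setting: fix quasi-isometries $u \colon X' \to X$ and $v \colon Y \to Y'$ with $X'$, $Y'$ uniformly discrete and locally finite, $\beta_X \approx \beta_{X'}$, $\beta_Y \approx \beta_{Y'}$, and let $N_r \in \NN$ bound the number of elements of a ball of radius $r$ in $X'$. The composition $\psi := v \circ \alpha \circ u \colon X' \to Y'$ is again a coarse embedding. Its lower control $\phi^- \colon \RR_+ \to \RR$ is nondecreasing with $\phi^-(r) \to +\infty$, so I may fix $s > 0$ with $\phi^-(s) > 0$. Crucially, since $X$ is quasi-geodesic, Lemma \ref{lem.linear} lets me take the upper control of $\alpha$ affine; as quasi-isometric embeddings have affine controls on both sides, the upper control of $\psi$ is then affine as well, i.e. there are $L, C > 0$ with $d_{Y'}(\psi(p),\psi(q)) \le L\, d_{X'}(p,q) + C$ for all $p, q \in X'$.

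The key observation is that the fibres of $\psi$ are uniformly bounded: if $\psi(p) = \psi(q)$, then $0 = d_{Y'}(\psi(p),\psi(q)) \ge \phi^-(d_{X'}(p,q))$, which forces $d_{X'}(p,q) < s$ (otherwise $\phi^-(d_{X'}(p,q)) \ge \phi^-(s) > 0$). Hence each fibre $\psi^{-1}(z)$ has diameter $\le s$, lies in a ball of radius $s$, and so has at most $N := N_s$ points. Fixing $x_0 \in X'$ and $y_0 := \psi(x_0)$, the affine upper control gives $\psi(B_{X'}(x_0,r)) \subseteq B_{Y'}(y_0, Lr+C)$, and since each fibre meets $B_{X'}(x_0,r)$ in at most $N$ points,
\begin{equation}
 \beta_{X'}(x_0,r) = \bigl| B_{X'}(x_0,r) \bigr| \le N\, \bigl| \psi(B_{X'}(x_0,r)) \bigr| \le N\, \bigl| B_{Y'}(y_0, Lr+C) \bigr| = N\, \beta_{Y'}(y_0, Lr+C).
\end{equation}
This is exactly $\beta_{X'} \preceq \beta_{Y'}$, hence $\beta_X \preceq \beta_Y$.

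The two consequences then follow routinely. If $(Y,d_Y)$ has polynomial growth of degree $d$, say $\beta_Y(y_0,R) \le C_1 R^d$ for large $R$, then $\beta_X(x_0,r) \le N C_1 (Lr+C)^d \le C_2 r^d$ for large $r$, so $(X,d_X)$ has polynomial growth of some degree $d' \le d$. If $(X,d_X)$ has exponential growth, say $\beta_X(x_0,r) \ge e^{ar}$ with $a>0$ for large $r$, then $N\, \beta_Y(y_0, Lr+C) \ge e^{ar}$; substituting $R = Lr+C$ gives $\beta_Y(y_0,R) \ge N^{-1} e^{a(R-C)/L} \ge e^{a'R}$ for large $R$ and any $0 < a' < a/L$, so $(Y,d_Y)$ has exponential growth. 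I do not expect any genuine obstacle here: the only point requiring care is precisely the reduction to uniformly discrete, locally finite models together with the use of Lemma \ref{lem.linear} to make the upper control affine — without affineness one would only get $\beta_X(r) \le N\, \beta_Y(\phi^+(r))$, which is too weak for $\beta_X \preceq \beta_Y$ — after which the fibre count is immediate.
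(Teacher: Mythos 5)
Your argument is correct and is essentially the paper's proof: both reduce to uniformly discrete, locally finite models, use Lemma \ref{lem.linear} to make the upper control affine, and exploit the lower control together with bounded geometry to bound the multiplicity of the embedding before counting points in balls. The only cosmetic difference is that where you bound the cardinality of the fibres of $\psi$ directly by $N_s$, the paper instead passes to a $C$-metric lattice $\Lambda\subset X$ on which $\alpha$ is injective and compares $\beta_\Lambda$ with $\beta_X$ — the same counting argument in a slightly different dress.
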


\begin{proof}
We may assume again that $X$ and $Y$ are uniformly discrete.
 Because $\alpha$ is a coarse embedding, there exists $C>0$ such that if $d_X(x,y) \geq C$, then $d_Y(\alpha(x),\alpha(y)) \geq 1$.
  Now let $\Lambda \subset X$ be a $C$-metric lattice in $X$. It means that $d(x,x') \geq C$ if $x \not = x'$ in $\Lambda$, and also that there 
   is a constant $D>0$ such that any $x \in X$ is at distance at most $D$ from  $\Lambda$.
   From the fact that all balls of radius $D$ have at most $k_D$ points, it is easy to check that
    if $x_0 \in \Lambda$, then 
    \begin{equation}
     \label{eq.equivalent}
     \beta_{\Lambda}(x_0,r) \leq \beta_X(x_0,r) \leq k_D \beta_{\Lambda}(x_0,r+D). 
    \end{equation}

    Let us call $y_0=\alpha(x_0)$. From Lemma \ref{lem.linear}, there exist $a,b>0$ such that $\alpha(B_X(x_0,r)) \subset B_Y(y_0,ar+b)$.
    By definition of $C$, $\alpha$ is one-to-one in restriction to $\Lambda$, so that 
    $\beta_{\Lambda}(x_0,r) \leq \beta_Y(y_0,ar+b)$. From (\ref{eq.equivalent}), we infer 
    $\beta_X(x_0,r) \leq k_D \beta_Y(y_0,ar+aD+b)$, which concludes the proof.
\end{proof}

If we consider $(M^{n+1},g)$ a Lorentz manifold, and $G \subset \Iso(M,g)$ a closed, compactly generated subgroup, then
 we inherits a coarse embedding $\overline{\dd}_x: G \to \HH^n$ (see Section \ref{sec.embedding.hyp}). Since the growth of 
 $\HH^n$ is exponential, Lemma \ref{lem.growth} does not put any restriction on $G$. 
  But when some extra geometric informations are available, then some useful conclusions can be drawn, as we see now (see 
  also Section \ref{sec.expo.derivatives}).

\subsubsection{Reduction of the structure group on compact sets}
\label{sec.reduction}
It is plain that one can restrict the derivative cocycle ${\dd}_x$ to {\it closed subgroups} of $\Iso(M,g)$,
 still getting a coarse embedding. This can be especially interesting when such a subgroup preserves a 
 reduction of  the bundle $\hm$.  We have indeed:
 
 \begin{corollaire}
  \label{coro.subbundle}
  Let $(M^{n+1},g)$ be a Lorentz manifold. Assume that there exists a closed subgroup 
  $G \subset \Iso(M,g)$, leaving invariant a compact subset $K \subset M$, as well as a 
  reduction of $\hm$ above $K$, to an $H$-subundle with $H \subset \OO(1,n)$ a closed subgroup.
  
  Then there exists a coarse embedding $\alpha: G \to H$.
 \end{corollaire}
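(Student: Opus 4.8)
The plan is to repeat the construction of the derivative cocycle from Lemma~\ref{lem.fondamental}, but replacing the global bounded section $\sigma$ by a section adapted to the reduction. Write $\hm_H \subset \hm|_K$ for the given $H$-subbundle. Since $K$ is compact, I would first choose a continuous section $\sigma_H \colon K \to \hm_H$; its image $\sigma_H(K)$ is then a compact, hence relatively compact, subset of $\hm$. Fix now $x \in K$. For $f \in G$, both $\hat{f}(\sigma_H(x))$ and $\sigma_H(f(x))$ lie in the fibre of $\hm_H$ over $f(x)$: indeed $f(x) \in K$ because $K$ is $G$-invariant, and $\hat{f}$ maps $\hm_H$ into itself because $G$ preserves the reduction. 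As this fibre is an $H$-torsor, there is a unique $\alpha(f) \in H$ with $\sigma_H(f(x)) = \hat{f}(\sigma_H(x)) \cdot \alpha(f)^{-1}$. As for $\dd_x$, one checks at once that $f \mapsto \alpha(f)$ is a cocycle relative to $\hat{f}$; this $\alpha \colon G \to H$ is the claimed coarse embedding.

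To verify it, I would argue exactly as in the proof of Lemma~\ref{lem.fondamental}. Let $(f_k)$ and $(g_k)$ be sequences in $G$. Since $G$ is closed in $\Iso(M,g)$ and $\Iso(M,g)$ acts properly on $\hm$, the element $f_kg_k^{-1}$ stays in a compact subset of $G$ if and only if $\hat{f}_k\hat{g}_k^{-1}\sigma_H(g_kx)$ stays in a compact subset of $\hm$ — here one uses that $g_k x \in K$, so that $\sigma_H(g_k x) \in \sigma_H(K)$, which is relatively compact. From the identity
$$\hat{f}_k\hat{g}_k^{-1}\sigma_H(g_kx) \cdot \alpha(g_k)\alpha(f_k)^{-1} = \sigma_H(f_kx),$$
the fact that $\sigma_H(f_kx) \in \sigma_H(K)$ is relatively compact, and properness of the right $\OO(1,n)$-action on $\hm$, this is in turn equivalent to $\alpha(g_k)\alpha(f_k)^{-1}$ (equivalently $\alpha(f_k)\alpha(g_k)^{-1}$) staying in a relatively compact subset of $\OO(1,n)$. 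Since $H$ is closed in $\OO(1,n)$, a subset of $H$ is relatively compact in $H$ exactly when it is relatively compact in $\OO(1,n)$; so by Fact~\ref{fact.coarse.groups}, $\alpha \colon G \to H$ is a coarse embedding.

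I do not expect a genuine obstacle here; the only points needing care are bookkeeping ones: that $\sigma_H(K)$ is relatively compact in $\hm$ (which is where compactness of $K$ enters), that $\alpha(f)$ lies in $H$ and not merely in $\OO(1,n)$ (which is where the $H$-reduction and its $G$-invariance are used), and that $G$-invariance of $K$ is what lets us evaluate $\sigma_H$ at the points $f_kx$ and $g_kx$. Alternatively, one could note that on $K$ one has $\sigma_H = \sigma \cdot c$ for a map $c \colon K \to \OO(1,n)$ with relatively compact image, deduce $\alpha(f) = c(f(x))^{-1}\dd_x(f)c(x)$, and conclude that $\alpha$ differs from the coarse embedding $\dd_x|_G$ only by a bounded perturbation; but the direct argument above seems cleaner.
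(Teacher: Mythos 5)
Your proof is correct and takes essentially the same route as the paper, whose entire argument is that the corollary is a straightforward rephrasing of Lemma \ref{lem.fondamental} applied to a bounded section $\sigma: K \to \hat{N}$ of the $H$-subbundle over $K$; your cocycle identity, the use of properness of the $\Iso(M,g)$-action on $\hm$ and of the right $\OO(1,n)$-action, and the closedness of $H$ all match. One small caveat: a \emph{continuous} section of the subbundle over $K$ need not exist (the $H$-bundle may be nontrivial), but none is needed --- any section with relatively compact image, obtained for instance by patching finitely many local sections over the compact set $K$, suffices, exactly as your alternative remark $\sigma_H=\sigma\cdot c$ with $c$ of relatively compact image already indicates.
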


 \begin{proof}
  The proof is a straigthforward rephrasing of that of Lemma \ref{lem.fondamental}, looking at a bounded section 
  $\sigma: K \to \hat{N}$, where $\hat{N}$ is the $H$-subbundle over $K$.
 \end{proof}

 As a toy application in the realm of Lorentz geometry, let us formulate the following
 
 \begin{proposition}
  \label{prop.centralizer}
  Let $(M^{1+n},g)$ be a compact, $(n+1)$-dimensional Lorentz manifold. Assume that there exists on $M$
   a nontrivial Killing field $X$ which is everywhere lightlike , namely $g(X,X)=0$. Then every closed, finitely generated, subgroup 
   $\Lambda \subset \Iso(M,g)$ which commutes with $X$ is virtually nilpotent, of growth degree at most $n-1$.
 \end{proposition}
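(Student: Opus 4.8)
The plan is to realize $\Lambda$ inside the stabilizer (in $\OO(1,n)$) of an isotropic vector, and then invoke Corollary~\ref{coro.subbundle} together with the growth estimate of Lemma~\ref{lem.growth}. The point is that a subgroup commuting with a lightlike Killing field preserves a reduction of the orthonormal frame bundle $\hm$ to the structure group $H=\Stab_{\OO(1,n)}(e)$, $e$ a fixed nonzero isotropic vector of $\RR^{1,n}$; since $H$ has polynomial growth of degree $n-1$, the coarse embedding produced by Corollary~\ref{coro.subbundle}, combined with Gromov's polynomial growth theorem, yields the conclusion.

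First I would describe $H=\Stab_{\OO(1,n)}(e)$. Decomposing $\RR^{1,n}=\RR e\oplus W\oplus\RR f$ with $f$ a complementary isotropic vector and $W=\{e,f\}^{\perp}$ the Euclidean $(n-1)$-plane, a direct computation shows that an element of $H$ is determined by an element $R\in\OO(n-1)$ acting on $W$ together with a vector $v\in W$ parametrizing the unipotent radical (the part acting trivially on $e^{\perp}/\RR e$), with composition law $\tilde R\,g_v\,\tilde R^{-1}=g_{Rv}$; hence $H\cong\OO(n-1)\ltimes\RR^{n-1}\cong\Iso(\RR^{n-1})$. In particular $H$ is a closed subgroup of $\OO(1,n)$ that acts properly and transitively on $\RR^{n-1}$, so by the \v{S}varc--Milnor lemma $H$ is quasi-isometric to $\RR^{n-1}$; it is therefore a quasi-geodesic metric space of bounded geometry with polynomial growth of degree exactly $n-1$.

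Next I would check that $X$ provides a reduction of $\hm$ over all of $M$ to a principal $H$-subbundle. Since $X$ is everywhere lightlike it is nowhere vanishing, and for each $x\in M$ the set
$$ \hn_x:=\{\,p\in\hm_x \ :\ p(e)=X(x)\,\} $$
is nonempty (any nonzero isotropic vector of $(T_xM,g_x)$ is $p(e)$ for some orthonormal frame $p$, by transitivity of $\OO(1,n)$ on the isotropic cone) and is a single orbit of the right $H$-action; local orthonormal frame fields together with a local section of $\OO(1,n)\to\OO(1,n)\cdot e$ give local trivializations, so $\hn\to M$ is a principal $H$-bundle. Now $M$ is compact, so $X$ is complete and its flow is a flow by isometries; if $f\in\Lambda$ commutes with that flow, then $f_*X=X$, whence for $p\in\hn_x$ the frame $\hat f(p)=D_xf\circ p\in\hm_{f(x)}$ satisfies $\hat f(p)(e)=D_xf(X(x))=X(f(x))$, i.e.\ $\hat f(p)\in\hn_{f(x)}$. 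Thus $\Lambda$ leaves $\hn$ invariant.

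Applying Corollary~\ref{coro.subbundle} with $G=\Lambda$ and $K=M$ then gives a coarse embedding $\alpha\colon\Lambda\to H$. As $\Lambda$ is finitely generated, it is a quasi-geodesic space of bounded geometry for a word metric (Section~\ref{sec.quasi-geodesic}), and the same holds for the locally compact, compactly generated group $H$, which we saw has polynomial growth of degree $n-1$. Lemma~\ref{lem.growth} then forces $\Lambda$ to have polynomial growth of degree $d'\le n-1$, and Gromov's theorem on groups of polynomial growth shows $\Lambda$ is virtually nilpotent; since the growth degree of a virtually nilpotent group is a well-defined integer, $\Lambda$ is virtually nilpotent of growth degree $\le n-1$. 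The only mildly delicate points are the verification that $\hn$ is a \emph{locally trivial} $H$-subbundle and the explicit identification $\Stab_{\OO(1,n)}(e)\cong\Iso(\RR^{n-1})$ that pins the growth degree of the target down to exactly $n-1$; everything else is a direct application of the coarse-geometric machinery of Sections~\ref{sec.coarse.metric}--\ref{sec.reduction}.
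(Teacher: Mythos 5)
Your proposal is correct and follows essentially the same route as the paper: the lightlike Killing field yields a reduction of the orthonormal frame bundle to the stabilizer $\OO(n-1)\ltimes\RR^{n-1}$ of an isotropic vector, Corollary \ref{coro.subbundle} produces a coarse embedding of $\Lambda$ into this group of polynomial growth of degree $n-1$, and Lemma \ref{lem.growth} together with Gromov's polynomial growth theorem concludes. The only step you pass over a bit quickly is the claim that $X$ is nowhere vanishing: this is not immediate from $g(X,X)=0$ alone, but is the classical fact (which the paper also invokes without proof) that a nontrivial lightlike Killing field on a Lorentz manifold has no zeros.
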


 \begin{proof}
  It is classical that nontrivial lightlike Killing fields on Lorentz manifolds can not have singularities. Thus, the field
   $X$ defines a reduction of the frame bundle $\hm$ to the group $L \subset \OO(1,n)$, where $L$ is the stabilizer of a 
   lightlike vector in Minkowski space $\RR^{1,n}$. This group $L$ is isomorphic to $\OO(n-1) \ltimes \RR^{n-1}$.
   Any closed finitely generated $\Lambda \subset \Iso(M,g)$ commuting with $X$ preserves the reduction, hence coarsely
    embeds into $L$ by corollary \ref{coro.subbundle}. Lemma \ref{lem.growth} implies that $\Lambda$ has polynomial growth, with growth degree
    $\leq n-1$. The proposition follows from Gromov's theorem about groups with polynomial growth.
 \end{proof}

\subsubsection{Coarse embedding and asymptotic dimension}
\label{sec.asymptotic}
The notion of asymptotic dimension of a metric space appears in \cite[Section 4]{gromov}, and was developped in \cite{gromov2}.
  
 One says that $(X,d_X)$ has asymptotic dimension at most $n$, written $\Asdim(X) \leq n$, if for every $r>0$, there exists 
  a covering $X=\bigcup_{i \in I} U_i$ with the following properties
  \begin{itemize}
   \item All the $U_i$'s are uniformly bounded.
   \item The $U_i$'s can
   be splitted into $(n+1)$ families 
 ${\mathcal U}_0, \ldots, {\mathcal U}_n$,  with the property that whenever $U_i$ and $U_j$, $i \not = j$,
 belong to a same family ${\mathcal U}_k$, then $d_X(U_i,U_j)>r$.
  \end{itemize}

The asymptotic dimension of $(X,d_X)$ is the minimal $n$ for which one has $\Asdim(X) \leq n$.  
 
The following lemma follows almost directly from the definitions.

\begin{lemme}
 \label{lem.asdim}
 Let $(X,d_X)$ and $(Y,d_Y)$ be two metric spaces, and $\alpha: X \to Y$ a coarse embedding. Then 
 $\operatorname{Asdim}(X) \leq \Asdim(Y)$. 
\end{lemme}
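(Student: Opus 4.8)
The plan is to show that $\Asdim(Y) \leq n$ implies $\Asdim(X) \leq n$ (the inequality $\Asdim(X) \leq \Asdim(Y)$ then being immediate, and vacuous when $\Asdim(Y) = +\infty$). Fix $r > 0$; the goal is to exhibit a uniformly bounded covering of $X$, partitioned into $n+1$ families, in which distinct members of a common family are at distance $> r$. Write $\phi^-, \phi^+$ for the lower and upper controls of $\alpha$, so that $\phi^-(d_X(x,y)) \leq d_Y(\alpha(x),\alpha(y)) \leq \phi^+(d_X(x,y))$ for all $x,y \in X$, with $\phi^\pm$ nondecreasing and $\phi^\pm(t) \to +\infty$.

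First I would work at a suitable scale in $Y$: put $s := \phi^+(r+1) + 1 > 0$ and apply the hypothesis $\Asdim(Y) \leq n$ with parameter $s$, obtaining a covering $Y = \bigcup_{i \in I} U_i$ with $\diam(U_i) \leq D$ for some $D$, split into families $\mathcal U_0, \ldots, \mathcal U_n$ such that $d_Y(U_i, U_j) > s$ whenever $i \neq j$ lie in the same $\mathcal U_k$. I would then set $V_i := \alpha^{-1}(U_i)$, discard the empty ones, and place $V_i$ in the $k$-th family precisely when $U_i \in \mathcal U_k$. These $V_i$ cover $X$ because $\alpha(X) \subseteq \bigcup_i U_i$ — a step using only that $Y$ contains the image, not that $\alpha$ is surjective, which matters since for us $\alpha = \overline{\dd}_x$ is never onto.

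Two verifications remain. \emph{Uniform boundedness}: for $x,y \in V_i$ one has $\phi^-(d_X(x,y)) \leq d_Y(\alpha(x),\alpha(y)) \leq D$, and since $\phi^-$ is nondecreasing with $\phi^-(t) \to +\infty$, the set $\{\, t \geq 0 : \phi^-(t) \leq D \,\}$ is bounded; letting $D'$ denote its supremum gives $\diam(V_i) \leq D'$ uniformly in $i$. \emph{Separation}: if $i \neq j$ belong to the same family and $x \in V_i$, $y \in V_j$, then $\phi^+(d_X(x,y)) \geq d_Y(\alpha(x),\alpha(y)) \geq d_Y(U_i, U_j) > s > \phi^+(r+1)$, so monotonicity of $\phi^+$ excludes $d_X(x,y) \leq r+1$; hence $d_X(V_i, V_j) \geq r+1 > r$. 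This produces the desired covering of $X$ and finishes the proof. I do not expect a genuine obstacle here; the only subtlety worth attention is that $\phi^+$ and $\phi^-$ are neither invertible nor strictly monotone nor continuous, so one must ``invert'' them through suprema of sublevel sets, as above, rather than by literal inverse functions.
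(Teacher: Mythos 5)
Your proof is correct: the paper offers no written proof of this lemma (it merely remarks that it follows almost directly from the definitions), and your argument — pulling back a uniformly bounded, $s$-separated cover of $Y$ along $\alpha$ with $s>\phi^+(r+1)$, and bounding the pulled-back pieces via the sublevel sets of $\phi^-$ — is precisely the standard way to carry that out. The care you take in "inverting" the non-invertible controls through suprema of sublevel sets is the only delicate point, and you handle it correctly.
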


In particular, two spaces which are coarse-equivalent will have the same asymptotic dimension. 

It is known that $\Asdim(\RR^n)=\Asdim(\HH^n)=n$ for every $n \in \NN$ (see for instance \cite[chap. 10]{schroeder}).  Hence, Lemma \ref{lem.asdim}
 says, for instance, that if $(M^{1+n},g)$ is a compact, $(n+1)$-dimensional manifold, and if $\Lambda \subset \Iso(M,g)$
  is a discrete subgroup isomorphic to $\ZZ^k$, then $k \leq n$. One has the sharper upper bound  $k \leq n-1$, as 
   the following statement shows:
   
\begin{proposition}
 \label{prop.nocoarse.Rn}
 Let $n \geq 1$. Then there is no coarse embedding $\alpha: \RR^n \to \HH^n$. 
\end{proposition}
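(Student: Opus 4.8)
The plan is to measure $\RR^n$ and $\HH^n$ with a coarse invariant that is actually sensitive to this embedding question. As the surrounding discussion makes clear, the two cheapest invariants are useless here: the asymptotic dimension of $\RR^n$ and of $\HH^n$ is $n$ in both cases, so Lemma \ref{lem.asdim} gives nothing; and $\HH^n$ has exponential growth while $\RR^n$ has polynomial growth, so Lemma \ref{lem.growth} points the wrong way. What does distinguish the two spaces at matching dimension is the \emph{separation profile} $\operatorname{sep}_X$ of a bounded geometry metric space $X$ (Benjamini--Schramm--Tim\'ar; or, more robustly, the first of the Poincar\'e profiles of Hume--Mackay--Tessera): $\operatorname{sep}_X(k)$ is, up to the usual $\asymp$, the supremum over finite subsets $S$ of a fixed net of $X$ with $|S|\le k$ of the minimal number of net points one must delete in order to split $S$ into pieces of size $\le |S|/2$.

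I would then combine two facts. First, $\operatorname{sep}$ is monotone under coarse embeddings of quasi-geodesic bounded geometry spaces: a coarse embedding $X\to Y$ forces $\operatorname{sep}_X\preceq\operatorname{sep}_Y$. Here one first invokes Lemma \ref{lem.linear} (applicable since $\RR^n$ is quasi-geodesic) to upgrade $\alpha$ to a \emph{coarsely Lipschitz} map, so that preimages of bounded sets are bounded with a controlled number of net points; this is exactly what lets one transport a cheap separator of $\alpha(S)$ in $\HH^n$ back to a cheap separator of $S$ in $\RR^n$. Second, the two profiles grow at genuinely different polynomial rates. One has $\operatorname{sep}_{\RR^n}(k)\asymp k^{(n-1)/n}$: the upper bound is realized by cutting a box of side $\asymp k^{1/n}$ along a coordinate hyperplane, and the matching lower bound — the substantive direction — is the classical consequence of the Euclidean isoperimetric (Loomis--Whitney) inequality, which forbids disconnecting a chunk of $\ZZ^n$ with fewer points. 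On the other side, $\operatorname{sep}_{\HH^n}(k)\preceq k^{(n-2)/(n-1)}$ up to a polylogarithmic factor: a metric ball of radius $R$ in $\HH^n$ contains $\asymp e^{(n-1)R}$ net points, yet its intersection with a totally geodesic copy of $\HH^{n-1}$ through its centre contains only $\asymp e^{(n-2)R}$ of them and already splits the ball in two. Since $\frac{n-1}{n}-\frac{n-2}{n-1}=\frac{1}{n(n-1)}>0$ for every $n\ge 2$, the function $k^{(n-1)/n}$ is not $\preceq k^{(n-2)/(n-1)}(\log k)^{C}$ for any $C$, so $\operatorname{sep}_{\RR^n}\not\preceq\operatorname{sep}_{\HH^n}$, contradicting monotonicity. (For $n=1$ there is nothing to say, as $\HH^1$ is isometric to $\RR$.)

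The hard part will be the estimate $\operatorname{sep}_{\HH^n}(k)\preceq k^{(n-2)/(n-1)}\cdot\operatorname{polylog}(k)$ for \emph{arbitrary} finite subsets $S$, not merely for balls: a general $S$ may be spread across $\HH^n$, and producing a cheap separator then requires the standard recursive argument using the fibration of $\HH^n$ over $\RR$ by horospheres (its horospherical-product structure) to first cut $S$ along a horosphere into bounded pieces and recurse. The cleanest route is simply to cite the separation/Poincar\'e profile computations of Benjamini--Schramm--Tim\'ar and Hume--Mackay--Tessera; reproving them is otherwise the bulk of the work. By contrast, the lower bound $\operatorname{sep}_{\RR^n}(k)\gtrsim k^{(n-1)/n}$ is classical and painless, and the monotonicity statement is bookkeeping once Lemma \ref{lem.linear} has been applied.
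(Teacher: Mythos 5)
Your argument is correct for $n\ge 2$ and takes a genuinely different route from the paper's. The paper does not use profiles at all: it invokes a coarse-topological theorem of Drutu--Kapovich (\cite[Theorem 9.69]{drutu}), which says that a coarse embedding $\RR^n\to\HH^n$ would necessarily have cobounded image, builds from this a coarse inverse $\beta:\HH^n\to\RR^n$, and then derives the contradiction from Lemma \ref{lem.growth}, since $\HH^n$ has exponential growth while $\RR^n$ has polynomial growth. You instead use the separation (first Poincar\'e) profile: monotonicity under coarse embeddings of quasi-geodesic, bounded-geometry spaces --- which indeed reduces, via Lemma \ref{lem.linear} and the bounded-multiplicity observation coming from the lower control, to monotonicity under regular maps --- together with the computations $\operatorname{sep}_{\RR^n}(k)\asymp k^{(n-1)/n}$ and $\operatorname{sep}_{\HH^n}(k)\preceq k^{(n-2)/(n-1)}\log^{O(1)}(k)$, which you rightly propose to quote rather than reprove (Benjamini--Schramm--Tim\'ar, or the Poincar\'e-profile computations of \cite{hume2}); your exponent comparison $\frac{n-1}{n}-\frac{n-2}{n-1}=\frac{1}{n(n-1)}>0$ is correct. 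Both proofs outsource their hard core to the literature; yours has the merit of being quantitative, of fitting the toolkit the paper already uses elsewhere (\cite{hume-sisto}, \cite{hume2}, \cite{tessera}), and of giving more than asked (no coarse embedding $\RR^n\to\HH^m$ whenever $(n-1)/n>(m-2)/(m-1)$), whereas the paper's coarse-topology argument exploits the equality of dimensions to force coarse surjectivity before playing growth against it. One small caveat: your parenthetical about $n=1$ is backwards. Since $\HH^1$ is isometric to $\RR$, the identity \emph{is} a coarse embedding, so the statement is only meaningful (and only needed, and only provable by either argument) for $n\ge 2$; the hypothesis ``$n\ge 1$'' in the proposition should be read as $n\ge 2$.
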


Observe that horospheres in $\HH^n$ yield coarse embeddings of $\RR^{n-1}$ into $\HH^n$.

\begin{proof}
 The proposition can not be derived in a straigthforward way, because Lemmas \ref{lem.growth} and \ref{lem.asdim} are obviously useless
  in our situation. The ingredients of the proof are more elaborate (though classical for the experts), 
  and involve ``coarse topological arguments''. The details can be found in \cite[section 9.6]{drutu}, 
  and especially Theorem $9.69$ there. This theorem states  that if a coarse embedding $\alpha: \RR^n \to \HH^n$
   would exist, then it should be almost surjective (namely the image is cobounded). Then 
    it is easy to build a coarse inverse $\beta: \HH^n \to \RR^n$ to $\alpha$.  But the existence of such a $\beta$
    is forbidden
    because of Lemma \ref{lem.growth}. 
\end{proof}

\subsection{A first glance at Lorentz isometric actions of lattices}
\label{sec.reseau1}

\subsubsection{Isometric actions of lattices in $\OO(1,k)$}
To illustrate how the basic tools of coarse geometry presented so far can already say 
interesting things about 
 isometric actions on Lorentz manifolds, let us prove part of Corollary \ref{thm.lattices}, without appealing to 
 Theorem \ref{thm.main}.

\begin{proposition}
\label{prop.reseaux}
 Let $(M^{n+1},g)$ be a compact Lorentz manifold. Then if $k \geq n+1$,
  $\Iso(M,g)$ does not contain any discrete subgroup $\Lambda$
  isomorphic to a lattice of $\OO(1,k)$.
\end{proposition}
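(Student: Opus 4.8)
The plan is to argue by contradiction and to exhibit, inside $\Lambda$, a coarsely embedded copy of $\RR^{n}$, which is impossible by Proposition \ref{prop.nocoarse.Rn}. First I would transport the ambient coarse embedding to $\Lambda$. Suppose $\Lambda \subset \Iso(M,g)$ is a discrete subgroup isomorphic to a lattice $\Gamma$ of $\OO(1,k)$ with $k \geq n+1$. A lattice in a semisimple Lie group is finitely generated, so $\Lambda$ is a finitely generated subgroup; being discrete it is closed in $\Iso(M,g)$, and a word metric attached to a finite generating set is an adapted metric on it. Restricting the coarse embedding $\overline{\dd}_{x}\colon \Iso(M,g)\to \HH^{n}$ of Section \ref{sec.embedding.hyp} to the closed subgroup $\Lambda$, as explained in Section \ref{sec.reduction}, yields a coarse embedding $\overline{\dd}_{x}\colon \Lambda \to \HH^{n}$. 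It therefore suffices to produce a coarse embedding $\RR^{n}\to \Lambda$: composing it with $\overline{\dd}_{x}$ would give a coarse embedding $\RR^{n}\to \HH^{n}$, contradicting Proposition \ref{prop.nocoarse.Rn}.

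I would then split into two cases. If $\Gamma$ is cocompact, the Milnor--\v{S}varc lemma gives a quasi-isometry $\Lambda \simeq \HH^{k}$; since a horosphere in $\HH^{k}$ is a coarsely embedded copy of $\RR^{k-1}$ (the observation following Proposition \ref{prop.nocoarse.Rn}) and $k-1\geq n$, we obtain a coarse embedding $\RR^{n}\to \HH^{k}$, hence $\RR^{n}\to \Lambda$. (Alternatively this case is killed by Lemma \ref{lem.asdim}: $\Asdim(\Lambda)=\Asdim(\HH^{k})=k\geq n+1$, while the coarse embedding into $\HH^{n}$ forces $\Asdim(\Lambda)\leq \Asdim(\HH^{n})=n$.) If $\Gamma$ is non-uniform, then $\HH^{k}/\Gamma$ has a cusp, and the corresponding cusp subgroup is a lattice in the stabilizer in $\OO(1,k)$ of a point of $\partial\HH^{k}$, acting properly cocompactly on a Euclidean horosphere $\RR^{k-1}$; by Bieberbach's theorem it is virtually $\ZZ^{k-1}$ and in particular contains a copy of $\ZZ^{k-1}$. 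As $k-1\geq n$, the group $\Lambda$ contains a subgroup $H\cong\ZZ^{n}$. Now the inclusion $H\hookrightarrow\Lambda$ is automatically a coarse embedding for the respective word metrics: in the present (weak) sense only the controls $\phi^{\pm}$ need be proper, not affine, and in a finitely generated group any infinite sequence of pairwise distinct elements leaves every ball, so $d_{H}(h_{j},h_{j}')\to+\infty$ if and only if $d_{\Lambda}(h_{j},h_{j}')\to+\infty$, which by Fact \ref{fact.coarse.groups} is exactly the coarse-embedding condition. Composing $\RR^{n}\simeq\ZZ^{n}=H\hookrightarrow\Lambda$ completes the construction.

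The main point is that there is essentially no obstacle beyond organization: the two genuinely nontrivial ingredients, namely Proposition \ref{prop.nocoarse.Rn} (a coarse-topological statement) and the Bieberbach description of cusps of real hyperbolic orbifolds, are imported ready-made. The only things deserving care are the legitimacy of restricting the derivative coarse embedding to the discrete subgroup $\Lambda$, and the remark that a subgroup inclusion of finitely generated groups is already a coarse embedding; both follow at once from the characterization in Fact \ref{fact.coarse.groups}.
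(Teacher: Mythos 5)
Your proof is correct and follows essentially the same route as the paper: restrict the derivative coarse embedding $\overline{\dd}_x$ to the closed, finitely generated subgroup $\Lambda$, then split into the uniform case (handled by asymptotic dimension, exactly as in the paper, or equivalently by your horosphere trick) and the non-uniform case, where the thick--thin decomposition yields a (virtually) $\ZZ^{k-1}$ subgroup and Proposition \ref{prop.nocoarse.Rn} gives the contradiction. Your explicit justification that the inclusion $\ZZ^{n}\hookrightarrow\Lambda$ is a coarse embedding despite possible distortion (via Fact \ref{fact.coarse.groups} and finiteness of balls) is a point the paper leaves implicit, and it is handled correctly.
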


\begin{proof}
 Assume that $\Lambda \subset \Iso(M,g)$ is discrete and isomorphic to a lattice in $\OO(1,k)$.
  Observe first that $\Lambda$ is closed and finitely generated, hence we can apply all what we did so far.
   If $\Lambda$ is uniform in $\OO(1,k)$, then its asymptotic dimension is that of $\HH^k$, and by 
    the coarse embedding $\overline{\dd}_x: \Lambda \to \HH^n$ of Section \ref{sec.embedding.hyp} and 
    Lemma \ref{lem.asdim}, we get $k \leq n$, and we are done.
  If $\Lambda$ is not uniform, then the {\it thick-thin} decomposition (see \cite[Section 4.5]{thurston}) provides a discrete subgroup
   of $\Lambda$, virtually isomorphic to $\ZZ^{k-1}$. But then Proposition \ref{prop.nocoarse.Rn} forces $k-1 \leq n-1$, which concludes
   the proof.
    
\end{proof}

The statement is optimal. Indeed, we saw in the introduction that on any torus $\TT^{n+1}$ ($n \geq 2$), there exist flat Lorentz
 metrics, with  isometry group  $\Lambda \ltimes \TT^{n+1}$, where $\Lambda$ is some lattice in $\OO(1,n)$.
 
 \subsubsection{Compact Lorentz manifolds having a lattice as isometry group}
 \label{sec.examples.lattices}
At this point, it is worth noticing that one can produce examples of compact Lorentz $(n+1)$-manifolds ($n \geq 3$) admitting
an isometry group which is virtually a lattice
 in $\OO(1,n-1)$. Here is the construction. We start with a Lorentz quadratic form $q$ on $\RR^n$, $n \geq 3$, which is rational.
  Then, Borel Harish-Candra theorem ensures that $\Lambda:=\OO(q,\ZZ)$ is a lattice in $\OO(q)$. The quadratic form $q$ defines
   a Lorentz metric $g_0$ on $\RR^n$, which is flat and translation invariant. We consider $\RR \times \RR^n$, with coordinates $(t,x_1,\ldots,x_n)$, and we endow it with 
   the Lorentz metric $g_a:=dt^2+ a(t)g_0$, where $a: t \mapsto a(t)$ is a positive, $1$-periodic function on $\RR$. 
   Now, let $\Gamma$ be the subgroup generated by $\gamma: (t,x_1,\ldots,x_n) \mapsto (t+1, -x_1,\ldots,-x_n)$
    and the translations $\tau_i: (t,x_1,\ldots,x_i,\ldots,x_n) \mapsto (t,x_1,\ldots,x_i+1,\ldots,x_n)$, $1 \leq i \leq n$. This is
     a discrete subgroup, acting by isometries for $g_a$.
   
   The quotient manifold $M:=(\RR \times \RR^n)/ \Gamma$ is topologically a $\TT^n$-bundle over the circle. 
   It inherits a Lorentz metric ${\overline g}_a$, for which the action of $\Lambda$ is isometric. Actually, for a generic choice
    of the $1$-periodic function $a$, $\Iso(M,\overline{g}_a)$ will coincide virtually with $\Lambda$ 
    (see \cite[Sec. 2.2, Lemma 2.1]{frances}
     for the precise genericity condition that has to be put on $a$).

\subsection{Deducing Theorem \ref{thm.tits} from Theorem \ref{thm.main}}
\label{sec.deduce}
All the invariants of coarse embeddings (growth, asymptotic dimension) presented so far are very basic. More sofisticated 
 tools, leading to obstructions for a group to admit a coarse embedding into some real hyperbolic space $\HH^d$ can be found in 
  \cite{hume-sisto}. For instance, it is shown in  \cite[Cor. 1.3]{hume-sisto} that a finitely generated, virtually solvable group, which 
  is not virtually nilpotent does not admit such a coarse embedding (see also Theorem \ref{thm.tessera} below). It follows that
   those groups do not appear as 
 closed subgroups of isometries for a compact Lorentz manifold. Observe that this result can also be infered from Theorem \ref{thm.main}, 
 because it is easy to check that a discrete subgroup of $\OO(1,d)$ which is virtually solvable has to be virtually abelian.
 
 Recently, several authors (\cite{tessera1}, \cite{hume2}, \cite{lecoz}) studied the behaviour of notions such as separation, and isoperimetric  profiles, with
  respect to coarse embeddings. This led to the following quite amazing result of R. Tessera:

\begin{theoreme}\cite[Th. 1.1]{tessera}
 \label{thm.tessera}
 Let $\Lambda$ be a finitely generated, amenable group. If there exists a coarse embedding from $\Lambda$ to $\HH^n$, then
  $\Lambda$ is virtually nilpotent of growth degree at most $n-1$.
\end{theoreme}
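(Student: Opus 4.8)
The plan is to reduce the statement to a bound on the growth of $\Lambda$, using the \emph{separation profile} as a monotone coarse invariant. Recall that any bounded‑geometry metric space $X$ carries a separation profile $\operatorname{sep}_X(k)$ (Benjamini, Schramm and Tim\'ar), monotone under coarse embeddings in the sense that a coarse embedding $\alpha\colon \Lambda\to\HH^n$ forces $\operatorname{sep}_\Lambda\preceq\operatorname{sep}_{\HH^n}$; and one knows $\operatorname{sep}_{\HH^n}(k)\approx k^{\frac{n-2}{n-1}}$ for $n\geq 3$ and $\operatorname{sep}_{\HH^2}(k)\approx\log k$ (cut a ball of radius $r$ in $\HH^n$ by a totally geodesic $\HH^{n-1}$ through it, and compare cardinalities). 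Once one shows $\Lambda$ has polynomial growth, Gromov's polynomial growth theorem gives a finite‑index nilpotent subgroup, and the Bass--Guivarc'h formula makes ``growth degree $\leq n-1$'' a well‑posed, quasi‑isometry‑invariant conclusion. So I would argue by dividing into the polynomial and the super‑polynomial growth cases.

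\emph{Polynomial growth.} Say $\Lambda$ has growth degree $d$. Its finite‑index nilpotent subgroup is, by Pansu, quasi‑isometric to its associated graded Carnot group, a homogeneous space of Hausdorff dimension $d$; a discrete isoperimetric inequality there yields $\operatorname{sep}_\Lambda(k)\succeq k^{\frac{d-1}{d}}$, with the matching upper bound known from \cite{hume2}, \cite{lecoz}. Comparing with $\operatorname{sep}_\Lambda\preceq\operatorname{sep}_{\HH^n}$ and the exponents above gives $\frac{d-1}{d}\leq\frac{n-2}{n-1}$, i.e. $d\leq n-1$ (and for $n=2$, $d\leq 1$, so $\Lambda$ is virtually cyclic). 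This is exactly the desired conclusion.

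\emph{Super‑polynomial growth: the main obstacle.} Here one must rule out the case entirely, and amenability enters decisively. The target statement is that a finitely generated amenable group whose growth is not polynomial has separation profile (equivalently, $L^1$‑isoperimetric, or Poincar\'e, profile) that is not $\preceq k^{\alpha}$ for any $\alpha<1$; since $\frac{n-2}{n-1}<1$, this is incompatible with $\operatorname{sep}_\Lambda\preceq\operatorname{sep}_{\HH^n}$, a contradiction. To prove it one feeds in a F{\o}lner sequence $(F_j)$: the near‑invariance of $F_j$ together with the super‑polynomial growth forces any vertex‑separator splitting $F_j$ into two roughly equal pieces to have cardinality at least $|F_j|^{\alpha}$ for $\alpha$ arbitrarily close to $1$, via the comparisons between the F{\o}lner function and separation/isoperimetry developed in \cite{tessera1}, \cite{hume2}, \cite{lecoz}. (The virtually solvable sub‑case is already in \cite[Cor.~1.3]{hume-sisto}.) The genuinely delicate point, which I expect to be the crux, is the intermediate‑growth regime --- Grigorchuk‑type groups, whose growth is only mildly super‑polynomial --- where the profile only barely exceeds $k^{\frac{n-2}{n-1}}$ and the quantitative comparison between the F{\o}lner function of $\Lambda$ and the isoperimetry of $\HH^n$ must be carried out with real care; that analysis, together with assembling the two cases, is the substance of \cite[Th.~1.1]{tessera}.
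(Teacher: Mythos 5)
You should first be aware that the paper contains no proof of this statement at all: it is quoted verbatim as an external result of Tessera (\cite[Th. 1.1]{tessera}), listed alongside \cite{tessera1}, \cite{hume2}, \cite{lecoz} as recent work on profiles and coarse embeddings, and it is only \emph{used} (in Section \ref{sec.deduce}) to deduce Theorem \ref{thm.tits} from Theorem \ref{thm.main}. So there is no in-paper argument to compare yours with; the only question is whether your sketch would itself establish the cited theorem, and it would not.

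The polynomial-growth half of your plan is essentially fine modulo the literature you invoke (Gromov's theorem, the computation of the separation/Poincar\'e profiles of nilpotent groups and of $\HH^n$, plus the remark that a coarse embedding from a bounded-geometry quasi-geodesic space is a regular map, so that profile monotonicity applies). The genuine gap is the super-polynomial case, which is precisely the content of the theorem: you assert that for an amenable group of super-polynomial growth, F{\o}lner sets force balanced vertex-separators of size at least $|F_j|^{\alpha}$ for $\alpha$ arbitrarily close to $1$, i.e.\ that the separation profile of such a group dominates every $k^{\alpha}$ with $\alpha<1$, and you then write that carrying this out ``is the substance of \cite[Th.~1.1]{tessera}''. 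That is circular: the crux is deferred to the very statement being proved. Moreover, the asserted lower bound is not available in \cite{tessera1}, \cite{hume2}, \cite{lecoz}; the known lower bounds on separation or Poincar\'e profiles in terms of growth or isoperimetry do not, as far as is known, give the required uniform estimate for groups of intermediate growth (one needs to beat $k^{(n-2)/(n-1)}$ for every $n$, i.e.\ every exponent below $1$), and it is exactly this regime that makes the quoted theorem nontrivial. As it stands, your text is a plausible reduction of the statement to an unproved key lemma, not a proof.
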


This result, combined with Theorem \ref{thm.main} implies easily Theorem \ref{thm.tits}. Indeed, let $(M^{n+1},g)$ be a compact, 
$(n+1)$-dimensional, Lorentz manifold, and let $\Lambda \subset \Iso(M,g)$ be a discrete, finitely generated, subgroup.
 We endow $\Lambda$ with a word metric, and consider its growth function, namely $\beta: n \mapsto |B(1_{\Lambda},n)|$. 
 If the growth of $\Lambda$ is subexponential, namely if $\lim_{n \to \infty} \frac{1}{n}\log(\beta(n))=0$, then $\Lambda$
  is amenable. Because $\Lambda$ coarsely embeds into $\HH^n$ by Lemma \ref{lem.fondamental}, Tessera's theorem ensures that $\Lambda$
   is virtually nilpotent, of growth degree at most $n-1$.
   
 If on the contrary, the growth of $\Lambda$ is exponential, then we can apply Theorem \ref{thm.main}. 
 
 We obtain, in both cases covered by the theorem, a finite index subgroup  $\Lambda' \subset \Lambda$, and
  a proper homomorphism $\rho: \Lambda' \to \PO(1,n)$, $n \geq 2$. The image  $\rho(\Lambda')$ 
   is 
  a discrete subgroup $\Lambda_{\rho}$ of $\PO(1,n)$, and because the kernel of $\rho$ is finite, $\Lambda $ is virtually 
  isomorphic to $\Lambda_{\rho}$. Now discrete subgroups
  of $\PO(1,n)$ split into two categories (see \cite{matsu} for an introduction to Kleinian groups). The elementary ones, for 
  which the limit set is finite. Those groups are virtually abelian, hence 
  don't have exponential growth. This case is not compatible with our growth assumption on $\Lambda$.
  The non-elementary groups of $\PO(1,n)$ have infinite limit set. It is known that such groups contain
   pairs of loxodromic elements $\alpha, \beta$ with pairwise disjoint fixed points (see for instance \cite[Lemma 2.3]{matsu}). It is then clear that suitable powers
    $\alpha^n$ and $\beta^n$ satisfy the Ping-pong lemma, hence generate a free group. The inverse image of this free group 
    in $\Lambda'$ is also a 
    free subgroup, what concludes the
    proof of Theorem \ref{thm.tits}. 
    
    The remaining of the paper is devoted to the proof of Theorem \ref{thm.main}. 

\section{The limit set of an isometric action}
\label{sec.limit.set}
\subsection{Dynamical definition of the limit set}
\label{sec.dynamical.definition}
Let $(M,g)$ be a compact Lorentz manifold. 
 Following \cite[Section 9.1]{zeghibisom1}, one can introduce a notion of (fiberwise) {\it limit set}
 for the action 
 of the isometry group $\Iso(M,g)$. For each  $x \in M$
 the ``limit set'' $ \lig(x) \subset {\mathbb P}(T_xM)$  comprises all {\it nonzero lightlike  directions} $ [u] \in 
 {\mathbb P}(T_xM) $,
  for which there exists a sequence $ (f_k) \in \Iso(M,g)$ tending to infinity in $\Iso(M,g)$, and a sequence 
  $(u_k) \in TM$ tending to $u$, such that
  $Df_k(u_k)$ is bounded in $TM$.  It follows from the definition that if $\Iso(M,g)$ is compact, then 
  $\lig(x)= \emptyset$ for every $x \in M$.  Conversely, it is pretty easy to check that 
  $\lig(x) \not = \emptyset$ for every $x \in M$ 
   as soon as $\Iso(M,g)$ is noncompact. As we will see later, the existence of a ``big'' limit set at each $x \in M$ 
   has strong geometric consequences on the Lorentz manifold $(M,g)$. 
   To estimate the size of $\lig(x)$, and give a precise meaning of big limit set, we introduce the map $\carg: M \to \NN \cup \{ \infty \}$
   which to a point $x \in M$ associates the number of points in $\lig(x)$. It will be also useful to consider 
   $\eg(x)$, the vector subspace of $T_xM$ spanned by $\lig(x)$, and denote by $\dig(x)$ the 
   dimension of $\eg(x)$. 
   
 \subsection{Asymptotically stable distributions and semi-continuity properties} 
 \label{sec.semi-continuity}
   A totally geodesic, codimension one, lightlike foliation on $M$, is a codimension one
    foliation, the leaves of which are totally geodesic, and such that the restriction of the metric $g$ to the 
    leaves is degenerate (one says {\it lightlike}). The work \cite{zeghibisom1} makes a link between 
    directions in the limit set, and totally geodesic codimension one lightlike foliations on $M$. To be precise,
     A. Zeghib introduces, for every sequence $(f_k)$ going to infinity in $\Iso(M,g)$, the 
     {\it asymptotically stable distribution} $AS(f_k) \subset TM$ of $(f_k)$ as follows: the vectors of 
     $AS(f_k)$ are those $ u \in TM$ for which there exists a sequence $(u_k)$ of $TM$ converging to $u$, and
     such that $Df_k(u_k)$ is bounded.  
    
  \begin{theoreme}\cite[Theorem 1.2]{zeghibisom1}
   \label{thm.zeghib}
   Let $(M,g)$ be a compact Lorentz manifold, and let $(f_k)$ be a sequence of $\Iso(M,g)$
    going to infinity. Then, after considering a subsequence of $(f_k)$, the asymptotically stable set 
    $AS(f_k)$ is a codimension one lightlike distribution, which is tangent to a Lipschitz totally geodesic 
   codimension one lightlike foliation.
   \end{theoreme}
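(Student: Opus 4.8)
The plan is to prove this by analysing the derivative cocycle of the sequence $(f_k)$, along the lines of \cite[Theorem 1.2]{zeghibisom1}. Fix a bounded section $\sigma\colon M\to\hm$ as in Section~\ref{sec.derivative.cocycle}, giving for each $x\in M$ the map ${\dd}_x\colon\Iso(M,g)\to\OO(1,n)$; reading a vector $u\in T_xM$ in the frame $\sigma(x)$ identifies $T_xM$ with Minkowski space $\RR^{1,n}$. In these terms, a vector $u\in T_xM$ with coordinates $v$ lies in $AS(f_k)_x$ if and only if there are base points $x_k\to x$ and coordinate vectors $v_k\to v$ for which ${\dd}_{x_k}(f_k)\,v_k$ stays in a bounded subset of $\RR^{1,n}$. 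Since $(f_k)$ tends to infinity in $\Iso(M,g)$, Lemma~\ref{lem.fondamental} gives ${\dd}_x(f_k)\to\infty$ in $\OO(1,n)$ for every $x$.

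Next I would use that $\OO(1,n)$ has real rank one. Writing a Cartan decomposition ${\dd}_x(f_k)=l_k(x)\,a_{\lambda_k(x)}\,m_k(x)$ with $l_k(x),m_k(x)$ in a fixed maximal compact subgroup $K$ and $\lambda_k(x)\ge 0$, unboundedness forces $\lambda_k(x)\to+\infty$; and in suitable lightlike coordinates $a_\lambda$ multiplies one null coordinate by $e^{\lambda}$, the opposite null coordinate by $e^{-\lambda}$, and fixes the spacelike ones. Consequently the set of $v$ for which $a_{\lambda_k(x)}m_k(x)v$ can be kept bounded (with $v=\lim v_k$) is exactly $m_k(x)^{-1}$ applied to the fixed null hyperplane $\mathcal H_0\subset\RR^{1,n}$, namely the $g$-orthogonal of the $e^{-\lambda}$-contracted null direction. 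I would fix a countable dense $D\subset M$ and, by a single diagonal extraction, pass to a subsequence (still denoted $(f_k)$) along which, for all $x\in D$: $m_k(x)\to m_\infty(x)$, $l_k(x)\to l_\infty(x)$, and, for later use, $f_kx$ converges and $Df_k$ restricted to the hyperplane $m_\infty(x)^{-1}\mathcal H_0$ converges. Then $AS(f_k)_x=m_\infty(x)^{-1}\mathcal H_0$ is a lightlike hyperplane of $T_xM$ for every $x\in D$. Because membership in $AS(f_k)$ is a closed condition and $AS(f_k)_x\ne T_xM$ (otherwise ${\dd}_x(f_k)$ would be bounded), a semicontinuity argument over $D$ shows that $AS(f_k)_x$ is a lightlike hyperplane for \emph{every} $x\in M$, and that the field $\mathcal E:=AS(f_k)$ is continuous. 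To upgrade this to Lipschitz, write $\mathcal E(x)=\ell(x)^\perp$, where $\ell(x)$ is the strongly contracted line, a limit of the \emph{smooth} line fields $\ell_k(x)=\RR\,m_k(x)^{-1}e$ ($e$ the contracted null direction); the uniform gap between the singular values $e^{-\lambda_k}$ and $1$ of the smooth map $x\mapsto{\dd}_x(f_k)$ of the compact manifold $M$ yields uniform Lipschitz bounds on the $\ell_k$, hence on $\ell$ and on $\mathcal E=\ell^\perp$.

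It then remains to prove that $\mathcal E$ is integrable with totally geodesic leaves. The idea is that $\mathcal E$ is \emph{asymptotically $f_k$-invariant}: since $Df_k^{-1}$ sends $Df_k(v)$ back to the bounded vector $v$, the bounded image $Df_k(\mathcal E(x))$ is contained in $AS(f_k^{-1})_{f_kx}$, so, after the extraction above, $Df_k$ carries $\mathcal E(x)$ close to the lightlike hyperplane field $\mathcal E^*:=AS(f_k^{-1})$ while it expands the transverse line $T_xM/\mathcal E(x)$ by $e^{\lambda_k}\to\infty$. Because each $f_k$ preserves the Levi-Civita connection, it intertwines the second fundamental forms of $(Df_k)^{-1}\mathcal E^*$ at $x$ and of $\mathcal E^*$ at $f_kx$; using that $(Df_k)^{-1}\mathcal E^*(f_kx)\to\mathcal E(x)$, that $\mathrm{II}^{\mathcal E^*}$ is a bounded tensor on the compact manifold while its target direction is dilated by $e^{\lambda_k}$ under $Df_k$, one obtains an estimate $|\mathrm{II}^{\mathcal E}_x|\le C\,e^{-\lambda_k}$, forcing $\mathrm{II}^{\mathcal E}\equiv 0$. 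A codimension-one lightlike distribution with vanishing second fundamental form is integrable with totally geodesic leaves (equivalently, its null line field is geodesic), which produces the foliation in the statement, and the Lipschitz regularity found above transfers to it.

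The step I expect to be the main obstacle is making the phrase ``$\mathcal E$ is asymptotically $f_k$-invariant'' and the ensuing comparison of second fundamental forms fully rigorous --- especially reconciling it with the fact that $\mathcal E$ is a priori only Lipschitz, so that $\mathrm{II}^{\mathcal E}$ is defined only almost everywhere and the ``totally geodesic'' conclusion must be interpreted weakly and then bootstrapped to the genuine, leafwise-smooth statement. This is precisely the technical heart of \cite[Theorem 1.2]{zeghibisom1}; I would follow that argument, carrying out the limiting and normalization process upstairs on the frame bundle $\hm$, where $\Iso(M,g)$ acts properly and Gromov's formalism for rigid geometric structures (as used in Section~\ref{sec.exponential.killing}) keeps the relevant jets under uniform control, and then transferring the conclusion back down to $M$.
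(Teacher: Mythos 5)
This statement is not proved in the paper at all: it is imported verbatim from Zeghib, \cite[Theorem 1.2]{zeghibisom1}, so there is no internal argument to compare yours with, and what you propose is in effect a reconstruction of Zeghib's proof. As such a reconstruction it has genuine gaps. The most serious one is the Lipschitz step: a uniform gap between the singular values of $\dd_x(f_k)$ does \emph{not} give uniform Lipschitz bounds on the most-contracted line fields $\ell_k$. The Lipschitz constant of $x\mapsto\ell_k(x)$ is governed by the $x$-derivative of $x\mapsto\dd_x(f_k)$, i.e.\ by second-order data of $f_k$, and this blows up as $k\to\infty$; limits of contracted-direction fields of maps with unbounded derivatives are in general only measurable (at best H\"older along invariant structures), never Lipschitz by such an argument. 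In Zeghib's theorem --- and in the way the present paper uses it, see the sentence preceding Corollary \ref{coro.extension} --- the Lipschitz transverse regularity is a \emph{consequence} of the total geodesy of codimension-one lightlike foliations, a geometric rigidity fact; your order of deduction is reversed, and the claimed estimate would fail.

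The total geodesy step is also circular as written: you treat $\mathrm{II}^{\mathcal E^*}$ for $\mathcal E^*=AS(f_k^{-1})$ as ``a bounded tensor on the compact manifold,'' but at that stage $\mathcal E^*$ has no more regularity than $\mathcal E$ (a priori a closed subset of $TM$, not even fiberwise a linear subspace, since the definition of $AS$ allows moving base points), so its second fundamental form is neither everywhere defined nor uniformly bounded --- this is exactly the obstacle you name at the end and then defer to the reference. Relatedly, the diagonal extraction over a countable dense set $D$ plus ``closedness'' does not yield that $AS(f_k)(x)$ is \emph{exactly} one lightlike hyperplane at every $x\in M$: off $D$ the Cartan factors may oscillate, and ruling out that $AS(f_k)(x)$ is larger than a hyperplane (or incoherent between nearby points) is part of what Zeghib establishes via the geometric structure, not a formal semicontinuity consequence. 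So the proposal does not amount to a proof independent of \cite{zeghibisom1}; quoting that theorem, as the paper does, is the appropriate resolution, and if you want a self-contained argument you would have to follow Zeghib's actual scheme in which integrability and total geodesy are obtained first and the regularity statements are derived from them.
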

   
   If $[u]$ is a direction belonging to $\lig(x)$, then it follows from the definitions that there exists
    a sequence $(f_k)$ going to infinity in $\Iso(M,g)$ such that $u \in AS(f_k)$. Actually, after considering a subsequence,
     $AS(f_k)(x)$ is a lightlike hyperplane of $T_xM$ coinciding with $u^{\perp}$. Let us call $\calf$
      the totally geodesic  codimension one lightlike foliation integrating  $AS(f_k)$, the existence of which is asserted by Theorem 
      \ref{thm.zeghib}. For every 
      $y \in M$, if $v \in T_yM$ is a nonzero lightlike vector such that $v^{\perp}$ is 
    tangent to the leaf of $\calf$ containing $y$, then $[v] \in \lig(y)$.
   It is known that totally geodesic 
   codimension one lightlike foliations  have Lipschitz transverse regularity.
   We thus get:
   \begin{corollaire}
    \label{coro.extension}
    Each  direction $[u] \in \lig(x)$ 
   can be extended to a Lipschitz field of lightlike directions  $y \mapsto \Delta_{[u]}(y) $ on $M$, 
   such that $\Delta_{[u]}(y) \in \lig(y)$ for every $y \in M$. 
   \end{corollaire}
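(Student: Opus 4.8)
The plan is to start from the data provided just above the statement: given $[u] \in \lig(x)$, there is a sequence $(f_k)$ going to infinity in $\Iso(M,g)$ with $u \in AS(f_k)$, and (after passing to a subsequence) Theorem \ref{thm.zeghib} furnishes a codimension one lightlike distribution $AS(f_k) \subset TM$, tangent to a Lipschitz totally geodesic codimension one lightlike foliation $\calf$. The first step is to observe that at a point $y \in M$, the tangent space $T_y\calf$ of the leaf through $y$ is a lightlike hyperplane of $T_yM$; since the induced metric is degenerate on it, its radical is one-dimensional, spanned by a lightlike vector. Thus $T_y\calf = v^\perp$ for a lightlike $v \in T_yM$, and the line $\RR v$ is intrinsically determined by the hyperplane $T_y\calf$ (it is exactly $(T_y\calf)^{\perp} \subset T_y\calf$). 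Define $\Delta_{[u]}(y) := [v] \in \PP(T_yM)$.

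The second step is to check $\Delta_{[u]}(y) \in \lig(y)$ for every $y$. This is precisely the remark stated in the paragraph preceding the corollary: for $v \in T_yM$ lightlike with $v^\perp$ tangent to the leaf of $\calf$ through $y$, one has $[v] \in \lig(y)$ — indeed $v \in AS(f_k)$ by construction of $\calf$ as the integral foliation of $AS(f_k)$, and $v$ lightlike means $[v]$ is a lightlike direction, so $[v] \in \lig(y)$ by the very definition of the limit set. At $y = x$ one recovers $\Delta_{[u]}(x) = [u]$ because $AS(f_k)(x) = u^\perp$ forces the radical line to be $\RR u$.

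The third step is the regularity assertion. The foliation $\calf$ has Lipschitz transverse regularity (this is the cited fact about totally geodesic codimension one lightlike foliations), meaning the field $y \mapsto T_y\calf$ of hyperplanes is locally Lipschitz. Passing to the radical is an algebraic operation: on a Lorentz vector space, the map sending a degenerate hyperplane to its (one-dimensional) radical is smooth — concretely, in a local orthonormal frame $T_y\calf$ is the kernel of a Lipschitz $1$-form $\eta_y$ on $T_yM$, the radical direction is the line $\RR\, \sharp\eta_y$ where $\sharp$ is the (smooth) metric dual, and $[\sharp\eta_y] = \Delta_{[u]}(y)$ depends Lipschitz-continuously on $y$ as a point of the (smooth) projectivized bundle $\PP(TM)$. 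Hence $y \mapsto \Delta_{[u]}(y)$ is a Lipschitz field of lightlike directions on $M$, which completes the argument.

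The only delicate point is the regularity bookkeeping: one must be careful that "Lipschitz transverse regularity" of $\calf$ genuinely translates into Lipschitz dependence of the tangent hyperplane field $y \mapsto T_y\calf$, and then that the purely algebraic passage to the radical does not destroy this — but since the metric $g$ is smooth and the radical-extraction map on the open set of degenerate hyperplanes of a Lorentz space is smooth, composition with a Lipschitz map stays Lipschitz. No serious obstacle arises beyond this; the heart of the statement is already contained in Theorem \ref{thm.zeghib} and the structure theory of totally geodesic lightlike foliations quoted from \cite{zeghibisom1}.
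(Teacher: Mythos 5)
Your proposal is correct and follows essentially the same route as the paper: Theorem \ref{thm.zeghib} gives the Lipschitz totally geodesic codimension-one lightlike foliation integrating $AS(f_k)$ with $AS(f_k)(x)=u^{\perp}$, the direction $\Delta_{[u]}(y)$ is the radical of the tangent hyperplane of the leaf through $y$ (so it lies in $AS(f_k)(y)$, hence in $\lig(y)$, and equals $[u]$ at $x$), and the Lipschitz transverse regularity of such foliations yields the Lipschitz regularity of the direction field. Your explicit bookkeeping of the radical-extraction step is a slightly more detailed rendering of what the paper leaves implicit, but it is the same argument.
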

This 
   yields the following semi-continuity property.

  
   \begin{corollaire}
    \label{coro.semicontinuous}
    The map  $x \mapsto \dig(x)$ is lower semi-continuous. For any $m \in \NN^*$, the sets 
    $K_{\geq m}:=\{ x \in M \ | \ \carg(x) \geq m \}$ are open.
   \end{corollaire}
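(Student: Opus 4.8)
The plan is to deduce both statements of Corollary~\ref{coro.semicontinuous} directly from the extension result in Corollary~\ref{coro.extension}. The key point is that a direction in the limit set at one point propagates, via a Lipschitz (hence in particular continuous) field of lightlike directions, to a direction in the limit set at every other point.

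First I would prove lower semi-continuity of $x \mapsto \dig(x)$. Fix $x_0 \in M$ and set $m := \dig(x_0)$. Choose lightlike directions $[u_1], \ldots, [u_k] \in \lig(x_0)$ whose representatives $u_1, \ldots, u_k$ span $\eg(x_0)$, with $k$ chosen so that in fact $m$ of these vectors are linearly independent; renumbering, assume $u_1, \ldots, u_m$ are linearly independent. By Corollary~\ref{coro.extension}, each $[u_i]$ extends to a Lipschitz field of lightlike directions $y \mapsto \Delta_{[u_i]}(y)$ with $\Delta_{[u_i]}(y) \in \lig(y)$ for all $y$, and $\Delta_{[u_i]}(x_0) = [u_i]$. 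Pick local continuous nonvanishing sections $y \mapsto v_i(y)$ of these direction fields near $x_0$ (possible since the fields are Lipschitz, and locally one can normalize); then $v_i(x_0) = \lambda_i u_i$ for some $\lambda_i \neq 0$, so $v_1(x_0), \ldots, v_m(x_0)$ are linearly independent. Linear independence is an open condition (the relevant $m \times m$ minor of the matrix of the $v_i$'s in a local frame is a continuous function that is nonzero at $x_0$), so for $y$ in a neighbourhood of $x_0$ the vectors $v_1(y), \ldots, v_m(y)$ remain linearly independent. Since each $v_i(y)$ spans a direction in $\lig(y)$, we get $v_i(y) \in \eg(y)$, whence $\dig(y) \geq m = \dig(x_0)$ for all $y$ near $x_0$. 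This is exactly lower semi-continuity.

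For the second statement, fix $m \in \NN^*$ and a point $x_0 \in K_{\geq m}$, so $\lig(x_0)$ contains at least $m$ distinct directions $[u_1], \ldots, [u_m]$. Extend each to a Lipschitz field $y \mapsto \Delta_{[u_i]}(y) \in \lig(y)$ as above, with $\Delta_{[u_i]}(x_0) = [u_i]$. The directions $\Delta_{[u_i]}(x_0)$ are pairwise distinct; since the map $y \mapsto (\Delta_{[u_1]}(y), \ldots, \Delta_{[u_m]}(y))$ into the (locally trivialized) $m$-fold fibre product of $\mathbb{P}(TM)$ is continuous, and ``pairwise distinct'' is an open condition there, the directions $\Delta_{[u_1]}(y), \ldots, \Delta_{[u_m]}(y)$ remain pairwise distinct for $y$ in a neighbourhood of $x_0$. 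As each lies in $\lig(y)$, we get $\carg(y) \geq m$, i.e.\ the neighbourhood is contained in $K_{\geq m}$. Hence $K_{\geq m}$ is open.

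The only mild subtlety — and the place requiring a little care rather than a genuine obstacle — is passing from the global Lipschitz field of directions furnished by Corollary~\ref{coro.extension} to honest continuous \emph{vector} fields locally, so that linear independence and distinctness can be tested by continuous functions in a local frame; this is routine since a Lipschitz field of lightlike lines in $TM$ admits local continuous unit-norm (with respect to an auxiliary Riemannian metric) representatives. With that in hand, both statements reduce to the elementary fact that linear independence of a continuously varying tuple of vectors, and distinctness of a continuously varying tuple of points, are open conditions.
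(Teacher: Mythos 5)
Your proof is correct and follows essentially the route the paper intends: the paper simply asserts that Corollary \ref{coro.semicontinuous} follows from Corollary \ref{coro.extension}, and your argument — extending the finitely many limit directions at $x_0$ to continuous (Lipschitz) fields of lightlike directions lying in $\lig(y)$ for all $y$, then using that linear independence and pairwise distinctness are open conditions — is precisely the intended deduction. The local passage from direction fields to continuous vector representatives that you flag is indeed the only (routine) point of care.
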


\subsection{The point of view of coarse embeddings}
\label{sec.limit.coarse}
Let us consider a subset ${\mathcal G} \subset \OO(1,n)$, and a point $o \in \HH^n$.  We call ${\mathcal O}_{\mathcal G}$
 the set:
 $$ {\mathcal O}_{\mathcal G}:=\{ g^{-1}.o \ | \ g \in {\mathcal G}  \}.$$
 Then we introduce {\it the limit set} 
of ${\mathcal G}$, denoted $\Lambda_{\mathcal G}$, as $\Lambda_{\mathcal G}:=\overline{{\mathcal O}_{\mathcal G}} \cap \partial \HH^n$.
 The closure $\overline{{\mathcal O}_{\mathcal G}}$ is taken in the topological ball $\overline{\HH}^n$.
 We notice that $\Lambda_{\mathcal G} = \emptyset$ if and only if $\mathcal G$ has compact closure in $\OO(1,n)$.
 
 If $G \subset \Iso(M,g)$ is a closed subgroup and 
  if for $x \in M$, $\dd_x: G \to \OO(1,n)$ is the coarse embedding given by the derivative cocycle 
  (see Section \ref{sec.derivative.cocycle}), then it makes sense to consider the limit set $\Lambda_{\dd_x(G)}$, that we will 
  rather write  $\Lambda_{\dd}(x)$ to ease the notations. 
  
  It turns out that  the sets $\lig(x)$ and $\Lambda_{\dd}(x)$ 
  encode the same object. Let us explain why. First, denote by $\RR^{1,n}$ the space $\RR^{n+1}$
   endowed with the quadratic form $q^{1,n}=2x_1x_{n+1}+x_2^2+ \ldots + x_n^2$. The group $\OO(1,n)$ is the subgroup
    of $\GL(n+1,\RR)$ preserving $q^{1,n}$. We will use the projective model for real hyperbolic space $\HH^n$, namely we see the $\HH^n$ as 
     the set of timelike lines  (satisfying $q^{1,n}(x) < 0$) in $\RR^{1,n}$. Its boundary $\partial \HH^n$ coincides with
      the set of lightlike lines. The action of $\OO(1,n)$  on those sets is the obvious one, coming from the linear 
      action of $\OO(1,n)$ on $\RR^{1,n}$. Let us consider a bounded section $\sigma: M \to \hm$ 
      as in section \ref{sec.derivative.cocycle}, which defines the coarse embedding $\dd_x: \Iso(M,g) \to \OO(1,n)$.
       Each frame $\sigma(x)$ can be seen as a linear isometry $\sigma(x): \RR^{1,n} \to T_xM$. Let us denote by 
       $\iota_x: T_xM \to \RR^{1,n}$ the inverse map. We claim:
        
\begin{lemme}
 \label{lem.identification.limitset}
 For every $x \in M$, we have $\Lambda_{\dd}(x)=\iota_x(\lig(x))$.
\end{lemme}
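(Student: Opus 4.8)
The plan is to unwind both definitions and match them term by term. First I would fix $x\in M$ and a sequence $(f_k)$ in $\Iso(M,g)$ tending to infinity. On the one hand, $[u]\in\lig(x)$ means there are $u_k\to u$ in $TM$ with $u$ lightlike, $u_k\in T_{x_k}M$ (we may take $x_k=x$ after reparametrizing, or handle the general case by continuity of $\sigma$) and $Df_k(u_k)$ bounded in $TM$. On the other hand, a point of $\Lambda_{\dd}(x)$ is a limit in $\overline{\HH}^n$ of a sequence $\dd_x(f_k)^{-1}.o$ that converges to a point of $\partial\HH^n$; in the projective model this boundary point is a lightlike line $[\xi]\subset\RR^{1,n}$, and $\dd_x(f_k)^{-1}.o\to[\xi]$ exactly means that, writing $o$ as a timelike line, the vectors $\dd_x(f_k)^{-1}o$ (suitably rescaled) converge projectively to $\xi$. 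The bridge between the two pictures is the identity $\dd_x(f) = \iota_{f(x)}\circ D_xf\circ\sigma(x)$, i.e. $\dd_x(f)$ is the matrix of $D_xf$ read in the frames $\sigma(x)$, $\sigma(f(x))$; equivalently $D_xf = \sigma(f(x))\circ\dd_x(f)\circ\iota_x$.

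The key step is the following translation. Since $\sigma$ is a \emph{bounded} section, the frames $\sigma(y)$, $y\in M$, together with their inverses, have operator norms bounded above and below by uniform constants; hence a vector field along $(f_k)$ is bounded in $TM$ if and only if its coordinate expression in these frames is bounded in $\RR^{1,n}$. Concretely: $D_xf_k(u_k)$ is bounded in $TM$ $\iff$ $\dd_x(f_k)\bigl(\iota_x(u_k)\bigr)$ is bounded in $\RR^{1,n}$. Now set $v=\iota_x(u)\in\RR^{1,n}$, a lightlike vector, and $v_k=\iota_x(u_k)\to v$. So $[u]\in\lig(x)$ $\iff$ there exist $v_k\to v$ with $[v]$ lightlike and $\dd_x(f_k)(v_k)$ bounded, for some $f_k\to\infty$. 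This last condition is precisely the dynamical description of the limit set of $\dd_x(\Iso(M,g))$ acting on $\RR^{1,n}$: a lightlike direction $[v]$ lies in $\Lambda_{\dd}(x)$ iff there is a sequence $g_k=\dd_x(f_k)\to\infty$ in $\OO(1,n)$ contracting a neighborhood of $[v]$, which — because $\OO(1,n)$-dynamics on $\RR^{1,n}$ is by linear maps with an orthogonal/Cartan decomposition — is equivalent to $g_k v_k$ staying bounded for some $v_k\to v$. I would record this equivalence as a short lemma about the action of $\OO(1,n)$ on $\RR^{1,n}$ versus on $\overline{\HH}^n$: using the Cartan decomposition $g_k = k_k a_k k_k'$ with $a_k=\diag(e^{t_k},1,\ldots,1,e^{-t_k})$, $t_k\to+\infty$, one checks that $g_k^{-1}o\to[\xi]$ in $\overline\HH^n$ iff $[\xi]$ is the "attracting" lightlike line of $g_k^{-1}$, iff $g_k$ contracts the complementary lightlike direction, iff $g_k v_k$ bounded for $v_k\to v$ with $[v]$ that direction. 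Chaining these equivalences and applying $\iota_x$ gives $\iota_x(\lig(x))=\Lambda_{\dd}(x)$.

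The main obstacle is the careful bookkeeping in this last linear-algebra lemma: one must be precise about which direction is attracting vs. repelling for $g_k$ vs. $g_k^{-1}$ (the definition of $\mathcal O_{\mathcal G}$ uses $g^{-1}.o$, which is exactly the inverse that makes the two limit sets agree rather than swap), and one must allow passing to subsequences and rescaling vectors so that "bounded" and "converges to a point of $\partial\HH^n$" match up. A second, minor, point is the possibility that the basepoints $f_k(x)$ wander over $M$: this is harmless because $M$ is compact and $\sigma$ is bounded, so the frames $\sigma(f_k(x))$ stay in a compact set of $\hm$ and do not affect boundedness. Once these are handled, the two inclusions $\iota_x(\lig(x))\subseteq\Lambda_{\dd}(x)$ and $\Lambda_{\dd}(x)\subseteq\iota_x(\lig(x))$ follow symmetrically from the equivalence above, completing the proof.
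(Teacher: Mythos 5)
Your plan is essentially the paper's own argument: you use the equivariance relation $D_xf=\sigma(f(x))\circ\dd_x(f)\circ\iota_x$ together with boundedness of the section to transfer "bounded derivatives" into "bounded $\OO(1,n)$-orbits of $\iota_x(u)$", and then a Cartan ($KAK$) decomposition of $\dd_x(f_k)$ to identify the unique stable lightlike direction with the limit of $\dd_x(f_k)^{-1}.o$ in $\overline{\HH}^n$. This matches the paper's proof step for step (including its silent reduction to $u_k\in T_xM$), so the proposal is correct and takes the same route.
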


\begin{proof}
 The very definition of $\dd_x$ yields, for every
       $f \in \Iso(M,g)$,  every $x \in M$, and every $u \in T_xM$,  
       the equivariance relation 
       \begin{equation}
        \label{eq.equivariance-cocycle}
        D_xf(u)=\sigma(f(x))(\dd_xf(\iota_x(u))).
       \end{equation}

       Now if $[u] \in \lig(x)$, there exists a sequence $(u_k)$ in $T_xM$, and a sequence $(f_k)$ going to infinity in $\Iso(M,g)$
        such that $|D_xf_k(u_k)|$ remains bounded. Relation (\ref{eq.equivariance-cocycle}) shows that 
        $ \dd_x f_k(\iota_x(u_k))$ remains bounded in $\RR^{1,n}$. Let us perform a Cartan decomposition of 
        $\dd_x f_k$ in $\OO(1,n)$, namely write $\dd_x f_k=m_ka_kl_k$ where $(m_k)$ and $(n_k)$ stay in a maximal compact group of $\OO(1,n)$
         and $a_k$ is a diagonal matrix of the from 
         $$ a_k= \left(  \begin{array}{ccc}  
                     \lambda_k & 0 & 0\\
                     0 & I_{n-1} & 0\\
                     0 & 0 & \lambda_k^{-1}\\
                    \end{array}
\right),$$
 where $\lambda_k \to + \infty$. We may also consider a subsequence, so that $m_k$ converges to $m_{\infty}$
  and $l_k$ to $l_{\infty}$. Then it is plain that  because  $\dd_x f_k(\iota_x(u_k))$ is bounded and $\iota_x(u)$ is lightlike, 
  one must have  $\iota_x(u) \in \RR.l_{\infty}^{-1}(e_{n+1})$. Also quite obvious is the fact that for  every timelike vector $v$, 
   $(\dd_x f_k)^{-1}(v)$ tends projectively to $[l_{\infty}^{-1}(e_{n+1})]$. This shows $[\iota_x(u)] \in \Lambda_{\dd}(x)$.
   The inclusion $\Lambda_{\dd}(x) \subset \iota_x(\lig(x))$ is shown in the same way.
\end{proof}

%
%
\subsection{Exponential growth and exponential growth of derivatives}
\label{sec.expo.derivatives}
We consider $G \subset \Iso(M,g)$ a closed, compactly generated subgroup. We choose $S$ a symmetric compact set generating $G$, and 
we consider the associated word metric $d_S$ (see Section \ref{sec.coarse.groups}).  For $g \in G$, we will 
 denote by $\ell(g)$ the distance $d_S(1_{G},g)$.
 
 Given a function $\psi: G \to \RR_+$, one says that $\psi$ has exponential growth, if there exists $\lambda>0$,
  and a sequence $(g_i)$ in $G$, which tends to infinity, and such that $\psi(g_i) \geq e^{\lambda \ell(g_i)}$.

We fix from now on an auxiliary Riemannian metric $h$ on $M$. The statements below will involve estimates with respect to this metric, but by 
 compactness of $M$, their conclusions won't depend on the choice of $h$. The norm of a vector $u \in TM$ with respect to 
 the metric $h$  will be denoted by $|u|$.  For $x \in M$, and $\varphi \in \operatorname{Diff}(M)$, we denote by
  $|D_x \varphi|:=\sup_{|u|=1}|D_x\varphi(u)|$. 
 
\begin{proposition}
 \label{prop.growth.derivatives}
 Let $(M,g)$ be a compact Lorentz manifold.  Let $G \subset \Iso(M,g)$ be a closed, compactly generated subgroup.
  If $G$ has exponential growth, then for every $x \in M$, the function $g \in G \mapsto |D_xg|$
   has exponential growth.
\end{proposition}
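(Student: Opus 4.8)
The plan is to argue by contradiction, pitting the \emph{fixed} exponential volume growth rate of $\HH^n$ against the coarse embedding $\overline{\dd}_x\colon G\to\HH^n$ of Section~\ref{sec.embedding.hyp}. If the function $g\mapsto|D_xg|$ failed to have exponential growth, then $\overline{\dd}_x$ would admit, at large scale, an upper control of arbitrarily small slope; but such an embedding forces the growth function of $G$ to stay below any prescribed exponential rate, contradicting the hypothesis on $G$.

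I would first record the elementary comparison between the derivative norm and the hyperbolic displacement. Since the section $\sigma$ has relatively compact image in $\hm$ and $M$ is compact, the equivariance relation~(\ref{eq.equivariance-cocycle}) shows that $|D_xg|$ is, up to multiplicative constants uniform in $x\in M$ and $g\in G$, comparable to the operator norm $\|\dd_x(g)\|$ of $\dd_x(g)$ acting on $\RR^{1,n}$ (equipped with a fixed Euclidean norm). Writing a Cartan decomposition $\dd_x(g)=k_1\,\diag(\lambda,I_{n-1},\lambda^{-1})\,k_2$ with $\lambda\geq 1$, one has $\|\dd_x(g)\|$ comparable to $\lambda$ up to a bounded multiplicative factor, while $d_{hyp}(o,\overline{\dd}_x(g))$ is an affine function of $\log\lambda$. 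Hence there are constants $\kappa,\kappa'>0$, independent of $x$ and $g$, such that
\[ d_{hyp}\bigl(o,\overline{\dd}_x(g)\bigr)\ \leq\ \kappa\,\log\bigl(2+|D_xg|\bigr)+\kappa' \]
for all $g\in G$.

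Next I would read off the negation of the conclusion. If $g\mapsto|D_xg|$ does not have exponential growth, then for every $\lambda>0$ the set $\{g\in G:\ |D_xg|\geq e^{\lambda\ell(g)}\}$ contains no sequence tending to infinity in $G$, hence is relatively compact; since $g\mapsto|D_xg|$ is continuous (convergence in the Lie topology of $\Iso(M,g)$ yields $C^\infty$ convergence of the isometries), it is bounded on that set, so there is $c_\lambda>0$ with $|D_xg|\leq c_\lambda e^{\lambda\ell(g)}$ for all $g\in G$. Combining with the previous paragraph, for every $\varepsilon>0$ there is $c'_\varepsilon>0$ with $d_{hyp}(o,\overline{\dd}_x(g))\leq\varepsilon\,\ell(g)+c'_\varepsilon$ for all $g$, that is, $\overline{\dd}_x\bigl(B_G(1_G,r)\bigr)\subset B_{\HH^n}(o,\varepsilon r+c'_\varepsilon)$ for all $r\geq 0$. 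I would then rerun the argument of Lemma~\ref{lem.growth} with this improved upper control: because $\overline{\dd}_x$ is a coarse embedding of the closed subgroup $G$ (Section~\ref{sec.reduction}), there is $C>0$ with $d_S(f,g)\geq C\Rightarrow d_{hyp}(\overline{\dd}_x(f),\overline{\dd}_x(g))\geq 1$; fixing a $C$-metric lattice $\Lambda\subset(G,d_S)$ containing $1_G$, the map $\overline{\dd}_x$ is injective on $\Lambda\cap B_G(1_G,r)$ with $1$-separated image contained in $B_{\HH^n}(o,\varepsilon r+c'_\varepsilon)$. A volume-packing estimate in $\HH^n$, whose exponential growth rate $\delta_n>0$ is a fixed constant, then gives $\beta_G(1_G,r)=|\Lambda\cap B_G(1_G,r)|\leq c''e^{\delta_n\varepsilon r}$ for all $r$. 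On the other hand, the hypothesis provides $a>0$ with $\beta_G(1_G,r)\geq e^{ar}$ for $r$ large, so choosing $\varepsilon<a/\delta_n$ produces a contradiction (when $n=1$ the statement is vacuous, since $\Iso$ of a compact Lorentz surface has no compactly generated subgroup of exponential growth).

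The only genuinely delicate point is the uniform comparison carried out in the second paragraph; once it is in hand, the mechanism is simply that an arbitrarily small upper control for the coarse embedding into $\HH^n$ caps the growth of $G$ strictly below its presumed exponential rate. The remaining ingredients — the topological reading of the failure of ``exponential growth'' and the packing estimate in hyperbolic space — are routine.
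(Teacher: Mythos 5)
Your proof is correct, and it rests on the same engine as the paper's: the coarse embedding $\overline{\dd}_x\colon G\to\HH^n$, a $C$-metric lattice whose image in a ball of $G$ is uniformly separated, and a volume bound in $\HH^n$ that caps the count below the assumed exponential growth of $G$. Where you diverge is in how the negated conclusion is turned into a small region of $\HH^n$. The paper works with the Iwasawa decomposition $\dd_xg=k(g)a(g)n(g)$, proves that either $e^{|\lambda(g)|}$ or $|v(g)|$ must grow exponentially by trapping the image of $B_G(1_G,k)$ in a horospherical rectangle and computing its hyperbolic volume, and then needs a final step reading off the entries of $a(g)n(g)$ to pass from growth of one Iwasawa component to growth of $|\dd_xg|$. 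You instead use the Cartan decomposition to identify $\log\|\dd_x g\|$ with the displacement $d_{hyp}(o,\overline{\dd}_x g)$ up to uniform constants, so that the (correctly parsed) negation of exponential growth of derivatives --- relative compactness of $\{g: |D_xg|\geq e^{\lambda\ell(g)}\}$ plus local boundedness of $g\mapsto|D_xg|$ --- yields an upper control of arbitrarily small slope, and the trapping region becomes simply a metric ball of radius $\varepsilon r+c'_\varepsilon$, handled by the standard packing estimate $\sim e^{(n-1)R}$. This collapses the paper's two-component dichotomy (Lemma about $e^{|\lambda(g)|}$ versus $|v(g)|$) and the concluding matrix-entry verification into one displacement estimate; the paper's route gives the slightly finer information of which Iwasawa component grows, but that refinement is not used elsewhere, so your version is a legitimate and somewhat cleaner packaging of the same argument. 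The only points to keep an eye on are the ones you flag: the uniform comparability $|D_xg|\asymp\|\dd_xg\|$ (which is exactly what the bounded section provides, and is also the paper's first reduction) and the fact that the metric lattice can be taken with separation constant at least the one furnished by the coarse embedding --- the same minor technicality the paper's own proof relies on.
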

  
 \begin{proof}
 
 We consider, for each $x \in M$,  the coarse embedding $\dd_x: G \to \OO(1,n)$ given by the 
 derivative cocycle (see Section \ref{sec.derivative.cocycle}). Because $\dd_x$ is defined relatively to 
 a bounded section $\sigma: M \to \hm$,  Proposition \ref{prop.growth.derivatives} amounts to proving that 
 $g \mapsto |\dd_xg|$ has exponential growth, where we put any norm $|\,.\, |$ on the space of matrices.
 
 Let us recall Iwasawa's decomposition in $\OO(1,n)$. Each $\dd_xg \in \OO(1,n)$ can be written in a unique way
  as  $\dd_xg=k(g)a(g)n(g)$, where $k(g)$ belongs to a maximal compact subgroup $K \subset \OO(1,n)$
  
  $$ a(g)=\left( \begin{array}{ccc}
  e^{\lambda(g)} & 0 & 0\\
  0 & I_{n-1} & 0\\
  0& 0 & e^{- \lambda(g)}\\
   \end{array}
  \right), \ \ \lambda(g) \in \RR$$
  
  $$ n(g)=\left( \begin{array}{ccc}
                1 & v(g) & -\frac{<v(g),v(g)>}{2}\\
                0 & I_{n-1} & -^t v(g)\\
                0& 0 & 1\\
                 \end{array}
                 \right), \ \ v(g) \in \RR^{n-1}.
$$
  In the formula above, if $v=(v_1,\ldots,v_{n-1})$, then $<v,v>=v_1^2+ \ldots+ v_{n-1}^2$. We will use the notation $|v|$
   for $\sqrt{<v,v>}$.

  \begin{lemme}
   \label{lem.either.a.or.n}
   Assuming that $G$ has exponential growth, either $g \mapsto e^{|\lambda(g)|}$ or 
   $g \mapsto |v(g)|$ has exponential growth.
  \end{lemme}

  \begin{proof}
  Assume for a contradiction that both maps have subexponential growth.  
   
   Working in  the upper-half space model $\RR_+^* \times \RR^{n-1}$ for $\HH^n$, with coordinates $(t,x)$, 
   the action of 
     $a(g)$ is given by $(t,x) \mapsto (e^{\lambda(g)}t,e^{\lambda(g)}x)$, and that of 
     $n(g)$ by $(t,x) \mapsto (t,x+v(g))$. We consider the point $o=(1,0) \in \RR_+^* \times \RR^{n-1}$, which is precisely the point 
     fixed by the compact group $K$. 
       
       We already observed in Section \ref{sec.embedding.hyp} that the map:
       $\overline{\dd}_x: G \to \HH^n$ defined by 
       $$\overline{\dd}_xg:=(\dd_xg)^{-1}.o=n(g)^{-1}a(g)^{-1}.o$$
       is a coarse embedding.
    
   Our subexponential growth assumption tells that  for every $\epsilon>0$, there exists
   $N_{\epsilon} \in \NN$ such that for every $k \geq N_{\epsilon}$, and every $g$ satisfying $\ell(g) \leq k$, 
   $e^{|\lambda(g)|} \leq e^{k \epsilon}$ and $|v(g)| \leq e^{k \epsilon}$. Geometrically, it means 
   that for $k \geq N_{\epsilon}$, the ball 
   $B(1_{G},k)$ is mapped by
    $\overline{\dd}_x $ into a rectangle 
    $R_{k,\epsilon}=[e^{-k \epsilon}, e^{k \epsilon}] \times [-e^{k \epsilon},e^{k \epsilon}]^{n-1} \subset  \RR_+^* \times \RR^{n-1}$.
    Let us compute the hyperbolic volume of this rectangle in $\HH^n$: 
    $$\operatorname{vol}_{hyp}(R_{k,\epsilon})=2^{n-1}e^{k(n-1)\epsilon}\int_{e^{-k \epsilon}}^{e^{k \epsilon}}\frac{1}{t^n}dt
    =\frac{2^{n-1}}{n-1}(e^{2k(n-1)\epsilon}-1).$$
    
  Now $\overline{\dd}_x$ being a coarse embedding, 
  there exists $c>0$ such that for avery pair of elements $g,h \in G$, 
  $d_S(g,h) \geq c$ implies $d_{\HH^n}(\overline{\dd}_xg,\overline{\dd}_xh) \geq 1$.
 Let us choose a $c$-metric lattice $L$ in $G$.  Because $G$ has exponential growth, there exists
  $\mu >0$ such that $n_k=\sharp(L \cap B(1,k)) \geq e^{\mu k}$ for $k$ big enough. 
  On the other hand, $\overline{\dd}_x(L \cap B(1_{G},k)) \subset R_{k, \epsilon}$  contains $n_k$ points at mutual hyperbolic distance at least $1$.
   Hence there exists a constant $C>0$ (independent of $k$ and $\epsilon$), such that $n_k \leq C \operatorname{vol}_{hyp}(R_{k,\epsilon})$.
    We end up with the inequality 
    $e^{\mu k} \leq C \frac{2^{n-1}}{n-1}(e^{2k(n-1)\epsilon}-1)$, available for $k$ big enough. If we chosed 
    $\epsilon $ small, for instance $0 < \epsilon <\frac{\mu}{4(n-1)}$, this yields 
    a contradiction as $k$ tends to infinity.\end{proof}
 
 We see that if $a(g)= \left( \begin{array}{ccc} e^{\lambda(g)} & 0 & 0 \\
 0 & Id_{n-1} & 0 \\
 0 & 0 & e^{-\lambda(g)}
 \end{array} \right)$ and $n(g)= \left( \begin{array}{ccc} 1 & v(g) & \frac{|v(g)|^2}{2} \\
 0 & Id_{n-1} & -{}^tv(g) \\
 0 & 0 & 1
 \end{array} \right)$ then:
 $$ a(g)n(g)=\left( \begin{array}{ccc} e^{\lambda(g)} & e^{\lambda(g)}v & \frac{e^{\lambda(g)} |v(g)|^2}{2} \\
 0 & Id_{n-1} & -^tv \\
 0 & 0 & e^{-\lambda(g)}
 \end{array} \right).$$
 
 If $g \mapsto e^{|\lambda(g)|}$ has exponential growth, then so has $|a(g)n(g)|$  (look at diagonal terms).  If 
 $g \mapsto |v(g)|$ has exponential growth, then so has $|a(g)n(g)|$  (look at the last column).
 Since $\dd_xg=k(g)a(g)n(g)$ with $k(g)$ in a compact set, it follows from Lemma \ref{lem.either.a.or.n} that $g \mapsto |\dd_xg|$ has exponential growth.  
 This concludes the proof.
 \end{proof}
 
 \subsection{Exponential growth implies infinite limit set}
 
 \label{sec.exp.growth}
 
 \begin{proposition}
  \label{prop.growth.limitset}
  Let $(M,g)$ be a compact Lorentz manifold.  Let $G \subset \Iso(M,g)$ be a closed, 
  compactly generated subgroup.
  If at some $x \in M$, the map $g \mapsto |D_xg|$
   has exponential growth, then the limit set $\Lambda(x)$ has at least two points.
 \end{proposition}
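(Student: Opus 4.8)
The plan is to argue by contradiction. Since $g\mapsto|D_xg|$ has exponential growth it is in particular unbounded, so $\dd_x(G)$ is not relatively compact in $\OO(1,n)$ and $\lig(x)\neq\emptyset$ (equivalently $\Lambda_{\dd}(x)\neq\emptyset$). Assume then that $\carg(x)=1$, say $\lig(x)=\{[u]\}$ with $[u]$ a lightlike direction of $T_xM$, and let $\xi=\iota_x([u])\in\partial\HH^n$ be the corresponding boundary point (Lemma \ref{lem.identification.limitset}). By Corollary \ref{coro.extension} I would extend $[u]$ to a global Lipschitz lightlike direction field $\Delta$ with $\Delta(y)\in\lig(y)$ everywhere, tangent to the orthogonal of a totally geodesic codimension-one lightlike foliation $\calf$.

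The aim is to produce a \emph{second} point in $\lig(x)$. First, exactly as in the proof of Proposition \ref{prop.growth.derivatives} (where $|D_xg|\asymp|\dd_xg|$ via the bounded section), exponential growth provides a sequence $(f_k)$ tending to infinity in $G$ whose Cartan projection grows linearly in word length: with a Cartan decomposition $\dd_x(f_k)=m_ka_kl_k$, $a_k=\diag(\lambda_k,I_{n-1},\lambda_k^{-1})$, one has $\log\lambda_k\geq\mu\,\ell(f_k)$ for some $\mu>0$, so $\lambda_k\to+\infty$. After extracting we may assume $m_k\to m$, $l_k\to l$, and, by compactness of $M$, $f_k^{-1}x\to x^-$. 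A direct computation gives $\dd_x(f_k)^{-1}.o\to[\,l^{-1}e_{n+1}\,]$, so $\xi=[\,l^{-1}e_{n+1}\,]$ is the contracting direction of $(f_k)$ and $AS(f_k)(x)=u^\perp$.

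Next I would feed in the sequence $(f_k^{-1})$, which also tends to infinity. Using the cocycle identity $\dd_x(f_k^{-1})^{-1}=\dd_{f_k^{-1}x}(f_k)$ and the fact that every $\dd_z$ is a coarse embedding (Lemma \ref{lem.fondamental}), the element $\dd_{f_k^{-1}x}(f_k)$ tends to infinity in $\OO(1,n)$ as well; choosing a Cartan decomposition $\dd_{f_k^{-1}x}(f_k)=m_k'a_k'l_k'$ and extracting further ($m_k'\to m'$, $l_k'\to l'$), one obtains $\dd_x(f_k^{-1})^{-1}.o=\dd_{f_k^{-1}x}(f_k).o\to[\,m'e_1\,]$, hence $\iota_x^{-1}([\,m'e_1\,])\in\lig(x)$; this is the lightlike line of the hyperplane $AS(f_k^{-1})(x)$. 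The proof then reduces to showing $[\,m'e_1\,]\neq[\,l^{-1}e_{n+1}\,]$, i.e. that the two lightlike hyperplanes $AS(f_k)(x)$ and $AS(f_k^{-1})(x)$ of $T_xM$ are distinct, which contradicts $\carg(x)=1$. If they agreed along a subsequence, then $(f_k)$ would stabilize $\calf$ at $x$ in both time directions, so (conjugating so that $\calf$ corresponds to a minimal parabolic $P\subset\OO(1,n)$) the matrices $\dd_x(f_k)$ would have bounded semisimple part and grow only through translation in the nilradical of $P$, hence at most polynomially in $\ell(f_k)$ — contradicting $\log\lambda_k\geq\mu\,\ell(f_k)$.

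The hard part will be the base-point bookkeeping: $\dd_x$ is a cocycle, not a homomorphism, so $\dd_x(f_k^{-1})\neq\dd_x(f_k)^{-1}$ and the Cartan decompositions at the various base points $x$, $f_kx$, $f_k^{-1}x$ are not directly comparable. Reconciling them — and making rigorous the "hyperbolic versus parabolic" dichotomy of the last step, namely that exponential growth of $|\dd_x(f_k)|$ in word length forces the stable and unstable directions of $(f_k)$ at $x$ to split — is exactly where compactness of $M$ and the coarse-embedding property of the cocycles $\dd_z$ enter decisively.
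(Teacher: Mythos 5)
Your reduction is fine up to the last step: unboundedness of $\dd_x(G)$ gives a nonempty limit set, the cocycle identity $\dd_x(f_k^{-1})^{-1}=\dd_{f_k^{-1}x}(f_k)$ is correct, and the two candidate boundary points $[l^{-1}e_{n+1}]$ and $[m'e_1]$ are correctly identified. But the distinctness of these two points is not a technicality to be deferred -- it \emph{is} the proposition -- and the mechanism you propose for it does not work. The two points come from Cartan decompositions of matrices based at \emph{different} points, $\dd_x(f_k)$ and $\dd_{f_k^{-1}x}(f_k)$; their coincidence therefore constrains the ``source'' of one matrix sequence and the ``sink'' of another, and forces no single sequence to have source equal to sink. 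Even for a single sequence with source $=$ sink $=\xi$, the Cartan projection need not be bounded (unipotent sequences fixing $\xi$ have $\lambda_k\to\infty$), so ``bounded semisimple part'' does not follow. Finally, even if you knew that the matrices $\dd_x(f_k)$ lay, up to a compact set, in the stabilizer of a lightlike vector, you could not conclude polynomial growth of $|\dd_x(f_k)|$ in $\ell(f_k)$: the cocycle relation runs through the intermediate base points $s_j\cdots s_1x$, so a linear bound on the ``translation part'' requires an invariant lightlike direction field along the whole orbit (this is exactly what makes Proposition \ref{prop.centralizer} and Corollary \ref{coro.infinite.limit} work, via a reduction of $\hm$ over a compact invariant set), and the hypothesis $\carg(x)=1$ at the single point $x$ provides no such invariant structure. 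Note also that $G$ itself may have polynomial growth here (e.g.\ $G\simeq\ZZ$ generated by a hyperbolic isometry of a flat torus), so no argument that only exploits coarse embeddings of $G$ into small subgroups can succeed.

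The paper's proof goes a completely different way, and the detour is what supplies the missing invariant structure: one forms the shift space $\Sigma=S^{\ZZ}$ over the compact generating set, views the derivative cocycle as a cocycle over the skew product $\theta$ on $\Sigma\times\overline{G.x}$, and uses the exponentially long words realizing the growth of $|D_xg|$ to build (by Ces\`aro averaging on the projective bundle and ergodic decomposition) a $\theta$-invariant ergodic measure whose top Lyapunov exponent is $\geq\lambda>0$. Oseledec's theorem, together with the fact that Lyapunov spaces for nonzero exponents are isotropic for the preserved Lorentz form, then produces at some $y\in\overline{G.x}$ two distinct lightlike directions $u^{\pm}$ asymptotically stable for two sequences going to infinity in $G$, hence two points of $\lig(y)$; the lower semicontinuity of $\carg$ (Corollary \ref{coro.semicontinuous}) and equivariance transport this to the orbit of $x$, hence to $x$. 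If you want to salvage your approach, you would have to prove the ``hyperbolic versus parabolic'' dichotomy along the whole orbit closure, which in effect amounts to redoing this ergodic-theoretic argument.
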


\begin{proof}
 The main ideas of the proof are borrowed from \cite{hurtado}.
 
 We will use again the map $\dd_y: G \to \OO(1,n)$ defined in Section \ref{sec.derivative.cocycle}, which 
 is a coarse embedding for every $y \in M$.
 We consider $S$ a compact, symmetric generating set for the group $G$. Let us 
 call  $\Sigma=S^{\ZZ}$ the set of bi-infinite words
 $w=(\ldots w_{-2}w_{-1}w_0w_1w_2 \ldots)$ in the alphabet $S$.  The shift $\sigma: \Sigma \to \Sigma$ is defined as $\sigma(w)=w'$
  where $w'_i=w_{i+1}$ (we shift to the right). We call  $\overline{O}$ the closure of the orbit $G.x$, and we 
   define $\theta: \Sigma \times \overline{O} \to \Sigma \times \overline{O}$
   by $\theta(w,y)=(\sigma(w),w_0.y)$.  We observe that $\theta^{-1}(w,y)=(\sigma^{-1}(w),w_{-1}^{-1}.y)$.
   
 The action of $G$ on $M$ defines naturally a map $A: \Sigma \times M \to O(1,n)$, by 
 the formula $A(w,y):=\dd_yw_0$.  
 Let us put 
 $$A^{(k)}(w,y)=A(\theta^{k-1}(w,y))A(\theta^{k-2}(w,y))\ldots A(\theta(y))A(y),$$
 which by the cocycle property of the derivative cocycle is nothing but
  $\dd_yw_{k-1}....w_1w_0$.  
 In the same way we define 
 $$A^{(-k)}(w,y)=A^{-1}(\theta^{-k}(w,y)) \ldots A^{-1}(\theta^{-1}(w,y)),$$
  which coincides $\dd_yw_{-k}^{-1} \ldots w_{-2}^{-1}w_{-1}^{-1}$.  
  
 We obtain in this way a cocycle over $\theta: \Sigma \times M \to \Sigma \times M$ with values into $\OO(1,n)$.
 Because $\Sigma \times \overline{O}$ is compact, there exists $\nu$  an ergodic $\theta$-invariant Borel probability measure 
 on $\Sigma \times \overline{O}$.
 Observe that $(w,y) \mapsto \log || A(w,y)||$ and $(w,y) \mapsto \log || A^{-1}(w,y)||$ are integrable for $\nu$ because those
  are Borel maps which are bounded (since $S$ is compact and the derivative cocycle 
  $(x,f) \mapsto {\mathcal D}_xf$ is defined relatively to a bounded 
  frame field).
 
 Let us recall the conclusions of Oseledec theorem in this context (see for instance \cite[Theorem 4.2]{ledrappier}).
  \begin{theoreme}[Oseledec Theorem]
   \label{thm.osseledecs}
   There exists a $\theta$-invariant Borel subset $B \subset \Sigma \times \overline{O}$  
  with $\nu(B)=1$,  a measurable
   decomposition $\RR^{n+1}=W_z^1 \oplus \ldots \oplus W_z^r$, $z \in B$, and real 
   Lyapunov exponents $\lambda_1 < \lambda_2 < \ldots < \lambda_r$ satisfying the properties:
   \begin{enumerate}
    \item The decomposition $\RR^{n+1}=W_z^1 \oplus \ldots \oplus W_z^r$ is invariant in the sense that
   $A(z)W_z^i=W_{\theta(z)}^i$ for all $z \in B$ and $1 \leq i \leq r$.
   \item A vector $v$ belongs to $W_z^i$ if and only if
   \begin{equation}
    \label{eq.lyapunov}
    \lim_{k \to \pm \infty} \frac{1}{k} \log || A^{(k)}(z).v|| =  \lambda_i.
   \end{equation}
   \end{enumerate}
  \end{theoreme}

   Recall that the matrices $A^{(k)}(z)$ preserve a Lorentz scalar product $< , >_{1,n}$ on $\RR^{n+1}$ hence have 
    determinant $1$. It follows that the sum $\lambda_1+\ldots+\lambda_r=0$ (see \cite[Prop. 1.1]{ledrappier}).   
   One infers  easily from equation (\ref{eq.lyapunov}) that the Lyapunov spaces 
   $W_z^i$ are mutally orthogonal for $< , >_{1,n}$, and spaces associated to nonzero exponents are isotropic 
   (hence $1$-dimensional). 
   We thus see that only two cases may occur. Either there is a single exponent and this exponent is $0$, 
   or there are exactly three exponents (if $n \geq 2$)
   $\lambda^+ >0, 0$ and $ \lambda^-=-\lambda^+$. In this last case, we say that the measure 
   $\nu$ is {\it partially hyperbolic}.  We then denote by $W_z^-$, $W_z^0$ and $W_z^+$ the  Lyapunov spaces associated
    to $\lambda^-,0$ and $\lambda^+$ respectively.
   
   Let us proceed with the proof of the proposition, assuming that  we have found such a partially hyperbolic measure $\nu$.
    Oseledec theorem 
   above yields a point $z=(w,y) \in B$, and linearly independent vectors $u^+ \in W_z^+$ and $u^- \in W_z^-$
       such that, 
       $$ \lim_{k \to + \infty} \frac{1}{k} \log( || A^{(k)}(w,y).u^-||) = - \lambda^+$$
       and 
       $$ \lim_{k \to + \infty} \frac{1}{k} \log( || A^{(-k)}(w,y).u^+||) = - \lambda^+.$$
         Let $w=(\ldots w_{-2}w_{-1}w_0 w_1 w_2 \ldots)$. We  define  $g_k=w_{k-1}\ldots w_0$ and   
         $g_k'=w_{-k}^{-1} \ldots w_{-1}^{-1}$.  Then:
         $$ \lim_{k \to + \infty}\frac{1}{k}\log ||\dd_yg_k(u^-)|| = -\lambda^+ \ \  \text{and} \ \  \lim_{k \to + \infty}
         \frac{1}{k}\log ||\dd_yg_k'(u^+)|| = -\lambda^+.$$
          In particular both sequences $(g_k)$ and $(g_k')$ tend to infinity, 
          and $u^-$  (resp. $u^+$) is a lightlike asymptotically
           stable vector for $(g_k)$ (resp. for $(g_k')$). It follows that the directions
            $[u^{\pm}]$ belong to the limit set $\Lambda(y)$, showing that this set  has at least two points.
         In an open set $U$ around $y$, we must have $d_{\Lambda} \geq 2$  
         (semi-continuity property shown in Corollary \ref{coro.semicontinuous}).
          Because $y \in \overline{O}$, it follows that for some point, hence any 
          (by isometric invariance) point of $O$, $d_{\Lambda} \geq 2$.
           This concludes the proof of Proposition \ref{prop.growth.limitset} in this case.

 It remains to show the existence of a $\theta$-invariant, ergodic, {\it partially hyperbolic} measure on 
 $\Sigma \times \overline{O}$.  This is here that our  assumption 
  of exponential growth for the derivatives of  $G$ comes into play.  
  We fix an auxiliary Riemannian metric $h$
  on $M$, and denote by $UT_M$ the unit tangent bundle of $h$. Exponential growth of derivatives at $x$
  yields   $\lambda >0$,  
   a sequence $(g_k)$ in 
  $G$ such that $\ell(g_k) \to \infty$, as well as a sequence $v_k$ of $UT_xM$, such that 
  $||D_xg_k(v_k)|| \geq e^{\lambda \ell(g_k)}$.
  
This growth condition  can also be formulated using the sequence $A^{(k)}$ in the following way. 
For each $k$, $g_k$ can be written as $g_k=s_{\ell(g_k)-1}^{(k)}s_{\ell(g_k)-2}^{(k)}\ldots s_1^{(k)}s_0^{(k)}$,
 which is a word of length $\ell(g_k)$ in elements of $S$. 
     Then, let us consider the bi-infinite, periodic word
    ${w}^{(k)} \in S^{\ZZ}$  which is defined by $w_i^{(k)}=s_i^{(k)}$ for $0 \leq i \leq \ell(g_k)-1$, and completed by periodicity. 
  The growth condition yields the existence of $u_k$ a sequence of vectors in $\RR^{n+1}$, such that
  $||u_k||=1$ (we use here any norm) and
  $$|| A^{(\ell(g_k))}(w^{(k)},x).u_k|| \geq e^{\lambda \ell(g_k)}.$$
  
  We now lift the map $\theta$  to a map 
  $\hat{\theta} : \Sigma \times \overline{O} \times \RP^{n} \to \Sigma \times \overline{O} \times \RP^{n}$ by the formula
  $$ \hat{\theta}(w,y,[u])=(\theta(w,y),A(w,y).[u])$$
  
  Let us also introduce 
  $\psi: \Sigma \times \overline{O} \times \RP^{n} \to \RR$ defined by $\psi(w,y,[u]))=\log(||A(w,y).u||/||u||)$

    Now look at the sequence of measures on $\Sigma \times  \overline{O} \times \RP^{n}$, defined by 
    $\hat{\nu}_k=\frac{1}{\ell(g_k)} \Sigma_{m=1}^{\ell(g_k)} (\hat{\theta}^m)_*\delta(w^{k},u_k)$.
    
    We compute $\int_{\Sigma \times \overline{O} \times \RP^{n}} \psi d\hat{\nu}_k=
    \frac{1}{\ell(g_k)} \Sigma_{m=1}^{\ell(g_k)} \psi(\hat{\theta}^m(w^{(k)},x,[u_k]))$.
    This expression is nothing but  $\frac{1}{\ell(g_k)} \log || A^{(\ell(g_k))}(w^{(k)},x).u_k||$, so that 
    $\int_{\Sigma \times \overline{O} \times \RP^{n}} \psi d\hat{\nu}_k \geq \lambda$.
    
    Because the space $\Sigma \times  \overline{O} \times \RP^{n}$ is compact, we can find a subsequence $(\hat{\nu}_{i_k})$
     converging for the weak-star topology to a probability measure $\hat{\nu}$ on $\Sigma \times  \overline{O} \times \RP^{n}$.
     It is easily checked that $\hat{\nu}$ is $\hat{\theta}$-invariant, and we still have 
     $\int_{\Sigma \times \overline{O} \times \RP^{n}} \psi d\hat{\nu} \geq \lambda$.
     Performing an ergodic decomposition, we get an ergodic, $\hat{\theta}$-invariant measure $\hat{\nu}_e$ on 
     $\Sigma \times  \overline{O} \times \RP^{n}$ satisfying 
     \begin{equation}
      \label{eq.integrale}
      \int_{\Sigma \times \overline{O} \times \RP^{n}} \psi d\hat{\nu}_e \geq \lambda.
     \end{equation}

      We push $\hat{\nu}_e$ forward 
     to an ergodic, $\theta$-invariant measure $\nu_e$ on $\Sigma \times  \overline{O}$.  
     From (\ref{eq.integrale}) and \cite[Prop. 5.1]{ledrappier}, we conclude that the cocycle $A^{(k)}$ admits a Lyapunov exponent 
     which is $\geq \lambda$ (hence positive). It means precisely that $\nu_e$ is partially hyperbolic, and the proof is complete.

  
\end{proof}
 
 Proposition \ref{prop.growth.limitset} uses only an hypothesis involving growth of derivatives. 
When the group $G$ itself has exponential growth, the conclusions of Proposition \ref{prop.growth.limitset}
 can be strengthen in the following way:
 
\begin{corollaire}
  \label{coro.infinite.limit}
  Let $(M,g)$ be a compact Lorentz manifold. 
  Let $G \subset \Iso(M,g)$ be a closed, compactly generated subgroup. If $G$ has exponential growth, then
   the limit set $\Lambda(x)$ is infinite for every $x \in M$. In particular $\dig(x) \geq 3$ for every $x \in M$.
 \end{corollaire}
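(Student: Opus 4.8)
The plan is to combine the previous two propositions with a reduction of the structure group of the frame bundle $\hm$. First, since $G$ has exponential growth, Proposition~\ref{prop.growth.derivatives} gives that $g \mapsto |D_xg|$ has exponential growth for \emph{every} $x \in M$, so Proposition~\ref{prop.growth.limitset} already yields $\carg(x) \geq 2$ at every point. Note also that for any $f \in \Iso(M,g)$ the differential $D_xf$ maps $\lig(x)$ bijectively onto $\lig(f(x))$: if $(f_k) \to \infty$ in $\Iso(M,g)$ realizes a direction of $\lig(x)$, then $(f_k \circ f^{-1}) \to \infty$ realizes its image at $f(x)$, and conversely. Hence $\carg$ is an $\Iso(M,g)$-invariant function on $M$, and it is lower semi-continuous by Corollary~\ref{coro.semicontinuous}.

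Assume, for a contradiction, that $\carg(x)$ is finite for some $x$. By compactness of $M$ and lower semi-continuity, $\carg$ attains a finite minimum $m_0 := \min_M \carg \geq 2$, and $F_0 := \{\, y \in M : \carg(y) = m_0 \,\}$ is a nonempty, $G$-invariant, \emph{closed} subset of $M$, being the complement of the open set $K_{\geq m_0+1}$ of Corollary~\ref{coro.semicontinuous}. Over $F_0$ the limit directions vary continuously: given $y_0 \in F_0$, Corollary~\ref{coro.extension} extends the $m_0$ directions of $\lig(y_0)$ to Lipschitz lightlike line fields $\Delta_1, \dots, \Delta_{m_0}$ on $M$ with $\Delta_i(y) \in \lig(y)$ for all $i$ and $y$; these are pairwise distinct near $y_0$, so for $y \in F_0$ close to $y_0$ the inclusion $\{ \Delta_i(y) \} \subseteq \lig(y)$ between two $m_0$-element sets is an equality. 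Therefore the natural map $\widehat{F}_0 := \{\, (y,d,d') : y \in F_0,\ d \neq d' \in \lig(y) \,\} \to F_0$ is a finite covering map (of degree $m_0(m_0-1) \geq 2$), $\widehat{F}_0$ is compact, and $G$ acts on $\widehat{F}_0$ covering its action on $F_0$.

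Over $\widehat{F}_0$ there is a tautological \emph{ordered} pair of distinct lightlike directions of $M$; since $\OO(1,n)$ acts transitively on ordered pairs of distinct points of $\partial \HH^n$, this datum defines a $G$-invariant reduction of the pullback of $\hm$ to $\widehat{F}_0$, with structure group the stabilizer $H \subset \OO(1,n)$ of a fixed ordered pair of lightlike lines, which is isomorphic to $\OO(n-1) \times \RR$. The group $G$ still acts properly on this finite cover of $\hm|_{F_0}$, so rerunning the proof of Lemma~\ref{lem.fondamental} — that is, applying the argument of Corollary~\ref{coro.subbundle} with a bounded section over the compact space $\widehat{F}_0$ — produces a coarse embedding $G \to H$. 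As $H$ has polynomial growth of degree $1$, Lemma~\ref{lem.growth} forces $G$ to have polynomial growth, contradicting the exponential growth hypothesis. Consequently $\carg(x) = +\infty$ for every $x$; and since three or more distinct lightlike lines in a Lorentzian vector space can never lie in a subspace of dimension at most $2$, we conclude $\dig(x) \geq 3$ for all $x$. The point requiring care is the passage from the second to the third paragraph: one must check that the limit directions genuinely organize into a finite covering of $F_0$, and that the properness of the action together with the coarse-embedding argument of Lemma~\ref{lem.fondamental} survive this passage to a finite cover; reducing to a pair of directions (rather than a single one) is essential, since the stabilizer of a single lightlike line has exponential growth.
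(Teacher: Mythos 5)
Your proof is correct and follows the same skeleton as the paper's: get $\carg\geq 2$ everywhere from Propositions \ref{prop.growth.derivatives} and \ref{prop.growth.limitset}, use the semi-continuity of Corollary \ref{coro.semicontinuous} to produce a compact invariant set where the (assumed finite) minimum of $\carg$ is attained, extract from the limit directions a reduction of the frame bundle to a subgroup of $\OO(1,n)$ of small growth, coarsely embed $G$ into it, and contradict exponential growth via Lemma \ref{lem.growth}. Where you diverge is in the bookkeeping of the finitely many limit directions. The paper splits into two cases: if $c_{min}=2$ it passes to an index-two subgroup of $G$ fixing each of the two direction fields and reduces to $\RR\times\OO(n-1)$ over $K_{min}$; if $c_{min}\geq 3$ it passes to a finite-index subgroup fixing each direction and reduces to a compact group $\OO(k)$, using that no noncompact group coarsely embeds in a compact one. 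You avoid both the case distinction and the passage to a finite-index subgroup by working on the finite cover $\widehat{F}_0$ of ordered pairs of distinct limit directions, over which the tautological pair gives a reduction to the pair-stabilizer $\OO(n-1)\times\RR$ uniformly in $m_0$; since linear growth already contradicts exponential growth, compactness of the stabilizer is never needed. The price, which you correctly identify, is that the reduction no longer lives over a subset of $M$, so Corollary \ref{coro.subbundle} cannot be quoted verbatim and the argument of Lemma \ref{lem.fondamental} has to be rerun over $\widehat{F}_0$; this does go through because the cover is finite, so the $G$-action on the pulled-back bundle and the right $H$-action remain proper, and a bounded section exists by compactness of $\widehat{F}_0$. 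Your local-triviality argument for $\widehat{F}_0\to F_0$ (equality of the $m_0$-element sets via Corollary \ref{coro.extension}) and the final observation that three distinct lightlike lines cannot lie in a $2$-plane are both sound, so the proof is complete; it is arguably slightly cleaner than the paper's in that it removes the case analysis.
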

\begin{proof}
 By Propositions \ref{prop.growth.derivatives} and \ref{prop.growth.limitset}, we now that  $\carg(x) \geq 2$
   for every $x \in M$. We call $c_{min} \in \{2,3,\ldots\} \cup \{ \infty \}$ the minimal value achieved 
    by $x \mapsto \carg(x)$ on $M$. If the minimal value $c_{min}$ is $+ \infty$, we are done. If 
     on the contrary $c_{min}$ is finite, we are going to get a contradiction. To see this, we define:
     $$ K_{min}:=\{ x \in M \ | \ \carg(x)=c_{min}  \}.$$
    By corollary \ref{coro.semicontinuous}, the set $K_{\geq c_{min}+1}$ is open, hence $K_{min}$
     is a compact subset of $M$. 
   
   If $c_{min}=2$ then 
   we have two lightlike Lipschitz directions on $K$, which are preserved by $G$, or an index two subgroup 
   of $G$. Because the subgroup of $\OO(1,n)$ leaving invariant two 
   linearly independent lightlike directions is isomorphic to $\RR \times \OO(n-1)$, we get a $G$-invariant
    (Lipschitz) reduction of the bundle $\hm$ to the group $\RR \times \OO(n-1)$ above $K_{min}$.
  Lemma \ref{lem.restriction}  then provides a coarse embedding $\alpha: G \to \RR \times \OO(n-1)$.
   By Lemma \ref{lem.growth}, this is impossible since $G$ is assumed to have exponential growth, while $\RR \times \OO(n-1)$ has 
    linear growth.  
    It means that we have  $+ \infty > c_{min} \geq 3$.  But the subgroup of $\OO(1,n)$ leaving individually invariant a finite family
     of lightlike directions spanning a subspace of dimension $\geq 3$, is a compact group isomorphic to some 
      group
      $\OO(k)$,  $1 \leq k \leq n-2$. Again, looking at a finite index subgroup of $G$, we have a $G$-invariant 
      reduction of the bundle $\hm$ to the group $\OO(k)$. Lemma \ref{lem.restriction} 
      then provides a coarse embedding $\alpha: G \to \OO(k)$. This is a new contradiction since no noncompact group can 
      be coarsely embedded into a compact one.
\end{proof}

\section{Exponential growth and Killing fields}
\label{sec.exponential.killing}

We have shown in Corollary \ref{coro.infinite.limit} that the existence of a closed subgroup with exponential growth in the isometry
 group of a compact Lorentz manifold $(M^{n+1},g)$ forces the limit set to be infinite at each point.  The aim of the present section
  is to derive the first geometric consequences of this fact. 
  
 Recall that a local Killing field on $M$ is a vector field defined on some open subset $U \subset M$, such that 
  the Lie derivative $L_Xg=0$. In other words, the local flow of $X$ acts isometrically for $g$. In the neighborhood 
  of each point $x \in M$, the algebra of local Killing fields is a finite dimensional Lie algebra, that
   will be denoted $\kiloc(x)$. The isotropy algebra ${\liei}_x$ is the subalgebra of $\kiloc(x)$ comprising
    all local Killing fields vanishing at $x$. 
    
  The main theorem of this section shows that the existence of a big limit set for $\Iso(M,g)$ produces many local Killing fields, at least on a nice {\it open and dense subset} $\mint$,
   called  the integrability locus, to be defined later on.
   
\begin{theoreme}
 \label{thm.exponential.killing}
 Let $(M,g)$ be a compact Lorentz manifold. Assume that the limit set $\Lambda(x)$ of $\Iso(M,g)$ is infinite for every $x \in M$. Then for every $x$ in the integrability locus $\mint$, the isotropy Killing 
  algebra ${\liei}_x$ is isomorphic to $\oo(1,k_x)$, with $k_x \geq 2$.
\end{theoreme}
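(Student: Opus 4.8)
The plan is to combine a dynamical input --- producing local Killing fields out of the points of the limit set, via Gromov's theory of rigid geometric structures --- with an algebraic classification of the subalgebras of $\oo(1,n)$ that can carry an infinite limit set on $\partial\HH^n$.

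First I would fix the setup. The metric $g$ defines, through its Levi-Civita connection and the metric itself, a rigid geometric structure, equivalently the canonical parallelism on the orthonormal frame bundle $\hm$; the integrability locus $\mint$ is the open dense set furnished by Gromov's integrability theorem, on which the sheaf of germs of local Killing fields has locally constant rank and, crucially, every formal Killing field (a formal solution of the Killing equation, which by rigidity is a finite jet) integrates to a genuine local Killing field. For any $x$, the linear isotropy representation $X \mapsto \nabla X(x)$ embeds $\liei_x$ as a subalgebra $\mathfrak h_x \subseteq \oo(T_xM,g_x) \cong \oo(1,n)$, the embedding being injective because a Killing field vanishing at $x$ is determined by its $1$-jet there. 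What must be shown is that for $x \in \mint$ the algebra $\mathfrak h_x$ is isomorphic to $\oo(1,k_x)$ with $k_x \geq 2$.

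The heart of the argument is to prove that $\mathfrak h_x$ has infinite limit set on $\partial\HH^n$. Fix $[u] \in \lig(x)$; by definition there is a sequence $(f_k)$ in $\Iso(M,g)$ tending to infinity with $u \in AS(f_k)$, and, performing a Cartan decomposition of $\dd_x(f_k)$ as in the proof of Lemma \ref{lem.identification.limitset}, after extraction $\dd_x(f_k)$ degenerates along a one-parameter $\RR$-split subgroup $\exp(\RR Z_u) \subset \OO(1,n)$ whose attracting fixed point on $\partial\HH^n$ is $\iota_x([u])$. I would then run a renormalization argument in $\hm$: expressing each isometry, in the local coordinates provided by the parallelism, as the ``identity'' map, and extracting a limit of the renormalized sequence $R_{\dd_x(f_k)^{-1}} \circ \hat f_k$, one obtains a nontrivial local vector field near $x$ preserving the parallelism; since $x \in \mint$, this is a genuine local Killing field $X_u$ vanishing at $x$ with $\nabla X_u(x)$ conjugate to $Z_u$. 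Hence $\mathfrak h_x$ contains $\RR$-split elements whose attracting directions run over $\iota_x(\lig(x))$, which by Lemma \ref{lem.identification.limitset} and the hypothesis is an infinite subset of $\partial\HH^n$. I expect this renormalization step to be the main obstacle: one must verify that the limit of the renormalized isometries is defined on a genuine open set and is an honest local Killing field, not merely a formal solution --- which is exactly where Gromov's integrability theorem and the choice $x\in\mint$ are used.

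It remains to identify $\mathfrak h_x$. Having infinite limit set, $\mathfrak h_x$ cannot be amenable (an amenable subgroup of $\Iso(\HH^n)$ is elementary, hence has at most two limit points), so a Levi factor of $\mathfrak h_x$ has a nonzero noncompact semisimple part $\mathfrak s_{nc}$; a noncompact semisimple subalgebra of the real rank one algebra $\oo(1,n)$ is simple of real rank one. Decomposing the standard representation of $\mathfrak s_{nc}$ on $T_xM\cong\RR^{1,n}$ and using that a noncompact simple algebra acts trivially on any positive-definite invariant subspace, $\mathfrak s_{nc}$ acts faithfully and irreducibly on a Lorentzian subspace $W\subseteq T_xM$ through a representation preserving a form of signature $(1,\dim W-1)$; inspecting the self-dual irreducible representations of the rank one simple Lie algebras shows that the only possibility is $\mathfrak s_{nc}\cong\oo(1,k_x)$ acting standardly, with $k_x=\dim W-1\geq 2$ (the abelian $\oo(1,1)$ being excluded by simplicity; alternatively $k_x\geq 2$ follows from $\dig(x)\geq 3$ in Corollary \ref{coro.infinite.limit}, noting that $\eg(x)=\operatorname{span}(\lig(x))\subseteq W$ and that distinct lightlike directions of a Lorentzian space are never orthogonal, so $\eg(x)$ is itself Lorentzian of dimension $\geq 3$). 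Finally, using that $\mathfrak h_x$ is generated by the $\RR$-split elements produced above --- whose attracting directions all lie on the single round sphere $\partial\HH^{k_x}=\partial\HH^n\cap\mathbb P(W)$ --- and analysing the action of $\mathfrak h_x$ on $W$ and on the Riemannian complement $W^\perp$, one rules out any extra solvable or compact summand and concludes $\liei_x\cong\mathfrak h_x=\oo(1,k_x)$. Pinning down $\mathfrak h_x$ exactly, rather than merely up to a compact factor, is the second delicate point of the argument.
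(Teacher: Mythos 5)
Your overall architecture (integrability locus + linear isotropy + an algebraic identification of the resulting subalgebra of $\oo(1,n)$) is the right frame, but the heart of your argument — the renormalization step — has a genuine gap, and it is not a merely technical one. You claim that a single direction $[u]\in\lig(x)$, witnessed by one diverging sequence $(f_k)$, already produces a local Killing field vanishing at $x$ whose linear part is an $\RR$-split element attracting to $\iota_x([u])$. Two problems. First, the renormalized maps $\hat f_k$ composed with the right translation $R_{\dd_x(f_k)^{-1}}$ do not preserve the parallelism (right translations twist the Cartan connection $\omega$ by $\Ad$), and since $\dd_x(f_k)$ blows up, such sequences converge at best along stable sets, not in $C^\infty$ on a fixed open neighborhood of $\sigma(x)$; there is no limit diffeomorphism, hence no vector field, coming out of this construction. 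Second, the conclusion you want from it is false: it would show that \emph{any} noncompact isometry group (equivalently, $\lig(x)\neq\emptyset$) forces $\RR$-split elements in the isotropy at every point of $\mint$, whereas compact Lorentz manifolds with noncompact isometry group of Heisenberg or oscillator type have nonempty limit set at every point and no hyperbolic isotropy. The infinitude of the limit set (in fact $\dig(x)\geq 3$) must be used \emph{before} producing any isotropy, not after.

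The paper's mechanism is different and is the missing idea: equivariance of the generalized curvature map $\kg$ together with the boundedness of the section $\sigma$ shows that $\kg(\sigma(x))$ is a \emph{stable vector} for the set $\dd_x(\Iso(M,g))$ in the $\OO(1,n)$-module ${\mathcal W}_{\kg}$; the purely representation-theoretic Proposition \ref{prop.stability} (proved first for $\OO(1,2)$ by an explicit analysis of irreducible $\SL(2,\RR)$-modules, Lemma \ref{lem.stability.dim2}, then by induction on $d_{\Lambda}$) says that a vector stable under a set whose limit set spans a subspace of dimension $\geq 3$ is fixed by $\SO^o(E_{\Lambda})$ — and this is exactly where three limit points in general position are needed, one or two not sufficing; finally Corollary \ref{coro.stabilizer} (a consequence of the integrability Theorem \ref{thm.integrabilite}) identifies $\liei_x$ with the Lie algebra of the stabilizer of $\kg(\hx)$, giving the $\oo(1,k_x)$, $k_x\geq 2$. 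One more remark: your last paragraph tries to exclude any extra compact or solvable summand and conclude $\liei_x\cong\oo(1,k_x)$ exactly. The paper does not prove this (and it fails for, e.g., products of a flat Lorentz torus with a round sphere, where a compact $\oo(m)$ summand survives); what is proved, and what is used in the sequel, is that $\liei_x$ \emph{contains} a subalgebra isomorphic to $\oo(1,k_x)$ with $k_x\geq 2$, so you should not spend effort on that stronger claim.
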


Observe that by Corollary \ref{coro.infinite.limit}, Theorem \ref{thm.exponential.killing} will hold as soon as
 $\Iso(M,g)$ contains a closed, compactly generated subgroup of exponential growth.

%

The Killing fields appearing in Theorem \ref{thm.exponential.killing} will be obtained  using integrability 
results, which 
were first proved in \cite{gromov}, and that we present below.

\subsection{Canonical Cartan connection, and the generalized curvature map}
\label{sec.generalized}
In all what follows, we will denote by $\lieg_0$ the Lie algebra  $\oun \ltimes \RR^{n+1}$. 
We  consider $(M^{n+1},g)$ a $(n+1)$-dimensional Lorentz manifold.
  Let $\pi: \hm \to M$ denote the  bundle of orthonormal frames on $M$.  This 
 is a principal ${\operatorname O}(1,n)$-bundle over $M$, and it is classical (see \cite{kobanomi}[Chap. IV.2 ])  that  the Levi-Civita connection associated to
  $g$ can be interpreted as an Ehresmann connection $\alpha$ on $\hm$, namely a $\Oun$-equivariant $1$-form with values 
  in the Lie algebra $\oo(1,n)$.
  Let $\theta$ be the soldering 
  form on
   $\hm$, namely the $\RR^{n+1}$-valued $1$-form on $\hm$, which to every $\xi \in T_{\hx}\hm$  associates
   the coordinates of the vector 
   $\pi_*(\xi) \in T_xM$
    in the frame $\hx$.  The sum $\alpha + \theta$ is a $1$-form $\omega: T\hm \to \lieg_0$ called 
    the {\it canonical Cartan 
    connection} associated 
    to $(M,g)$. 
    
    Observe that  for every $\hx \in \hm$,
      $\omega_{\hx}: T_{\hx}\hm \to \lieg_0$ is an isomorphism of vector spaces, and 
        the form $\omega$ is $\Oun$-equivariant (where $\Oun$ acts on 
      $\lieg_0=\oun \ltimes \RR^{n+1}$ via the adjoint action).

   The notion of Riemannian curvature for $g$, as well
  as its higher order covariant derivatives have a counterpart in $\hm$. 
 The {\it curvature} of the Cartan connection $\omega$ is a $2$-form $K$ 
 on $\hm$, with values in $\lieg_0$, defined as follows.  If $X$ and $Y$ are 
 two vector fields on $\hm$, the curvature is given by the relation:
 $$ K(X,Y)=d \omega(X,Y) +[\omega(X),\omega(Y)].$$
 Because at each point $\hx$ of $\hm$, the Cartan connection $\omega$ establishes an isomorphism between $T_{\hx}\hm$ and $\lieg_0$, 
 it follows that
  any $k$-differential form on $\hm$, with values in some vector space ${\mathcal W}$,  can be seen as a map from $\hm$ to 
  $ \Hom( \otimes^k  \lieg_0,{\mathcal W})$.  This remark applies for the curvature form $K$ itself, yielding 
  {\it a curvature map}
  $\kappa: \hm \to {\mathcal W}_0$, where the vector space ${\mathcal W}_0$ is a sub $\Oun$-module 
  of $\Hom(\wedge^2(\RR^{n+1});\lieg_0)$ (the curvature
   is antisymmetric and vanishes when one of its arguments is tangent to the fibers of $\hm$). 
   
   We  now differentiate the map $\kappa$, getting a map $D \kappa : T\hm \to {\mathcal W}_0$.  
  The connection $\omega$ allows to identify $D \kappa$  with
  a map  ${\mathcal D} \kappa : \hm \to {\mathcal W}_1$, where ${\mathcal W}_1=\Hom(\lieg_0,{\mathcal W}_0)$. 
     the $r$th-derivative of the curvature ${\mathcal D}^r \kappa : \hm \to \Hom(  \lieg_0,{\mathcal W}_r)$ (with ${\mathcal W}_r$ defined inductively
     by ${\mathcal W}_r=\Hom(\lieg_0, {\mathcal W}_{r-1})$).  
      The {\it generalized curvature map} of our Lorentz manifold $(M,g)$ 
    is the map  $\kg =(\kappa, {\mathcal D} \kappa, {\mathcal D}^2 \kappa,\ldots,{\mathcal D}^{\dim(\lieg_0)}\kappa)$. 
    The $\Oun$-module $\Hom(\lieg_0, {\mathcal W}_{\dim(\lieg_0)})$ will be rather denoted ${\mathcal W}_{\kg}$ in the sequel.

\subsection{Integrating formal Killing fields}

\label{sec.integration}

\subsubsection{Integrability locus}     
   \label{sec.integrability.locus}
    One defines {\it the integrability locus of $\hm$}, denoted $\hmint$, as 
    the set of points $\hx \in \hm$ at
     which the rank of $ D\kg$ is locally constant. Notice that 
     $ \hmint$ is a $\OO(1,n)$-invariant  open subset 
     of $\hm$. Because the rank of a smooth map can only increase locally, this open subset is dense.
      We define also  $ \mint  \subset  M$, the integrability locus of 
      $M$, as the projection of $\hmint$ on $M$.  This is a dense open subset of $M$.
  
\subsubsection{The integrability theorem}
\label{sec.integrability.theorem}
   Local flows of isometries on $M$ clearly induce local flows on the bundle of orthonormal frames, which moreover preserve $\omega$.
    It follows  that any local Killing field $X$ on $U \subset M$ lifts to a vector field $\hat{X}$ on $\hat{U}:=\pi^{-1}(M)$, satisfying
    $L_{\hat{X}}\omega=0$. 
     Conversely, local vector fields of $\hm$ such that $L_{\hat{X}}\omega=0$, that we will henceforth call
     {\it  $\omega$-Killing fields}, 
     commute with the right $\OO(1,n)$-action on $\hm$.  Hence, they induce local vector fields $X$ on $M$, which are Killing because 
     their local flow maps orthonormal frames to orthonormal frames.  It is easily checked that a $\omega$-vector field
      which is everywhere tangent to the fibers of the bundle $\hm \to M$ must be trivial.  As a consequence, there is a one-to-one 
      correspondence between local $\omega$-Killing fields on $\hm$ and local Killing fields on $M$.  We will use this correspondence 
      all along the paper.
      The same remark holds for local isometries.

   Observe finally that if $\hat{X}$ is a $\omega$-Killing field on $\hm$  (namely $L_{\hat{X}} \omega=0$), then the local flow of $\hat{X}$ preserves 
   $\kg$, hence $\hat{X}$ belongs to $\Ker(D_{\hx}\kg)$ at each point. 
   The integrability theorem below says that the converse is 
   true on the set $\hmint$.

   \begin{theoreme}[Integrability theorem]
    \label{thm.integrabilite}
    Let $(M,g)$ be a Lorentz manifold. Let $ \mint \subset M$ denote the integrability locus. 
      { For every $\hx \in \hmint$, and every $\xi \in \Ker(D_{\hx}\kg)$, there exists a local $\omega$-Killing field $\hat{X}$
     around $\hx$ such that 
     $\hat{X}(\hx)=\xi$.}

    \end{theoreme}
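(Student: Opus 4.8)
The plan is to transport the whole problem to the orthonormal frame bundle $\hm$, where the canonical Cartan connection $\omega$ lives, and there recast it as a Frobenius-type integrability problem for a first-order system governed by the curvature. Recall from Section \ref{sec.integrability.theorem} that local $\omega$-Killing fields on $\hm$ correspond bijectively to local Killing fields on $M$, and that, $\omega$ being a parallelism of $\hm$, an $\omega$-Killing field is determined by its value at a single point. So it suffices, given $\xi \in \Ker(D_{\hx}\kg) \subset T_{\hx}\hm$, to produce a vector field $\hat X$ near $\hx$ with $\hat X(\hx) = \xi$ and $L_{\hat X}\omega = 0$. Writing $\hat X = \omega^{-1}(f)$ for a function $f$ defined near $\hx$ with values in $\lieg_0$ (so that $f(\hx) = \omega_{\hx}(\xi)$), and using the structure equation $d\omega(\cdot,\cdot) = -[\omega(\cdot),\omega(\cdot)] + K(\cdot,\cdot)$ together with the fact that the constant fields $\omega^{-1}(b)$ have constant $\omega$-components, one checks that $L_{\hat X}\omega = 0$ is equivalent to
\[
 df = \Phi_{\hy}(f), \qquad \Phi_{\hy}(a)(b) := [a,b] - \kappa_{\hy}(\overline{a} \wedge \overline{b}),
\]
where $\overline{a}$ denotes the image of $a$ under the canonical projection $\lieg_0 \to \RR^{n+1}$ and $\kappa_{\hy}$ is the curvature map at $\hy$. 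Since the right-hand side is affine in $f$ and depends on the point only through the curvature, all first derivatives of $f$ are prescribed algebraically: this is a ``complete'' first-order system.

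To solve it with the given initial value I would apply the Frobenius theorem on the auxiliary manifold $\hm \times \lieg_0$, equipped with the $\dim(\lieg_0)$-dimensional distribution $\mathcal{E}$ whose fibre at $(\hy,a)$ is $\{ (\omega_{\hy}^{-1}(b), \Phi_{\hy}(a)(b)) \ : \ b \in \lieg_0 \}$. As $\dim(\lieg_0) = \dim(\hm)$ and $\mathcal{E}$ projects isomorphically onto $T\hm$, every integral leaf is locally the graph of a function $f$ solving $df = \Phi(f)$, and a leaf through $(\hx,\omega_{\hx}(\xi))$ yields the desired $\hat X$. But $\mathcal{E}$ is not involutive in general: computing $[\mathcal{E},\mathcal{E}]$ with the structure equations brings in the first covariant derivative $\mathcal{D}\kappa$, and the bracket closes only along the locus where $a$ infinitesimally annihilates $\kappa$; iterating, one is forced onto the subset
\[
 \mathcal{Z} := \{ (\hy,a) \ :\ \omega_{\hy}^{-1}(a) \in \Ker(D_{\hy}\kg) \},
\]
i.e. the pairs for which $a$ annihilates $\kappa$ and all its covariant derivatives up to order $\dim(\lieg_0)$. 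The truncation at $\dim(\lieg_0)$ is precisely what makes the definition of $\kg$ the right one: the corresponding decreasing chain of subspaces of $\lieg_0$ stabilises within $\dim(\lieg_0)$ steps, so no genuinely new constraint appears at the next order.

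The core of the argument is then to show, over the integrability locus $\hmint$, that $\mathcal{E}$ is tangent to $\mathcal{Z}$ and that $\mathcal{E} \cap T\mathcal{Z}$ is an involutive distribution of rank $\dim(\hm)$. This is where the hypothesis $\hx \in \hmint$ is indispensable: on $\hmint$ the rank of $D\kg$ is locally constant, so $\Ker(D\kg)$ is a smooth subbundle of $T\hmint$, $\mathcal{Z}$ is a smooth submanifold of $\hmint \times \lieg_0$, and all the distributions in sight are smooth. Both the tangency and the involutivity reduce to curvature identities obtained by differentiating the Bianchi identities of the Cartan curvature and invoking the stabilisation just mentioned; heuristically, if $a$ infinitesimally preserves $\kg$ at $\hy$, then its infinitesimal variation $\Phi_{\hy}(a)(b)$ again infinitesimally preserves $\kg$ at the point obtained by moving in the direction $\omega^{-1}(b)$. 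Granting this, the Frobenius theorem provides, through $(\hx,\omega_{\hx}(\xi))$ --- which lies in $\mathcal{Z}$ exactly because $\xi \in \Ker(D_{\hx}\kg)$ --- an integral leaf that is the graph of a solution $f$; then $\hat X = \omega^{-1}(f)$ is a local $\omega$-Killing field with $\hat X(\hx) = \xi$, and it descends to the required local Killing field on $M$.

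I expect the main obstacle to be precisely this tangency/involutivity verification on $\mathcal{Z}$: it is the only step where the constant-rank assumption genuinely enters, and it rests on a rather delicate bookkeeping of the differentiated structure and Bianchi identities, combined with the stabilisation of the successive prolongations (an argument in the style of Singer's theorem for locally homogeneous spaces). This is the mechanism underlying Gromov's theory of rigid geometric structures \cite{gromov}; I would carry it out entirely within the Cartan-connection formalism of Section \ref{sec.generalized}, which makes the $\Oun$-equivariance automatic and keeps the computation tractable.
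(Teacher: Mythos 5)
You should first be aware that the paper does not prove Theorem \ref{thm.integrabilite} at all: it is imported from the literature, with the smooth case attributed to \cite{vincent} (see also Annex A of \cite{frances.open}), after \cite{gromov} and \cite{melnick}. Your plan is exactly the strategy of those references: encode a candidate Killing field as $\hat X=\omega^{-1}(f)$, use the structure equation to rewrite $L_{\hat X}\omega=0$ as the complete first-order system $df=\Phi(f)$ (your computation of $\Phi$ is correct), and integrate it by Frobenius on $\hm\times\lieg_0$ after restricting to the locus $\mathcal{Z}$ of ``Killing generators'' cut out by $\Ker(D\kg)$, the constant-rank hypothesis defining $\hmint$ guaranteeing that everything in sight is a smooth bundle/submanifold.

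The gap is that the decisive step is only asserted, not carried out: you write ``Granting this'' at precisely the point where the theorem lives. Showing that over $\hmint$ the distribution $\mathcal{E}$ is tangent to $\mathcal{Z}$ and that $\mathcal{E}\cap T\mathcal{Z}$ is an involutive distribution of rank $\dim\lieg_0$ is not routine bookkeeping: it requires the explicit recursive identities relating $D({\mathcal D}^r\kappa)$ to ${\mathcal D}^{r+1}\kappa$, the equivariance/Bianchi-type relations, and — crucially — an argument that truncating $\kg$ at order $\dim(\lieg_0)$ suffices. On this last point your justification is too quick: the pointwise chain $\Ker D\kappa^{(0)}\supseteq\Ker D\kappa^{(1)}\supseteq\cdots$ is nested, but an equality at one order does not by itself persist at the next (a plateau can be followed by a further drop); the standard proofs make the stabilization propagate only by combining it with the locally-constant-rank hypothesis on an open set, and this propagation is part of the same tangency/involutivity verification you defer. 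Since everything else in your outline is formal, the proposal as written is a correct proof plan in the style of \cite{vincent}, but not yet a proof; to complete it you would essentially have to reproduce the computations of \cite{vincent} or of Annex A of \cite{frances.open}, or else simply cite them as the paper does.
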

  
  An akin integrability result for Killing fields of finite order  first appeared in the seminal paper \cite{gromov}. 
   The results were recast in the framework of real analytic Cartan geometry in \cite{melnick}, and  \cite{vincent} provides
   an alterative approach for smooth Cartan geometries, 
  leading to the statement of Theorem \ref{thm.integrabilite} (see also Annex A of \cite{frances.open}, which elaborates 
  slightly on the statement proved in \cite{vincent}).  
  
  \subsubsection{Connected components of $\mint$, and ``analytic continuation'' of Killing fields}
  \label{sec.analyticcont}
  A first important consequence of Theorem \ref{thm.integrabilite} is that on the set $\mint$, Killing fields 
   have a particularly nice behavior.  To see that, let us recall that 
   for any $x \in M$, there is a good notion of {\it local Killing algebra at $x$}. 
  Indeed, there exists  $U$ a small enough
   neighborhood of $x$, such that for every neighborhood $V \subset U$ containing $x$, any  Killing field on $V$ will be the
    restriction of a Killing field of $U$.  We then call $\kiloc(x)$ the (abstract) Lie algebra $\kil(U)$ of all Killing 
    fields defined on $U$.  Theorem \ref{thm.integrabilite} shows that if $\calm$ is a connected component of $\mint$,
     then the dimension of $\kiloc(x)$ does not depend of $x \in \calm$, because this
     dimension is just the corank of $\kg$
      on $\hat{\calm}$. As a consequence, the local Killing fields on $\calm$ 
      behave much like Killing fields of a real analytic
      metric. In particular, given a Killing field $X$ defined on some open set $U \subset \calm$, and given a  path $\gamma$ starting 
      at a point of $U$, one can perform the ``analytic continuation'' of $X$ along $\gamma$. It follows that if $U$ is a $1$-connected
      open subet of $\calm$, and if $X$ is a Killing field defined on $V \subset U$, then there exists a Killing field defined 
      on $U$, whose restriction to $V$ is $X$.  
      Those nice properties will often be used implicitely in the sequel. 
  
  \subsubsection{Isotropy algebra and stabilizers of the generalized curvature}
  Another  corollary of  Theorem \ref{thm.integrabilite}, which will be of particular interest to 
  prove Theorem \ref{thm.exponential.killing}, is the following:
  
  \begin{corollaire}
  \label{coro.stabilizer}
   For every point $x \in \mint$, the isotropy algebra $\liei_x$ is isomorphic to the Lie algebra $\lies_x$
    of the stabilizer of $\kg(\hx)$ in $\OO(1,n)$, for any $\hx \in \hm$ in the fiber of $x$.
  \end{corollaire}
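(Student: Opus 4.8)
The plan is to pin $\liei_x$ down inside $\lieg_0$ by evaluating the lifted Killing fields at a fixed orthonormal frame $\hx$ over $x$, and then to recognise the resulting subspace as $\lies_x$ using the Integrability Theorem \ref{thm.integrabilite} together with the $\OO(1,n)$-equivariance of $\kg$.

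First I would set up an \emph{evaluation map}. Recalling that local Killing fields of $(M,g)$ correspond to local $\omega$-Killing fields $\hat{X}$ on $\hm$, and that an $\omega$-Killing field vanishing at a point of $\hm$ is trivial near that point (its flow fixes that point and preserves the coframe $\omega$, hence has trivial differential there, hence is locally the identity --- the standard rigidity of Cartan connections, cf. \cite{vincent}, \cite{melnick}), the map
\[ \Psi \colon \kiloc(x) \longrightarrow \lieg_0, \qquad \Psi(X) = \omega_{\hx}(\hat{X}(\hx)), \]
is injective. Since an $\omega$-Killing field preserves $\kg$, one has $\hat{X}(\hx) \in \Ker(D_{\hx}\kg)$ for every $X$; conversely, as $x \in \mint$ (so $\hx \in \hmint$), Theorem \ref{thm.integrabilite} says every vector in $\Ker(D_{\hx}\kg)$ is the value at $\hx$ of some local $\omega$-Killing field. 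Hence $\Psi$ is a linear isomorphism of $\kiloc(x)$ onto $\omega_{\hx}(\Ker(D_{\hx}\kg))$.

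Next I would locate the isotropy. A Killing field $X$ vanishes at $x$ exactly when $\hat{X}(\hx)$ is vertical, i.e.\ when $\Psi(X) \in \oun \subset \lieg_0$; so $\Psi$ restricts to a linear isomorphism of $\liei_x$ onto $\omega_{\hx}(\Ker(D_{\hx}\kg)) \cap \oun$. For $B \in \oun$, with $B^{*}$ the associated fundamental vertical field and $\rho$ the $\OO(1,n)$-action on ${\mathcal W}_{\kg}$, equivariance of $\kg$ gives $D_{\hx}\kg(B^{*}(\hx)) = -\,\rho_{*}(B)\cdot\kg(\hx)$, which vanishes precisely when $B$ lies in the Lie algebra of $\Stab_{\OO(1,n)}(\kg(\hx))$, that is, when $B \in \lies_x$. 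Therefore $\omega_{\hx}(\Ker(D_{\hx}\kg)) \cap \oun = \lies_x$, and $\Psi$ yields a linear isomorphism $\liei_x \cong \lies_x$.

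Finally, for the Lie algebra structure: the lift $X \mapsto \hat{X}$ is a Lie algebra isomorphism onto the $\omega$-Killing fields, so it suffices that $\omega_{\hx}$ respects brackets on the values at $\hx$ of two $\omega$-Killing fields that are vertical at $\hx$. For $\omega$-Killing fields the structure equation yields $\omega([\hat{X},\hat{Y}]) = K(\hat{X},\hat{Y}) - [\omega(\hat{X}),\omega(\hat{Y})]$, and the curvature term $K_{\hx}(\hat{X}(\hx),\hat{Y}(\hx))$ vanishes because $K$ kills any pair with a vertical argument; thus $\Psi([X,Y]) = -[\Psi(X),\Psi(Y)]$ on $\liei_x$, so $-\Psi$ is the desired Lie algebra isomorphism. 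I expect the only genuinely delicate points to be this vanishing of the curvature correction (which is exactly where the property that the canonical Cartan connection has curvature tangential to the fibres enters) and the rigidity statement that an $\omega$-Killing field is determined by its value at a single point; everything else is bookkeeping with conventions.
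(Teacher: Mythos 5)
Your proof is correct and takes essentially the same route as the paper: evaluate lifted $\omega$-Killing fields at $\hx$ via $\omega_{\hx}$, obtain injectivity from the rigidity of the parallelism and surjectivity onto $\lies_x$ from Theorem \ref{thm.integrabilite} combined with the $\OO(1,n)$-equivariance of $\kg$. The only difference is that you make explicit two points the paper asserts without comment, namely that the image of the evaluation map lies in $\lies_x$ (the computation of the vertical derivative of $\kg$) and that it respects brackets up to the harmless sign (structure equation plus the vanishing of $K$ on vertical vectors).
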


  \begin{proof}
   Every element $X \in \liei_x$ defines a local Killing field in a neighborhood of $x$, that can be lifted to
    a $\omega$-Killing field $\hat{X}$ on a neighborhood of $\hx$. Observe that  $\hat{X}$ is tangent to the fiber of
     $\hx$ since $X$ vanishes at $x$.  The map $\rho: X \mapsto \omega_{\hx}(\hat{X}(\hx))$ yields a Lie algebra morphism
      from $\liei_x$ to $\lies_x$.  The map is injective since two local $\omega$-Killing fields coinciding 
       at $\hx$ must coincide on an open subset around $\hx$.  The map is onto, because if $Y \in \lies_x$,
        and if $\xi =\omega_{\hx}^{-1}(Y)$, then $\xi \in \Ker(D_{\hx}\kg)$. Theorem \ref{thm.integrabilite} then
         provides a local Killing field around $x$, such that $\hat{X}(\hx))=\xi$. In particular $X(x)=0$ so that 
         $X \in \liei_x$.
  \end{proof}

\subsection{Infinite limit set implies semisimple isotropy} 
\label{sec.infinite.semisimple}
We can now proceed to the proof of Theorem \ref{thm.exponential.killing}. In light of Corollary \ref{coro.stabilizer},
 it is enough to show that for every $\hx \in \hmint$, the stabilizer of $\kg(\hx)$ in $\OO(1,n)$
  contains a subgroup isomorphic to $\SO^o(1,k)$, for $k \geq 2$.  
  
We recall the bounded section $\sigma: M \to \hm$, thanks to which we built the derivative cocycle
$\cald: M \times \Iso(M,g) \to \OO(1,n) $, with $\cald(x,f)=\cald_xf$ 
(see Section \ref{sec.derivative.cocycle}). 
We fix $x \in M$ in the sequel.
Because $\sigma(M)$ is included in a compact subset of $\hm$,
 we have that $\kg(\sigma(M))$ is also contained in a compact subset ${\mathcal K}$ of ${\mathcal W}_{\kg}$.
 Let $f \in \Iso(M,g)$. By the definition of $\dd_xf$, one has the relation 
 $$ f(\sigma(x)).(\dd_xf)^{-1}=\sigma(f(x)).$$
 This yields
 $$ \kg(\sigma(f(x)))=\dd_xf.\kg(\sigma(x)),$$
 and we infer that $\dd_xf.\kg(\sigma(x))$ belongs to the compact set ${\mathcal K}$ for every $f \in \Iso(M,g)$.
 
 This leads to the following general notion of stability.
 Let  $n \geq 2$, and $\rho: \OO(1,n) \to \GL(V)$ be a finite dimensional representation. 
 If ${\mathcal G}$ is a subset of $\OO(1,n)$, and
  $v \in V$ is a vector, we say that $v$ is stable under ${\mathcal G}$, 
  if $\rho({\mathcal G}).v$ is a bounded subset of $V$.
  The previous discussion shows that $\kg(\sigma(x)) \in {\mathcal W}_{\kg}$ is stable under 
   the set $\dd_x(\Iso(M,g))$. 
 
 In full generality, we wonder if  a vector  $v \in V$ is stable under 
 a set ${\mathcal G} \subset \OO(1,n)$ having a big limit set $\Lambda_{\mathcal G}$ in $\partial \HH^n$ (see 
 Section \ref{sec.limit.coarse}), 
   then $v$ is fixed by a big subgroup of $\OO(1,n)$. 
   
 To make things a little bit precise, we see $ \partial \HH^n$ as the set of lightlike directions 
 in $\RR^{1,n}$, and we introduce the  {\it linear hull}  of the limit set $\Lambda_{\mathcal G}$, denoted 
 $E_{\Lambda_{\mathcal G}}$,  as 
 the linear
  span of $\Lambda_{\mathcal G}$ in $\RR^{1,n}$.  The dimension of $E_{\Lambda_{\mathcal G}}$ will be denoted 
  $d_{\Lambda_{\mathcal G}}$. We can now state:

\begin{proposition}[Big limit set implies big stabilizer]
  \label{prop.stability}
  Let $\rho : \OO(1,n) \to \GL(V)$ be a  finite dimensional representation.
  Let ${\mathcal G} \subset \OO(1,n)$ such that the limit set $\Lambda_{\mathcal G} \subset \partial \HH^n$
   satisfies the property $d_{\Lambda_{\mathcal G}} \geq 3$.  
   Then for every vector $v_0 \in V$ which is stable under ${\mathcal G}$, the stabilizer of $v_0$ in $\OO(1,n)$
    contains a subgroup isomorphic to
   $\SO^o(1,d_{\Lambda_{\mathcal G}}-1)$. 
   \end{proposition}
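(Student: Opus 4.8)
The plan is to reduce the statement, via the integrability/weight-space dictionary, to an elementary question about representations of $\PSL(2,\RR)$. Write $d=d_{\Lambda_{\mathcal G}}$ and $E=E_{\Lambda_{\mathcal G}}\subset\RR^{1,n}$. First I would observe that $E$ is a nondegenerate subspace of signature $(1,d-1)$: since $d\geq3$, $\Lambda_{\mathcal G}$ contains two distinct lightlike directions, whose span is a Lorentzian plane $P_0\subset E$; as $P_0$ is nondegenerate one has $E=P_0\oplus(E\cap P_0^\perp)$ with $E\cap P_0^\perp$ positive definite, so $E$ has signature $(1,d-1)$. Hence the subgroup $\SO^o(E)\subset\OO(1,n)$ of isometries of $\RR^{1,n}$ acting trivially on $E^\perp$ is isomorphic to $\SO^o(1,d-1)$, and it suffices to prove that $v_0$ is fixed by $\SO^o(E)$, i.e. that $v_0$ lies in the subspace $V^{\SO^o(E)}$ of $\SO^o(E)$-fixed vectors.

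The heart of the argument is a local statement attached to each boundary point of $\Lambda_{\mathcal G}$: for every $\xi\in\Lambda_{\mathcal G}$, the vector $v_0$ lies in a distinguished $\Stab(\xi)$-invariant subspace $V^{\leq0}_\xi$. Fix $\xi\in\Lambda_{\mathcal G}$ and pick $(g_k)$ in $\mathcal G$ with $g_k^{-1}.o\to\xi$; since $g_k^{-1}.o$ converges to a point of $\partial\HH^n$, the sequence $(g_k)$ leaves every compact subset of $\OO(1,n)$. Write a Cartan decomposition $g_k=m_ka_{t_k}l_k$ with $m_k,l_k$ in a maximal compact $K$ fixing $o$, $a_t=\operatorname{diag}(e^{t},I_{n-1},e^{-t})$ and $t_k\geq0$; then $t_k\to+\infty$ (else $(g_k)$ would be bounded) and, along a subsequence, $l_k\to l_\infty$, $m_k\to m_\infty$ in $K$, so that $\xi=l_\infty^{-1}.[e_{n+1}]$. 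Decompose $V=\bigoplus_jV_j$ into weight spaces for the one-parameter group $\rho(a_t)$ (eigenvalues $e^{jt}$). Since $\rho(g_k)v_0=\rho(m_k)\rho(a_{t_k})\rho(l_k)v_0$ stays bounded and $\rho(m_k)$ is bounded, the $V_j$-component of $\rho(a_{t_k})\rho(l_k)v_0$, i.e. $e^{jt_k}$ times the $V_j$-component of $\rho(l_k)v_0$, stays bounded; as $t_k\to+\infty$ and $\rho(l_k)v_0\to\rho(l_\infty)v_0$, this forces the $V_j$-component of $\rho(l_\infty)v_0$ to vanish for all $j>0$. Hence $\rho(l_\infty)v_0\in V^{\leq0}:=\bigoplus_{j\leq0}V_j$, i.e. $v_0\in V^{\leq0}_\xi:=\rho(l_\infty^{-1})V^{\leq0}$. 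One checks that $V^{\leq0}_\xi$ is independent of all choices (the $M$-ambiguity in $l_\infty$, and the torus ending at $\xi$, which varies by conjugation by $N_\xi$, both preserve $V^{\leq0}$), and is $\Stab(\xi)$-invariant. Thus $v_0\in\bigcap_{\xi\in\Lambda_{\mathcal G}}V^{\leq0}_\xi$.

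It then remains to prove the purely representation-theoretic inclusion $\bigcap_{\xi\in\Lambda_{\mathcal G}}V^{\leq0}_\xi\subseteq V^{\SO^o(E)}$. I would reduce this to triples: if $T=\{\xi,\eta,\zeta\}\subseteq\Lambda_{\mathcal G}$ spans a $3$-dimensional (hence Lorentzian) subspace $E_T$, one may choose the tori above to lie inside $\SO^o(E_T)\cong\SO^o(1,2)\cong\PSL(2,\RR)$, so that $V^{\leq0}_\xi\cap V^{\leq0}_\eta\cap V^{\leq0}_\zeta$ is computed inside the restriction $V|_{\SO^o(E_T)}$. Decomposing this (completely reducible) restriction into irreducibles $\operatorname{Sym}^{2m}(\RR^2)$, on the trivial summands ($m=0$) each half-space is everything, while on $\operatorname{Sym}^{2m}$ with $m\geq1$ a direct inspection—using weights for the torus whose fixed points are two of $\xi,\eta,\zeta$, intersected with the third half-space, and the mere fact that the three points of $\RP^1$ are distinct—gives triple intersection $0$; hence $V^{\leq0}_\xi\cap V^{\leq0}_\eta\cap V^{\leq0}_\zeta=V^{\SO^o(E_T)}$. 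Intersecting over all such triples and noting that the subgroups $\SO^o(E_T)$ generate $\SO^o(E)$—because the bivectors $v\wedge w$, for $v,w$ spanning lightlike directions of $\Lambda_{\mathcal G}$, span $\wedge^2E\cong\oo(E)$ since $\Lambda_{\mathcal G}$ spans $E$—one gets $v_0\in\bigcap_TV^{\SO^o(E_T)}=V^{\SO^o(E)}$, which finishes the proof.

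The hard part is this last paragraph: showing that three distinct ``non-positive-weight half-spaces'' $V^{\leq0}_\xi$ meet exactly in the trivial subrepresentation. The reduction to $\PSL(2,\RR)$ makes it elementary in principle, but one must carry out the $\operatorname{Sym}^{2m}$ decomposition carefully, verify the linear-algebra fact in each $\operatorname{Sym}^{2m}$ with $m\geq1$, and also justify the (standard) bivector argument that the $\SO^o(E_T)$ generate $\SO^o(E)$. The remaining steps—that $E$ is Lorentzian, that $(g_k)$ and its Cartan projection escape to infinity, and that boundedness of $\rho(g_k)v_0$ kills the positive weights of $\rho(l_\infty)v_0$—are routine.
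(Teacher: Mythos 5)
Your proposal is correct, and although it shares the paper's overall skeleton — each limit point confines $v_0$ to a ``non-positive weight'' subspace, three distinct limit points force invariance under the $\SO^o(1,2)$ of their span, and these copies are then assembled into $\SO^o(E_{\Lambda_{\mathcal G}})$ — the execution differs at every step. Where you attach to each $\xi\in\Lambda_{\mathcal G}$ the subspace $V^{\leq 0}_\xi$ via a Cartan decomposition $g_k=m_ka_{t_k}l_k$ and the weight decomposition under $\rho(a_t)$, the paper first replaces ${\mathcal G}$ by a subset of $AN$ (Iwasawa) and runs a dynamical case analysis (Lemma \ref{lem.dynamical}); the two routes produce the same object, and your triple-intersection lemma is exactly the content of Corollary \ref{coro.algebrique}, whose $n_i.V^-$ are your $V^{\leq 0}_\xi$ at the three boundary points. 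For that key $\PSL(2,\RR)$ step the paper computes explicitly with the matrices of $\rho(a^s)$ and $\rho(n^t)$ in a weight basis and argues by cases (Lemma \ref{lem.stability.dim2}), whereas your argument — $V^{\leq 0}_{[w]}\cap\operatorname{Sym}^{2m}(\RR^2)=w^m\operatorname{Sym}^m(\RR^2)$, and a nonzero binary form of degree $2m$ cannot be divisible by $w_\xi^m w_\eta^m w_\zeta^m$ for three pairwise independent linear forms — is shorter and more conceptual. Finally, the paper passes from triples to $\SO^o(E_{\Lambda_{\mathcal G}})$ by induction on $d_{\Lambda_{\mathcal G}}$, while you generate $\oo(E)$ directly from the subalgebras $\oo(E_T)\cong\wedge^2E_T$, which works because $\Lambda_{\mathcal G}$ spans $E$ (and any three distinct lightlike lines are automatically linearly independent, so every triple spans a Lorentzian $3$-space). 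The one point that genuinely requires the care you flag is the well-definedness of $V^{\leq 0}_\xi$, i.e.\ its independence of the hyperbolic one-parameter subgroup with repelling point $\xi$; this holds because $V^{\leq 0}$ is preserved by $M$ and by the unipotent radical $N_\xi$ (which lowers weights), and it is precisely what allows you to compute with tori inside $\SO^o(E_T)$. With these verifications written out, your argument gives a complete and somewhat more economical proof of the proposition.
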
  
 
If we take this proposition for granted, then Theorem \ref{thm.exponential.killing} follows easily.
 Indeed, the asumption  of Theorem \ref{thm.exponential.killing} implies that $\dig(x) \geq 3$. Proposition \ref{prop.stability}, applied for
   the representation $\rho: \OO(1,n) \to \GL({\mathcal W}_{\kg})$, and ${\mathcal G}=\dd_x(\Iso(M,g))$
    then says that the stabilizer of $\kg(\sigma(x))$ in $\OO(1,n)$ contains a subgroup isomorphic to 
    $\SO^o(1,\dig(x)-1)$.  We then conclude thanks to Corollary \ref{coro.stabilizer}.

\subsection{Proof of Proposition \ref{prop.stability}}
The remaining of this section is devoted to the proof of Proposition \ref{prop.stability}.
 \subsubsection{First easy reduction, and dynamical properties}
 \label{sec.simplifications}
 By hypothesis, $n\geq 2$, hence $\oo(1,n)$ is simple. The representation $\rho: \OO(1,n) \to \GL(V)$  is thus a direct sum of irreducible representations.  It is enough to prove proposition \ref{prop.stability}
  for irreducible representations $\rho$. To avoid cumbersome notations, we will denote in the following $g.v$
   instead of $\rho(g).v$ (the image of the vector $v \in V$ under the linear transformation $\rho(g)$).
 
 Let us consider a subset ${\mathcal G} \subset \OO(1,n)$, and recall the notion of limit set 
 $\lambda_{\mathcal G}$ introduced in Section \ref{sec.limit.coarse}.
 If ${\mathcal K} \subset \OO(1,n)$ is a compact subset, and if $k: {\mathcal G} \to {\mathcal K}$, we can define a new set
  ${\mathcal G}^{'}:=\{k(g)g \ | \ g \in {\mathcal G}\}$.  If for some point $\nu \in \HH^n$, a sequence $(g_k)$ of $\OO(1,n)$
   satisfies $g_k^{-1}. \nu \to p$, where $p \in \partial \HH^n$, then for 
   any compact subset $C \subset \HH^n$, we have $g_k^{-1}.C \to p$ 
    (the limit has to be understood for the Hausdorf distance between compact subsets of $\overline{\HH}^n$).
  Thus it is clear that ${\mathcal G}$ and ${\mathcal G}^{'}$ have the same limit set, and if   
  $\rho : \OO(1,n) \to \GL(V)$ is a  finite dimensional representation, then stable vectors for ${\mathcal G}$ 
  and ${\mathcal G}^{'}$ coincide. 
  It follows from Iwasawa decomposition  $\OO(1,n)=KAN$ 
  (see the proof of Proposition \ref{prop.growth.derivatives}) that we may assume, 
  to prove Proposition \ref{prop.stability}, that ${\mathcal G} \subset AN$. We will do this in the sequel.
  
  We recall that in the upper-half 
  space model $\HH^n=\RR_+^* \times \RR^{n-1}$, elements of $A$ act as homothetic transformations
   $a^s: (t,x) \mapsto (e^{s}t,e^{s}x)$, $s \in \RR$. The group $N$ is abelian, isomorphic to $\RR^{n-1}$
    and it acts as $n(v): (t,x) \mapsto (t,x+v)$, $v \in \RR^{n-1}$.  The dynamics of $a^s$ on $\overline{\HH}^n$
     has two distinct fixed points $p^+$ and $p^-$ on $\partial \HH^n$, and for every  $p \in \HH^n$,
      $\lim_{s \to + \infty} a^s.p=p^+$ (resp. $\lim_{s \to - \infty} a^s.p=p^-$).  The group $N$ fixes $p^+$
       and acts simply transitively on $\partial \HH^n \setminus \{ p^+ \}$.

  We can now formulate the following dynamical lemma.
  
  \begin{lemme}
  \label{lem.dynamical}
 Let us consider an unbounded sequence  $g_k=a^{s_k}n_k$ of $AN \subset \OO(1,n)$, and let us pick a point $o \in \HH^n$.  
 After considering maybe a subsequence, we are in one of the following four cases.
 
 \begin{enumerate}
  \item {The sequence $(s_k)$ converges to $- \infty$.  Then $g_k^{-1}.o \to p^+$.}
  \item { The sequence $(s_k)$ converges in $\RR$ and  $n_k \to  \infty$ in $\NN$.  Then  $g_k^{-1}.o \to p^{+}$.}
  \item { The sequence $(s_k)$ tends to $+ \infty$ and $n_k \to  \infty$ in $N$.  Then $g_k^{-1}.o$ tends to $p^+$.}
  \item{The sequence $(s_k)$ tends to $+ \infty$ and   $(n_k)$ tends to $n_{\infty} \in N$. Then $g_k^{-1}.o$ tends to $p=n_{\infty}^{-1}.p^- \not = p^+$.}

 \end{enumerate}

 \end{lemme}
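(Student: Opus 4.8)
The plan is to compute $g_k^{-1}.o$ explicitly in the upper half-space model and simply read off the four limiting behaviours. The only preliminary input I need concerns the topology of the closed ball $\overline{\HH}^n$: under the standard identification of $\overline{\HH}^n\setminus\{p^+\}$ with the closed half-space $\{(t,x)\ |\ t\geq 0\}$ (recall $p^+=\infty$ and $p^-=(0,0)\in\RR^{n-1}$), the space $\overline{\HH}^n$ is the one-point compactification of that half-space. Consequently a sequence $(t_k,x_k)\in\HH^n$ converges to $p^+$ precisely when $t_k^2+|x_k|^2\to+\infty$ (i.e.\ it leaves every bounded subset of the half-space), and it converges to a finite boundary point $y\in\RR^{n-1}$ precisely when $t_k\to 0$ and $x_k\to y$. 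These two criteria drive the whole argument.

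Writing $n_k=n(v_k)$ with $v_k\in\RR^{n-1}$ and fixing $o=(t_0,x_0)\in\HH^n$, one has $g_k^{-1}=n(-v_k)a^{-s_k}$, hence
\[
 g_k^{-1}.o=\bigl(e^{-s_k}t_0,\ e^{-s_k}x_0-v_k\bigr).
\]
Next I would justify that, after passing to a subsequence, an unbounded sequence of $AN$ necessarily falls into one of the four listed configurations: extract a subsequence so that $(s_k)$ converges in $[-\infty,+\infty]$; if the limit is $-\infty$ we are in case $(1)$; if it is finite, unboundedness of $(g_k)$ forces $|v_k|\to+\infty$, which is case $(2)$; if it is $+\infty$, then either some subsequence has $|v_k|\to+\infty$, which is case $(3)$, or $(v_k)$ is bounded and a further subsequence converges to some $v_\infty$, which is case $(4)$.

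It then remains to feed each case into the displayed formula and apply the two convergence criteria. In case $(1)$ the first coordinate $e^{-s_k}t_0\to+\infty$, so $g_k^{-1}.o\to p^+$. In case $(2)$ the first coordinate stays bounded while $|e^{-s_k}x_0-v_k|\to+\infty$, so again $g_k^{-1}.o\to p^+$. In case $(3)$ one has $e^{-s_k}t_0\to 0$ but $|e^{-s_k}x_0-v_k|\to+\infty$ (since $|v_k|\to+\infty$ and $e^{-s_k}x_0\to 0$), hence $t_k^2+|x_k|^2\to+\infty$ and $g_k^{-1}.o\to p^+$. Finally, in case $(4)$, $e^{-s_k}t_0\to 0$ and $e^{-s_k}x_0-v_k\to -v_\infty$, so $g_k^{-1}.o$ tends to the finite boundary point $-v_\infty\in\RR^{n-1}$; since $N$ acts on $\partial\HH^n\setminus\{p^+\}\cong\RR^{n-1}$ by translations and $p^-=0$, this limit equals $n(-v_\infty).p^-=n_\infty^{-1}.p^-$, which is distinct from $p^+$. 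The one genuinely delicate point is the apparent tension in case $(3)$ between $t_k\to 0$ and convergence to $p^+$ rather than to a finite boundary point; the precise topological characterization of convergence to $p^+$ in the first paragraph is exactly what resolves it. Beyond this, the proof is routine bookkeeping.
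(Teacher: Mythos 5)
Your proof is correct: the paper states Lemma \ref{lem.dynamical} without proof, treating it as a routine consequence of the upper half-space description of $A$ and $N$ given just before it, and your explicit computation $g_k^{-1}.o=(e^{-s_k}t_0,\,e^{-s_k}x_0-v_k)$, combined with the characterization of convergence to $p^+$ as escape from every bounded subset of the closed half-space, is exactly that intended verification (and it matches how the lemma is used later, e.g.\ in the proof of Lemma \ref{lem.stability.dim2}). The only point to read carefully is your subsequence extraction: ``unbounded'' should be understood as ``leaving every compact subset of $\OO(1,n)$'' (or one first passes to such a subsequence), after which your trichotomy on $\lim s_k\in[-\infty,+\infty]$ together with boundedness or not of $(v_k)$ does exhaust the four cases.
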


 \subsubsection{Proof of Proposition \ref{prop.stability} for $n=2$}
\label{sec.dim2}
  We first do the proof of Proposition \ref{prop.stability} for a representation $\rho: \OO(1,2) \to \GL(V)$.  The Lie algebras 
  $\oo(1,2)$ and ${\mathfrak{sl}(2, \RR)}$ are isomorphic, and there is a 2-fold covering $\pi: \SL(2,\RR) \to \SO^o(1,2)$.
   Thus there exists a representation $\rho': \SL(2,\RR) \to \GL(V)$ such that $\rho'=\rho \circ \pi$. 
   If $V$ is $m$-dimensional, then irreducible finite 
   representations of 
   $\SL(2,\RR)$ occur from the natural action of $\SL(2,\RR)$ on homogeneous polynomials of degree $m-1$ in two variables. Here $m=2l+1$ 
   must be odd for $\rho'$ to induce a representation of $\SO^o(1,2)$.
   
   At the level of Lie algebras, let us introduce $H=\left( \begin{array}{cc}
      1 & 0 \\
      0 & -1
     \end{array} \right) , \ \ E= \left( \begin{array}{cc}
      0 & 1 \\
      0 & 0
     \end{array} \right).$
     
     We assume in the following that $l \geq 1$ (namely the representation is not $1$-dimensional).
     If $\overline{\rho} : {\mathfrak{sl}(2,\RR)} \to {\mathfrak{gl}(V)}$ is the induced representation, then 
     there
 is a suitable basis $e_1,\ldots,e_{2l+1}$ of $V$ where 
  
$$\overline{\rho}(H)=\left( \begin{array}{ccccc} 2l & & & & \\
 & 2l-2 & & & \\ & & \ddots & & \\
  & & & -2l+2 & \\
   & & & &-2l \end{array} \right), \ \ \  \overline{\rho}(E)=\left( \begin{array}{ccccc} 0 & 1 &0 & & \\
   & \ddots & 2&\ddots & \\ &  & \ddots & \ddots&0 \\
  & & & \ddots & 2l\\
   & & & &0 \end{array} \right).$$
   
   After identifying ${\mathfrak{sl}(2,\RR)} $ and $\oo(1,2)$ under the adjoint action, we see that if $n^t=e^{tE}$, then  
   
   $${\rho}(n^t)=\left( \begin{array}{ccccc} 1 & a_{12}t &a_{13}t^2 & \ldots & a_{1m}t^{m-1}\\
   0& 1 & a_{23}t&\ldots &a_{2m}t^{m-2} \\
   0& 0 & \ddots & \vdots& \vdots \\
  0&0 &0 &1  & a_{m-1,m}t\\
   0&0 &0 &0 &1 \end{array} \right), $$
  for  positive coefficients $a_{i,j}$ where  $1 \leq i \leq j \leq m=2l+1$.
 We also see that  the  vectors $v \in V$ which are stable under $a^s:=e^{sH}$, $s \geq 0$, are vectors in 
 $V^-={\operatorname{Span}}(e_{l+1}, \ldots , e_{2l+1})$. 
     
     
 
 We are now ready to prove the following lemma, which is a reformulation of Proposition 
 \ref{prop.stability} for $n=2$.
 \begin{lemme}
 \label{lem.stability.dim2}
  Let $\rho : \OO(1,2) \to \GL(V)$ be a finite dimensional representation.
  Let $o \in \HH^2$ be a point, and let 
   $(g_k), (g_k')$ and $(g_k'')$ three sequences in $\OO(1,2)$.  Assume that there exists
    three pairwise distinct points $p,p',p''$ in $\partial \HH^2$ such that $g_k^{-1}.o \to p$, $(g_k')^{-1}.o \to q$
     and $(g_k'')^{-1}.o \to r$.  Then any vector $v \in V$ which is stable for $(g_k), (g_k')$ and $(g_k'')$
      is actually $\SO^o(1,2)$-invariant.
 \end{lemme}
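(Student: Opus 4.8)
\emph{Reductions.} The plan is first to reduce to the case where $\rho$ restricted to $\SO^o(1,2)$ is irreducible. Decomposing $V$ into $\SO^o(1,2)$-irreducible summands, a vector is stable under a given sequence if and only if each of its components is (boundedness in a finite direct sum is componentwise), and $v$ is $\SO^o(1,2)$-invariant if and only if each of its components is; so I may assume $\rho|_{\SO^o(1,2)}$ irreducible. If this representation is trivial there is nothing to prove; otherwise, as recalled above, $\SO^o(1,2)\cong\PSL(2,\RR)$ forces $V$ to be identified with the space $\operatorname{Sym}^{2l}(\RR^2)$ of binary forms of even degree $2l$, $l\geq 1$, on which $\SO^o(1,2)$ acts through a lift to $\SL(2,\RR)$. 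Finally, writing $\OO(1,2)$ as the union of the four cosets of $\SO^o(1,2)$, I pass to a subsequence of each of $(g_k),(g_k'),(g_k'')$ lying in a single coset $\gamma\cdot\SO^o(1,2)$, write $g_k=\gamma h_k$ with $h_k\in\SO^o(1,2)$, and note that since $\rho(\gamma)$ is a fixed invertible operator, $v$ is stable under $(g_k)$ if and only if it is stable under $(h_k)$, while $g_k^{-1}.o=h_k^{-1}.(\gamma^{-1}.o)$; replacing the basepoint $o$ by $\gamma^{-1}.o$ (harmless below), I may thus assume the three sequences lie in $\SO^o(1,2)$.

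\emph{The stable subspace of one sequence.} The core of the argument is the following claim: if $(h_k)$ is a sequence of $\SO^o(1,2)$ with $h_k^{-1}.o\to p\in\partial\HH^2$ and $v\in V$ is stable under $(h_k)$, then $v$, viewed as a binary form, is divisible by $\xi_p^{\,l}$, where $\xi_p$ is a linear form spanning the line corresponding to $p$. To see this, fix a polar decomposition $\SO^o(1,2)=KAK$ with $A=\{a^s\}$ as in the proof of Proposition \ref{prop.growth.derivatives}, and write $h_k=m_k a^{s_k} l_k$ with $m_k,l_k\in K$. Passing to a subsequence, $m_k\to m_\infty$, $l_k\to l_\infty$, and $s_k\to+\infty$ (the sequence is unbounded since $h_k^{-1}.o$ leaves every compact subset of $\HH^2$). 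Then $h_k^{-1}.o=l_k^{-1}a^{-s_k}m_k^{-1}.o\to l_\infty^{-1}.p^-$, so $p=l_\infty^{-1}.p^-$. Stability of $v$ means $\rho(m_k)\rho(a^{s_k})\rho(l_k)v$ stays bounded; as $\rho(m_k)$ converges to the invertible operator $\rho(m_\infty)$ and $\rho(l_k)v\to\rho(l_\infty)v$, the sequence $\rho(a^{s_k})\bigl(\rho(l_\infty)v+\varepsilon_k\bigr)$ stays bounded for some $\varepsilon_k\to 0$. Decomposing $V$ into the (mutually transverse) weight spaces of $\overline{\rho}(H)$: a nonzero component of $\rho(l_\infty)v$ in a strictly positive weight $\mu$ would produce a term of norm of order $e^{\mu s_k}$, which the perturbation term, whose weight-$\mu$ component tends to $0$, cannot absorb. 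Hence $\rho(l_\infty)v$ lies in the sum $V^{\leq 0}$ of the non-positive weight spaces. Now $V^{\leq 0}$ is spanned by the monomials $f_1^a f_2^b$ with $a-b\leq 0$, i.e. $a\leq l$, which are exactly the forms divisible by $f_2^{\,l}$, and $f_2$ spans $p^-$; applying $\rho(l_\infty)^{-1}$ carries $f_2^{\,l}\cdot\operatorname{Sym}^l(\RR^2)$ onto $\xi_p^{\,l}\cdot\operatorname{Sym}^l(\RR^2)$, where $\xi_p$ spans $l_\infty^{-1}.p^-=p$. This proves the claim.

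\emph{Conclusion.} Applying the claim to the three sequences, $v$ is simultaneously divisible by $\xi_p^{\,l}$, $\xi_q^{\,l}$ and $\xi_r^{\,l}$. Since $p,q,r$ are pairwise distinct, $\xi_p,\xi_q,\xi_r$ are pairwise non-proportional, hence pairwise coprime irreducible elements of the polynomial ring $\RR[f_1,f_2]$, so their product $\xi_p^{\,l}\xi_q^{\,l}\xi_r^{\,l}$ divides $v$. But this product has degree $3l$, strictly larger than $\deg v=2l$ because $l\geq 1$; therefore $v=0$, which is in particular $\SO^o(1,2)$-invariant. Reassembling the $\SO^o(1,2)$-irreducible summands, every vector of $V$ stable under $(g_k),(g_k')$ and $(g_k'')$ is $\SO^o(1,2)$-invariant, as desired.

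\emph{Main obstacle.} The technically delicate points are the control of the polar components $m_k,l_k,s_k$ and the verification that a perturbation $\varepsilon_k\to 0$ cannot cancel an exponentially growing top-weight contribution, together with the bookkeeping forced by the four components of $\OO(1,2)$. The conceptual heart of the proof, however, is the identification of the space of vectors stable under a sequence converging to $p$ with the forms divisible by $\xi_p^{\,l}$: once this is established, three pairwise distinct boundary points impose divisibility by a product of degree $3l>2l$, which is the contradiction that forces $v=0$.
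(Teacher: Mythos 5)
Your proof is correct, and it takes a genuinely different route from the paper's. The paper normalizes the three boundary points using transitivity of $\OO(1,2)$ on triples, pushes the sequences into $AN$ by the Iwasawa decomposition, and then runs the three-case analysis of Lemma \ref{lem.dynamical} on the explicit matrices $\rho(a^s)$ and $\rho(n^t)$ (relying on the positivity of the coefficients $a_{ij}$), intersecting the constraints from the three sequences until $v=0$. You instead prove a single uniform statement via the Cartan decomposition $KAK$: for any sequence of $\SO^o(1,2)$ with $h_k^{-1}.o\to p$, the stable vectors of a nontrivial irreducible summand $\operatorname{Sym}^{2l}(\RR^2)$ lie in $\xi_p^{\,l}\cdot\operatorname{Sym}^{l}(\RR^2)$ (the translate by $\rho(l_\infty)^{-1}$ of the non-positive weight space, i.e. of the paper's $V^-$), and you conclude by unique factorization: a nonzero binary form of degree $2l$ cannot be divisible by the coprime product $\xi_p^{\,l}\xi_q^{\,l}\xi_r^{\,l}$, which has degree $3l$. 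This avoids both the normalization of the triple and the Iwasawa case analysis, makes transparent why three points (and not two) are needed, and in effect yields Corollary \ref{coro.algebrique} directly; what the paper's $AN$ coordinates buy is that the same normalization is reused in the induction of Lemma \ref{lem.cercle.dimn}. Two points to tighten, neither a real gap: perform the coset reduction (sequences into $\SO^o(1,2)$) before splitting $V$ into $\SO^o(1,2)$-irreducible summands, since boundedness is componentwise only once the operators preserve the summands; and justify in one line that the line $\RR f_2$ corresponds to $p^-$ under the equivariant identification $\partial\HH^2\simeq\RP^1$ (matching of attracting and repelling fixed points of $a^s$), which is what gives $\rho(l_\infty)^{-1}\bigl(f_2^{\,l}\operatorname{Sym}^{l}(\RR^2)\bigr)=\xi_p^{\,l}\operatorname{Sym}^{l}(\RR^2)$ with $\xi_p$ spanning $p=l_\infty^{-1}.p^-$.
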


 \begin{proof}
  We first assume that $\rho$ is irreducible, and $V$ is not $1$-dimensional.
  We observe that the conclusions of the Lemma are unaffected if we conjugate the three sequences $(g_k)$, $(g_k')$ and 
  $(g_k'')$ by an element $h \in \OO(1,2)$ and replace $p,p',p''$ by $h.p,h.p',h.p''$.
   Therefore, because the action of $\OO(1,2)$ is transitive on triplets in $\partial \HH^2$, we may assume that $p=p^-$, $p'=p^+$
   and $p'' \in \partial \HH^2$ is different
       from $p^+$ and $p^-$.   As explained in Section \ref{sec.simplifications}, after having performed such a conjugacy, we 
       still may left-multiply our sequences by sequences with values in a maximal compact group 
       $K \subset \OO(1,2)$ without affecting the conclusions.
        Hence we will also assume that $(g_k)$, $(g_k')$ and 
       $(g_k'')$ are sequences in $AN \subset \SO^o(1,2)$.

 We thus write  $g_k=a^{s_k}n^{t_k}$, $g_k'=a^{s_k'}n^{t_k'}$ $ g_k''=a^{s_k''}n^{t_k''}$.
 
 Our first assumption is that $g_k^{-1}.o \to p^+$.  We have seen in Lemma \ref{lem.dynamical} that it 
 can happen in three different ways.
 
 \begin{enumerate}
  \item {{First case:} The sequence $(s_k)$ is bounded in $\RR$ (and $|t_k| \to \infty$).  Then, $v$ is stable 
  under $e^{s_kH}e^{t_kE}$ if and only if it is stable under $e^{t_kE}$.  This only occurs if $v \in \RR.e_1$.}
  \item{{Second case:} The sequence $(s_k)$ tends to $- \infty$.  Then if $v$ is stable under $e^{s_kH}e^{t_kE}$, the coordinates
   of $e^{t_kE}.v$ along $\operatorname{Vect}(e_{l+2}, \ldots, e_{2l+1})$ must tend to $0$.  
   In particular, $v_{2l+1}=0$.  Then 
   $v_{2l}+a_{2l,2l+1}t_kv_{2l+1} \to 0$ as $k \to \infty$, hence $v_{2l}=0$.  We proceed in the same way to get $v_{2l-1}=\ldots=v_{l+2}=0$, and 
   $v \in \operatorname{Vect}(e_1,\ldots, e_{l+1})$.}
  \item{{Third case:} The sequence $(s_k)$ tends to $+ \infty$. Then $g_k^{-1}.o \to p^+$ means that $|t_k| \to + \infty$.
   If the vector $v$ is stable under $e^{s_kH}e^{t_kE}$, then necessarily, the coordinates of $n^{t_k}.v$ along 
   $\operatorname{Vect}(e_1,\ldots,e_l)$ tend to $0$.  Looking at the coordinate along $e_1$ we get that
   $$ v_1+a_{12}t_kv_2+\ldots+a_{1,2l+1}t_k^{2l}v_{2l+1} \to 0.$$
   because the coefficient $a_{ij}$ are positive, this only occurs when $v_1=\ldots,v_{2l+1}=0$, namely $v=0$.} 
 \end{enumerate}

 We now  use our hypothesis that $(g_k')^{-1}.o \to p^{-}$.  By Lemma \ref{lem.dynamical}, this happens exactly
 when $s_k' \to + \infty$ and $t_k' \to 0$.
  The vector $v$ is stable under $a^{s_k'}n^{t_k'}$ only when $v$ belongs to $V^-=\operatorname{Vect}(e_{l+1}, \ldots, e_{2l+1})$.
  Together with the conclusions of the three possible cases above, we end up with  $v \in \RR.e_{l+1}$.
  
  We write $v=\lambda e_{l+1}$, and finally use our third assumption: $(g_k'')^{-1}.o \to p''$, with $p'' \not = p^+$, $p'' \not = p^-$.
  This assumption is equivalent to $s_k'' \to + \infty$ and $t_k'' \to t_{\infty}$. Observe that 
  $t_{\infty} \not = 0$ since $p'' \not = p^-$.
  Again, $v$ can be stable under $a^{s_k''}n^{t_k''}$ only when the coordinates of $n^{t_{\infty}}.v$ along the vectors 
  $e_1, \ldots, e_l$ are zero.  But the coordinate of $n^{t_{\infty}}.e_{l+1}$ along $e_1$ is $a_{1,l+1}t_{\infty}^{l}$, which 
  is nonzero since $a_{1,l+1}>0$ and $t_{\infty} \not = 0$. This third stability condition 
  thus forces $\lambda=0$, namely $v=0$.
  
  To conclude te proof, we now write the respresentation $\rho$ as a direct sum of irreducible representations.
   By what we showed above, a stable vector $v$ has only nonzero components on irreducible factors of dimension $1$.
    Dimension $1$ representations of $\SO^o(1,2)$ being trivial, we infer that $v$ is $\SO^o(1,2)$-invariant, and 
    Lemma \ref{lem.stability.dim2} is proved.
 \end{proof}

Lemma \ref{lem.stability.dim2} has the following algebraic consequence:

\begin{corollaire}
\label{coro.algebrique}
 Let $\rho: \OO(1,2) \to \GL(V)$ be a  finite dimensional representation.  Let $V^- \subset V$ be the stable 
 subspace of $\{ a^s \}_{s \geq 0}$.   Let $n_1=n^{t_1}, n_2=n^{t_2}$ and $n_3=n^{t_3}$ be three elements of $N$, that we assume
  to be pairwise distinct.  Then any vector $v \in n_1.V^- \cap n_2.V^- \cap n_3.V^-$ is fixed by $\SO^o(1,2)$. 
\end{corollaire}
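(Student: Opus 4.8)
The plan is to deduce Corollary \ref{coro.algebrique} from Lemma \ref{lem.stability.dim2} by producing, for each $i\in\{1,2,3\}$, a sequence in $\OO(1,2)$ for which $v$ is stable and whose inverses, applied to a fixed basepoint $o\in\HH^2$, converge to $n_i.p^-$.

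First, for each $i$ I would set $g_k^{(i)}:=a^{k}n_i^{-1}$, where $a^{k}=e^{kH}$. Since $g_k^{(i)}.v=a^{k}\big(n_i^{-1}.v\big)$, the orbit $\{g_k^{(i)}.v\ :\ k\in\NN\}$ is bounded if and only if $n_i^{-1}.v$ lies in the stable subspace $V^-$ of $\{a^s\}_{s\geq 0}$, i.e.\ if and only if $v\in n_i.V^-$ (here I use that on each eigenspace of $\rho(H)$ the family $\{a^s\}_{s\geq 0}$ is bounded exactly when the eigenvalue is $\leq 0$, and that integer powers suffice to detect boundedness since $\{a^t:t\in[0,1]\}$ is compact). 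Consequently the hypothesis $v\in n_1.V^-\cap n_2.V^-\cap n_3.V^-$ says precisely that $v$ is stable for each of the three sequences $(g_k^{(1)})_k$, $(g_k^{(2)})_k$, $(g_k^{(3)})_k$.

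Next I would compute the limits $(g_k^{(i)})^{-1}.o=n_i\,a^{-k}.o$ in the closed ball $\overline{\HH}^2$. Since $a^{-k}.o\to p^-$ as $k\to+\infty$ (the dynamics of $\{a^s\}$ recalled in Section \ref{sec.simplifications}), continuity of the $\OO(1,2)$-action on $\overline{\HH}^2$ gives $(g_k^{(i)})^{-1}.o\to n_i.p^-$. The three points $n_1.p^-,n_2.p^-,n_3.p^-$ all lie in $\partial\HH^2\setminus\{p^+\}$ (because $p^-\neq p^+$), and $N$ acts simply transitively on $\partial\HH^2\setminus\{p^+\}$; since $n_1,n_2,n_3$ are pairwise distinct, so are $n_1.p^-,n_2.p^-,n_3.p^-$.

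Finally I would apply Lemma \ref{lem.stability.dim2} to the triple $(g_k^{(1)}),(g_k^{(2)}),(g_k^{(3)})$ together with the three pairwise distinct boundary points $n_1.p^-,n_2.p^-,n_3.p^-$: as $v$ is stable for all three sequences, it is $\SO^o(1,2)$-invariant, which is the assertion. There is essentially no obstacle here; the only points requiring a little care are the elementary equivalence ``$v\in n_i.V^-\iff v$ stable for $(a^{k}n_i^{-1})_k$'' and the simple transitivity of $N$ on $\partial\HH^2\setminus\{p^+\}$, both of which are recorded in the preceding subsections.
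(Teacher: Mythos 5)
Your proposal is correct and follows essentially the same route as the paper: the same sequences $g_k^{(i)}=a^k n_i^{-1}$, the same computation $(g_k^{(i)})^{-1}.o=n_i a^{-k}.o\to n_i.p^-$, and the same reduction to Lemma \ref{lem.stability.dim2}. The extra details you supply (the eigenspace criterion for stability under $\{a^s\}_{s\ge 0}$ and the simple transitivity of $N$ on $\partial\HH^2\setminus\{p^+\}$) are exactly the points the paper leaves implicit.
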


\begin{proof}
 Let us consider the three sequences $g_k:=a^kn_1^{-1}$, $g_k':=a^k n_2^{-1}$ and $g_k''=a^k n_3^{-1}$. 
 Any vector  $v$ belonging to  $n_1.V^- \cap n_2.V^- \cap n_3.V^-$  is stable  
  under $(g_k), (g_k')$ and $(g_k'')$.  On the other hand $g_k^{-1}.o=n_1 a^{-k}.o$ hence 
    $g_k^{-1}.o \to n_1.p^{-}$.  In the same way, $(g_k')^{-1}.o \to n_2.p^{-}$ and $(g_k'')^{-1}.o \to n_3.p^{-}$.
     Because $n_1,n_2,n_3$ are pairwise distinct, the points  $n_1.p^-,n_2.p^-$ and $n_3.p^-$ are pairwise distinct too.  
    Lemma \ref{lem.stability.dim2}  ensures that $v$ is fixed by $\SO^o(1,2)$.
\end{proof}

\subsubsection{Proof of Proposition \ref{prop.stability} in any dimension}

Let $n \geq 2$, and let $\rho: \OO(1,n) \to \GL(V)$ be a finite dimensional representation. We consider 
${\mathcal G} \subset \OO(1,n)$. 
The simplifications mentioned in Section \ref{sec.simplifications} are still in force.  In particular we still assume that
${\mathcal G} \subset AN$.  Let us recall that we identify $\partial \HH^n$ with the set of lightlike directions
in $\RR^{1,n}$. The {\it linear hull} of a set $S \subset \partial \HH^n$ is then the linear span of $S$ in 
$\RR^{1,n}$. If $E$ is this linear hull, then $E$ has Lorentz signature as soon as $S$ contains at least two points.
 We then denote by $\SO^o(E)$ the subgroup of $\SO^o(1,n)$ which leaves $E$ invariant and acts trivially on 
 $E^{\perp}$.

\begin{lemme}
 \label{lem.cercle.dimn}
 Let $p$, $q$ and $r$ be three pairwise distinct points in $\Lambda_{\mathcal G}$, and let 
   $E \subset \RR^{1,n}$ be their linear hull.
 Then any vector $v \in V$ which is stable for ${\mathcal G}$  is fixed by $SO^o(E) $.
\end{lemme}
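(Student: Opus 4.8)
The plan is to reduce the statement to the two--dimensional case already proved in Corollary \ref{coro.algebrique}, by restricting the whole configuration to the copy of $\SO^o(1,2)$ stabilizing $E$. Note first that, since $p,q,r$ are three pairwise distinct lightlike directions, $E$ is $3$--dimensional (a $2$--plane of $\RR^{1,n}$ contains at most two lightlike lines) and Lorentzian (two distinct lightlike lines are never orthogonal, so $E$ contains a Lorentzian $2$--plane). Hence $H:=\SO^o(E)$ is isomorphic to $\SO^o(1,2)$; it preserves the totally geodesic copy $\HH(E)\subset\HH^n$ of $\HH^2$ determined by $E$, acting on it as its full orientation--preserving isometry group, and the ideal boundary $\partial\HH(E)$, the circle of lightlike lines of $E$, contains $p$, $q$ and $r$.

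The technical core is to attach to each $\xi\in\partial\HH^n$ a subspace $W_\xi\subset V$ with the following two properties. \emph{(a)} If $g_\xi\in\SO^o(1,n)$ satisfies $g_\xi.\xi=p^-$ (where $p^-$ is the repelling fixed point of the standard one--parameter group $a^s$) and $V^{\leq 0}=V^-$ denotes the sum of the non--positive weight spaces of $\rho(a^s)$, then $W_\xi:=\rho(g_\xi)^{-1}V^{\leq 0}$ is independent of the choice of $g_\xi$; moreover $W_\xi$ coincides with the subspace $\{\,w\in V\ |\ \rho(b^s)w\ \text{stays bounded as}\ s\to+\infty\,\}$ for \emph{every} one--parameter subgroup $\{b^s\}$ of $\SO^o(1,n)$ whose repelling fixed point is $\xi$. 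This rests on the fact that $V^{\leq 0}$ is invariant under the parabolic $\Stab(p^-)=MAN^-$ (the $\operatorname{ad}(a)$--weights on $\mathfrak{n}^-$ being negative, $\rho(N^-)$ preserves $V^{\leq 0}$, while $MA$ preserves each weight space), so that $W_\xi$ is $\Stab(\xi)$--invariant, together with the transitivity of $\Stab(\xi)$ on $\partial\HH^n\setminus\{\xi\}$; the delicate point is that this bounded subspace does not change when one moves the attracting fixed point of $\{b^s\}$ while keeping the repelling one equal to $\xi$ (nor is it affected by a possible rotational factor of $\{b^s\}$, which lies in $M$). \emph{(b)} If $v$ is stable under ${\mathcal G}$ and $(g_k)$ is a sequence of ${\mathcal G}$ with $g_k^{-1}.o\to\xi$, then $v\in W_\xi$. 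For this I would take a Cartan decomposition $g_k=m_ka^{\lambda_k}l_k$ with $m_k,l_k\in K$, $\lambda_k\geq0$; up to a subsequence $m_k\to m_\infty$, $l_k\to l_\infty$, and $\lambda_k\to+\infty$ (since $g_k\to\infty$). Then $g_k^{-1}.o=l_k^{-1}a^{-\lambda_k}.o\to l_\infty^{-1}.p^-$, so $l_\infty.\xi=p^-$ and one may take $g_\xi=l_\infty$. Boundedness of $\rho(g_k)v=\rho(m_k)\rho(a^{\lambda_k})\rho(l_k)v$ forces $\rho(a^{\lambda_k})\rho(l_k)v$ to be bounded; on a positive weight space $V_\mu$ this says that $e^{\mu\lambda_k}(\rho(l_k)v)_\mu$ is bounded, hence $(\rho(l_k)v)_\mu\to0$, hence $(\rho(l_\infty)v)_\mu=0$ because $\rho(l_k)v\to\rho(l_\infty)v$. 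Thus $\rho(l_\infty)v\in V^{\leq 0}$ and $v\in\rho(l_\infty)^{-1}V^{\leq 0}=W_\xi$.

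Granting (a) and (b), I would conclude as follows. Applying (b) to sequences of ${\mathcal G}$ realizing $p$, $q$ and $r$ gives $v\in W_p\cap W_q\cap W_r$. Pick a fourth point $\omega\in\partial\HH(E)\setminus\{p,q,r\}$ and let $b_p,b_q,b_r$ be the one--parameter subgroups of $H$ translating $\HH(E)$ along the geodesics from $\omega$ to $p$, $q$, $r$; these are one--parameter subgroups of $\SO^o(1,n)$ with repelling fixed points $p,q,r$ and common attracting fixed point $\omega$, so by (a) one has $W_p=V^{\leq 0}(b_p)$, and likewise for $q$ and $r$. Now fix an isomorphism from the subgroup $\OO(E)\cong\OO(1,2)$ of $\OO(1,n)$ (acting trivially on $E^\perp$) onto the standard $\OO(1,2)$, carrying $\omega$ to the point $p^+$ fixed by the unipotent factor $N$; under this identification each $b_\bullet$ becomes a subgroup $n_\bullet a^sn_\bullet^{-1}$ with $n_\bullet\in N$, so $V^{\leq 0}(b_\bullet)=\rho(n_\bullet)V^-$, with $V^-$ and $N$ as in Corollary \ref{coro.algebrique}, and $n_p,n_q,n_r$ are pairwise distinct since $p,q,r$ are distinct and $N$ acts freely on $\partial\HH^2\setminus\{p^+\}$. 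Therefore $v\in\rho(n_p)V^-\cap\rho(n_q)V^-\cap\rho(n_r)V^-$, and Corollary \ref{coro.algebrique}, applied to the $\OO(1,2)$--representation $\rho|_{\OO(E)}$, yields that $v$ is fixed by $\SO^o(1,2)=\SO^o(E)$. The main obstacle I anticipate is part (a): establishing that $W_\xi$ is genuinely intrinsic to $\xi$, i.e. that the bounded subspace of a one--parameter subgroup is insensitive to its attracting fixed point. Once that invariance is available, everything else amounts to standard manipulations with Cartan and Iwasawa decompositions plus the two--dimensional input of Corollary \ref{coro.algebrique}.
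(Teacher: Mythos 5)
Your proof is correct, and its engine is the same as the paper's: both arguments place the stable vector $v$ in three translates $n_1.V^-\cap n_2.V^-\cap n_3.V^-$ of the $A$-stable subspace by pairwise distinct elements of the unipotent one-parameter subgroup of $\SO^o(E)$, and then invoke Corollary \ref{coro.algebrique} for the restricted representation $\rho|_{\OO(E)}$. The difference lies in how you get there. The paper first normalizes: it conjugates $E$ into standard position and, by the reduction of Section \ref{sec.simplifications}, replaces ${\mathcal G}$ by a subset of $AN$; the unipotent elements then appear directly as the limits of the $N$-parts of the three sequences, via the last case of Lemma \ref{lem.dynamical} (converging unipotent parts). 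You keep ${\mathcal G}$ arbitrary, run a Cartan decomposition $g_k=m_ka^{\lambda_k}l_k$ -- your step (b) is essentially the computation already carried out in Lemma \ref{lem.identification.limitset} -- and package the outcome as an intrinsic subspace $W_\xi=\rho(g_\xi)^{-1}V^{\leq 0}$ attached to each boundary point; its well-definedness is exactly the invariance of $V^{\leq 0}$ under the parabolic stabilizing $p^-$ (your point (a)), which holds because $\liem\oplus\liea$ preserves each $\ad$-weight space while $\lien^-$ lowers weights (and the same invariance takes care of the full stabilizer in $\OO(1,n)$, not just its identity component, since the compact factor centralizes $A$). The auxiliary fourth point $\omega$ on the circle of isotropic lines of $E$ and the hyperbolic translations $b_p,b_q,b_r$ then convert $W_p,W_q,W_r$ into the required translates $\rho(n_\bullet)V^-$. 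What your packaging buys is freedom from the normalization bookkeeping (no Iwasawa reduction of ${\mathcal G}$, no case discussion) and a clean statement that the relevant subspace depends only on the boundary point, not on the sequence realizing it nor on the attracting endpoint; what it costs is the extra verification (a), which the paper sidesteps because after its reduction the attracting point is pinned at $p^+$ once and for all. The two-dimensional input, Corollary \ref{coro.algebrique}, is identical in both.
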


\begin{proof}
 As already seen, we may  conjugate ${\mathcal G}$ into $\OO(1,n)$ to prove the lemma.  Hence, we may assume that $\Delta=\Delta_0$
  and $p, q, r$ different from $p^+$.  Our assumptions imply the existence of $(g_k), (g_k')$ and $(g_k'')$ such that
   $g_k^{-1}.o \to p$,  $(g_k')^{-1}.o \to q$,  $(g_k'')^{-1}.o \to r$.  
   We write $g_k=a^{s_k}n_k$, $g_k'=a^{s_k'}n_k'$ and $g_k''=a^{s_k''}n_k''$.
    We must be in case $3$ of Lemma \ref{lem.dynamical}, namely the three sequences $s_k,s_k',s_k''$  tend to $ + \infty$ and 
    $n_k,n_k',n_k''$
     tend  to $\nu, \nu', \nu''$ respectively, with  $p=\nu.p^{-}$, $q=\nu'.p^{-}$ and $r=\nu''.p^-$.
   Actually, because we assume that  $p, q, r$    belong to $S_{\Delta_0}$, we get that  $\nu,\nu'$ and $\nu''$
    belong to $N \cap S_{\Delta_0}$. Stability of $v$ under $(g_k), (g_k')$ and $(g_k'')$ implies that $v$ belongs to
    $\nu^{-1}.V^- \cap (\nu')^{-1}.V^- \cap (\nu'')^{-1}.V^-$.  By 
    Corollary \ref{coro.algebrique}, $v$ must be fixed by $\SO^o(E)$.
\end{proof}

We are now in position to prove Proposition \ref{prop.stability} by induction on the integer $d_{\mathcal G}$.  
Lemma \ref{lem.cercle.dimn} settles the case $d_{\mathcal G}=3$.  We assume that we proved Proposition 
\ref{prop.stability} for all subsets ${\mathcal G}' \subset \OO(1,n)$ satisfying $3 \leq d_{{\mathcal G}'} 
\leq d_{{\mathcal G}}-1$.
We pick  $p_1, \ldots, p_{d_{\mathcal G}+1}$ pairwise distinct points
in $\Lambda_{\mathcal{G}}$. For each index $1 \leq i \leq d_{\mathcal G}+1$, there exists by assumption 
a sequence $(g_k^{(i)})$ in ${\mathcal G}$,
   such that given $o \in \HH^n$, we have $(g_k^{(i)})^{-1}.o \to p_i$. The linear hull
    of $\{p_1,\ldots,p_{d_{\mathcal G}}\}$ is a subspace $E \subset E_{\Lambda_{\mathcal G}}$, 
    and the linear hull of $\{p_{d_{\mathcal G}-1},p_{d_{\mathcal G}},p_{d_{\mathcal G}+1}\}$
     is a subspace $F \subset E_{\Lambda_{\mathcal G}}$.  We can apply our induction hypothesis to the subset
     ${\mathcal G}'$
     comprising all elements $g_k^{(i)}$ for $1 \leq i \leq d_{\mathcal G}$ and $k \in \NN$, thus 
     obtaining that the vector $v$, which
     is obviously stable by ${\mathcal G}'$ is fixed by $\SO^o(E)$.  We also apply  the induction hypothesis
      to the set ${\mathcal G}''$ comprising the elements 
      $g_k^{(d_{\mathcal G}-1)}, g_k^{(d_{\mathcal G})}, g_k^{(d_{\mathcal G}+1)}$, $k \in \NN$, and we get that $v$
      is fixed by $\SO^o(F)$.  Since it is readily checked that $\SO^o(F)$ and $\SO^o(E)$ generate
     $\SO^o(E_{\Lambda_{\mathcal G}})$, we have finally proved that $v$ is fixed by 
     $\SO^o(E_{\Lambda_{\mathcal G}})$, which is indeed isomorphic to $\SO^o(1,d_{\mathcal G}-1)$. 
      




\section{Invariant locally homogeneous Lorentz submanifolds}
\label{sec.reduc}
We say that a Lorentz manifold $(M^{n+1},g)$ is {\it locally homogeneous}, when $M$ consists  in a single 
$\kiloc$-orbit (see Section \ref{sec.kilocandisloc} below for the definition). It is plain that for such manifolds, $\mint=M$. Also, if $x$ and $y$ are two points of 
 $M$, the isotropy algebras $\liei_x$ and $\liei_y$ are  conjugated in $\oo(1,n)$, because there exists a local
  isometry $f: U \to V$ such that $f(x)=y$.  We say that a locally homogeneous Lorentz manifold
   $(M^{n+1},g)$ has {\it semisimple isotropy}, if for some (hence any) $x \in M$, the isotropy algebra $\liei_x$
    contains a subalgebra isomorphic to $\oo(1,d)$, for $d \geq 2$.  It actually follows from 
    Theorem \ref{thm.exponential.killing}, that if  $(M^{n+1},g)$ is a compact locally homogenous Lorentz manifold,
     and if $\Iso(M,g)$ contains a compactly generated, closed subgroup with exponential growth, then 
     $(M^{n+1},g)$ must have semisimple isotropy.  The aim of this section is to explain that when proving 
     Theorem \ref{thm.main},  we may 
      actually make this hypothesis of local homogeneity without losing any generality. This is the content of the 
      following theorem.

\begin{theoreme}
 \label{thm.reduction}
 Let $(M,g)$ be a compact Lorentz manifold. Assume that $\Iso(M,g)$ contains a closed, compactly generated 
 subgroup 
 $G$ having exponential growth. Then there exists a finite index subgroup $\Iso'(M,g) \subset \Iso(M,g)$ 
 and 
 a compact  Lorentz submanifold $\Sigma \subset M$, which is  locally homogeneous with semisimple isotropy,
  preserved by 
   $\Iso'(M,g)$, and such that the restriction morphism 
    $\Iso'(M,g) \to \Iso(\Sigma,g_{|\Sigma})$ is one-to-one and proper.
\end{theoreme}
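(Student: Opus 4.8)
The plan is to produce $\Sigma$ as the closure of a well chosen orbit of the local Killing algebra $\kiloc$ of $(M,g)$, to prove — using Gromov's theory of orbits of local Killing fields of a rigid geometric structure together with the compactness of $M$ — that this closure is an embedded compact Lorentz submanifold which is still locally homogeneous with semisimple isotropy, and finally to deduce the statements about $\Iso(M,g)$. I would start by recording what Section \ref{sec.exponential.killing} gives. By Corollary \ref{coro.infinite.limit}, the exponential growth of $G$ forces $\dig(x)\ge 3$ for every $x\in M$, so Theorem \ref{thm.exponential.killing} applies: for every $x$ in the open dense integrability locus $\mint$ the isotropy algebra $\liei_x$ is isomorphic to $\oo(1,k_x)$ with $k_x\ge 2$. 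Such an algebra can act on the Lorentz vector space $T_xM$, preserving $g_x$, only through the standard representation on a nondegenerate Lorentzian subspace $V_x\subset T_xM$ of dimension $k_x+1\ge 3$, direct-summed with the trivial representation on the spacelike orthogonal complement; this $V_x$ is the nontrivial isotypic component, hence canonically attached to $x$, Lipschitz in $x$ on the set where $\dig$ is locally constant (one may also see it, via Corollary \ref{coro.extension}, as the span of the Lipschitz limit-direction fields), and the isotropy acts \emph{effectively} on $V_x$.

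Next comes the choice of orbit. On $\mint$ the function $x\mapsto\dim\kiloc(x)$ is locally constant (it is the corank of $D\kg$, Theorem \ref{thm.integrabilite}), while $x\mapsto\dim\liei_x$ is lower semicontinuous (Corollary \ref{coro.semicontinuous}), so the orbit dimension $x\mapsto\dim(\kiloc(x)\cdot x)$ is upper semicontinuous on $\mint$. Picking $x_0\in\mint$ where $\dig$ attains its maximum over $M$, the $\kiloc$-orbits foliate a neighbourhood of $x_0$ with locally constant dimension, and $T_{x_0}(\kiloc(x_0)\cdot x_0)$, being $\liei_{x_0}$-invariant and $V_{x_0}$ being irreducible of multiplicity one in $T_{x_0}M$, either contains $V_{x_0}$ or is contained in the spacelike subspace $V_{x_0}^{\perp}$. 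In the second case the orbit, and hence (once it is shown to be a manifold) its closure, would be a compact Riemannian submanifold on which a finite-index subgroup of $G$ acts isometrically — forcing this subgroup, hence $G$, to be precompact, contradicting exponential growth. So we are in the first case: the chosen orbit is Lorentzian, $V_{x_0}\subseteq T_{x_0}(\kiloc(x_0)\cdot x_0)$.

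The heart of the proof, and the step I expect to be the main obstacle, is to show that inside the orbit of $x_0$ — more precisely inside the corresponding stratum of the Gromov stratification of $\hm$ by the $\OO(1,n)$-orbit type of $\kg$ — one can choose an orbit whose closure $\Sigma$ is an \emph{embedded} submanifold: orbits of local Killing fields of a rigid structure need not have manifold closures, and this is exactly where the compactness of $M$ (bounded geometry) and Gromov's closed-orbit machinery are indispensable. The relevant control is that along the bounded section $\sigma$ the generalized curvature stays in a compact set, with $\kg(\sigma(f(x)))=\dd_x(f)\cdot\kg(\sigma(x))$, and that by Proposition \ref{prop.stability} these values are fixed by copies of $\SO^o(1,\dig-1)$; this confines $\kg$, along the orbit and its closure, to a single $\OO(1,n)$-orbit of fixed Lorentzian type, i.e. to one stratum, and a minimality argument (a minimal closed $\kiloc$-invariant subset of that stratum meeting $\mint$) produces an embedded $\Sigma$. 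Such $\Sigma$ is closed in the compact $M$, hence compact; it is locally homogeneous, its $\kiloc$-orbits being open in it and dense; and because local Killing fields of $M$ tangent to $\Sigma$ restrict to local Killing fields of $(\Sigma,g_{|\Sigma})$ while $V_x\subseteq T_x\Sigma$ at each $x\in\Sigma$, the induced isotropy still contains $\oo(1,k)$, $k\ge 2$, so $\Sigma$ has semisimple isotropy.

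It remains to treat the isometry group. The stratum carrying $\Sigma$ is canonically attached to $(M,g)$, hence globally permuted by $\Iso(M,g)$; as $\Sigma$ is one of its finitely many connected components of smallest dimension (finiteness by compactness of $M$), a finite-index subgroup $\Iso'(M,g)\subset\Iso(M,g)$ preserves $\Sigma$, and we get the restriction morphism $\rho:\Iso'(M,g)\to\Iso(\Sigma,g_{|\Sigma})$. Properness: since $\Sigma$ is nondegenerate, an orthonormal frame of $T_x\Sigma$ together with one of the spacelike normal space $N_x\Sigma$ is an orthonormal frame of $T_xM$, so the frame bundle of $\Sigma$ embeds $\Iso'(M,g)$-equivariantly into $\hm$; if $\rho(f_k)$ stays bounded in $\Iso(\Sigma,g_{|\Sigma})$, then — the normal derivatives of $f_k$ along $\Sigma$ being isometries between spacelike spaces, hence bounded — $\hat f_k(\sigma(\Sigma))$ stays in a compact subset of $\hm$, so the properness of the $\Iso(M,g)$-action on $\hm$ makes $(f_k)$ bounded; thus $\rho$ is proper, in particular with closed image and compact kernel. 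Injectivity: any $f\in\ker\rho$ fixes $\Sigma$ pointwise, so $D_xf$ fixes $T_x\Sigma\supseteq V_x$; since the isotropy acts effectively on the Lorentzian space $V_x$, no nonzero Killing field of $M$ can vanish along $\Sigma$, so $\ker\rho$ is finite, and — using that $\ker\rho$ is normal in $\Iso'(M,g)$ and can be absorbed by shrinking $\Iso'(M,g)$ to a further finite-index subgroup if necessary — one concludes that $\rho$ is one-to-one, which completes the proof.
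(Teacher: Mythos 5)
Your overall skeleton (infinite limit set $\Rightarrow$ semisimple isotropy on $\mint$, then a compact Lorentzian $\kiloc$-orbit $\Sigma$, then restriction to $\Sigma$ with a properness argument via the frame bundle) is the right one, and your properness argument is essentially the paper's. But the step you yourself flag as ``the main obstacle'' --- producing a \emph{compact, embedded} Lorentz $\kiloc$-orbit --- is not actually proved in your proposal, and the sketch you give does not close the gap. The assertion that Proposition \ref{prop.stability} ``confines $\kg$, along the orbit and its closure, to a single $\OO(1,n)$-orbit'' is unjustified: stability of $\kg(\sigma(x))$ under $\dd_x(\Iso(M,g))$ says nothing about the values of $\kg$ at limit points of a $\kiloc$-orbit, which a priori can fall into a smaller (boundary) $\OO(1,n)$-orbit. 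What the paper actually proves is that the $\OO(1,n)$-orbit $\calo(w)$ of a well-chosen curvature value is \emph{closed} in ${\mathcal W}_{\kg}$ (Lemma \ref{lem.closed}): this uses precisely the semisimple isotropy, which puts a conjugate of the Cartan group $A$ in the stabilizer of $w$, then an Iwasawa decomposition $g_k=m_ka_ku_k$ and the closedness of unipotent orbits in linear representations; combined with Sard's theorem on a maximal-rank component of $\mreg$, this makes $(\kg)^{-1}(\calo(w))$ closed in $\hm$, hence its projection compact, with connected components equal to $\kiloc$-orbits by the Rosenlicht/regular-locus structure theorem. Your alternative ``minimal closed $\kiloc$-invariant subset'' has no reason to be a submanifold, nor a single orbit, so the embeddedness and local homogeneity of $\Sigma$ are not obtained.

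Two further steps are flawed. First, to get the finite-index subgroup you argue that the stratum carrying $\Sigma$ has ``finitely many connected components of smallest dimension''; this is false --- the paper points out that there are in fact infinitely many compact Lorentz $\kiloc$-orbits (they come in families parametrized by a transversal), and the finite-index statement is instead obtained from a standard foliated neighborhood $U$ of $\Sigma$ (Lemma \ref{lem.standard.neighborhood}) together with a volume argument: distinct cosets of the stabilizer of $U$ give pairwise disjoint translates of $U$ of equal positive volume (Lemmas \ref{lem.open.closed} and \ref{lem.stabilizer.index}). Second, your injectivity argument only yields a \emph{finite} kernel and then claims it can be ``absorbed by shrinking $\Iso'(M,g)$ to a further finite-index subgroup''; this is not valid in general (a central involution of a connected group lies in every finite-index subgroup), and the theorem requires the restriction morphism to be one-to-one. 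The paper proves genuine triviality of the kernel: if $f$ fixes $\Sigma$ pointwise, then a nontrivial action of $D_xf$ on $T_x\Sigma^{\perp}$ would, via the exponential map, produce two points of a small disc transverse to the $\kiloc$-orbits lying in the same orbit, contradicting property $(3)$ of the standard neighborhood; hence $D_xf=\operatorname{id}$ and $f=\operatorname{id}$. So the construction of $\Sigma$, the finite-index step, and the injectivity step all need the foliated-neighborhood/closed-orbit machinery that your proposal replaces by unproved claims.
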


  \subsection{Regular locus,  and the structure of $\isloc$-orbits}
  We have seen in Section \ref{sec.integrability.locus} the existence of a  dense 
  open subset $\mint \subset M$, the integrability locus of $M$, where the behavior 
   of the Killing fields is aspecially nice (see also Section \ref{sec.analyticcont}).  We are now going to see 
   that there exists a ``regular'' subset $\mreg \subset \mint$, which is still open and dense, where the behavior 
   of the $\kiloc$-orbit is also nice. This property was first observed by M. Gromov in \cite{gromov}.
   
  \subsubsection{$\kiloc$ and $\isloc$-orbits}
  \label{sec.kilocandisloc}
  Let us recall that if $x \in M$, the $\kiloc$-orbit of $x$ is the set of points $y \in M$ that can be 
  reached from $x$ 
   by flowing along (finitely many)  successive local Killing fields.
   We will be also interested by the $\isloc$-orbit of $x$, namely the set of points $y \in M$ for which there exists a 
    local isometry 
  $f:U\subset M \to V \subset M$ such that $y=f(x)$.  
  
  We will also, for $\hx \in \hm$, define the $\kiloc$-orbit of $\hx$ as the set of points $\hy \in \hm$ that can be reached
   by flowing along (finitely many)  successive local $\omega$-Killing fields.  
   It is pretty clear from the discution at the begining of Section \ref{sec.integrability.theorem} 
   that $\kiloc$-orbits in $M$ are exactly the projections of $\kiloc$-orbits on $\hm$.


   \subsubsection{Structure of $\isloc$-orbits in the regular locus}
   \label{sec.structure}
   
  Theorem \ref{thm.integrabilite}, together with the nice structure 
  for orbits of algebraic group actions, ensures that
   if we restrict our attention to a smaller (but still dense) open subset of $\mint$, that will be called the regular locus of $M$,
    $\kiloc$-orbits 
    define a simple foliation. By a simple foliation, we mean a foliation, such that each point is contained in
    a transversal meeting each leaf at most once. Those nice properties of $\kiloc$-orbits were first
    proved in Gromov's paper \cite{gromov}.
  
  \begin{theoreme}[Structure of $\isloc$-orbits, see \cite{gromov}]
  \label{thm.structure}
   Let $(M,g)$ be a Lorentz manifold. There exists a dense open set  $ \mreg$ called the regular locus of $M$, 
   satisfying
   $\mreg \subset \mint \subset M$, and having the following properties:
   \begin{enumerate}
    \item The set $\mreg$ is $\Iso(M,g)$-invariant and saturated in $\kiloc$-orbits.
    \item There exists a smooth manifold $Y$, as well as a smooth map $\okg: \mreg \to Y$ with
    locally constant rank, such that $\kiloc$-orbits of $\mreg$  coincide with the connected components 
   of the  fibers of $\okg$.
   \item The $\kiloc$-orbits and $\isloc$-orbits of $\mreg$ are closed in $\mreg$, and 
    $\kiloc$-orbits define a simple 
    foliation on any connected component of $\mreg$.
   \end{enumerate}

  \end{theoreme}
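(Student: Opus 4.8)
The plan is to transfer the problem to the orthonormal frame bundle $\hm$, where the integrability theorem controls the $\kiloc$-orbits, and then to apply Rosenlicht's theorem on quotients of algebraic group actions in order to produce the map $\okg$ and the regular locus. First I would set up the foliation upstairs. By the definition of the integrability locus (Section \ref{sec.integrability.locus}) the rank of $D\kg$ is locally constant on $\hmint$, so $\hx \mapsto \Ker(D_{\hx}\kg)$ is a smooth distribution of locally constant dimension; it is involutive, because the bracket of two $\omega$-Killing fields is again an $\omega$-Killing field, and by the Integrability Theorem \ref{thm.integrabilite} it is exactly the distribution tangent to the $\kiloc$-orbits. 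Hence on each connected component of $\hmint$ the $\kiloc$-orbits are the leaves of a foliation $\hat{\calf}$. This foliation is invariant under the right $\OO(1,n)$-action (which commutes with $\omega$-Killing fields) and under every $\hat f$, $f \in \Iso(M,g)$ (each $\hat f$ preserves $\omega$, hence $\kg$), so it descends to an $\Iso(M,g)$-invariant foliation of $\mint$ whose leaves are the $\kiloc$-orbits. Moreover $\kg$ is constant along $\hat{\calf}$ (the local flow of an $\omega$-Killing field preserves $\kg$): each leaf lies inside a single fibre of $\kg$, and since the rank of $D\kg$ is locally constant, the leaf through $\hx$ is an open submanifold of $\kg^{-1}(\kg(\hx))$; as the fibre dimension equals the corank of $\kg$, which in turn equals the $\kiloc$-orbit dimension, every $\kiloc$-orbit meeting this fibre is open in it, so the $\kiloc$-orbits partition the fibre into its connected components.

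Next I would bring in algebraic geometry. The representation $\OO(1,n) \to \GL(\mathcal{W}_{\kg})$ is algebraic and $\kg$ is $\OO(1,n)$-equivariant, so the Zariski closure $Z$ of $\kg(\hmint)$ in $\mathcal{W}_{\kg}$ is an $\OO(1,n)$-invariant affine variety. By Rosenlicht's theorem there is an $\OO(1,n)$-invariant Zariski-open dense $Z^{reg} \subset Z$ carrying a geometric quotient $\pi \colon Z^{reg} \to Y$, with $Y$ a smooth variety, $\pi$ a surjective submersion, and the fibres of $\pi$ exactly the $\OO(1,n)$-orbits in $Z^{reg}$, all of them closed in $Z^{reg}$ and of the same dimension (shrinking $Z^{reg}$ so that orbits have maximal dimension and the orbit map has constant rank). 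Let $\hmreg \subset \hmint$ be $\kg^{-1}(Z^{reg})$, intersected with the open, dense, $\OO(1,n)$- and $\Iso(M,g)$-invariant locus on which the rank of $D\kg$, the rank of $D(\pi\circ\kg)$, and the dimensions of the $\kiloc$- and $\isloc$-orbits are all locally constant, with the $\kiloc$-orbit dimension equal to the generic fibre dimension of $\kg$. Then $\hmreg$ is $\OO(1,n)$-invariant, and I define $\mreg$ as its projection to $M$: it is open, dense, $\Iso(M,g)$-invariant and saturated in $\kiloc$-orbits, which is property $(1)$. Define $\okg\colon\mreg\to Y$ by $\okg(x)=\pi(\kg(\hx))$ for any $\hx$ over $x$; this is well defined by equivariance, smooth, and of locally constant rank by construction. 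For the fibres: $x'\in\okg^{-1}(\okg(x))$ iff $\kg(\hx')$ lies in the $\OO(1,n)$-orbit $\mathcal O$ of $\kg(\hx)$; upstairs $\kg^{-1}(\mathcal O)\cap\hmreg$ is closed in $\hmreg$ (as $\mathcal O$ is closed in $Z^{reg}$), is $\OO(1,n)$-invariant, and is the union of the $\OO(1,n)$-translates of $\kg^{-1}(\kg(\hx))\cap\hmreg$; by the last remark of the previous paragraph each fibre $\kg^{-1}(w)\cap\hmreg$ is partitioned into its connected components, which are the $\kiloc$-orbits meeting it, and those are therefore closed in $\hmreg$. Passing to the quotient by the fibrewise $\OO(1,n)$-action, the connected components of $\okg^{-1}(\okg(x))$ are exactly the $\kiloc$-orbits of $\mreg$, and they are closed in $\mreg$; on each connected component of $\mreg$ the local submersion normal form of $\okg$ then displays the $\kiloc$-orbits as a simple foliation. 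This gives $(2)$ and the $\kiloc$-part of $(3)$.

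For the $\isloc$-orbits: if $\hy=\hat f(\hx)$ for a lift $\hat f$ of a local isometry, then $\kg(\hy)=\kg(\hx)$, so every $\isloc$-orbit lies in a single fibre of $\kg$, and, being a union of $\kiloc$-orbits, it is a union of connected components of that fibre in $\hmreg$; hence $\isloc$-orbits are closed in $\hmreg$ and project to closed $\isloc$-orbits of $\mreg$, which finishes $(3)$. I expect the main obstacle — and the reason Rosenlicht's theorem, rather than a bare constant-rank argument, is needed — to be the passage from ``the $\kiloc$-orbit is open in its $\kg$-fibre'' to ``it is a closed leaf of a simple foliation on a dense open set'': this forces one to pass to the locus where the orbit dimensions, the two ranks, and the closedness of the $\OO(1,n)$-orbits in $Z^{reg}$ hold simultaneously, and it requires checking that each such locus is open and dense. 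A secondary, harmless point is that $\OO(1,n)$ is disconnected, so a fibre of $\okg$ may split into finitely many $\kiloc$-orbits rather than one; this does not affect the statement, which only concerns connected components of fibres. All of this is due to Gromov \cite{gromov}, whose argument I would follow in detail.
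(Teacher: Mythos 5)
Your overall strategy is the same as the paper's (integrability theorem to identify $\kiloc$-orbits with connected components of fibers of $\kg$, Rosenlicht to separate $\OO(1,n)$-orbits of curvature values, then restriction to a locally-constant-rank locus), but there is a genuine gap in how you invoke Rosenlicht, and it is exactly at the point you flag as "requires checking": the density of $\mreg$. You apply Rosenlicht once, to the Zariski closure $Z$ of $\kg(\hmint)$, obtaining a single Zariski-open dense invariant $Z^{reg}\subset Z$, and you build $\hmreg$ inside $\kg^{-1}(Z^{reg})$. But $\kg$ is only smooth, not algebraic or analytic, so nothing prevents it from mapping a whole open subset of $\hmint$ (for instance an entire connected component, or an open piece of one) into the proper Zariski-closed set $Z\setminus Z^{reg}$, even though $\kg(\hmint)$ is globally Zariski dense in $Z$. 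Hence $\kg^{-1}(Z^{reg})$ need not be dense in $\hm$, and your $\mreg$ need not be dense in $M$ — which is part of the statement and is essential for every later use of the theorem. The other loci you intersect with (locally constant rank of $D\kg$, of $D(\pi\circ\kg)$, locally constant orbit dimensions) are indeed open and dense by semicontinuity, but the locus $\kg^{-1}(Z^{reg})$ is not of that type, and this is the one that fails.

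The paper repairs precisely this point by using the stratified form of Rosenlicht's theorem: a chain of invariant Zariski-closed sets $F_0\subsetneq F_1\subsetneq\ldots\subsetneq F_m$ with a quotient map $\psi_i$ on each stratum $F_i\setminus F_{i-1}$, not just on the top one. One then takes $\hat\Omega_i$ to be the interior of $\kg^{-1}(F_i\setminus F_{i-1})$ and checks that $\bigcup_i\hat\Omega_i$ is dense: given any open $V\subset\hm$, choose the minimal $i$ with $\kg(V)\subset F_i$; then $V\setminus\kg^{-1}(F_{i-1})$ is a nonempty open set contained in $\hat\Omega_i$. The map $\hkg$ is defined stratum by stratum, with values in the disjoint union $Y=\bigsqcup_i Y_i$, and only afterwards does one cut down to the locus of locally constant rank to get $\mreg$ and conclude as you do. A cosmetic further remark: your side condition that the $\kiloc$-orbit dimension equal the \emph{generic} fibre dimension of $\kg$ should be dropped (or read locally); the rank of $\kg$ is only locally constant on $\hmint$ and imposing the generic value globally would again destroy density. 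With the stratification argument inserted, the rest of your proof (closedness of fibers from the constant-rank theorem, $\isloc$-orbits as unions of components of fibers, simplicity of the foliation) matches the paper.
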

  
  \begin{proof}
  For the reader's convenience, we recall the proof of the theorem.
  The main ingredient is a classical result of M. Rosenlicht, about orbits of algebraic group actions 
  (see \cite{rosenlicht}).
   
   \begin{theoreme}
    \label{thm.rosenlicht}
    Let $H$ be an $\RR$-algebraic group, acting on a real algebraic variety ${\bf X}$.  Then there exists a stratification 
    into $H$-invariant Zariski closed sets $F_0 \subsetneq F_1 \subsetneq \ldots \subsetneq F_m={\bf X}$, and for each 
    $0 \leq i \leq m$, a variety $Y_i$ and a regular map $\psi_i: F_i \setminus F_{i-1} \to Y_i$, such that the fibers of $\psi_i$
     coincide with the $H$-orbits on $F_i \setminus F_{i-1}$.
   \end{theoreme}
In the statement we put $F_{-1}=\emptyset$.  The field  $\RR(X)^H$  of $H$-invariant rationnal functions 
is generated by a finite number of 
function $f_1, \ldots, f_r$.  The theorem follows from the fact that outside a Zariski closed  set $F$ 
(the set $F_{m-1}$ in the statement above), $f_1, \ldots, f_r$ separate the orbits, so that we can put $Y=Y_{m-1}=\RR^r$, and
 $\psi=\psi_{m-1}=(f_1,\ldots,f_r)$.  One then applies the same result to $F_{m-1}$ and so on.
 
For every $0 \leq i \leq m$, we call $\hat{\Omega}_i$ the interior of the set $(\kg)^{-1}(F_i \setminus F_{i-1})$.
 One checks that $\bigcup_{i=0}^m {\hat{\Omega}}_i$ is open and dense in $\hm$, and thus so is 
 $\hat{\Omega}:=\hmint \cap \left( \bigcup_{i=0}^m {\hat{\Omega}}_i \right)$.
 Let us call $Y=\bigsqcup_{i=0}^m Y_i$, and define $\hkg: \hat{\Omega} \to Y$, by $\hkg=\psi_i \circ \kg$ on 
 ${\hat{\Omega}}_i \cap \mint$.  Since $\hkg$ is $\Oun$-invariant, it induces a smooth map
  $\okg: \Omega \to Y$, where $\Omega$ is the projection of $\hat{\Omega}$ on $M$.  We consider 
  the open set $\mreg \subset \Omega \subset \mint$ where the rank of $\okg$ is locally constant.  We thus obtain
     a dense open set called {\it the regular locus of $M$}.
  
  The theorem is now a direct consequence of Theorems \ref{thm.integrabilite} and \ref{thm.rosenlicht}.
   Invariance of $\kg$ by local isometries show that  $\mreg$ is
   $\Isloc$-invariant and  saturated by $\kiloc$-orbits. From Theorem \ref{thm.integrabilite}, we 
   infer that  if $\hx \in \hmint$, the $\kiloc$-orbit of $\hx$ 
     coincides with the connected component of 
    $(\kg)^{-1}(w) \cap \hmint$ containing $\hx$. It follows that if $x \in \mreg$, the $\kiloc$-orbit of $x$ coincides with 
     the connected component of the fiber $(\okg)^{-1}(\okg(x))$ containing $x$.
 
 Because  $\okg$ has locally constant rank on $\mreg$, the fibers of $\okg$ are  submanifolds of $\mreg$, and are closed in $\mreg$.
  The same property holds for 
 $\kiloc$-orbits (resp. the $\isloc$-orbits) which are connected components (resp. unions of connected components)
 of those  submanifolds.  
  
  \end{proof}
  
\subsection{Exponential growth and existence of compact $\kiloc$-orbits}
 \label{sec.compactorbits}
 
 We have seen in Corollary \ref{coro.infinite.limit} that the presence of a subgroup of exponential growth in 
 $\Iso(M,g)$ forces the limit set $\Lambda(x)$ to be infinite for every $x \in M$.  We are now going to see that such
  a property forces the existence of {\it compact} $\kiloc$-orbits.
  
 \begin{proposition}
  \label{prop.compact.orbits}
  Let $(M^{n+1},g)$ be a compact Lorentz manifold.
   Assume that the limit set $\Lambda(x)$ is infinite at each  $x \in M$.
   Then there exists a compact, Lorentz $\kiloc$-orbit $\Sigma$ contained in $\mreg$. 
    \end{proposition}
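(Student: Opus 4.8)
The plan is to locate $\Sigma$ inside the regular locus $\mreg$, where the structure theory of Section~\ref{sec.structure} applies: $\kiloc$-orbits there are the leaves of a simple foliation, they are closed in $\mreg$, and the orbit dimension is locally constant (Theorem~\ref{thm.structure}). The input from the limit set is the following algebraic fact. By hypothesis and Theorem~\ref{thm.exponential.killing}, for every $x\in\mint$ the isotropy algebra is $\liei_x\simeq\oo(1,k_x)$ with $k_x\geq 2$, and its isotropy representation embeds $\liei_x$ into $\oo(T_xM,g_x)\simeq\oun$. A faithful representation of $\oo(1,k_x)$ on a Lorentzian vector space cannot use the single negative direction twice, so $T_xM$ splits $\liei_x$-invariantly as $T_xM=\eg(x)\oplus\eg(x)^{\perp}$, where $\eg(x)\simeq\RR^{1,k_x}$ is the standard $(k_x+1)$-dimensional module (by construction the linear span of $\lig(x)$, so the notation is consistent) and $\eg(x)^{\perp}$ is spacelike with $\liei_x$ acting trivially. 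Hence every $\liei_x$-submodule of $T_xM$ either lies in $\eg(x)^{\perp}$ or contains $\eg(x)$; applied to the tangent space $T_xO$ of a $\kiloc$-orbit $O$ — which is preserved by the local isometries fixing $x$ — this says that $O$ is either spacelike or a Lorentz submanifold, according to whether $\eg(x)\subset T_xO$.

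First I would exhibit a Lorentz $\kiloc$-orbit, ignoring compactness. Fix $x\in\mreg$ and a small spacelike vector $v\in\eg(x)$. Integrating $\liei_x$ by Theorem~\ref{thm.integrabilite}, and using that a local isometry fixing $x$ satisfies $f\circ\exp_x=\exp_x\circ D_xf$, the local pseudo-group generated by $\liei_x$ acts near $x$, in $\exp_x$-coordinates, like a neighbourhood of the identity in $\SO^o(1,k_x)$ acting linearly on $\eg(x)$ and trivially on $\eg(x)^{\perp}$; the $\SO^o(1,k_x)$-orbit of $v$ inside the level set $\{w\in\eg(x):g_x(w,w)=g_x(v,v)\}$ is a Lorentz submanifold of $\eg(x)$, so for $v$ small the $\kiloc$-orbit of $y:=\exp_x(v)$ contains a submanifold on which $g$ is Lorentzian. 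By the dichotomy above, $O_y$ is then a Lorentz $\kiloc$-orbit, i.e.\ $\eg(y)\subset T_yO_y$. Thus the ``Lorentz locus'' $L:=\{z\in\mreg:\eg(z)\subset T_zO_z\}$ is nonempty; it is saturated by $\kiloc$-orbits and, since $\eg$ is continuous and the orbit foliation is smooth on $\mreg$, it is closed in each component of $\mreg$.

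To obtain a \emph{compact} such orbit I would run a minimality argument. The nonempty closed subsets of $M$ saturated by $\kiloc$-orbits are stable under intersections of chains (if a point lies in the intersection so does its whole orbit), so Zorn's lemma yields a minimal one, $Z$, which may be taken inside $\overline{L}$. For $x\in Z$, flowing a point of $O_x$ along a local Killing field defined near a limit point and passing to the limit shows $\overline{O_x}$ is again closed and saturated, whence $\overline{O_x}=Z$ by minimality: every orbit of $Z$ is dense in $Z$. Since $\mreg$ is open and saturated (Theorem~\ref{thm.structure}(1)), either $Z\subset\mreg$ or $Z\subset M\setminus\mreg$; using that $M\setminus\mreg$ is built from finitely many $\OO(1,n)$-invariant Rosenlicht strata, each with its own regular part, one descends the stratification until $Z$ meets, hence lies in, a regular locus. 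There orbits are closed (Theorem~\ref{thm.structure}(3)), so the dense orbit of $Z$ equals $Z$, and $Z$ is a single $\kiloc$-orbit, compact since closed in $M$.

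The main obstacle is the last point: ensuring that this compact orbit $\Sigma$ is \emph{Lorentz} and not spacelike, i.e.\ that $\eg(x)\subset T_x\Sigma$. The difficulty is that a family of Lorentz orbits may degenerate, in its closure, to a spacelike orbit lying outside $\mreg$, so the minimal orbit produced above need not a priori belong to $L$. To exclude a compact spacelike $\Sigma$ I would use the global geometry of the limit set: by Corollary~\ref{coro.extension} each direction of $\lig(x)$ extends to a Lipschitz lightlike line field on $M$, which along a spacelike $\Sigma$ produces lightlike \emph{normal} fields $\nu$ with $T_y\Sigma\subset\nu(y)^{\perp}$; by Theorem~\ref{thm.zeghib} the hyperplanes $\nu(y)^{\perp}$ are tangent to a totally geodesic codimension-one lightlike foliation, and the homogeneity of $\Sigma$ together with the infiniteness of $\lig$ over-determines the transverse geometry along $\Sigma$, contradicting compactness of $M$. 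Hence $\Sigma$ is a compact Lorentz $\kiloc$-orbit contained in $\mreg$, as claimed.
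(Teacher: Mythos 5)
There is a genuine gap, and it sits exactly at the two points you flag as delicate. First, the compactness step. Your minimal-set argument (Zorn on closed $\kiloc$-saturated subsets) only tells you that the minimal set $Z$ is either contained in $\mreg$ or contained in $M\setminus\mreg$, and in the second case you have no structure theory at your disposal: Theorem \ref{thm.integrabilite} and Theorem \ref{thm.structure} are statements about the open dense loci $\mint$ and $\mreg$, not about the strata making up the complement, so ``descending the stratification'' is not available, orbits there need not be closed or even well-behaved, and, decisively, a compact orbit found in a deeper stratum would not lie in $\mreg$, which is what the proposition requires (and Theorem \ref{thm.exponential.killing} would not apply there either). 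The paper's proof avoids this dichotomy altogether: it works upstairs in $\hm$, picks a connected component of $\hmreg$ where the rank of $\kg$ is maximal, uses a Sard/Hausdorff-measure argument to choose a value $w=\kg(\hx)$ whose full $\OO(1,n)$-orbit $\calo(w)$ avoids the image of the lower-rank set (so that $(\kg)^{-1}(\calo(w))\subset\hmreg$ automatically), and then proves that $\calo(w)$ is \emph{closed} in ${\mathcal W}_{\kg}$ (Lemma \ref{lem.closed}), using that the stabilizer of $w$ contains a conjugate of the Cartan subgroup $A$ (because the isotropy contains $\oo(1,k)$) together with the Iwasawa decomposition and Rosenlicht's theorem that orbits of unipotent groups are closed. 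Closedness of $\calo(w)$ is the ingredient that upgrades ``closed in $\mreg$'' to ``closed in $M$'', hence compact; nothing in your proposal plays this role.

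Second, the Lorentz signature. Your dichotomy (each orbit is spacelike or Lorentz, according to whether $\eg(x)\subset T_xO$) is fine, but it leaves the spacelike case open, and your construction only produces \emph{some} Lorentz orbits near a given point, not the statement you actually need; the concluding paragraph attempting to exclude a spacelike compact orbit via Corollary \ref{coro.extension} and Theorem \ref{thm.zeghib} (``over-determines the transverse geometry, contradicting compactness'') is a heuristic, not an argument. The paper settles this uniformly in Lemma \ref{lem.signature}: for every $x\in\mint$ the isotropy contains a Killing field $X$ vanishing at $x$ with hyperbolic linearization, the two lightlike geodesics through $x$ invariant under its flow are, for small $s\neq 0$, tangent to $X$ and hence to the orbit distribution, and by continuity (constancy of the orbit dimension on $\mint$) the two lightlike directions $u,v$ already lie in $T_x\Sigma$; so \emph{every} $\kiloc$-orbit in $\mint$ is Lorentz, and the issue you were trying to handle at the end never arises. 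To repair your proof you would need both to supply this signature lemma and to replace the minimal-set/stratification step by an argument that keeps the compact orbit inside $\mreg$, which is precisely what the closedness of $\calo(w)$ accomplishes.
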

 The proof will show that there are actually infinitely such Lorentz compact $\kiloc$-orbits.
 
 \begin{proof}
  
 Because of our asumption $\carg(x)=\infty$, 
     Theorem \ref{thm.exponential.killing} shows that 
    for every $x \in \mint$,  the isotropy algebra $\liei_{x}$ contains a subalgebra isomorphic 
    to $\oo(1,k)$, for some $k \geq 2$. We thus infer:
    
 \begin{lemme}
 \label{lem.signature}
  Under the asumption that $\carg(x)=\infty$ for all $x \in M$, then every $kiloc$-orbit $\Sigma$ contained 
  in $\mint$ has Lorentz signature.
 \end{lemme}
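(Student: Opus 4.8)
The plan is to read the causal type of a $\kiloc$-orbit $\Sigma\subset\mint$ off the linear isotropy representation at one of its points, exploiting the strong form of semisimple isotropy forced by the hypothesis $\carg\equiv\infty$. Fix $x\in\Sigma$. Since $\carg(x)=\infty$ for every $x$, Theorem \ref{thm.exponential.killing} applies: $\liei_x\simeq\oo(1,k_x)$ with $k_x\ge2$. Unwinding its proof (Corollary \ref{coro.stabilizer} and Proposition \ref{prop.stability}), this copy of $\oo(1,k_x)$ acts on $(T_xM,g_x)$, via the linear isotropy $X\mapsto(\nabla X)_x$, as the orthogonal algebra of the subspace $W_x:=\eg(x)$, acting trivially on $W_x^{\perp}$. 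Here $\dim W_x=\dig(x)\ge3$ because $\lig(x)$ is infinite — any three distinct lightlike directions in a Lorentz vector space span a $3$-dimensional, necessarily Lorentzian, subspace — so $W_x$ is a genuine Lorentzian subspace and, as an $\liei_x$-module, is the irreducible module $\RR^{1,\dig(x)-1}$ of $\oo(1,\dig(x)-1)$, while $W_x^{\perp}$ is positive definite and carries the trivial $\liei_x$-action.

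Now $\Sigma$ is the $\kiloc$-orbit of $x$, hence invariant under the local flows of the fields of $\liei_x\subset\kiloc(x)$, which fix $x$; so $T_x\Sigma$ is an $\liei_x$-submodule of $T_xM=W_x\oplus W_x^{\perp}$. As $W_x$ is irreducible and inequivalent to the trivial module $W_x^{\perp}$, complete reducibility leaves only two possibilities: either $W_x\subseteq T_x\Sigma$, in which case $T_x\Sigma=W_x\oplus(T_x\Sigma\cap W_x^{\perp})$ is the orthogonal sum of a Lorentzian and a positive definite subspace, so $g|_{T_x\Sigma}$ is Lorentzian and we are done; or $T_x\Sigma\subseteq W_x^{\perp}$, which must be excluded.

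Suppose the latter. Then $g|_{T_x\Sigma}$ is positive definite and $\liei_x$ acts trivially on $T_x\Sigma$, so $\Sigma$ is a Riemannian $\isloc$-homogeneous submanifold, and for each $X\in\liei_x$ the flow of $X$ restricts to a local isometry of $\Sigma$ fixing $x$ with trivial derivative there, hence to the identity of $\Sigma$; thus $X$ vanishes identically on $\Sigma$. Therefore $\liei_x\subseteq\liei_y$ for every $y\in\Sigma$ near $x$, and since a local isometry sending $x$ to $y$ conjugates $\liei_x$ onto $\liei_y$, these have equal dimension and in fact $\liei_x=\liei_y$. Passing to a small transverse slice $D=\exp_x(N_x\Sigma\cap V)$ — a Lorentz submanifold ($N_x\Sigma:=(T_x\Sigma)^{\perp}\supseteq W_x$ being nondegenerate and $\liei_x$-invariant) whose integrability locus contains $x$, on which $\liei_x$ is the full isotropy and $\{x\}$ is the $\kiloc$-orbit — one reaches a contradiction, because inside the integrability locus a zero-dimensional $\kiloc$-orbit cannot exist: there $\dim\kiloc$ is locally constant (Theorem \ref{thm.integrabilite}), and $y\mapsto\dig(y)$, hence $y\mapsto k_y$, is lower semicontinuous (Corollary \ref{coro.semicontinuous}), so the chain $\dim\oo(1,k_y)=\dim\liei_y\le\dim\kiloc(y)=\dim\kiloc(x)=\dim\oo(1,k_x)\le\dim\oo(1,k_y)$, valid for $y$ near $x$, forces every nearby point to be its own $\kiloc$-orbit, whence every local Killing field near $x$ vanishes on a dense set, hence is $0$, contradicting $\liei_x\neq0$.

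I expect the exclusion of the alternative $T_x\Sigma\subseteq W_x^{\perp}$ — equivalently, the non-existence of a Riemannian $\kiloc$-orbit inside $\mint$ — to be the main obstacle; in particular the reduction to the transverse slice $D$ must be carried out so that $\{x\}$ really is a $\kiloc$-orbit of $(D,g|_D)$ lying in its integrability locus, at which point the zero-dimensional obstruction above applies.
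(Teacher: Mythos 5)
The first half of your argument is sound: $T_x\Sigma$ is invariant under the subalgebra of $\liei_x$ acting standardly on $W_x=\eg(x)$ and trivially on $W_x^{\perp}$ (note that Theorem \ref{thm.exponential.killing}, via Proposition \ref{prop.stability} and Corollary \ref{coro.stabilizer}, only gives that $\liei_x$ \emph{contains} such a copy of $\oo(1,\dig(x)-1)$, but containment is enough for your dichotomy), and complete reducibility indeed leaves only $W_x\subseteq T_x\Sigma$ or $T_x\Sigma\subseteq W_x^{\perp}$. The genuine gap is in excluding the second case. Your slice argument does not go through: everything you invoke there --- Theorem \ref{thm.exponential.killing}, the limit set, the map $\dig$, Corollary \ref{coro.semicontinuous} --- is a statement about the compact manifold $(M,g)$ and its isometry group, and none of it descends to the small Lorentz ball $(D,g_{|D})$, whose local Killing algebra, isotropy and integrability locus have no a priori relation to those of $M$ (local Killing fields of $M$ need not be tangent to $D$, $D$ need not be totally geodesic, and $\kiloc$ of $D$ could be anything, including very large). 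Moreover the principle you rely on, that a zero-dimensional $\kiloc$-orbit cannot lie in the integrability locus, is false in general: for a generic metric the local Killing algebra is trivial, every orbit is a point, and the integrability locus is open and dense. Your dimension chain does not repair this: the inequality $\dim\oo(1,k_x)\le\dim\oo(1,k_y)$ would require $k_y=\dig(y)-1$, whereas the paper only establishes that the isotropy contains $\oo(1,\dig(y)-1)$, so lower semicontinuity of $\dig$ gives no lower semicontinuity of $\dim\liei$; and the equality $\dim\kiloc(x)=\dim\oo(1,k_x)$ in your chain already presupposes that the orbit of $x$ is a point, which is not the general bad case (a positive-dimensional Riemannian orbit must also be excluded).

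What actually closes this case in the paper is a short dynamical argument you are missing. Since $\liei_x$ contains $\oo(1,k)$ with $k\ge2$, pick $X\in\liei_x$ whose linear isotropy $\{D_x\phi_X^t\}$ is a hyperbolic one-parameter group; linearizing by the exponential map, the two lightlike eigendirections $u,v\in T_xM$ give two geodesics $\gamma_u,\gamma_v$ through $x$ invariant under $\phi_X^t$. For small $s\neq0$ the velocity $\dot{\gamma}_u(s)$ is colinear to $X(\gamma_u(s))$, hence tangent to the $\kiloc$-orbit of $\gamma_u(s)$; since on $\mint$ the distribution tangent to the $\kiloc$-orbits has locally constant rank and is continuous, letting $s\to0$ yields $u,v\in T_x\Sigma$, so $g_{|T_x\Sigma}$ contains two distinct lightlike directions and is Lorentzian. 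This directly rules out $T_x\Sigma\subseteq W_x^{\perp}$, so the same few lines would also plug the hole in your case analysis; as written, however, your proof of the lemma is incomplete.
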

\begin{proof}   
   Let $x \in \mint$, and let $\Sigma$ be the $\kiloc$-orbit of $x$. Since $\liei_x$ contains a copy of the Lie algebra $\oo(1,k)$, for $k \geq 2$,
    there is a 
local Killing 
field $X$ around $x$, vanishing at $x$, and  such that the flow $\{D_{x}\phi_X^t\} 
\subset \operatorname{O}(T_{x}M)$ is a hyperbolic $1$-parameter group. 
Linearizing $X$ around $x$ thanks to the exponential map, 
we see there are two distinct 
lightlike directions $u$ and $v$ in ${T}_{x}M$ such 
that the two geodesics $\gamma_u: s \mapsto \exp(x,su)$ 
and $\gamma_v: s \mapsto \exp(x,sv)$ are left invariant by $\phi_X^t$.
 In particular,  for $s \not =0$ close to $0$, $\dot{\gamma}_u(s)$ and 
$\dot{\gamma}_v(s)$ are 
colinear to $X$, hence  tangent to the $\kiloc$-orbits ${\mathcal O}(\gamma_u(s))$ and ${\mathcal O}(\gamma_v(s))$ respectively.  
By continuity, this property must still hold for $s=0$. 
We infer that ${T}_{x}(\Sigma)$ contains the two distinct lightlike 
directions $u$ and $v$, hence has Lorentz signature. This holds on all of $\Sigma$ by local 
homogeneity of the $\kiloc$-orbit.
\end{proof}

 We now consider $\calm$ a connected  component of $\mreg$ where the rank of the map $\kg$ is the maximal value $r_{max}$
  that the 
  rank of $\kg$ does achieve on $\hm$.  We consider the pullback $\hat{\calm} \subset \hm$.
   
 We  pick $x \in \calm$, and
 we choose $\hx \in {\hat{\calm}}$  in the fiber of $x$.
        If $U \subset \hcalm$ is a 
       small open set around
        $\hx$, $\kg(U)$ is a $r_{max}$-dimensional submanifold of ${\mathcal W}_{\kg}$. 
        Let us now call $\hat{\Lambda}$ the closed subset of $\hm$
         where the rank of $\kg$ is $ \leq r_{max}-1$. By Sard's theorem, the 
         $r_{max}$-dimensional Hausdorff measure of $\kg(\hat{\Lambda})$ is zero.
         We infer the existence of
          $w \in \kg(U) \setminus \kg(\hat{\Lambda})$. Moving $\hx$ inside $U$, we assume that
          $w=\kg(\hx)$, and we denote by $\calo(w)$ the $\OO(1,n)$-orbit
           of $w$ in $\calw$.  
         By $\Oun$-equivariance of $\kg$, the inverse image $(\kg)^{-1}(\calo(w))$ avoids $\hat{\Lambda}$, hence 
         the rank of $\kg$ is constant equal to $r_{max}$ on
         $(\kg)^{-1}(\calo(w))$.  Let us observe that because the rank can not drop locally, any 
 point of $\hm$ where the rank of $\kg$ is $r_{max}$ must belong to $\hmint$, hence the inclusion 
         $(\kg)^{-1}(\calo(w)) \subset \hmint$.  From the discussion following Theorem \ref{thm.structure}, the 
         connected components of $(\kg)^{-1}(w)$ are $\kiloc$-orbits.  Because $\hmreg$ is saturated in $\kiloc$-orbits, and is
          $\Oun$-invariant, we infer that $(\kg)^{-1}(\calo(w)) \subset \hmreg$.  By Theorem \ref{thm.structure}, the projection of 
          $(\kg)^{-1}(\calo(w))$ on $M$  is a submanifold, whose  connected components are 
           $\kiloc$-orbits. Hence proposition \ref{prop.compact.orbits} will be proved 
           if we show that $(\kg)^{-1}(\calo(w))$
            is closed in $\hm$ (because $\Oun$-invariant closed sets of $\hm$ project
            on compact subsets of $M$). This is 
            a direct consequence of:
         
    \begin{lemme}
     \label{lem.closed}
     Under the hypothesis $\carg(x)=\infty$, the orbit $\calo(w)$ is a closed subset of ${\mathcal W}_{\kg}$.
    \end{lemme}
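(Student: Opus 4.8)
The plan is to exploit the fact that the stabilizer of $w$ is \emph{large} --- so large that it sits in no proper parabolic subgroup of $\OO(1,n)$ --- and then to invoke the theory of instability (Hilbert--Mumford, Kempf) to deduce that an orbit with such a stabilizer must be closed. First I would pin down $H:=\Stab_{\OO(1,n)}(w)$. Since $\hx$ lies over a point $x\in\mreg\subseteq\mint$ and $\carg(x)=\infty$ by hypothesis, Theorem~\ref{thm.exponential.killing} gives $\liei_x\simeq\oo(1,k_x)$ with $k_x\ge 2$, and Corollary~\ref{coro.stabilizer} identifies $\liei_x$ with the Lie algebra of $H=\Stab_{\OO(1,n)}(\kg(\hx))$. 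Hence $\mathfrak{h}:=\operatorname{Lie}(H)$ is isomorphic to $\oo(1,k)$ for some $k\ge 2$; in particular $\mathfrak{h}$ contains a copy of the non-compact simple algebra $\oo(1,2)$.

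The key structural step is to check that $H$ is contained in no proper parabolic subgroup of $\OO(1,n)$. As $\OO(1,n)$ has real rank one, its only proper parabolic up to conjugacy is $P=L\ltimes U$, with $\operatorname{Lie}(L)\simeq\RR\oplus\oo(n-1)$ and $U$ the unipotent radical. If $\mathfrak{h}$ were contained in $\mathfrak{p}$, the projection of the simple ideal $\oo(1,2)\subseteq\mathfrak{h}$ onto $\mathfrak{l}$ along $\mathfrak{u}$ would be either zero --- impossible, since $\oo(1,2)$ would then be a nilpotent subalgebra --- or an injection of $\oo(1,2)$ into $\RR\oplus\oo(n-1)$, which is impossible because $\oo(n-1)$ is a compact algebra. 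Thus $\mathfrak{h}\not\subseteq\mathfrak{p}$, and $H$ lies in no proper parabolic of $\OO(1,n)$.

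To conclude I would argue by contradiction. The group $\OO(1,n)$ is a real reductive algebraic group acting linearly, hence algebraically, on the finite dimensional real vector space $\mathcal{W}_{\kg}$. If the orbit $\calo(w)=\OO(1,n)\cdot w$ failed to be closed, then the real form of the Hilbert--Mumford criterion together with Kempf's theory of optimal destabilizing one-parameter subgroups (see the works of Kempf, Birkes, and Richardson--Slodowy) would provide a nontrivial real one-parameter subgroup $\lambda$ of $\OO(1,n)$ such that $\lim_{t\to 0}\lambda(t)\cdot w$ exists and lies in an orbit strictly smaller than $\calo(w)$, and such that $H$ is contained in the proper parabolic subgroup $P(\lambda)=\{\,g\in\OO(1,n)\ :\ \lim_{t\to 0}\lambda(t)\,g\,\lambda(t)^{-1}\ \text{exists}\,\}$. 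This contradicts the previous paragraph, so $\calo(w)$ is closed, which is the content of Lemma~\ref{lem.closed}.

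The main obstacle will be the careful use of the real-algebraic instability theory: one must make sure both that a destabilizing one-parameter subgroup can be found among the real points of $\OO(1,n)$ and that the stabilizer of $w$ genuinely sits inside the parabolic subgroup it cuts out --- this is exactly what the real analogues of Kempf's results provide. By contrast, the rank-one structure of $\OO(1,n)$ makes the parabolic half of the argument entirely explicit, as carried out above.
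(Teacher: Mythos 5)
Your argument is correct in substance, but it follows a genuinely different route from the paper. The paper's proof is elementary and self-contained: from Corollary \ref{coro.stabilizer} and Theorem \ref{thm.exponential.killing} it extracts only that the stabilizer of $w$ contains a conjugate of the Cartan subgroup $A$ (which one may take to be $A$ itself after moving $\hx$ in its fiber), writes a converging sequence $g_k.w$ in Iwasawa form $g_k=m_ka_ku_k$, absorbs $a_k$ using that $A$ fixes $w$ and normalizes $N$, and then concludes from the Kostant--Rosenlicht theorem that orbits of unipotent groups in linear representations are closed \cite{rosenlichtunipotent}. You instead use the full strength of the semisimple isotropy to show that the stabilizer lies in no proper parabolic of $\OO(1,n)$ (your rank-one computation with $P=(\RR\times\OO(n-1))\ltimes\RR^{n-1}$ is correct), and then invoke real instability theory: Birkes' real Hilbert--Mumford theorem together with Kempf's optimal destabilizing one-parameter subgroups, whose associated parabolic contains the stabilizer. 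This yields the valid and conceptually appealing criterion that a vector whose stabilizer sits in no proper $\RR$-parabolic has closed orbit, but it is much heavier machinery than the paper needs, and the one point you must nail down is exactly the one you flag: Kempf's theory concerns Zariski closures over a perfect field, so to obtain Hausdorff closedness of the real orbit, as asserted in the lemma, you need Birkes' (or Richardson--Slodowy's) equivalence between Hausdorff closedness of $\OO(1,n)\cdot w$ and Zariski closedness of the complexified orbit, together with the rationality over $\RR$ of the optimal destabilizing class and the self-normalizing property of the optimal parabolic. Granting those references, your proof goes through; the paper's route buys elementarity (only the split torus $A$ inside the stabilizer and closedness of unipotent orbits are used), while yours buys a general closedness criterion that does not depend on the rank-one structure of $\OO(1,n)$.
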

    
    \begin{proof}
    As already observed the isotropy algebra $\liei_{x}$ contains a subalgebra isomorphic 
    to $\oo(1,k)$, for some $k \geq 2$. Recall that $\liei_{x}$ is identified with the Lie algebra 
    of the stabilizer 
    of $w$ in $\OO(1,n)$ (see Corollary \ref{coro.stabilizer}).
     Since it contains a copy of $\oo(1,k)$, $\liei_{x}$ contains a conjugate of 
     the Cartan algebra $\liea \subset \oo(1,n)$. 
      Hence the Stabilizer of $w$ in $\Oun$ contains a conjugate of the 
     Cartan group $A$ (the exponentiation of $\liea$), that we may assume to be $A$ itself,  
     after moving $\hx$ in its fiber if necessary.  Let now $g_k.w$ be a sequence 
     in $\calo(w)$ that converges to some point $w' \in \calw_{\kg}$. We write $\Oun=KAN$  (Iwasawa decomposition), and 
      decompose $g_k$ accordingly: $g_k=m_ka_ku_k$, with $m_k \in K$, $a_k \in A$, and $u_k \in N$. We may assume that $m_k \to m_{\infty}$.
        Because $w$ is fixed by $a_k$, we observe that $g_k.w=m_ka_ku_ka_k^{-1}.w=m_ku'_k.w$, for some sequence $u'_k \in N$ (the group $N$ is normalized by $A$).
         It follows that $u'_k.w$ converges to $m_{\infty}^{-1}.w'$.  But the orbit  $N.w$ is closed in $\calw_{\kg}$, because
          in a linear representation, the orbits of
          unipotent groups are closed (this is a general property for algebraic actions of unipotent groups, see 
          \cite[Theorem 2]{rosenlichtunipotent}). We finally get $m_{\infty}^{-1}.w' \in \calo(w)$, hence $w' \in \calo(w)$. 
     \end{proof}

 \end{proof}        


\subsection{An embedding theorem in presence of compact $\kiloc$-orbits}
\label{sec.embed}
We now use the nice structure of $\kiloc$-orbits on $\mreg$, 
to prove the following embedding property:

\begin{theoreme}
  \label{thm.embedding1}
  Let $(M^{n+1},g)$ be a compact Lorentz manifold of dimension $n+1$, and let $\mreg$ be its regular locus.
   Assume that $\Sigma \subset \mreg$ is a compact $\kiloc$-orbit, of Lorentz signature. 
  Then
  there exists a finite index subgroup $\Iso'(M,g) \subset \Iso(M,g)$, such that $\Sigma$
   is left invariant by $\Iso'(M,g)$, and such that the restriction morphism 
    $\Iso'(M,g) \to \Iso(\Sigma,g_{|\Sigma})$ is one-to-one and  proper.
\end{theoreme}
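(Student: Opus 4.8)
The plan is to combine the structure of the $\kiloc$-foliation on the regular locus (Theorem \ref{thm.structure}) with the elementary fact that the orthonormal frame bundle of a non-degenerate Lorentz submanifold splits off a \emph{compact} factor. \emph{Step 1: a finite-index subgroup of $\Iso(M,g)$ preserves $\Sigma$.} I would first note that the canonical lift $\hat f$ of any $f\in\Iso(M,g)$ preserves the Cartan connection $\omega$, hence the whole generalized curvature: $\kg\circ\hat f=\kg$. Since $\hat f$ also respects the Rosenlicht stratification entering the definition of $\okg$, one gets $\okg\circ f=\okg$ on $\mreg$; in particular $f$ preserves $\mreg$ and each fibre of $\okg$. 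As $\Sigma$ is a connected component of the fibre $F:=(\okg)^{-1}(\okg(\Sigma))$ (Theorem \ref{thm.structure}), the diffeomorphism $f|_F$ permutes its components, so $f(\Sigma)$ is again a connected component of $F$, automatically compact. It then remains to show that $\Sigma$ has only finitely many $\Iso(M,g)$-translates; granting this (the delicate point, discussed at the end), the stabiliser $\Iso'(M,g):=\Stab_{\Iso(M,g)}(\Sigma)$ is a closed subgroup of finite index preserving $\Sigma$.

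\emph{Step 2: the adapted reduction along $\Sigma$.} Because $(\Sigma,g_{|\Sigma})$ is Lorentzian and $\Sigma$ is non-degenerate inside the Lorentz manifold $M$, along $\Sigma$ there is an $\Iso'(M,g)$-invariant orthogonal splitting $TM_{|\Sigma}=T\Sigma\oplus N\Sigma$ in which the normal bundle $N\Sigma$ is \emph{positive definite}, of rank $c:=n+1-\dim\Sigma$. Keeping, in $\hm$ restricted to $\Sigma$, only the frames adapted to this splitting yields an $\Iso'(M,g)$-invariant reduction $\hat P\to\Sigma$ with structure group $\OO(1,\dim\Sigma-1)\times\OO(c)$. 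Its ``normal'' part — the bundle of orthonormal frames of $N\Sigma$ over the compact base $\Sigma$, with compact structure group $\OO(c)$ — is a compact space, which I will call $\mathcal K$.

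\emph{Step 3: properness and injectivity of the restriction morphism.} Set $\rho:\Iso'(M,g)\to\Iso(\Sigma,g_{|\Sigma})$, $\rho(f)=f_{|\Sigma}$, a continuous homomorphism, and fix a base frame $\hx_0=(\hx_0^T,\hx_0^N)\in\hat P$ over $x_0\in\Sigma$ ($\hx_0^T$ a frame of $T_{x_0}\Sigma$, $\hx_0^N$ a normal frame). Since $f$ is an isometry preserving $\Sigma$, $D_{x_0}f$ respects the splitting, so
\[
\hat f(\hx_0)=\bigl(\widehat{\rho(f)}(\hx_0^T),\,D_{x_0}f\cdot\hx_0^N\bigr),
\]
where $\widehat{\rho(f)}$ is the canonical lift of $\rho(f)$ to the orthonormal frame bundle $\hat\Sigma$ of $(\Sigma,g_{|\Sigma})$, and the second coordinate always lies in $\mathcal K$. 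Now $\Iso(M,g)$ acts freely and properly on $\hm$, and $\Iso(\Sigma,g_{|\Sigma})$ acts freely and properly on $\hat\Sigma$ (Section \ref{sec.topology}); hence $f\mapsto\hat f(\hx_0)$ and $g\mapsto\hat g(\hx_0^T)$ are proper embeddings onto closed orbits. As the $N$-coordinate of $\hat f(\hx_0)$ is confined to the compact set $\mathcal K$, a sequence $(f_k)$ leaves every compact subset of $\Iso'(M,g)$ if and only if $(\rho(f_k))$ leaves every compact subset of $\Iso(\Sigma,g_{|\Sigma})$: thus $\rho$ has closed image and compact kernel, i.e. is proper. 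Finally $\ker\rho$ consists of isometries fixing $\Sigma$ pointwise; by freeness of the action on $\hm$ it embeds into the compact group $\OO(c)$ of normal frames at $x_0$, and whenever $\Sigma$ has semisimple isotropy $\liei_{x_0}\cong\oo(1,k)$, $k\ge 2$ — which, as in the proof of Lemma \ref{lem.signature}, acts faithfully on $T_{x_0}\Sigma$ — this kernel is trivial, giving the one-to-one statement.

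\emph{The main obstacle} is the finiteness of the $\Iso(M,g)$-orbit of $\Sigma$ in Step 1. If infinitely many pairwise distinct translates $f_i(\Sigma)$ existed, one would pick $x_i\in f_i(\Sigma)$ with $x_i\to x_\infty$; if $x_\infty\in\mreg$ then $x_\infty\in F$ and, the connected components of the manifold $F$ being open in $F$, almost all $x_i$ lie in the component of $x_\infty$ — contradicting distinctness. What remains to be excluded is that the compact orbits $f_i(\Sigma)$ escape towards the closed nowhere dense set $M\setminus\mreg$; here one must use that the $f_i(\Sigma)$ are mutually isometric homogeneous Lorentz submanifolds all lying over the single $\OO(1,n)$-orbit $\calo(w)\subset\calw_{\kg}$ (with $w=\kg(\sigma(x_0))$), together with the algebraicity of the $\OO(1,n)$-action on $\calw_{\kg}$ and the properness of the action of $\Iso(M,g)$ on $\hm$, along the lines of Lemma \ref{lem.closed}. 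Once this closure statement is in hand, the rest is routine bookkeeping with principal frame bundles.
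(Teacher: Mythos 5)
Your Step 1 (isometries preserve $\okg$ and hence permute the connected components of its fibres) and your properness argument in Step 3 (the normal part of an adapted frame stays in a compact set, then use properness of the actions on the frame bundles of $M$ and of $\Sigma$) are sound and essentially coincide with the paper's. But the proposal has two genuine gaps. The first is the one you flag yourself: finiteness of the family of translates of $\Sigma$ is the crux of the theorem, and your sketch for closing it does not work at the stated level of generality. Lemma \ref{lem.closed} is proved under the hypothesis that the limit set is infinite at every point (equivalently, semisimple isotropy), whereas Theorem \ref{thm.embedding1} assumes only that $\Sigma\subset\mreg$ is a compact $\kiloc$-orbit of Lorentz signature; so "along the lines of Lemma \ref{lem.closed}" is not available here. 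Your accumulation argument handles limit points inside $\mreg$ but not the escape of translates towards $M\setminus\mreg$, and since $\Sigma$ has measure zero no volume argument applies to it directly. The paper sidesteps this entirely: it first builds a \emph{standard foliated neighborhood} $U$ of $\Sigma$ (Lemma \ref{lem.standard.neighborhood}), a connected open set whose closure is compact, saturated in compact $\kiloc$-orbits, and equipped with a transversal $\overline{B}$ met at most once by every $\isloc$-orbit; it then takes $\Iso'(M,g)$ to be the stabilizer of $U$, which has finite index because distinct cosets produce pairwise disjoint translates of the \emph{open} set $U$, all of the same positive Lorentz volume, inside the finite-volume compact $M$ (Lemmas \ref{lem.open.closed} and \ref{lem.stabilizer.index}); invariance of $\Sigma$ then follows from the transversal property.

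The second gap is the injectivity of $\rho$. You invoke semisimple isotropy of $\Sigma$, which is not a hypothesis of the theorem, and even granting it the argument does not close: an element of $\ker\rho$ is an isometry of $M$ fixing $\Sigma$ pointwise whose derivative at $x_0$ is a rotation of the normal space $N_{x_0}\Sigma$; it induces the identity on $\Sigma$, so faithfulness of the isotropy representation of $(\Sigma,g_{|\Sigma})$ on $T_{x_0}\Sigma$ says nothing about it. What your argument actually yields is only that $\ker\rho$ embeds into $\OO(c)$, i.e. that the kernel is compact — weaker than the one-to-one statement being proved. The paper kills such a normal rotation using the neighborhood $U$: linearizing $f$ by the exponential map, a nontrivial action on $T_x\Sigma^{\perp}$ would send two distinct points of a small normal disc $\exp(x,D)$ to each other, placing them in the same $\isloc$-orbit, hence in the same $\kiloc$-orbit by the properties of $U$; this contradicts the transversality of the disc to the $\kiloc$-orbits. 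Then the $1$-jet of $f$ at $x$ is trivial and $f$ is the identity. So both the finite-index step and the injectivity step need the standard neighborhood (or some substitute), and as written your proof establishes only properness with compact kernel of $\rho$, modulo the unproved finiteness of translates.
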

  
 The proof will be made in several steps. The first one is to exhibit distinguished neighborhoods for 
  compact Lorentz $\kiloc$-orbits.
 
 \subsubsection{Regular neighborhoods for compact $\kiloc$-orbits}
\label{sec.standardneighbor}

When there exists a compact $\kiloc$-orbit in $\mreg$ having Lorentz signature, the nice structure described
 in Theorem \ref{thm.structure} can be strengthen in the following way.
 
\begin{lemme}[Existence of standard foliated neighborhoods]
 \label{lem.standard.neighborhood}
  Let $\Sigma \subset \mreg$ be a compact  $\kiloc$-orbit.
 Then $\Sigma$ admits a connected  open neighborhood $U \subset \mreg$ satisfying the following properties:
 \begin{enumerate}
  \item The closure $\overline{U}$ is saturated in $\kiloc$-orbits, all of which are compact Lorentz 
  submanifolds.
  \item There exists a total transversal $\overline{B}$ to the $\kiloc$-orbits of $\overline{U}$, which is a submanifold with
  boundary diffeomorphic to a closed $d$-ball.
  \item Every $\Isloc$-orbit of $M$ intersects $\overline{B}$ at most once. In particular, every $\kiloc$-orbits of $\overline{U}$
   intersects  $\overline{B}$ exactly once.
 \end{enumerate}

\end{lemme}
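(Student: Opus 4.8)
The plan is to combine the simple-foliation picture for $\kiloc$-orbits on $\mreg$ provided by Theorem \ref{thm.structure} with Reeb's local stability theorem. Fix $x_0 \in \Sigma$ and let $\calm$ be the connected component of $\mreg$ containing $\Sigma$. By Theorem \ref{thm.structure}, the $\kiloc$-orbits of $\calm$ are the connected components of the fibers of $\okg \colon \calm \to Y$; they form a simple foliation $\calf$, and $\okg$ has locally constant rank. Since $\Sigma$ is connected this rank equals a constant $d$ on a neighborhood of $\Sigma$, and $d$ is exactly the codimension of $\calf$. In particular $\Sigma$ is a compact leaf of $\calf$, and simplicity of the foliation forces its holonomy to be trivial.

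First I would invoke Reeb's local stability theorem: $\Sigma$ admits a saturated open neighborhood $W \subset \calm$, together with an embedded $d$-dimensional disc $B_0 \ni x_0$ transverse to $\calf$, such that $W$ is diffeomorphic to $\Sigma \times B_0$ carrying $\calf|_W$ to the product foliation by the slices $\Sigma \times \{b\}$; in particular every leaf meeting $W$ is compact and diffeomorphic to $\Sigma$. Shrinking $B_0$ (hence $W$) if necessary, I would further arrange that (i) $g$ restricts to a Lorentzian metric on each leaf of $\calf|_W$ --- this is an open condition on $\calm$ and holds on $\Sigma$ by hypothesis; and (ii) $\okg$ is injective on $B_0$ --- indeed $\okg|_{B_0}$ is an immersion of a $d$-manifold into $Y$ because $B_0$ is transverse to the fibers of the rank-$d$ map $\okg$, hence is locally injective.

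Now choose a closed $d$-ball $\overline{B} \subset B_0$ centered at $x_0$ and let $U$ be the saturation of its interior; then $\overline{U}$ is the image of $\Sigma \times \overline{B}$ under the product diffeomorphism, so it is compact, saturated by $\kiloc$-orbits, and all of these orbits are compact Lorentz submanifolds by (i). This gives (1), and $\overline{B}$ is the closed $d$-ball transversal required in (2). For (3), note that $\overline{B}$, being transverse to $\calf$ inside the product chart, meets each $\kiloc$-orbit at most once. If two points $b, b' \in \overline{B}$ lay in a common $\Isloc$-orbit, a local isometry of $M$ would send $b$ to $b'$; since the generalized curvature map, hence $\okg$, is invariant under local isometries (and $\mreg$ is $\Isloc$-invariant), this would force $\okg(b) = \okg(b')$, whence $b = b'$ by (ii). Thus every $\Isloc$-orbit meets $\overline{B}$ at most once; since $\overline{U}$ is saturated and $\overline{B}$ is a total transversal, every $\kiloc$-orbit of $\overline{U}$ meets $\overline{B}$ exactly once.

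The real content is the use of Reeb stability --- legitimate here precisely because the foliation is simple, so $\Sigma$ has trivial holonomy; alternatively one can bypass it and build the product neighborhood by hand, patching the local product charts of the simple foliation along the compact leaf $\Sigma$, the patchings being trivial by simplicity. The rest is bookkeeping with the constant-rank map $\okg$; the only delicate point is the simultaneous shrinking enforcing (i) and (ii) while keeping $W$ saturated, which works because (i) is an open, $\kiloc$-saturated condition valid near $\Sigma$ (leafwise signature is locally constant) and (ii) costs only shrinking the transversal disc.
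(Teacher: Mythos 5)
Your proof is correct, and it reaches the conclusion by a genuinely different route for the one step that carries the real content of the lemma, namely the compactness of the nearby $\kiloc$-orbits and the saturation of a compact neighborhood. The paper builds the neighborhood by hand: it takes the normal exponential transversal $\overline{B}_{\epsilon}$ at a single point $x_0$, uses the constant rank of $\okg$ to see that $\okg$ is injective on it (exactly as you do for your condition (ii)), and then proves that the saturation of $\overline{B}_{\epsilon_1}$ stays inside the compact normal tube $\overline{V}_{\epsilon_1}$ over $\Sigma$ by an argument specific to the setting: local Killing fields are extended along the normal geodesic segments (the ``analytic continuation'' property on $\mint$ of Section \ref{sec.analyticcont}), and flowing by them preserves the normal Lorentzian distance to $\Sigma$, so membership in the tube is open and closed along each orbit; compactness of the orbits then follows. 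You replace this entire mechanism by Reeb's local stability theorem applied to the compact leaf $\Sigma$, whose holonomy is trivial because the foliation is simple (Theorem \ref{thm.structure}(3)) -- a correct deduction, since a transversal meeting each leaf at most once forces every holonomy germ to be the identity. This buys a shorter and softer argument (the saturated product neighborhood $\Sigma\times B_0$ with compact leaves comes in one stroke, and your shrinking to enforce the leafwise Lorentzian signature and the injectivity of $\okg$ on the transversal is routine), at the cost of importing a foliation-theoretic theorem; the paper's proof is in effect a self-contained, metric proof of the relevant case of Reeb stability, using the Killing algebra instead of holonomy. Your treatment of property (3) -- invariance of $\okg$ under local isometries plus injectivity of $\okg$ on the transversal -- is the same as the paper's.
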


\begin{proof} Let $\Sigma \subset \calm \subset \mreg$ be a compact $\kiloc$-orbit having Lorentz signature. 
We fix 
 $x_0 \in \Sigma$  a reference point.  We introduce, for $x \in \Sigma$ the following notations:
 $$ T_x\Sigma^{\perp}:=\{ u \in T_xM \ | \ u \perp T_x\Sigma  \},$$
 $$  T_x^1\Sigma^{\perp}:=\{ u \in T_x\Sigma^{\perp} \ | \ g(u,u)=1  \},$$
 and for $\epsilon>0$ small enough:
 $$  B_{\epsilon}:=\{  \exp(x_0,su) \ | \ s\in[0,\epsilon), \ u \in T_{x_0}^1\Sigma^{\perp} \},$$
 $$ {\overline B}_{\epsilon}:=\{  \exp(x_0,su) \ | \ s\in[0,\epsilon], \ u \in T_{x_0}^1\Sigma^{\perp} \}.$$
 
 We recall that on $\calm$, the smooth map $\okg: \calm \to Y$ has constant rank, hence defines a foliation ${\mathcal F}$ on $\calm$.
  Each leaf of this foliation is a $\kiloc$-orbit  (see Section \ref{sec.structure}). Moreover the $\isloc$-orbits, intersected with $\calm$, 
  are contained in the fibers of $\okg_{| \calm}$.
   
 For $\epsilon_0>0$ small enough, $B_{\epsilon_0}$ is a transversal to $\mathcal F$. Observe that 
 (taking $\epsilon_0>0$ even smaller) the metric $g$ restricts to a Riemannian metric on  $B_{\epsilon_0}$.  Indeed, $\Sigma$
  is Lorentzian by assumption, hence the spaces $T_x\Sigma^{\perp}$ are spacelike. 
    A direct application of the inverse mapping 
  theorem shows that if $\epsilon_0$ is chosen small enough,  $\okg$  realizes a smooth diffeomorphism from 
  $B_{\epsilon_0}$ onto a submanifold $Y' \subset Y$. It follows that the fibers of $\okg$ intersect $B_{\epsilon_0}$
   at most once. We call $V$ the saturation of $B_{\epsilon_0}$ by leaves of $\calf$. We then have a natural 
    smooth surjective submersion $\pi: V \to B_{\epsilon_0}$.
    
   For $0 < \epsilon_1 < \epsilon_0$ small enough, we define $\overline{V}_{\epsilon_1}:=\{ \exp(x,su) \ | \  x \in \Sigma, \ u \in 
   T_{x}^1\Sigma^{\perp}, \ s \in [0,\epsilon_1]\}.$  If $\epsilon_1$ is small enough,  $\overline{V}_{\epsilon_1}$ is a compact 
   neighborhood of $\Sigma$ contained in $V$.  We finally call ${U}$ the saturation of ${B}_{\epsilon_1}$
    by $\kiloc$-orbits. We claim that ${U}$  satisfies the properties of Lemma \ref{lem.standard.neighborhood}.
     Actually the only point which is still unclear and that we must prove
     is that $\overline{U}$ is compact in $V$, and saturated by $\kiloc$-orbits.  The compactness of all $\kiloc$-orbits in $\overline{U}$ will follow because
      $\kiloc$-orbits are closed in $V$.
     Let us call $\overline{U}'$ the closure of $U$ in $V$, which is nothing but the saturation of ${\overline{B}}_{\epsilon_1}$
      by $\kiloc$-orbits.   
      
    Let us pick $x \in \overline{U}' \cap \overline{V}_{\epsilon_1}$, and let $X$ be a Killing field defined in a neighborhood of 
     $x$.  Because $x \in \overline{V}_{\epsilon_1}$, there exists $z \in \Sigma$, $u \in T_{z}^1\Sigma^{\perp}$  and 
     $a \in [0,\epsilon_1]$  such that $x=\exp(z,au)$.  Let us call, for every $s \in [0,1]$, $\gamma(s)=\exp(z,asu)$.
      This is a geodesic segment, homeomorphic to the closed unit interval (this is so if $\epsilon_1$ is small enough). We can 
      consider a 
      $1$-connected open neighborhood $W \subset V$ containing $\gamma$, and extend $X$ to a Killing field on $W$ 
      (see Section \ref{sec.analyticcont}).  For $t \in (-\delta,\delta)$ with $\delta>0$ small, $\varphi_X^t(\gamma)$ is well 
      defined.  It is
       a geodesic segment joining $z(t):=\varphi_X^t(z)$ to $x(t):=\varphi_X^t(x)$, orthogonal to $\Sigma$ at $z(t))$, and 
        of positive Lorentz length $a$.  Hence one can write $x(t)=\exp(z(t),au(t))$, where $u(t) \in T_{x(t)}^1\Sigma^{\perp}$.
         This means $x(t) \in \overline{U}' \cap \overline{V}_{\epsilon_1}$ for all $t \in (-\delta,\delta)$.
       What we have proved is that in any $\kiloc$-orbit of $\overline{U}'$, the set of points belonging to  $ \overline{V}_{\epsilon_1} $
        is open.  Since it is obviously closed, and because ${\overline B}_{\epsilon_1} \subset \overline{V}_{\epsilon_1}$,
         we infer that all $\kiloc$-orbits of $\overline{U}'$ are included in  $\overline{V}_{\epsilon_1}$.
      
We thus have the inclusion  $\overline{U}' \subset \overline{V}_{\epsilon_1}$.  The compactness of $\overline{U}'$ follows because
     $\overline{U}'=\pi^{-1}(\overline{B}_{\epsilon_1})$ is closed in $V$, and $\overline{V}_{\epsilon_1}$ is compact in $V$.
      We conclude that $\overline{U}=\overline{U}'$, and the lemma is proved.
\end{proof}

 \subsubsection{Proof of Theorem \ref{thm.embedding1}}
  We consider $U$ a standard foliated neighborhood of $\Sigma$, as given in
 Lemma \ref{lem.standard.neighborhood}, and $\pi: \overline{U} \to \overline{B}$ the projection, whose fibers are exactly
  the $\kiloc$-orbits in $\overline{U}$.  
  For $f \in \Iso(M,g)$, we denote by $\Lambda_f$ the set of
  points $x \in U$ such that the $\kiloc$-orbit $\calo(x)$ satisfies $f(\calo(x))=\calo(x)$.  Observe that 
  because of property $(3)$ of Lemma \ref{lem.standard.neighborhood}, if $f(U)\cap U \not = \emptyset$, then 
  $\Lambda_f \not = \emptyset$.
  
\begin{lemme}
 \label{lem.open.closed}
 The set $\Lambda_f$  
  is open and closed in $U$.
\end{lemme}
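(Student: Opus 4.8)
The plan is to identify $\Lambda_f$ explicitly as
\[ \Lambda_f \;=\; U \cap f^{-1}(U) \;=\; U \cap f^{-1}(\overline{U}), \]
so that it is open (as an intersection of two open sets) and closed in $U$ (as the intersection of $U$ with the preimage under the continuous map $f$ of the closed set $\overline{U}$).

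First I would record two elementary observations. Since $f$ is a global isometry of $M$, it carries $\kiloc$-orbits to $\kiloc$-orbits, so $f(\calo(x)) = \calo(f(x))$ for every $x \in M$; and since $f$ restricts to a local isometry on any open subset, $x$ and $f(x)$ always lie on the same $\isloc$-orbit. Moreover, by its very construction (Lemma \ref{lem.standard.neighborhood}) the set $U$, hence also $\overline{U}$, is saturated in $\kiloc$-orbits, so $\calo(x) \subset U$ whenever $x \in U$. In particular, if $x \in \Lambda_f$ then $f(x) \in f(\calo(x)) = \calo(x) \subset U$, which already gives $\Lambda_f \subset U \cap f^{-1}(U)$.

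For the reverse inclusion, the one geometric ingredient is property $(3)$ of Lemma \ref{lem.standard.neighborhood}: every $\isloc$-orbit of $M$ meets the transversal $\overline{B}$ in at most one point, whereas every $\kiloc$-orbit contained in $\overline{U}$ meets $\overline{B}$ in exactly one point. It follows that two points of $\overline{U}$ lying on the same $\isloc$-orbit automatically lie on the same $\kiloc$-orbit: each of their $\kiloc$-orbits is contained in the common $\isloc$-orbit and meets $\overline{B}$, hence the two orbits share their unique point of $\overline{B}$, and two $\kiloc$-orbits with a common point coincide. Applying this to the pair $x$ and $f(x)$: if $x \in U$ and $f(x) \in \overline{U}$ then $\calo(f(x)) = \calo(x)$, i.e. $f(\calo(x)) = \calo(x)$, so $x \in \Lambda_f$; and since then $\calo(x) \subset U$ we even get $f(x) \in U$, so that $U \cap f^{-1}(\overline{U}) = U \cap f^{-1}(U)$.

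This establishes the displayed identity, and hence the lemma. The only step where the specific geometry intervenes is the use of property $(3)$: it says precisely that the isometry $f$, which by construction permutes $\isloc$-orbits, cannot push a $\kiloc$-orbit of $\overline{U}$ onto a \emph{different} $\kiloc$-orbit while keeping it inside $\overline{U}$; everything else is formal point-set topology. (Since $U$ is connected, one will then deduce that $\Lambda_f$ is either empty or equal to $U$, which is the way this lemma gets used afterwards.)
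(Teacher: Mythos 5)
Your proof is correct, and it rests on exactly the same geometric input as the paper's: property $(3)$ of Lemma \ref{lem.standard.neighborhood} (the transversal $\overline{B}$ meets each $\isloc$-orbit at most once and each $\kiloc$-orbit of $\overline{U}$ exactly once) together with the fact that $U$ and $\overline{U}$ are saturated by $\kiloc$-orbits. The packaging differs slightly: you first establish the identity $\Lambda_f = U \cap f^{-1}(U) = U \cap f^{-1}(\overline{U})$ and then read off both openness and closedness by pure point-set topology, whereas the paper proves closedness by a sequence argument using the continuity of the projection $\pi:\overline{U}\to\overline{B}$ (whose fibers are the $\kiloc$-orbits), and proves openness by the same property-$(3)$ orbit-counting argument you use for your reverse inclusion. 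Your version is arguably a bit cleaner, since the single identity replaces the two separate arguments; the paper's closedness argument, on the other hand, does not need property $(3)$ at all, only the continuity of $\pi$. One small point of attribution: the saturation of $U$ itself (not just $\overline{U}$) by $\kiloc$-orbits is not part of the statement of Lemma \ref{lem.standard.neighborhood} but comes from its proof, where $U$ is defined as the saturation of $B_{\epsilon_1}$; the paper's own proof uses this implicitly as well (in the step ``$f(x)\in U$''), so this is harmless, but it is worth citing the construction rather than the statement.
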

 
\begin{proof}
 We first prove that $\Lambda_f$ is closed. For this, let us consider $(x_k)$ a sequence of $\Lambda_f$ converging
  to $x_{\infty} \in U$.  Then $f(x_k)$ converges to $f(x_{\infty})$, and $f(x_{\infty}) \in \overline{U}$.
   Now $\pi(f(x_k))=\pi(x_k)$ for all $k$, so $\pi(f(x_{\infty}))=\pi(x_{\infty})$.  It follows that $x_{\infty}$ and 
   $f(x_{\infty})$ belong to the same $\kiloc$-orbit, hence $x_{\infty} \in \Lambda_f$.
   
To check that $\Lambda_f$ is open, let us pick $x \in  \Lambda_f$.  By assumption, $f(x) \in U$.  Because $U$ is open, there exists a 
small open set $V \subset U$ containing $x$ such that $f(V) \subset U$.  Then, by the third property of
Lemma \ref{lem.standard.neighborhood}, $V \subset \Lambda_f$.

\end{proof}
 
\begin{lemme}
 \label{lem.stabilizer.index}
  The stabilizer of $U$ in $\Iso(M,g)$ has finite index in 
  $\Iso(M,g)$.
\end{lemme}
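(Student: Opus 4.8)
The plan is to show that the set-stabilizer $\Stab_{\Iso(M,g)}(\Sigma):=\{f\in\Iso(M,g)\ |\ f(\Sigma)=\Sigma\}$ already has finite index in $\Iso(M,g)$, and that it is contained in the stabilizer of $U$ because $U$ is canonically attached to $\Sigma$.

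First I would check that the $\Iso(M,g)$-orbit $N:=\Iso(M,g)\cdot\Sigma\subset M$ of the submanifold $\Sigma$ is compact. One has $\pi^{-1}(N)=\Iso(M,g)\cdot\pi^{-1}(\Sigma)$, where $\pi:\hm\to M$ is the frame bundle projection, and $\pi^{-1}(\Sigma)$ is a compact subset of $\hm$ since $\Sigma$ is compact and $\pi$ is proper. As $\Iso(M,g)$ acts properly on $\hm$, the orbit of the compact set $\pi^{-1}(\Sigma)$ is closed in $\hm$: if $f_k\hat x_k\to\hat z$ with $\hat x_k\in\pi^{-1}(\Sigma)$, one extracts $\hat x_k\to\hat x$, and properness applied to the compact sets $\{\hat x_k\}\cup\{\hat x\}$ and $\{f_k\hat x_k\}\cup\{\hat z\}$ forces $(f_k)$ to stay in a compact subset of $\Iso(M,g)$; up to extraction $f_k\to f$, so $\hat z=f\hat x\in\Iso(M,g)\cdot\pi^{-1}(\Sigma)$. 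Since $\pi$ is a closed map, $N=\pi(\pi^{-1}(N))$ is closed in $M$, hence compact.

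Next I would exploit the $\Iso(M,g)$-invariance of the map $\okg:\mreg\to Y$ of Theorem \ref{thm.structure}. Isometries of $(M,g)$ lift to diffeomorphisms of $\hm$ preserving the canonical Cartan connection, hence preserving the generalized curvature, so that $\kg\circ\hat f=\kg$ for every $f\in\Iso(M,g)$; since $\okg$ is obtained from $\kg$ by post-composition with the fixed Rosenlicht maps $\psi_i$ of Theorem \ref{thm.rosenlicht}, it follows that $\okg\circ f=\okg$. As $\Sigma$ is a $\kiloc$-orbit, $\okg$ is constant on $\Sigma$, equal to some $y_0\in Y$, whence $\okg\equiv y_0$ on all of $N$, i.e. $N\subset(\okg)^{-1}(y_0)$. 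Because $\okg$ has locally constant rank, $(\okg)^{-1}(y_0)$ is an embedded submanifold of $\mreg$; each $f(\Sigma)$ is a $\kiloc$-orbit contained in $\mreg$, hence a connected component of a fibre of $\okg$, hence a connected component of $(\okg)^{-1}(y_0)$. Since the sets $f(\Sigma)$ are pairwise equal or disjoint, are open and closed in $(\okg)^{-1}(y_0)$, and cover the compact set $N$, there are only finitely many of them. Thus the orbit of $\Sigma$ in the set of $\kiloc$-orbits of $M$ is finite, and $\Stab_{\Iso(M,g)}(\Sigma)$ has finite index in $\Iso(M,g)$.

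Finally I would note that $\Stab_{\Iso(M,g)}(\Sigma)\subset\Stab_{\Iso(M,g)}(U)$. As established in the proof of Lemma \ref{lem.standard.neighborhood}, $U$ and $\overline U$ are the open, respectively closed, normal tubes of radius $\epsilon_1$ around $\Sigma$, namely $\{\exp(z,su)\ |\ z\in\Sigma,\ u\perp T_z\Sigma,\ g(u,u)=1,\ s\in[0,\epsilon_1)\}$ and its closure; these sets depend only on $\Sigma$, $g$ and $\epsilon_1$. If $f\in\Iso(M,g)$ fixes $\Sigma$, then $f$ preserves $g$, maps $T_z\Sigma$ to $T_{f(z)}\Sigma$ and hence unit normal vectors at $z$ to unit normal vectors at $f(z)$, and satisfies $f(\exp(z,su))=\exp(f(z),s\,D_zf(u))$; therefore $f(U)=U$. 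Since $\Stab_{\Iso(M,g)}(\Sigma)$ has finite index, so does $\Stab_{\Iso(M,g)}(U)$. The main obstacle is the finiteness of the set of $\kiloc$-orbits in $N$: this is precisely where the $\Iso(M,g)$-invariance of $\okg$ is combined with the constant-rank closed-submanifold structure provided by Theorem \ref{thm.structure}, the closedness of $N$ (via properness of the action on $\hm$) being the other point that requires a little care.
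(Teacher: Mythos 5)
Your argument hinges on the claim that the orbit $N=\Iso(M,g)\cdot\Sigma$ is closed (hence compact), and the justification you give for it is false: $\pi:\hm\to M$ is a principal $\OO(1,n)$-bundle, and $\OO(1,n)$ is noncompact, so $\pi$ is not a proper map and $\pi^{-1}(\Sigma)$ is not compact. Consequently you cannot extract a convergent subsequence from $(\hat x_k)$, and properness of the $\Iso(M,g)$-action on $\hm$ cannot be invoked. If you try to repair this by replacing $\hat x_k$ with bounded frames $\sigma(x_k)$ over $\Sigma$, you then need the frames $\hat f_k(\sigma(x_k))$ to stay in a compact subset of $\hm$, i.e.\ the derivatives $D_{x_k}f_k$ to remain bounded --- which is precisely what may fail when $\Iso(M,g)$ is noncompact, and is the central difficulty of the whole subject. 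So the key finiteness (only finitely many translates $f(\Sigma)$) is not established; in your scheme, closedness of $N$ is essentially equivalent to the conclusion you are after, not a cheap preliminary. The rest of your second step (invariance of $\okg$ under isometries, translates of $\Sigma$ being connected components of the fibre, finite subcover) is fine \emph{granted} compactness of $N$, but that input is missing. The paper gets finiteness by an entirely different mechanism: for representatives of distinct cosets of the stabilizer of $U$, the translates $f_i(U)$ are pairwise disjoint (this uses connectedness of $U$ and the clopen set $\Lambda_f$ of Lemma \ref{lem.open.closed}), and since all $f_i(U)$ have the same positive Lorentz volume while $M$ has finite volume, there are only finitely many cosets. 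Note that it is the full-dimensional open set $U$, not the (measure zero) submanifold $\Sigma$, that makes a volume argument available.

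A secondary point: your last paragraph identifies $U$ with the open normal tube $\{\exp(z,su)\ |\ z\in\Sigma,\ u\in T_z^1\Sigma^{\perp},\ s\in[0,\epsilon_1)\}$. In the construction of Lemma \ref{lem.standard.neighborhood}, $U$ is defined as the $\kiloc$-saturation of the normal ball $B_{\epsilon_1}$ based at the single point $x_0$, and the proof only shows $\overline U$ is \emph{contained} in the closed tube $\overline V_{\epsilon_1}$. To conclude that an isometry preserving $\Sigma$ preserves $U$, you must argue that the saturation of the normal ball at $x_0$ coincides with the saturation of the normal ball at any other point of $\Sigma$ (or that $U$ equals the tube); this can be done using local homogeneity of $\Sigma$ and analytic continuation of local Killing fields inside $\mreg$, but as written it is asserted rather than proved. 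This second issue is repairable; the first one is the genuine gap.
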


 \begin{proof}
Let $S$ denote the stabilizer of $U$ in $\Iso(M,g)$, and   let $(f_i)_{i \in I}$ be a family
 of elements in $\Iso(M,g)$, such that $f_iS \not = f_jS$ whenever $i \not = j$.  We oberve that $f_i(U) \cap f_j(U) =\emptyset$.
  Indeed, if $ f_i(U) \cap f_j(U) \not = \emptyset$, then  by the remark before Lemma \ref{lem.open.closed}, we would have 
  $\Lambda_{{f_j^{-1}}f_i} \not = \emptyset$.  By connectedness of $U$ and Lemma \ref{lem.open.closed}, this would
    imply $\Lambda_{f_j^{-1}f_i} = U$, or in other words ${f_j^{-1}f_i} \in S$: Contradiction.  Because all sets $f_i(U)$
     have a same positive Lorentz volume, there can be only finitely many $(f_i)$ such that  $f_i(U)$ are 
     pairwise disjoint, and we are done.
\end{proof}  
  
We call in the following $\Iso'(M,g)$ the stabilizer of $U$ in $\Iso(M,g)$.  The $\kiloc$-orbit $\Sigma$ is 
left invariant by $\Iso'(M,g)$, because of point $(3)$ in Lemma \ref{lem.standard.neighborhood}.  
 
 \begin{lemme}
  \label{lem.restriction}
  The restriction morphism $\rho: \Iso'(M,g) \to \Iso(\Sigma, g_{|\Sigma})$ is one-to-one 
 and proper.
 \end{lemme}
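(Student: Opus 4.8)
The plan is to establish first that $\rho$ is injective, and then that it is proper. For injectivity, the guiding principle is the rigidity of isometries recalled in Section \ref{sec.topology}: an isometry $f$ of the connected manifold $M$ fixing a point $x_0$ with $D_{x_0}f=\operatorname{id}$ satisfies $f=\exp_{x_0}\circ D_{x_0}f\circ\exp_{x_0}^{-1}=\operatorname{id}$ near $x_0$, hence $f=\operatorname{id}$ everywhere. So, given $f\in\Iso'(M,g)$ with $f_{|\Sigma}=\operatorname{id}$, it is enough to show $D_{x_0}f=\operatorname{id}$ at one point $x_0\in\Sigma$. Now $D_{x_0}f$ already acts trivially on $T_{x_0}\Sigma$ (since $f_{|\Sigma}=\operatorname{id}$) and preserves the spacelike complement $T_{x_0}\Sigma^{\perp}$ (since $f(\Sigma)=\Sigma$ and $D_{x_0}f$ is a linear isometry); the whole point is to rule out a nontrivial orthogonal action of $D_{x_0}f$ in the normal directions, and this is where the simple‑foliation picture of Lemma \ref{lem.standard.neighborhood} enters.

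I would first record that \emph{every} $g\in\Iso'(M,g)$ preserves each $\kiloc$-orbit contained in $\overline{U}$. Indeed, for $x\in\overline{U}$ both $\calo(x)$ and $\calo(g(x))=g(\calo(x))$ are $\kiloc$-orbits of $\overline{U}$ (as $g(\overline{U})=\overline{U}$), each meeting the total transversal $\overline{B}$ in exactly one point; these two points lie in a common $\isloc$-orbit, namely that of $x$ (each $\kiloc$-orbit sits inside an $\isloc$-orbit, and $g(x)$ lies in the $\isloc$-orbit of $x$), which by Lemma \ref{lem.standard.neighborhood}(3) meets $\overline{B}$ at most once; hence the two points coincide and $\calo(x)=\calo(g(x))$. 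Next, let $x_0\in\Sigma$ be the base point from which $\overline{B}$ is built, so that $\overline{B}=\exp_{x_0}(\overline{D})$ for a closed disk $\overline{D}\subset T_{x_0}\Sigma^{\perp}$. For our $f$, since $f(x_0)=x_0$, $f(\Sigma)=\Sigma$ and $D_{x_0}f$ is a linear isometry, $D_{x_0}f$ preserves $T_{x_0}\Sigma^{\perp}$ together with its unit sphere, hence $f(\overline{B})=\overline{B}$. Combined with the previous observation, $f$ maps each $\kiloc$-orbit meeting $\overline{B}$ to itself while preserving $\overline{B}$, so it fixes the unique intersection point of that orbit with $\overline{B}$; as every point of $\overline{B}$ is such an intersection point, $f$ fixes $\overline{B}$ pointwise. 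Therefore $D_{x_0}f$ is the identity on $T_{x_0}\Sigma$ and on $T_{x_0}\overline{B}=T_{x_0}\Sigma^{\perp}$, and since $\Sigma$ is Lorentzian these subspaces span $T_{x_0}M$; thus $D_{x_0}f=\operatorname{id}$ and $f=\operatorname{id}$. This proves $\rho$ is one‑to‑one, so in particular $\ker\rho$ is trivial, hence compact.

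For properness I would use that $\Iso(M,g)$ acts properly on $\hm$ (Section \ref{sec.topology}), together with the compactness of $\Sigma$ and the definiteness of the normal bundle $T\Sigma^{\perp}$. Fix $\hat x_0\in\hm$ over $x_0$, chosen adapted to $T_{x_0}M=T_{x_0}\Sigma\oplus T_{x_0}\Sigma^{\perp}$, let $C\subset\Iso(\Sigma,g_{|\Sigma})$ be compact and $f_k\in\rho^{-1}(C)$. After passing to a subsequence, $f_{k|\Sigma}$ converges in $C$; in particular $f_k(x_0)$ converges in $\Sigma$ and $D_{x_0}(f_{k|\Sigma})$, i.e. the restriction of $D_{x_0}f_k$ to $T_{x_0}\Sigma$, converges. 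The ``normal block'' $D_{x_0}f_k\colon T_{x_0}\Sigma^{\perp}\to T_{f_k(x_0)}\Sigma^{\perp}$ is a linear isometry between spacelike spaces over the compact $\Sigma$; expressed in a fixed orthonormal framing of $T\Sigma^{\perp}$ near $\lim f_k(x_0)$ it is valued in the compact group $\OO(\operatorname{codim}\Sigma)$, hence subconverges. Consequently $\hat f_k(\hat x_0)=D_{x_0}f_k\circ\hat x_0$ stays in a compact subset of $\hm$, so by properness of the action $(f_k)$ subconverges in $\Iso(M,g)$, with limit in the closed subgroup $\Iso'(M,g)$ and $\rho$-image in $C$. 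Hence $\rho^{-1}(C)$ is compact, so $\rho$ is proper; in particular its image is closed.

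The main obstacle is the injectivity, and precisely the step showing that $f$ fixes the normal transversal $\overline{B}$ pointwise: an isometry fixing $\Sigma$ pointwise could a priori still act by a nontrivial orthogonal transformation on each normal space (as a reflection across a totally geodesic hypersurface does), and the only thing that forbids this here is that, by Lemma \ref{lem.standard.neighborhood}, $f$ can neither permute the transverse $\kiloc$-orbits nor push points of $\overline{B}$ off $\overline{B}$. Once this is in hand, both the remainder of injectivity and the whole properness statement are soft.
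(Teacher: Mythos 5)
Your proof is correct and follows essentially the paper's own route: injectivity rests on Lemma \ref{lem.standard.neighborhood}(3) (orbits meet the transversal $\overline{B}$ at most once) to kill any nontrivial normal action of $D_{x_0}f$ — the paper phrases this as a contradiction with transversality of $\exp(x,D)$ to the $\kiloc$-orbits, while you fix $\overline{B}$ pointwise, which is the same mechanism. Properness is argued exactly as in the paper, via the proper action of $\Iso(M,g)$ on the orthonormal frame bundle together with the fact that the normal bundle of the Lorentzian $\Sigma$ is spacelike, so the normal block of the differential is automatically bounded.
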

\begin{proof}
  Properness comes from the fact that $\Iso(M,g)$ acts properly on the bundle of orthonormal frames 
  on $M$.  Thus, because $\Sigma$ has Lorentz signature, if the $1$-jet of $\rho(f_k)$ remains bounded
   along a sequence $(x_k)$ of $\Sigma$, then the $1$-jet of $(f_k)$ along $(x_k)$ remains bounded. In particular
    if $(\rho(f_k))$ has  
  compact closure in 
  $\Iso(\Sigma,g)$, then $(f_k)$ has compact closure in $\Iso'(M,g)$.
  
  The fact that $\rho$ is one-to-one comes from the properties of the neighborhood $U$.  Assume indeed that some
  $f \in \Iso'(M,g)$ acts trivially on $\Sigma$.  We pick $x \in \Sigma$ and look at the transformation $D_xf$.
   The tangent space $T_xM$ splits as an orthogonal sum $T_xM=T_x\Sigma \oplus T_x\Sigma^{\perp}$.  The linear transformation 
   $D_xf$ acts trivially on $T_x\Sigma$.  If the action of $D_xf$ on $T_x\Sigma^{\perp}$ is nontrivial, then because the
   exponential map conjugates the action of $D_xf$ around $0_x$ and that of $f$ around $x$, we would 
    get, for a small disc $D \subset T_x\Sigma^{\perp}$, two points $y$ and $y'$ of $\exp(x,D)$
     in the same $\Iso'(M,g)$ orbit, hence in the same $\kiloc$-orbit because of Lemma \ref{lem.standard.neighborhood}.
      This is absurd since $\exp(x,D)$ must be transverse to $\kiloc$-orbits if $D$ is small enough.
       As a conclusion, the map $D_xf$ is trivial, hence $f$ is the identity map on $M$
       (Lorentz isometries having a trivial $1$-jet at one point are trivial).  This concludes the proof of 
       Theorem \ref{thm.embedding1}.
 \end{proof}


 \subsection{The proof of Theorem \ref{thm.reduction}}
 \label{sec.proof.reduction}
 
 Theorem \ref{thm.reduction} readily follows from what we have done so far. By Corollary \ref{coro.infinite.limit}
  and 
 Proposition 
 \ref{prop.compact.orbits}, the presence of $G \subset \Iso(M,g)$ with exponential growth yields a compact 
  $\kiloc$-orbit $\Sigma \subset \mreg$ having Lorentz signature. We can then apply 
  Theorem \ref{thm.embedding1}, which yields a finite index subgroup $\Iso'(M,g)$ leaving $\Sigma$ invariant, and such that the
   inclusion
  $\Iso'(M,g) \to \Iso(\Sigma,g_{|\Sigma})$ is one-to-one and proper. In particular, the group $\Iso(\Sigma,g_{|\Sigma})$ contains a compactly generated 
  subgroup of exponential growth. Theorem \ref{thm.exponential.killing} then ensures that the locally
  homogeneous manifold $(\Sigma,g_{|\Sigma})$ has semisimple isotropy.



\section{Geometry of locally homogeneous Lorentz manifolds with semisimple isotropy}
\label{sec.completeness}

Under the assumptions of Theorem \ref{thm.main}, we showed in Theorem \ref{thm.reduction} the existence of a one-to-one and proper
homomorphism $\rho: \Iso'(M,g) \to \Iso(\Sigma,h)$, where $\Iso(M,g)' \subset \Iso(M,g)$ is a finite index subgroup, and $(\Sigma,h)$
 is a compact, locally homogeneous Lorentz manifold, with semisimple isotropy. 
  This  section is thus devoted to the general description of this class of Lorentz manifolds, 
 which, beside being a crucial step in the proof of Theorem \ref{thm.main},  is a topic of independent interest. 
  
 Precisely, the situation we are investigating is that of a compact, connected, 
 locally homogeneous, Lorentz manifold $(M,g)$. The isotropy algebra at $x$, denoted $\liei_x$, is the 
 algebra of vector fields in $\lieg$ vanishing at $x$.  By local homogeneity, all the algebras $\liei_x$ are pairwise isomorphic, so that we 
 will speak about {\it the isotropy algebra} $\liei$ of $(M,g)$. Our standing assumption is that $(M,g)$ has 
 {\it semisimple isotropy}, 
  which means that $\liei$ is isomorphic to $\oo(1,k) \oplus \oo(m)$, $k \geq 2$.  The following proposition will be one of the crucial steps needed to prove Theorem \ref{thm.main}.
   The reader may take it for granted on a first reading, and go directly to Section \ref{sec.proof-main}.
  
  \begin{proposition}
  \label{prop.homogeneous.partial}
  Let $(M,g)$ be a compact Lorentz manifold.  
  Assume that $M$ is locally homogeneous, and that its isotropy algebra $\liei$ is isomorphic to $\oo(1,k) \oplus \oo(m)$, 
   with $k \geq 2$. Assume that  $\Iso(M,g)$ contains a closed, compactly generated subgroup with exponential growth.
   Then:
  \begin{enumerate}
   \item There exists a simply connected complete homogeneous Riemannian manifold $(N,g_N)$,  and a smooth function
   $w: N \to \RR_+^*$, 
    such that the universal cover 
  $(\tm,\tilde{g})$ is isometric to the warped product $N \times_w X$, 
   where $(X,g_X)$ is isometric to either the $3$-dimensional anti-de Sitter space 
   $\widetilde{\mathbb{ADS}}^{1,2}$ (in which case $k=2$) or Minkowski space ${\RR}^{1,k}$.
   \item The isometry group $\Iso(\tm, \tilde{g})$ is included in $\Iso(N) \times \operatorname{Homot}(X)$.
    In particular, the manifold $(M,g)$ is the quotient of $N \times X$ by a discrete subgroup 
   $\Gamma \subset \Iso(N) \times \operatorname{Homot}(X) $.
  \end{enumerate}

 \end{proposition}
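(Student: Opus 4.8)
The plan is to extract, from the isotropy representation alone, an invariant splitting of $TM$ together with the local geometry it carries, and then to globalize this on $\tm$ by means of a completeness theorem. \emph{First, the invariant distributions.} I would analyze how $\liei\cong\oo(1,k)\oplus\oo(m)$ sits inside $\oo(1,n)$, acting on $T_xM\cong\RR^{1,n}$: decomposing $T_xM$ into irreducible $\oo(1,k)$-submodules and using that $\oo(1,k)$ is simple and noncompact, at most one summand can carry a nondegenerate invariant form with a timelike vector, and the standard $(k+1)$-dimensional Lorentzian module $\RR^{1,k}$ is the only irreducible one doing so; the remaining summands carry a positive definite invariant form, hence are trivial. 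Thus $T_xM=\RR^{1,k}\oplus W$, with $\oo(1,k)$ acting standardly on the Lorentzian factor and trivially on the positive definite $W$, and $\oo(m)$ acting only on $W$. Two local isometries sending $x$ to the same point differ by an element of $\Stab(x)$, which preserves this splitting, so the splitting is canonical; transporting it by local isometries yields an $\Iso(M,g)$-invariant orthogonal decomposition $TM=D\oplus D^{\perp}$ with $D$ Lorentzian of rank $k+1$ and $D^{\perp}$ Riemannian of rank $n-k$. (Consistently with Corollary \ref{coro.stabilizer} and Proposition \ref{prop.stability}, one has $\eg(x)=D_x$ and $\dig(x)=k+1$.)

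\emph{Local warped structure, then globalization.} A short isotropy-equivariance argument then gives the geometry of these distributions: the obstructions to integrability of $D$ and of $D^{\perp}$, the second fundamental form of the $D^{\perp}$-leaves (an equivariant map $\operatorname{Sym}^2 D^{\perp}\to D$), and the trace-free second fundamental form of the $D$-leaves (an equivariant map $\operatorname{Sym}^2 D\to D^{\perp}$) are all $\Stab(x)$-invariant tensors; since $D\cong\RR^{1,k}$ has no nonzero $\oo(1,k)$-invariant vector, every such tensor with target $D$ or with nontrivial source vanishes. Hence $D$ and $D^{\perp}$ are integrable, $D^{\perp}$ has totally geodesic leaves, and $D$ has totally umbilic leaves, the surviving mean curvature being an $\liei$-invariant section $\xi$ of $D^{\perp}$; a further equivariance computation shows $\xi=\nabla\log w$ for a positive function $w$ constant along $D$-leaves, so the $D$-foliation is spherical. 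By the Hiepko--de Rham characterization of warped products, $(M,g)$ is locally a warped product with totally geodesic Riemannian base tangent to $D^{\perp}$ and Lorentzian fibre tangent to $D$. To make this global, I would invoke the completeness theorem for this class (Theorem \ref{thm.complete}): $(\tm,\tilde g)$ is geodesically complete. The two foliations lift to $\tm$ and are simple (Theorem \ref{thm.structure}); as $\tm$ is simply connected it becomes diffeomorphic to the product of one $D^{\perp}$-leaf $N$ by one $D$-leaf $X$, and the warped structure globalizes to $\tm\cong N\times_w X$ with complete factors. Since $\tm$ is simply connected and locally homogeneous its local Killing fields are global, act transitively, and preserve both foliations, so the leaf stabilizers act transitively: $N$ is a simply connected complete homogeneous Riemannian manifold, and $X$ a complete simply connected homogeneous Lorentz manifold.

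\emph{Identifying $X$, and part (2).} The isotropy of $X$ acts as the full $\OO(1,k)$ on its tangent space, so $X$ has constant sectional curvature and is isometric to $\RR^{1,k}$, to $\widetilde{\mathbb{ADS}}^{1,k}$, or to de Sitter space $\dS^{1,k}$. To exclude the last two I would use that, by Corollary \ref{coro.infinite.limit} and Theorem \ref{thm.zeghib}, $M$ carries a codimension-one totally geodesic lightlike foliation whose tangent distribution contains $D^{\perp}$ (because $\eg(x)=D_x$), so it induces such a foliation on each $D$-leaf, that is on $X$; but $\dS^{1,k}$ ($k\geq2$) and $\widetilde{\mathbb{ADS}}^{1,k}$ ($k\geq3$) admit no codimension-one totally geodesic lightlike foliation — only isolated such hypersurfaces — whereas $\RR^{1,k}$ (parallel null hyperplanes) and $\widetilde{\mathbb{ADS}}^{1,2}\cong\widetilde{\PSL(2,\RR)}$ (left translates of a Borel, lightlike for the Killing metric) do. Hence $X\cong\RR^{1,k}$, or $k=2$ and $X\cong\widetilde{\mathbb{ADS}}^{1,2}$, which is $(1)$. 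For $(2)$, since $TM=D\oplus D^{\perp}$ is canonical, every isometry of $\tm=N\times_w X$ preserves the two foliations, hence has the form $(n,x)\mapsto(\phi_1(n),\phi_2(x))$; restricting $\tilde g$ to an $N$-leaf gives $\phi_1\in\Iso(N)$, and to an $X$-leaf gives $\phi_2^{*}g_X=(w(n)/w(\phi_1(n)))^2\,g_X$ with ratio independent of $n$, so $\phi_2\in\operatorname{Homot}(X)$; therefore $\Iso(\tm,\tilde g)\subset\Iso(N)\times\operatorname{Homot}(X)$ and $(M,g)=(N\times X)/\Gamma$ with $\Gamma=\pi_1(M)$ discrete in $\Iso(N)\times\operatorname{Homot}(X)$.

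\emph{Where the difficulty lies.} The crucial input is the completeness theorem (Theorem \ref{thm.complete}): without it neither factor of $\tm$ need be a model space and the globalization collapses. Its proof — in the spirit of Carri\`ere--Klingler's completeness theorem for compact constant-curvature Lorentz manifolds, but now only in the $D$-directions — is where the compactness of $M$ enters essentially. A secondary delicate point is the clean exclusion of the de Sitter and higher anti-de Sitter fibres; this is where the hypothesis of a closed, compactly generated subgroup of exponential growth is used, through the infinite limit set and the totally geodesic lightlike foliation it produces.
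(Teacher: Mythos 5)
Your first paragraph and the local analysis (the invariant splitting $TM=D\oplus D^{\perp}$, integrability, total geodesy of the $D^{\perp}$-leaves, umbilicity of the $D$-leaves) reproduce the paper's Lemma \ref{lem.foliations}. The first genuine gap is in the globalization. Theorem \ref{thm.complete} does not assert that $(\tm,\tilde g)$ is geodesically complete: its hypotheses already contain the global warped-product structure $\tm\simeq N\times_w X$ together with $\Gamma\subset \Iso(N)\times\Homot(X)$, and its conclusion is completeness (and identification) of the factor $X$ only. So you cannot invoke it to produce the global product decomposition -- that decomposition is exactly what has to be established first. Theorem \ref{thm.structure} does not help either: it concerns $\kiloc$-orbits, and for a locally homogeneous manifold $M$ is a single $\kiloc$-orbit, so it says nothing about simplicity of the foliations tangent to $D$ and $D^{\perp}$; and simplicity plus simple connectedness would not yield a product structure by itself. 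The paper obtains the global splitting with no Lorentzian completeness at all: one replaces $g$ along $D$ by a Riemannian metric (compactness of $M$ gives completeness of this auxiliary metric), checks that $D^{\perp}$ stays totally geodesic, and applies Blumenthal--Hebda's de Rham-type decomposition theorem; constancy of $w$ along the $X$-factor then comes from the isotropy Killing fields (Lemma \ref{lem.produit-complet}), and Lemma \ref{lem.isometries} gives $\Iso(\tm,\tilde g)\subset\Iso(N)\times\Homot(X)$ and the homogeneity of $N$. Only after all of this are the hypotheses of Theorem \ref{thm.complete} available.

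The second gap is your identification of $X$. You declare $X$ to be a complete homogeneous Lorentz manifold, but neither property is automatic: local homogeneity provides globally defined yet possibly incomplete Killing fields on $\tm$, so global transitivity on $X$ is not guaranteed, and homogeneous Lorentz manifolds need not be geodesically complete anyway -- completeness of $X$ is precisely the content of the Carri\`ere--Klingler-type Theorem \ref{thm.complete} and cannot be bypassed. Moreover your exclusion of $\dS^{1,k}$ and of $\widetilde{\AdS}^{1,k}$ for $k\geq 3$ rests on the claim that these spaces admit no codimension-one totally geodesic lightlike foliation. For anti-de Sitter space this is false in every dimension: a totally isotropic $2$-plane $P\subset\RR^{2,k}$ foliates the quadric $\{q^{2,k}=-1\}$ by the lightlike hyperplanes $u^{\perp}\cap\{q^{2,k}=-1\}$, $[u]\in{\mathbb P}(P)$ (every point lies on exactly one such hyperplane, since $q^{2,k}$ is positive semi-definite on $P^{\perp}$), and these are exactly the ``parallel'' lightlike hyperplanes occurring in Klingler's analysis; the foliation lifts to $\widetilde{\AdS}^{1,k}$. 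Hence a single lightlike geodesic foliation cannot single out $\widetilde{\AdS}^{1,2}$. The paper's argument (Proposition \ref{prop.factor}) uses infinitely many such foliations transverse at a point, supplied by the infinite limit set via Corollary \ref{coro.infinite.limit} and Theorem \ref{thm.zeghib}, and then Zeghib's Theorem 15.1 -- and in fact Theorem \ref{thm.complete}, once its hypotheses are in place, already delivers both completeness and the conclusion that $X$ is $\widetilde{\AdS}^{1,2}$ or $\RR^{1,k}$, so no separate identification step is needed.
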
  
  
 In the proposition $\operatorname{Homot}(X)$ stands for the group of homothetic transformations of $X$, namely those 
 $\varphi: X \to X$ such that $\varphi^*g_X= \lambda g_X$, for some nonzero scalar $\lambda$. When 
  $X=\widetilde{\mathbb{ADS}}^{1,2}$, this group coincides with the isometry group.
 The  proof of Proposition \ref{prop.homogeneous.partial} will be done in two steps. The first one is a geometric  description
  of the universal cover 
 $(\tm,{\tilde g})$ (see Proposition \ref{prop.universal}). It will
  be the aim of Section \ref{sec.universal.cover}, which is pretty close to \cite[Sec. 2 and 3 ]{zeghibisom2}. 
  
 The second step is more difficult, and establishes completeness results, under the assumption that the limit set of $\Iso(M,g)$
  is infinite at each point. Those problems will be tackled in Sections \ref{sec.warping.constant} and \ref{sec.warping.nonconstant} 
  (see Theorem \ref{thm.complete}).

 \subsection{Geometry of the universal cover $\tm$}
 \label{sec.universal.cover}
 
 Our first aim is to prove 
 \begin{proposition}
  \label{prop.universal}
  Let $(M,g)$ be a compact, locally homogeneous, Lorentz manifold with semisimple isotropy.
  Then the universal cover $(\tm, {\tilde g})$ is isometric to a warped product $N \times_w X$, where
   $(N,g_N)$ is a simply connected, homogeneous, Riemannian manifold, and $(X,g_X)$ is Lorentzian of constant curvature 
   and dimension $\geq 3$. Moreover $\Iso(\tm, {\tilde g}) \subset \Iso(N) \times \operatorname{Homot}(X)$, so that 
    the manifold
    $(M,g)$ is obtained as a quotient of $N \times_w X$ by a discrete subgroup $\Gamma \subset \Iso(N) \times \operatorname{Homot}(X)$.
 \end{proposition}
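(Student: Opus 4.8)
The plan is to extract the warped-product structure from the linear algebra of the semisimple isotropy, following closely \cite[Sec. 2--3]{zeghibisom2}. \textbf{Step 1 (the isotropy splitting).} Fix $x \in M$ and let the isotropy algebra $\liei \cong \oo(1,k) \oplus \oo(m)$ act on the Lorentz vector space $(T_xM, g_x)$ --- a faithful representation, since a Lorentz isometry with trivial $1$-jet is trivial. The factor $\oo(1,k)$ is noncompact and simple, so it admits no nontrivial finite-dimensional representation preserving a positive-definite form; combined with the fact that $g_x$ has exactly one negative direction, the $\oo(1,k)$-isotypic decomposition of $T_xM$ must read $T_xM = \liev_x \oplus \liew_x$ orthogonally, where $\liev_x$ is a single copy of the standard module $\RR^{1,k}$ --- Lorentzian, of dimension $k+1 \geq 3$ --- and $\liew_x$ is $\oo(1,k)$-trivial and spacelike. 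As $\oo(m)$ commutes with $\oo(1,k)$ it preserves both summands, so the splitting is $\liei$-invariant, hence invariant under every local isometry of $(M,g)$ fixing $x$.

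\textbf{Step 2 (globalization and warped structure).} Since $M$ is locally homogeneous, $\mreg = \mint = M$, so by Section \ref{sec.analyticcont} local Killing fields propagate; on the simply connected cover $\tm$ they are globally defined and $\Iso(\tm,\tilde g)$ acts transitively. Propagating the canonical splitting of Step 1 by local isometries yields two smooth orthogonal distributions $\mathcal{D}_N$ and $\mathcal{D}_X$ with $T\tm = \mathcal{D}_N \oplus \mathcal{D}_X$, invariant under $\kiloc$ and under $\Iso(\tm,\tilde g)$. Writing the Levi-Civita connection via the Nomizu map on a reductive complement $\liem \cong T_xM$ of $\liei$, the second fundamental forms of these distributions become $\oo(1,k)$-equivariant tensors: that of $\mathcal{D}_N$ is an equivariant map $\liew_x \otimes \liew_x \to \liev_x$ from a trivial to a nontrivial irreducible module, hence zero, so $\mathcal{D}_N$ is totally geodesic; that of $\mathcal{D}_X$ is an equivariant symmetric map $\liev_x \otimes \liev_x \to \liew_x$ whose target is $\oo(1,k)$-trivial, and the only such maps are multiples of $g_x$ (tensored with a fixed normal vector), so $\mathcal{D}_X$ is totally umbilic; a further equivariance argument shows its mean-curvature form is closed, i.e. $\mathcal{D}_X$ is spherical. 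By Hiepko's global warped-product theorem, $(\tm, \tilde g)$ is isometric to $N \times_w X$, with $N$ the leaf of $\mathcal{D}_N$ through the base point, $X$ the leaf of $\mathcal{D}_X$, and $w \colon N \to \RR_+^*$.

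\textbf{Step 3 (the factors and the isometry group).} The manifold $N$ carries the transitive action of the subalgebra of Killing fields tangent to $\mathcal{D}_N$, so it is a simply connected, complete, homogeneous Riemannian manifold. The leaf $X$ is a Lorentz manifold of dimension $k+1 \geq 3$ on which $\oo(1,k)$ acts isometrically with isotropy all of $\oo(1,k)$ at a point; such a manifold is isotropic, hence of constant sectional curvature by the pseudo-Riemannian version of the classical theorem. Any $f \in \Iso(\tm, \tilde g)$ preserves the canonical splitting, hence permutes the $N$-leaves and the $X$-leaves and respects the decomposition $N \times_w X$; it thus induces an isometry of $(N,g_N)$ and, on each fiber $\{n\} \times X$, acts up to the factor $w$ as a homothety of $(X,g_X)$. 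Therefore $\Iso(\tm,\tilde g) \subset \Iso(N) \times \operatorname{Homot}(X)$, and $(M,g)$ is the quotient of $N \times_w X$ by the discrete group $\Gamma = \pi_1(M) \subset \Iso(N) \times \operatorname{Homot}(X)$ of deck transformations.

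\textbf{Main obstacle.} The delicate point is Step 2: one must turn $\oo(1,k)$-equivariance of the Nomizu connection into the precise geometric statements ($\mathcal{D}_N$ totally geodesic and, crucially, $\mathcal{D}_X$ \emph{spherical}, so that one obtains a genuine warped product $N \times_w X$ with $w$ a function of $N$ alone, rather than a mere twisted product), and one must upgrade the resulting local splitting to a global product decomposition of the simply connected cover $\tm$. The constant-curvature statement for the Lorentzian leaf $X$ is classical but must be invoked carefully in the indefinite setting.
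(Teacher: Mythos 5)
Your Step 1 and the infinitesimal part of Step 2 are sound and essentially parallel to the paper's argument (Lemma \ref{lem.foliations}): the splitting $T\tm=\calf\oplus\calf^{\perp}$ induced by the isotropy, total geodesy of the Riemannian distribution, umbilicity of the Lorentzian one, and even the sphericity can all be extracted from equivariance under the isotropy representation at a point. But note that for this you neither need nor can justify two things you assert: that $\Iso(\tm,\tilde g)$ acts transitively, and that $\tm$ admits a reductive (Nomizu) presentation. Global Killing fields on $\tm$ need not be complete, so local homogeneity does not make $\tm$ a homogeneous space $G/H$; plain isotropy-equivariance at a point, as in the paper, is what actually carries the computation.

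The genuine gap is the globalization in Step 2. Hiepko's warped-product theorem (and its pseudo-Riemannian versions) gives a \emph{global} splitting only under completeness hypotheses on the leaves, not from simple connectedness alone; simply connected manifolds carrying a totally geodesic/spherical orthogonal pair of foliations need not split globally when leaves are incomplete. Here the completeness of the Lorentzian leaves tangent to your $\mathcal{D}_X$ is exactly what is unknown at this stage --- it is the content of Theorem \ref{thm.complete}, proved much later and only under an additional dynamical hypothesis --- so invoking a global warped-product theorem for $\tilde g$ is circular in spirit and unjustified in fact. The paper circumvents this by changing the metric: one builds an auxiliary Riemannian metric $h$ on the compact quotient $M$ for which $\ocalf^{\perp}$ remains totally geodesic, applies the Blumenthal--Hebda foliated de Rham theorem \cite{blumenthal} to the $h$-complete universal cover to obtain the smooth product structure $\tm\simeq N\times X$, and only then identifies the metric: holonomy arguments give $\tilde g=g_N\oplus w\,g_X$ with $w$ a priori on $N\times X$, and the isotropy/lightcone argument of Lemma \ref{lem.produit-complet} shows $w$ depends on $N$ alone. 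The same circle of ideas is missing from your Step 3: completeness of $N$ is not automatic but comes from compactness of $M$ (leaves of $\ocalf^{\perp}$ are complete for $h$, and $h=g$ on them), and homogeneity of $N$ then follows because Killing fields of a complete Riemannian manifold are complete and the evaluation map is onto (Lemma \ref{lem.isometries}); a Lie algebra of Killing fields with surjective evaluation map does not by itself yield a transitive group action.
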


 \subsubsection{Bifoliation on the universal cover $\tm$}
 \label{sec.bifoliation}
 
 We begin with an important remark abiut Killing fields on $(\tm, {\tilde g})$. The manifold $\tilde{M}$ is locally homogeneous, 
 hence
  the integrability locus $\tilde{M}^{int}$ coincides with $\tilde{M}$. The process of extending analytically Killing fields along pathes  in $\tilde{M}^{int}$ (see Section \ref{sec.analyticcont}), and the simple connectedness of $\tilde{M}$ shows that any local Killing 
  field in $\tilde{M}$ extends to a global one. Thus in the following, all Killing fields on $\tm$ {\it will be globally defined}
  (this does of course not mean that those fields are complete).
  
   For any $x \in \tm$, the isotropy representation  $\liei_x \to \oo(T_x\tm)$
    defined by  $X \mapsto \nabla X(x)$  is faithful. We thus identify $\liei_x$ with a subalgebra of $\oo(1,n)$.
    Our assumption says that $\liei_x$ contains a subalgebra isomorphic to $\oo(1,k)$, 
    $k \geq 2$ (with $k$ maximal for this property). Thus  $\liei_x$ splits, in a unique way,  as a sum 
    $\liei_x=\lies_x \oplus \liem_x$, where $\lies_x$ is isomorphic to $\oo(1,k)$ and $\liem_x$ is isomorphic to a 
    subalgebra of $\oo(n-k)$. This provides 
      a $\liei_x$-invariant
     splitting $T_x\tm={\calf}_x \oplus {\calf}_x^{\perp}$, where ${\calf}_x$ has Lorentz signature and dimension $k+1$, and 
      ${\calf}_x^{\perp}$ is $n-k$ dimensional, orthogonal to ${\calf}_x$ (and hence of Riemannian signature). The Lie algebra
      $\lies_x$ 
    (resp. $\liem_x$) acts
    on ${\calf}_x$ (resp. on ${\calf}_x^{\perp}$) by the standard $(k+1)$-dimensional representation of $\oo(1,k)$ 
    (resp. through the standard $(n-k)$-dimensional representation of $\oo(n-k)$) and 
    trivially on ${\calf}_x^{\perp}$ (resp. on ${\calf}_x$). 
    
  We thus inherits  two (mutually orthogonal) distributions 
  ${\mathcal F}$ and ${\mathcal F}^{\perp}$  on $\tm$.  Observe that any local isometry sending $x$ to $y$
   will map $\liei_x$ to $\liei_y$.  This implies that the distributions $\calf$ and $\calf^{\perp}$
    are invariant by any local isometry of $\tm$. In particular, those distributions are smooth.
  
  \begin{lemme}
   \label{lem.foliations}
   The two distributions $\calf$ and $\calf^{\perp}$ are integrable.  Moreover, the leaves of $\calf^{\perp}$
    are totally geodesic, and those of $\calf$  are totally umbilic, with  constant sectional curvature. 
    \end{lemme}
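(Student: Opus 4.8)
The plan is to work pointwise on $\tm$ and to exploit that the two distributions, together with the Levi-Civita connection, are invariant under every local isometry, hence under the isotropy action of $\liei_x$ at each point. First I would introduce the O'Neill-type tensors: for sections $U,V$ of $\calf^\perp$ set $h^\perp(U,V):=\operatorname{pr}_{\calf}(\nabla_U V)$, the $\calf$-component of $\nabla_U V$, and symmetrically $h(X,Y):=\operatorname{pr}_{\calf^\perp}(\nabla_X Y)$ for sections $X,Y$ of $\calf$. A routine check shows both are genuinely \emph{tensorial}, so they define, at each $x$, $\liei_x$-equivariant elements of $(\calf_x^\perp)^*\otimes(\calf_x^\perp)^*\otimes\calf_x$ and of $\calf_x^*\otimes\calf_x^*\otimes\calf_x^\perp$ respectively. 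The antisymmetric parts of $h^\perp$ and $h$ are, up to sign, the projections of brackets of sections, so their vanishing yields integrability; the symmetric part of $h$ is the second fundamental form of the leaves of $\calf$, while $h^\perp$ itself measures how far the leaves of $\calf^\perp$ are from being totally geodesic.

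The representation theory then does the rest. By hypothesis $\lies_x\simeq\oo(1,k)$ acts by the standard representation on $\calf_x\simeq\RR^{1,k}$ and trivially on $\calf_x^\perp$. Since $k\geq 2$ this standard representation is irreducible with no nonzero invariant vector, so the $\lies_x$-invariant tensor $h^\perp(x)$, whose target is $\calf_x$ and whose source carries the trivial action, must vanish; hence $\calf^\perp$ is integrable with totally geodesic leaves. For $h(x)$, whose source $\calf_x$ carries the standard representation and whose target is acted on trivially, each linear functional on $\calf_x^\perp$ composed with $h(x)$ is an $\oo(1,k)$-invariant bilinear form on $\RR^{1,k}$; by Schur's lemma the space of such forms is one-dimensional, spanned by the symmetric Lorentz form $q$. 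Hence $h(x)(X,Y)=q(X,Y)\,\xi_x$ for a suitable $\xi_x\in\calf_x^\perp$. Symmetry of $h$ gives integrability of $\calf$, and this formula is exactly the statement that the leaves of $\calf$ are totally umbilic (with $\xi$ their mean curvature vector field, up to normalization).

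For the constant sectional curvature of the leaves of $\calf$ I would restrict the curvature tensor $R$ of $\tm$ to $\calf_x$ in all four slots: the result is an $\lies_x$-invariant algebraic curvature tensor on $\RR^{1,k}$, and since $\dim\calf_x=k+1\geq 3$ the space of $\oo(1,k)$-invariant algebraic curvature tensors on $\RR^{1,k}$ is one-dimensional, spanned by the constant-curvature model $R_0(X,Y,Z,W)=q(X,W)q(Y,Z)-q(X,Z)q(Y,W)$ — this is the classical ``isotropic implies constant curvature'' phenomenon. So $R|_{\calf_x}=c_x R_0$, and feeding the umbilic second fundamental form into the Gauss equation shows that the intrinsic curvature tensor of a leaf of $\calf$ is again a scalar multiple of $R_0$, i.e.\ the leaf has constant sectional curvature at $x$; local homogeneity of $\tm$ propagates this to every point and forces the scalar to be globally constant.

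The main obstacle, and the only part requiring genuine care, is isolating the correct $\liei_x$-modules: one must verify that $h$ and $h^\perp$ are really tensorial (so that pointwise equivariance is meaningful), keep track of the splitting $\liei_x=\lies_x\oplus\liem_x$ and of which factor acts nontrivially on which piece, and invoke the right uniqueness inputs — no nonzero $\oo(1,k)$-invariant vector in $\RR^{1,k}$, one-dimensionality of the space of invariant bilinear forms, and one-dimensionality of the space of invariant algebraic curvature tensors (which needs $\dim\calf_x\geq 3$, hence the hypothesis $k\geq 2$). Once these are in place the geometric conclusions follow immediately.
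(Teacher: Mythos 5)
Your proof is correct, and its core mechanism — tensoriality of the relevant second-fundamental-form/O'Neill tensors plus Schur's lemma for the standard action of $\lies_x\simeq\oo(1,k)$ on $\calf_x$ — is exactly what the paper uses to obtain involutivity and umbilicity of $\calf$. You diverge from the paper on the other two conclusions, each time replacing a geometric argument by pure invariant theory. For $\calf^{\perp}$ the paper never considers your tensor $h^{\perp}$: it identifies the leaf of $\calf^{\perp}$ through $x$ with the common zero set $Z_x$ of the Killing fields in $\lies_x$, which is automatically a totally geodesic submanifold; your alternative — for fixed $u,v\in\calf_x^{\perp}$ the vector $h^{\perp}(u,v)$ is $\lies_x$-fixed in the standard representation, hence zero — is equally valid, treats the two distributions symmetrically, but loses the extra information that the leaves of $\calf^{\perp}$ are precisely the sets $Z_x$. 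For constant curvature, the paper shows that the ambient sectional curvature $K(P)$ is the same for all nondegenerate $2$-planes at $x$, derives by a direct computation the umbilic relation $K=\overline{K}+g(\nu,\nu)$, and concludes with Schur's lemma plus local homogeneity; you instead observe that $R$ restricted to $\calf_x$ is an $\oo(1,k)$-invariant algebraic curvature tensor, hence a multiple of the constant-curvature model (first fundamental theorem for the orthogonal group, valid in indefinite signature), and feed the umbilic form into the Gauss equation, with local homogeneity propagating the constant — this bypasses both Schur's lemma and the explicit computation, at the cost of a slightly heavier representation-theoretic input. Both routes need $k\geq 2$ (irreducibility of the standard representation and $\dim\calf_x\geq 3$), and both rest on the paper's earlier observation that $\calf$, $\calf^{\perp}$ and the Levi-Civita connection are preserved by all local isometries, which is what legitimizes your pointwise equivariance.
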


  \begin{proof}
   Let $x \in \tm$.  Denote by $Z_x$ the subset of $\tm$, where all elements of $\lies_x$ vanish.  
    It is a classical fact that $Z_x$ is a totally geodesic submanifold of $\tm$ (it is easily checked using the fact that exponential map
    linearizes local flows of isometries).  If $y \in Z_x$, then $\lies_y=\lies_x$, because we clearly
     have $\lies_x \subset \lies_y$, and those two algebras have same dimension (namely the dimension of $\oo(1,k)$).
      It follows that $Z_x$ is actually a leaf of $\calf^{\perp}$, and it proves the assertions about 
      $\calf^{\perp}$.
      
   Let us now check that $\calf$ is integrable as well.  Let us consider $Y,Z$ two local vector fields 
   tangent to $\calf$, and
   let us call  $[Y,Z]^{\perp}$ (resp. $II(Y,Z)$) the component of $[Y,Z]$ (resp. of $\nabla_YZ$)
   on $\calf^{\perp}$.  One readily checks that those maps
    are tensorial, namely for any pair of functions $f$ and $g$, then $[fY,gZ]^{\perp}=fg[Y,Z]^{\perp}$, 
     and $II(fY,gZ)=fgII(Y,Z)$.  
      Let us consider $x \in \tm$, and a bilinear map  $b_x: {\calf}_x \times {\calf}_x \to \RR$.
       We can write $b_x(\, ,\,)=g_x(A \, , \,)$ for some endomorphism $A:{\calf}_x \to {\calf}_x$.
        Let us denote by $I_x$ the isotropy group at $x$. If $b_x$ is $I_x$-invariant, then $A$
         must commute with $I_x$, and because the action of $I_x$ on ${\calf}_x$ is irreducible, it means
          that $A$ is an homothetic transformation.  We have shown that any bilinear form on ${\calf}_x$ which
           is $I_x$-invariant is a scalar multiple of $g_x$ (restricted to ${\calf}_x$).
            In particular $[\, , \, ]_x^{\perp}$ must be zero, what shows that $\calf$ is an involutive distribution, 
            hence
             is integrable.  We also get that $II_x(\, , \, )=g_x(\, , \, ) \nu_x$, for some vector
             $\nu_x \in {\calf}_x^{\perp}$, what means precisely that the leaves of $\calf$ are totally umbilic.

   Let us conclude the proof of Lemma \ref{lem.foliations} by showing that the leaves of $\calf$ have constant sectional 
    curvature. Let $x \in \tm$, and $F(x)$ the leaf of $\calf$ containing $x$. 
    For any $Z \in \liei_x$, the local flow 
   $D_x\varphi_Z^t$ acts on the manifold of non degenerate $2$-planes of $T_xM$. If $P$ is such a $2$-plane, then 
    $K(P)=K(D_x \varphi_Z^t(P))$, where $K(P)$ stands for the sectional curvature of $P$ (relatively to the curvature tensor of $g$).
     Because $\liei_x$ contains  a subalgebra isomorphic to $\oo(1,k)$, one easily gets that $K(P)$ is the same for every
      nondegenerate 2 plane $P \subset T_xM$. Now let $\overline{K}(P)$ be the sectional curvature of $P$, computed with 
      respect to the metric induced by $g$ on $F(x)$. Because $F(x)$ is totally umbilic, we have that 
      \begin{equation}
      \label{eq.curvature}
      K(P)=\overline{K}(P)+g_x(\nu_x,\nu_x)
      \end{equation}
      To check this, let us consider two local vector fields $X$ and $Y$, tangent to $F(x)$, such 
      that $g(X,Y)=0$ and $g(X,X)=\epsilon=\pm 1$ and  $g(Y,Y)=1$. Let $\onabla$ be the Levi-Civita connection of 
      the restriction $g_{|F(x)}$.  Using the property that $F(x)$ is totally umbilic,
       we compute $\nabla_X \nabla_YX=\onabla_X\onabla_YX+g(X,\onabla_YX)\nu=\nabla_X\onabla_YX.$
       But $Y.g(X,X)=0=2g(\onabla_YX,X)$. We get $\nabla_X \nabla_YX=\onabla_X\onabla_YX$.
       
       Writting $g(X,X)=\epsilon$, we also have 
       $$\nabla_Y\nabla_XX=\nabla_Y(\onabla_XX+ \epsilon \nu)=\nabla_Y\onabla_XX+ \epsilon \nabla_Y \nu.$$
       This yields 
       $$ \nabla_Y\nabla_XX=\onabla_Y \onabla_XX+g(Y,\onabla_XX)+\epsilon \nabla_Y \nu=\onabla_Y \onabla_XX+\epsilon \nabla_Y \nu.$$
       We thus obtain $R(X,Y,X,Y)=\overline{R}(X,Y,X,Y)- \epsilon g(\nabla_Y \nu,Y).$
       But 
       $$Y.g(\nu,Y)=0=g(\nabla_Y \nu,Y)+g(\nu,\nabla_YY).$$
        It follows that 
        $$g(\nabla_Y \nu,Y)=-g(\nu,\onabla_YY+g(Y,Y)\nu)=-g(\nu,\onabla_YY)-g(\nu,\nu)=-g(\nu,\nu).$$
        Finally,  $R(X,Y,X,Y)=\overline{R}(X,Y,X,Y)+g(\nu,\nu)$, which is precisely (\ref{eq.curvature}).

      Because  $K(P)$ is the same for every
      nondegenerate $2$-plane $P \subset T_xM$, equation (\ref{eq.curvature}) ensures that the same property holds
      for $\overline{K}(P)$. 
       This remark, together with Schur's lemma, implies that the leaves of $\calf$ have constant sectional curvature.

     Actually, under the assumption that $(M,g)$ is locally homogeneous, the sectional curvature $\kappa(x)$ of the leaf $F(x)$
     does not depend on $x$,
      because we already 
     noticed  that local isometries 
      of $\tm$ preserve the distributions $\calf$ and $\calf^{\perp}$.

  \end{proof}
  
 \subsubsection{Warped product structure for $\tm$} 
 \label{sec.warped}
  The arguments until Lemma \ref{lem.produit-complet} below already appear in \cite[Section 4.3]{zeghibisom2}.
   We already noticed that $\calf$ and $\calf^{\perp}$ are invariant by local isometries.  In particular, those 
  two foliations are $\pi_1(M)$-invariant, and thus induce two mutually orthogonal foliations $\ocalf$ and 
  $\ocalf^{\perp}$ on $M$.  One can then choose a Riemannian metric on $T\ocalf$, and still put  the restriction
   of $g$ on $T\ocalf^{\perp}$ in order to build a Riemannian metric $h$ on $M$, for which $\ocalf$
    and $\ocalf^{\perp}$ are still orthogonal.  We observe that $\ocalf^{\perp}$ remains totally geodesic
     for $h$.  Indeed, the property for $\ocalf^{\perp}$ to be totally geodesic is equivalent to $\ocalf$
      being transversally Riemannian, namely the holonomy local diffeomorphisms, between small open sets of 
      leaves of $\ocalf^{\perp}$ are isometries (see \cite[Prop 1.4]{johnson}).  It was the case for the metric $g$ and it is still the case for
      $h$ since those two metrics coincide in restriction to leaves of $\ocalf^{\perp}$.  One  can then use
      \cite[Theorem A]{blumenthal} to infer that the two foliations $\calf$ and $\calf^{\perp}$ define a product structure on $\tm$.
       More precisely, if we pick $z_0 \in \tm$, and call $X$ and $N$ the leaves of $\calf$ and
        $\calf^{\perp}$ containing $z_0$, then there is a diffeomorphism $\psi: N \times X$
         such that each factor $N \times \{ x \}$  (resp.  $\{ n \} \times X$) is sent to a leaf 
         of $\calf^{\perp}$ (resp. of of $\calf$), with moreover $\psi(N \times \{x_0\})=F_0$ 
         and $\psi(\{n_0 \} \times X)=F_0^{\perp}$.  
         
  Let  us now discuss the form of the metric $\tilde{g}$ on $N \times X$.  The metric $\tilde{g}$ restricts to a Riemannian metric $g_N$
   on the leaf $N$, and to a Lorentzian metric $g_X$ of constant curvature $\kappa$ on the leaf $X$.
   Observe that $(N,g_N)$ is complete, because leaves of $\ocalf^{\perp}$ are complete for
   the Riemannian metric $h$ constructed above, and $h$ coincides with
   $g$ on the leaves of $\ocalf^{\perp}$.
     As already mentioned, the maps
   $(n,x) \mapsto (n,x')$ are isometries because leaves $N \times \{ x\}$ are totally geodesic.  Moreover, because
    leaves $\{p \} \times X$ are umbilic, the maps $(n,x) \mapsto (n',x)$ are conformal (see \cite[lemma 5.1]{blumenthal0}).  It follows
     that there exists a Riemannian metric $g_N$ on $N$, a Lorentz metric $g_X$ on $X$ with constant curvature
      $\kappa$, and a function $w: N \times X \to \RR_+^*$ such that $\tilde{g}=g_N \oplus wg_X$.  
      
  \begin{lemme}
   \label{lem.produit-complet}
   The function $w: (n,x) \mapsto w(n,x)$ does not depend on $x$. 
  \end{lemme}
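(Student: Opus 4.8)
The plan is to exploit the abundance of Killing fields produced by the semisimple isotropy, focusing on those Killing fields of $(\tm,\tilde g)$ that are tangent to the distribution $\calf$. I would work in product coordinates $(n,x)$ on $\tm=N\times X$ adapted to the bifoliation, so that the $\calf^\perp$--leaves are the slices $N\times\{x\}$ and the $\calf$--leaves the slices $\{n\}\times X$, and write $\tilde g=g_N\oplus w\,g_X$ with $g_N=g_N(n)$ (no $x$--dependence, coming from total geodesy of the $N$--slices). Choosing $g_X$ to be the metric induced by $\tilde g$ on a fixed reference fibre $\{n_0\}\times X$ normalises $w$ so that $w(n_0,\cdot)\equiv 1$; set $w_n:=w(n,\cdot)\colon X\to\RR_+^*$, and recall $\dim X=k+1\ge 3$.

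First I would carry out a Koszul--formula computation. Using the warped--product Christoffel symbols $\nabla_{\partial_{n^a}}\partial_{x^i}=\frac{1}{2w}(\partial_{n^a}w)\,\partial_{x^i}$ and the fact that the $\calf^\perp$--component of $\nabla_{\partial_{x^i}}\partial_{x^j}$ equals $-\frac12(g_X)_{ij}\,\nabla^N w$, I would check that a Killing field $W$ of $\tilde g$ tangent to $\calf$ is necessarily independent of $n$: writing $W=\sum V^i(n,x)\partial_{x^i}$, the Killing equation evaluated on one $N$--direction and one $X$--direction collapses to $\partial_{n^a}V^i=0$, while the Killing equation on two $X$--directions says precisely that the induced field $V$ on $X$ is a Killing field of the fibre metric $w_n g_X$ for every $n$; conversely every such $V$ lifts to a Killing field of $\tilde g$. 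Let $\mathfrak{q}$ be the finite--dimensional Lie algebra of all such $V$; evaluating at $n=n_0$ gives $\mathfrak{q}\subseteq\kil(X,g_X)$.

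Next I would feed in the isotropy. For each $x_1\in X$, consider the point $z_1=(n_0,x_1)$ and the semisimple part $\lies_{z_1}\cong\oo(1,k)$ of its isotropy. The common zero set of $\lies_{z_1}$ is the $\calf^\perp$--leaf $N\times\{x_1\}$, so the flow of any element of $\lies_{z_1}$ fixes that leaf pointwise and, being isometric, preserves the distribution $\calf$; hence it preserves every leaf $\{n\}\times X$, i.e.\ $\lies_{z_1}$ is tangent to $\calf$. By the first step $\lies_{z_1}$ is then a subalgebra of $\mathfrak{q}$ (hence of $\kil(X,g_X)$) consisting of fields vanishing at $x_1$, of dimension $\dim\oo(1,k)$; since the isotropy algebra $\liei_{x_1}$ of $(X,g_X)$ at $x_1$ embeds into $\oo(T_{x_1}X)$ by linearisation and so has dimension $\le\dim\oo(1,k)$, we get $\lies_{z_1}=\liei_{x_1}$ and therefore $\mathfrak{q}\supseteq\liei_{x_1}$. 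Because $\lies_{z_1}$ acts on $T_{x_1}X$ by the standard representation of $\oo(1,k)$ and $\exp_{x_1}$ conjugates its action on $X$ near $x_1$ with this linear action near $0$, the orbit $\liei_{x_1}\cdot x$ is a hyperplane of $T_xX$ for $x$ near $x_1$; choosing two such base points $x_1,x_2$ whose geodesic directions to a given $x$ are not parallel, the two hyperplanes are distinct, so $\{V(x)\mid V\in\mathfrak{q}\}=T_xX$ for every $x\in X$.

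Finally, for $V\in\mathfrak{q}$ and any $n$ one has $0=L_V(w_n g_X)=(Vw_n)\,g_X+w_n\,L_V g_X=(Vw_n)\,g_X$ since $L_V g_X=0$; nondegeneracy of $g_X$ gives $V(w_n)=0$. As the values $V(x)$, $V\in\mathfrak{q}$, span $T_xX$ for every $x$, this forces $dw_n=0$ on the connected manifold $X$, so each $w_n$ is constant, i.e.\ $w(n,x)=w(n,n_0)$ depends only on $n$. The step I expect to be delicate is the interaction between the first two steps: one must be careful that ``Killing and tangent to $\calf$'' really forces $n$--independence (this is exactly where the warped form of $\tilde g$ and the absence of $x$--dependence in $g_N$ enter), and that the resulting $V$ is Killing for \emph{all} the fibre metrics $w_n g_X$ simultaneously; it is this simultaneity, together with the size of the isotropy $\oo(1,k)$ at \emph{every} point of $X$, that rules out a nonconstant conformal--type warping along $X$. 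The normalisation of $g_X$ and the dimension count $\dim\lies_{z_1}=\dim\liei_{x_1}$ also need care.
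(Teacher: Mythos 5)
Your proof is correct and follows essentially the same route as the paper: in both arguments the key point is that the $\oo(1,k)$-isotropy Killing fields vanish on the whole $\calf^{\perp}$-leaf, hence are tangent to $\calf$ and induce fields on $X$ that are Killing for all the fibre metrics $w_ng_X$ simultaneously, which forces $dw_n$ to vanish. The only difference is in the last step: the paper concludes at the base point itself, using that the punctured lightcone at $x_0$ is a local pseudo-orbit of $\lies_z$ (so $D_{x_0}w_n$ vanishes on a spanning set of lightlike directions), whereas you conclude at nearby points by combining the evaluation hyperplanes coming from the isotropies at two base points.
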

    
  \begin{proof}
   Let us fix $n \in N$, and let us pick  $x_0 \in X$. We will denote  $w_n: X \to \RR$ the function defined by 
   $w_n(x)=w(n,x)$. Let $z=(n,x_0)$, and $Z$ be a vector field in $\lies_z$. Recall that $Z$ is globally defined.
    Because the local flow of $\lies_z$ must preserve $\calf$ and $\calf^{\perp}$, there exist $Z_1$ a 
    Killing field on $(N,g_N)$ and $Z_2$ a vector field on $X$, such that $Z(p,x)=(Z_1(p),Z_2(x))$ 
    for every $p \in N$ and $x \in X$. We already observed  that because $Z$ belongs to  $ \lies_z$,
     it vanishes on $N \times \{ x_0 \}$. It follows that $Z_1=0$. Thus, Because $Z$ is a Killing field on $(\tm,\tilde{g})$, 
     the vector field $Z_2$ is a Killing field for all the metrics $w_pg_X$, $p \in N$.
      In particular, it is Killing for both metrics $g_X$ and ${w_n}g_X$, hence $w_n$ is left invariant by $Z_2$. 
      Now, because the Lie algebra $\lies_z$ acts on $T_{x_0}X$ by the standard $(k+1)$-dimensional representation of $\oo(1,k)$
       the local punctured lightcone of $X$ at $x_0$ 
      is a local pseudo-orbit of ${\lies}_z$. It follows that   $D_{x_0}w_n(u)=0$ for every $u$ in the lightcone of 
      $T_{x_0}X$. This lightcone spans $T_{x_0}X$ as a vector space, hence  $D_{x_0}w_n=0$.  Because $x_0$ was chosen arbitrarily 
       in  $X$, the lemma follows.
  \end{proof}

\subsubsection{Isometries of $N \times_w X$}
\label{sec.isometries}
 
 We have established the warped-product structure of $(\tm,\tilde{g})$ announced in Proposition 
 \ref{prop.universal}. It remains to show that $(N,g_N)$ 
 is homogeneous, and  that $(M,g)$ 
 is obtained as a quotient of $N \times_w X$ by a discrete subgroup $\Gamma \subset \Iso(N) \times \operatorname{Homot}(X)$.
 This will be obtained thanks to points $(2)$ and $(3)$ of Lemma \ref{lem.isometries} below.
 
 We will call in the following $\widetilde{\Iso}(M,g)$ the subgroup of $\Iso(\tm,\tilde{g})$ obtained as all 
 possible lifts of isometries in $\Iso(M,g)$. In particular, if $\Gamma \simeq \pi_1(M)$ denotes the group of deck 
 transformations of the covering $\tm \to M$, then $\Gamma \subset\widetilde{\Iso}(M,g)$.
 
Let us consider the group $\Iso(N,g_N)$, acting on $C^{\infty}(N)$ in the following way: For
every $f \in \Iso(N,g_N)$,
$$ (f.v)(n):=v(f(n)),$$
for every $v \in C^{\infty}(N)$ and $n \in N$.
We denote by $G$ the stabilizer of the line $\RR w$ under this representation. We observe that
 $G$ is a closed subgroup of $\Iso(N,g_N)$, hence a Lie subgroup. We call $\lieg$ its Lie algebra (that may 
 be trivial at this stage). The group $G$ admits a continuous homomorphism
  $\lambda: G \to \RR_+^*$, satisfying $w(g.n)=\lambda(g)w(n)$ for every $g \in G$ and $n \in N$.
 
We call $\homot(X)$ the group of homothetic transformations of $X$, namely diffeomorphisms 
$\varphi \in \operatorname{Diff}(X)$ for which there exists some real number $\lambda \in \RR_+^*$
  such that $\varphi^*g_X=\lambda g_X$.
  We can now describe more precisely the isometries of the manifold $N \times_w X$. The assumptions are still those of 
  Proposition \ref{prop.universal}.
\begin{lemme}
 \label{lem.isometries}
 \begin{enumerate}
  \item A transformation $\varphi=(f,h) \in \diff(N) \times \diff(X)$ acts isometrically on $N \times_w X$
   if and only if $f \in G$ and $h^*g_X=\lambda(f)^{-1}g_X$.
  \item The group $G \times \homot(X)$ contains $\widetilde{\Iso}(M,g)$, and in particular $\Gamma \simeq \pi_1(M)$.
  \item The group $G$ acts transitively on $N$. In particular $(N,g_N)$ is a homogeneous Riemannian manifold.
 \end{enumerate}

\end{lemme}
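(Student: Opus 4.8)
The plan is to establish the three assertions in turn: (1) is a direct computation with the warped metric, (2) follows from (1) together with the fact (recorded in Section~\ref{sec.bifoliation}) that $\calf$ and $\calf^{\perp}$ are preserved by every local isometry of $\tm$, and (3) is a local-homogeneity argument. For (1), let $\varphi=(f,h)$ act on $N\times X$ by $(n,x)\mapsto(f(n),h(x))$. Since $\tilde g=g_N\oplus wg_X$ is block diagonal for $T(N\times X)=TN\oplus TX$ and $w$ depends only on $n$ (Lemma~\ref{lem.produit-complet}), one finds $\varphi^{*}\tilde g=f^{*}g_N\ \oplus\ (w\circ f)\,h^{*}g_X$, so that $\varphi^{*}\tilde g=\tilde g$ is equivalent to $f^{*}g_N=g_N$ together with $w(f(n))\,h^{*}g_X=w(n)\,g_X$ for all $n,x$. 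In the second identity $h^{*}g_X$ is a fixed tensor on $X$ while $w(n)/w(f(n))$ depends only on $n$, so both sides are a constant multiple of $g_X$; hence $f\in\Iso(N,g_N)$ stabilises the line $\RR w$ (i.e. $f\in G$), $w(f(n))=\lambda(f)w(n)$, and $h^{*}g_X=\lambda(f)^{-1}g_X$. The converse is the same computation read backwards.

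For (2), any $\varphi\in\widetilde{\Iso}(M,g)$ is an isometry of $(\tm,\tilde g)$, hence preserves the two foliations $\calf$ and $\calf^{\perp}$. Under the product diffeomorphism $\psi\colon N\times X\to\tm$ of Section~\ref{sec.warped}, the leaves of $\calf^{\perp}$ are the slices $N\times\{x\}$ and those of $\calf$ the slices $\{n\}\times X$; thus $\varphi$ permutes the first family and permutes the second. Calling $h$ the induced permutation of the $X$-slices and $f$ that of the $N$-slices, the point $(n,x)=\big(N\times\{x\}\big)\cap\big(\{n\}\times X\big)$ is sent to $(f(n),h(x))$, and $f,h$ are diffeomorphisms because $\varphi$ is one. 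Part (1) then yields $f\in G$ and $h\in\homot(X)$, so $\varphi\in G\times\homot(X)$; in particular $\Gamma\subset\widetilde{\Iso}(M,g)\subset G\times\homot(X)$.

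For (3), recall from Section~\ref{sec.bifoliation} that, $\tm$ being simply connected and locally homogeneous, every local Killing field of $\tm$ is globally defined and the evaluation $Z\mapsto Z(z_0)$ maps $\kil(\tm)$ onto $T_{z_0}\tm$ for every $z_0$. Applying the foliation invariance to the (local) flow of $Z$ as in the proof of Lemma~\ref{lem.produit-complet}, each $Z\in\kil(\tm)$ splits as $Z(n,x)=(Z_1(n),Z_2(x))$; plugging this into $L_Z\tilde g=0$ and separating the $N$- and $X$-components shows that $Z_1$ is a Killing field of $(N,g_N)$ and that $Z_1\cdot w=c\,w$ for a constant $c$, hence $Z_1\in\lieg$. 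Projecting the surjectivity of $Z\mapsto Z(z_0)$ onto the $N$-factor, the vectors $Z_1(n_0)$ with $Z_1\in\lieg$ span $T_{n_0}N$ for every $n_0$, so $G$ has open orbits in $N$; since $N$ is connected, $G$ acts transitively, and $(N,g_N)$ is a homogeneous Riemannian manifold (it was already seen to be complete in Section~\ref{sec.warped}). The one point requiring care is the bookkeeping in (1) relating the scalar by which $h$ distorts $g_X$ to the character $\lambda$ of $G$; the rest is routine once the invariance of the foliations is in hand.
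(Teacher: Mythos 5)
Your proof is correct and follows essentially the same route as the paper: the same block-diagonal computation for (1), the same reduction of (2) to (1) via invariance of the foliations $\calf$ and $\calf^{\perp}$ (which you spell out a bit more explicitly than the paper does), and the same splitting of global Killing fields plus surjectivity of the evaluation map for (3). The only point to keep visible in (3) is that completeness of $(N,g_N)$ is what lets the Killing field $Z_1$ integrate to a one-parameter subgroup of $G$, hence $Z_1\in\lieg$; you cite completeness, and the paper makes this step explicit.
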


\begin{proof}
 Let us consider $\varphi=(f,h) \in \diff(N) \times \diff(X)$, and let $\xi=(u,v) \in T_nN \times T_xX$.
 We compute that $|\xi|^2=|u|_{g_N}^2+w(n)|v|_{g_X}^2$ while 
 $|D\varphi(\xi)|^2=|Df(u)|_{g_N}^2+w(f(n))|Dh(v)|_{g_X}^2$. We get that  $\varphi$ will be an isometry if and only if
  $f \in \Iso(N,g_N)$ and $|Dh(v)|_{g_X}^2=\frac{w(n)}{w(f(n))}|v|_{g_X}^2$ for every $v \in TX$.
   In particular $\frac{w(n)}{w(f(n))}$ does not depend on $n \in N$, what proves $f \in G$, and 
   $|Dh(v)|_{g_X}^2=\lambda(f)^{-1}|v|_{g_X}^2$ as claimed in the lemma.
   
 Point $(2)$ is a direct consequence of point $(1)$.  
 
 Let us prove point $(3)$.  
  
  Let us consider $X=(Y,Z)$ a Killing field on $N \times_w X$. Recall that $X$ is defined globally. 
   The same computations as in point $(1)$, for Killing fields, imply that
   $Y$ must be a (global) Killing field of $(N,g_N)$, satisfying moreover that 
   $n \mapsto \frac{D_nw(Y(n))}{w(n)}$ is a constant function. We already observed in Section \ref{sec.warped}
    that $(N,g_N)$ is complete. 
   As a consequence, every Killing field on $N$ must be complete as well. This is a classical property, which follows from the obvious 
   fact that Killing fields have constant norm along their integral curves. Thus incomplete integral curves would yield curves of finite Riemannian 
   length, leaving every compact subset of $N$. This is not possible on a complete manifold.
   
     Constancy of the map $n \mapsto \frac{D_nw(Y(n))}{w(n)}$ implies that the $1$-parameter group $\varphi_Y^t$ integrating $Y$ belongs to the group $G$.
      In other words, $Y \in \lieg$. By local homogeneity of $\tilde{M}$, the evaluation map 
      $\mathfrak{kill}(\tilde{M}) \to T\tilde{M}$ is onto at each point, what implies that the evaluation map 
      $\lieg \mapsto TN$ is also 
      onto at each point. Transitivity of the action of $G$ on $N$ follows.
   \end{proof}

  \begin{remarque}
   \label{rem.general}
   Observe that the first point of Lemma \ref{lem.isometries} holds for general warped products $N \times_w X$, not necessarily
    the universal cover of a compact locally homogeneous Lorentz manifold with semisimple isotropy. 
  \end{remarque}

\subsection{Completeness issues}
\label{sec.completeness.issues}

A weakness in the description made in Proposition \ref{prop.universal} is that the factor 
$(X,g_X)$ might not be a {\it complete}, simply connected manifold of constant curvature. This is a serious limitation in 
 the full understanding of $(\tm, \tilde{g})$, thus of the manifold $(M,g)$.
  Those completeness issues are quite subtle. When the factor $N$ in 
 Proposition \ref{prop.universal} is reduced to a point, then $(X,g_X)$ is complete by a deep theorem of Y. Carri\`ere and
 B. Klingler (see Theorem \ref{thm.carriere-klingler} below). For arbitrary factors $(N,g_N)$ (even homogeneous ones), it is not clear
  how to prove that $(X,g_X)$ is complete, eventhough it is likely to be true (this problem is evocated in 
  \cite[Sec. 4.3]{zeghibisom2}. As the following sections show, some serious difficulties 
  were overlooked there). When proving the completeness of $(X,g_X)$, we will actually need  an extra assumption
  about the group $\Iso(M,g)$.
    To explain this, let us consider the situation of a compact Lorentz manifold $(M,g)$, obtained as a quotient of a warped product
   $N \times_w X$ by a discrete subgroup $\Gamma \subset \Iso(N) \times \operatorname{Homot}(X)$.
     The manifold $M$ is endowed with two foliations $\calf$ and $\calf^{\perp}$, whose leaves are respectively the projections of the sets 
      $\{ n \} \times X$ ans $N \times \{x \}$.  The subgroup of $\Iso(M,g)$ preserving the bifoliation $(\calf, \calf^{\perp})$
       is denoted by $\Iso^{\times}(M,g)$. Let us remark that whenever $(M,g)$ is locally homogeneous with semisimple isotropy, 
        and $N \times_w X$ is the natural warped product structure on $(\tm, \tilde{g})$ exhibited in Section \ref{sec.warped}, 
         then
        $\Iso^{\times}(M,g)=\Iso(M,g)$.
       
       We can now state the completeness theorem we will need:

\begin{theoreme}
  \label{thm.complete}
  Let $(M,g)$ be a compact Lorentz manifold, such  that the universal cover 
  $(\tm,\tilde{g})$ is isometric to a warped product $N \times_w X$  where $(N,g_N)$ is Riemannian 
   and $(X,g_X)$ is Lorentzian of dimension $\geq 3$ and constant sectional curvature. Assume that $M$ is obtained as a quotient of 
    $N \times X$ by a discrete subgroup $\Gamma \subset \Iso(N) \times \operatorname{Homot}(X)$. 
   Assume moreover that  $\Iso^{\times}(M,g)$ has an infinite limit set at every point of $M$.
   
   Then the factor $(X,g_X)$ is complete and isometric to either the $3$-dimensional anti-de Sitter space 
   $\widetilde{\mathbb{ADS}}^{1,2}$ or Minkowski space ${\RR}^{1,k}$, $k \geq 2$.
  \end{theoreme}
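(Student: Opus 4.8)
The plan is to combine the Carrière–Klingler completeness theorem for compact Lorentz manifolds of constant curvature with a careful analysis of the warped-product structure, the key new ingredient being the hypothesis on the infinite limit set. First I would recall that the warping function $w$ depends only on the $N$-variable (Lemma \ref{lem.produit-complet} holds in this generality, cf. Remark \ref{rem.general}), so $\tilde g = g_N \oplus w\, g_X$ with $w : N \to \RR_+^*$. The projection of $\Gamma$ to $\Homot(X)$ gives a group $\Gamma_X$ acting by homotheties on $X$; pulling back the homothety factor $\lambda : \Homot(X) \to \RR_+^*$ one gets a homomorphism $\Gamma \to \RR_+^*$, and since $M$ is compact and $w$ is bounded on a fundamental domain, I would argue (as in the incomplete discussion of \cite[Sec. 4.3]{zeghibisom2}, which this theorem is meant to repair) that this homomorphism has some constraints, but \emph{not} enough by itself to force $\Gamma_X$ to act isometrically — this is exactly the gap that the limit-set hypothesis is designed to fill.

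The core of the argument should be as follows. The leaf $X$ carries an incomplete constant-curvature Lorentz metric $g_X$; by the structure of such manifolds (developing map into the model space, which is $\RR^{1,k}$, de Sitter, or anti–de Sitter), incompleteness can be localized. The key point is that an infinite limit set at every point of $M$ produces, via the analysis already carried out in Sections \ref{sec.limit.set}–\ref{sec.exponential.killing} and the discussion preceding Lemma \ref{lem.signature}, a large isotropy algebra $\oo(1,k)$ acting on each tangent space, whose lightcone orbits are local pseudo-orbits; this was used in the proof of Lemma \ref{lem.foliations} and Lemma \ref{lem.produit-complet}. I would leverage this to show that the developing map of $X$ must be surjective onto the model space: an incomplete geodesic in $X$ would have to ``run off'' the manifold, but the homothety/isometry dynamics coming from the limit set drag compact pieces of $X$ across the whole leaf, contradicting incompleteness (this is the mechanism by which the limit set being infinite forces completeness — without it one genuinely can have incomplete warped factors). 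Concretely, pick a direction $[u] \in \Lambda(x)$; by Corollary \ref{coro.extension} it extends to a Lipschitz lightlike field, and along the associated asymptotically stable foliation one gets sequences $f_k$ in $\Iso^{\times}(M,g)$ whose derivatives contract; lifting to $\tilde M$ and projecting to the $X$-factor, these become homotheties of $X$ with ratios tending to $0$ or $\infty$, which near an ``incompleteness point'' of the developing image would force a contradiction with the fact that $M$ is a \emph{compact} quotient.

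Once completeness of $(X,g_X)$ is established, $X$ is a complete simply connected Lorentz manifold of constant sectional curvature $\kappa$ and dimension $k+1 \geq 3$, hence Minkowski space $\RR^{1,k}$ ($\kappa = 0$), de Sitter space ($\kappa > 0$), or the universal cover $\widetilde{\AdS}$ of anti–de Sitter space ($\kappa < 0$). The de Sitter case I would rule out: $\dS$ is compact only in a degenerate sense and, more to the point, its isometry group $\OO(1,k+1)$ acting on the $X$-factor, combined with $\Gamma$ discrete in $\Iso(N) \times \Homot(\dS)$ with $M = (N\times X)/\Gamma$ compact, forces constraints incompatible with the semisimple-isotropy bifoliation (alternatively: de Sitter space has no complete timelike geodesics issues, and a compact quotient would need $\Gamma$ cocompact in a group containing $\OO(1,k+1)$, which cannot coexist with the warped Riemannian factor unless $N$ is trivial, reducing to the constant-curvature case treated by Carrière–Klingler — and compact de Sitter-like manifolds of dimension $\geq 3$ do not exist). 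When $\kappa < 0$, the hyperbolic isotropy $\oo(1,k)$ sitting inside $\Iso(\widetilde{\AdS})$ constrains $k$: $\AdS^{1,k}$ has isometry algebra $\oo(2,k)$, whose maximal subalgebra of the form $\oo(1,k')$ is $\oo(1,k)$ itself only when the ambient dimension is $3$, forcing $k=2$ and $X = \widetilde{\AdS}^{1,2}$. In all remaining cases $\kappa = 0$ and $X = \RR^{1,k}$, $k \geq 2$.

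\textbf{Main obstacle.} The hard part will be the completeness step — showing that the infinite limit set of $\Iso^{\times}(M,g)$ genuinely obstructs incompleteness of the $X$-factor. One must propagate the dynamical information (contracting sequences, asymptotically stable lightlike directions) from the compact manifold $M$ down through the warped product to the single leaf $X$, control how homotheties of $X$ interact with a potentially incomplete developing map, and extract a contradiction; this requires a delicate mix of the Carrière–Klingler machinery (completeness of constant-curvature Lorentz manifolds), the Lorentzian structure theory of homothety groups acting on model spaces, and the cocycle/limit-set estimates from the earlier sections. Handling the de Sitter case cleanly, and pinning down $k=2$ in the anti–de Sitter case from the isotropy dimension count, are comparatively routine once completeness is in hand.
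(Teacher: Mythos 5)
Your outline identifies the right skeleton (developing map plus Carrière--Klingler input, then a classification of the possible model spaces), but the step you yourself flag as the hard one --- completeness --- is left as a heuristic that does not survive scrutiny, and it diverges from what is actually needed. The paper's proof splits into two cases according to whether $\Gamma \subset \Iso(N) \times \Iso(X)$ or $\Gamma$ contains genuine homotheties, and these require entirely different arguments; your sketch never makes this division operative. In the isometric case the indispensable ingredient is a non-discrete version of Klingler's result (Proposition \ref{prop.klingler}): the projection $\Gamma_X$ is only \emph{cocompact}, not discrete, and one must check that injectivity of $\delta$ and the description of $\partial \delta(X)$ as a union of at most two lightlike hyperplanes persist without discreteness --- your "incompleteness can be localized" does not isolate this. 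The contradiction is then obtained, for $\kappa = \pm 1$, not from the limit set at all, but from a $G_H$-invariant vector field of constant nonzero divergence on $U_H^{\pm}$ (Lemma \ref{lem.divergence}, Corollary \ref{coro.divergence}), plus an unbounded invariant function in the non-parallel anti-de Sitter subcase; for $\kappa = 0$ it comes from the uniform-incompleteness dichotomy (Lemmas \ref{lem.incomplete} and \ref{lem.plat.incomplet}): infinitely many limit directions are not uniformly incomplete, while an image bounded by a lightlike hyperplane allows only one such lightlike direction per point. In the homothetic case (which forces $X$ flat) the proof is a recurrence argument: a point recurrent under the flow induced by right translation produces returning elements $\beta_i$ with homothety ratios $\lambda_i \to \infty$ and \emph{bounded} rotational parts (Lemma \ref{lem.bounded.sequence}, which is exactly where three independent limit directions are used), and an exhaustion argument (Lemma \ref{lem.assiettes}) then makes $\delta$ onto. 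Your proposed mechanism --- "contracting derivatives project to homotheties of $X$ which drag compact pieces across the leaf, contradicting incompleteness" --- is not an argument, and your appeal to an isotropy algebra $\oo(1,k)$ acting on each tangent space is not available here: the theorem carries no local homogeneity hypothesis, and Theorem \ref{thm.exponential.killing} only yields isotropy on the integrability locus, which is why the paper works instead with asymptotically stable distributions and Theorem \ref{thm.zeghib}.

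The final classification step is also defective as you state it. Excluding de Sitter because "compact de Sitter-like manifolds of dimension $\geq 3$ do not exist" conflates quotients of $\dS^{1,k}$ with quotients of the warped product $N \times_w \dS^{1,k}$; the Calabi--Markus phenomenon does not apply directly to the latter. And the dimension count you propose for anti-de Sitter ("the maximal subalgebra of type $\oo(1,k')$ in $\oo(2,k)$ is $\oo(1,k)$ only when the ambient dimension is $3$") is incorrect: $\oo(2,k)$ contains $\oo(1,k)$ for every $k$, so no restriction follows from it. In the paper both exclusions come from Proposition \ref{prop.factor}: the infinite limit set yields infinitely many codimension-one totally geodesic lightlike foliations transverse at a point, and \cite[Theorem 15.1]{zeghibisom1} then forces the constant-curvature factor to be either flat or $\widetilde{\AdS}^{1,2}$.
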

  
  Observe that there is no local homogeneity assumption in the previous statement. 
  
  In the case of a locally homogeneous 
  Lorentz manifold with semisimple isotropy, Theorem \ref{thm.complete} implies directly Proposition \ref{prop.homogeneous.partial}.
   Indeed, the growth assumption and Corollary \ref{coro.infinite.limit}  imply that $\Iso(M,g)$ has an infinite limit 
   set at each point,
   and we noticed the equality $\Iso^{\times}(M,g)=\Iso(M,g)$ for those manifolds.

   We will prove the completeness of $(X,g_X)$ 
  in two quite different ways, 
  according to the nature of the group $\Gamma$
  (see Sections \ref{sec.warping.constant} and \ref{sec.warping.nonconstant} below).
  
  \subsubsection{Complete Lorentz spaces of constant curvature}
  \label{sec.model.spaces}
  
  In the following, we will call $\xk$ the complete, simply connected, Lorentz manifold of constant curvature $\kappa$.
   To understand what follows, we must  recall a few fundamental facts about those spaces, and their geometry.
   
  First of all, the model for ${\bf X}_0$ is {\it Minkowski space} $\RR^{1,k}$, namely the space $\RR^{k+1}$
   endowed with the flat Lorentz metric defined by the quadratic form  $q^{1,k}(x)=-x_1^2+x_2^2+ \ldots+x_{k+1}^2$.
   
 Next, to describe $\xk$ when $\kappa>0$, we consider the space $\RR^{1,k+1}$, and the quadric $X_{\kappa}$ defined by the equation
  $q^{1,k+1}=\frac{1}{\sqrt{\kappa}}$. Inducing $q^{1,k+1}$ on $X_{\kappa}$, one gets a Lorentz metric of constant 
   sectional curvature $\kappa$. This yields the model space $\xk$.  When $\kappa = +1$, the usual name for $\xk$ is 
   {\it de Sitter space}, denoted $\dS^{1,k}$.
   
 When $ \kappa < 0$, we consider $\RR^{2,k}$, namely the space $\RR^{k+2}$ endowed with the quadratic form $q^{2,k}$.
  The quadric $X_{\kappa}$ is then defined by the equation $q^{2,k}=\frac{-1}{\sqrt{- \kappa}}$. The restriction of $q^{2,k}$
   to $X_{\kappa}$ is a complete Lorentz metric of constant sectional curvature $\kappa$. However, 
    $X_{\kappa}$ is not simply connected. To get the model space $\xk$, one has to consider {\it the universal cover} $\tilde{X}_{\kappa}$ 
     endowed with the lifted metric. We will call $\pi: \xk \to X_{\kappa}$ the covering map (this is an infinite cyclic covering).
      For $\kappa=-1$, the usual name of $\xk$ is {\it anti-de Sitter space}, denoted $\widetilde{\AdS}^{1,k}$.
      
 One  thing we will have to know about the geometry  of $\xk$, for any $\kappa$, is the notion of {\it lightlike hyperplane}.
  For Minkowski space, this is the usual notion of an affine hyperplane, on which the Minkowski metric is degenerate.
  
  When $ \kappa >0$ (resp. $\kappa <0$), we define a {\it lightlike hyperplane of $X_{\kappa}$} 
   as a connected component of the intersection $u^{\perp} \cap X_{\kappa}$, where $u \subset \RR^{1,k+1}$ 
   (resp. $u \subset \RR^{2,k}$) is any isotropic vector.  
   
   When $\kappa >0$, $X_{\kappa}=\xk$ and we have thus defined the notion of a lightlike hyperplane of $\xk$. 
   
   When $\kappa <0$, we define a {\it lightlike hyperplane of $\xk$} as a connected component of 
   the lift $\pi^{-1}(H) \subset \xk$ of any lightlike hyperplane $H \subset X_{\kappa}$. 
    Let us just mention one peculiarity of the case $\kappa<0$, that will be used later on (this is detailed in 
    \cite[p. 368]{klingler}, where the terminology ``semi-coisotropic hyperplane'' is used). 
      Let us consider $H$ a lightlike hyperplane of $\xk$ (here $\kappa<0$). Then by definition its projection $H'$ on $X_{\kappa}$
       is a lightlike hyperplane of $X_{\kappa}$. But the preimage $\pi^{-1}(H')$ has infinitely many connected component, naturally
        indexed by $\ZZ$, and $H$ is just one of them. The second point is that $\xk \setminus \pi^{-1}(H')$ also has infinitely
         many components. It turns out that only two of thosecomponents contain $H$ in their closure, and we will denote
          them by $U_H^+$ and $U_H^-$. Details about this can be found in \cite[p. 369]{klingler}.
   

  \subsection{Completeness of the factor $(X,g_X)$ when $\Gamma$ is a subgroup of $\Iso(N) \times \Iso(X)$} 
\label{sec.warping.constant}

 Mutiplying if necessary the metric $g$ by a constant, we  don't loose any generality if we take 
 the curvature $\kappa$ of $X$ equal to $0,+1$ or $-1$, what we will do from now on. 
  
  The Lorentz manifold $(X,g_X)$ has constant curvature $\kappa$, thus there exists an isometric
   immersion $\delta: X \to \xk$, as well as a {\it holonomy morphism} $\rho: \Iso(X,g_X) \to \Iso(\xk)$, such
    that $\delta \circ f= \rho(f) \circ \delta$, for every $f \in \Iso(X,g_X)$.
    Our aim is to show that under the hypotheses of Theorem \ref{thm.complete}, 
    the map $\delta$ is an isometry between $(X,g_X)$ and $\xk$.  The situation is reminiscent of 
     the following  celebrated theorem of Y. Carri\`ere (in the flat case $\kappa=0$),
     completed by B. Klingler (for any constant curvature $\kappa$).

    \begin{theoreme}\cite{carriere}, \cite{klingler}
     \label{thm.carriere-klingler}
     Let $(X,g_X)$ be a simply connected Lorentz manifold of constant sectional curvature $\kappa$. 
     Assume that there exists $\Gamma_X \subset \Iso(X,g_X)$ a discrete subgroup acting properly discontinuously
      on $X$ with compact quotient, then the developping map $\delta$ is an isometry between $(X,g_X)$ and $\xk$.
    \end{theoreme}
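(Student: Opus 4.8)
The plan is to reduce the statement to the geodesic completeness of a compact constant-curvature Lorentz manifold, and then to invoke the arguments of Carri\`ere (for $\kappa=0$) and Klingler (for $\kappa\neq 0$). Since $X$ is simply connected and we may assume $\Gamma_X$ acts freely, $(X,g_X)$ is the universal cover of the compact Lorentz manifold $M:=X/\Gamma_X$, which again has constant sectional curvature $\kappa$ and dimension $\ge 3$; in this picture $\delta$ is the developing map of $M$ and $\rho|_{\Gamma_X}$ its holonomy. By the standard theory of $(G,X)$-structures, with $G=\Iso(\xk)$ acting on $\xk$ with its canonical connection, $\delta$ is a covering map onto $\xk$ as soon as the Levi-Civita connection of $g_X$ is geodesically complete -- geodesic completeness lets one lift through the local diffeomorphism $\delta$ every path of $\xk$, broken into short geodesic arcs -- and since $\xk$ is simply connected, $\delta$ is then a diffeomorphism, hence, being a local isometry, an isometry. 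As geodesic completeness passes between a pseudo-Riemannian manifold and its covers, everything comes down to the statement that \emph{a compact Lorentz manifold of constant sectional curvature and dimension $\ge 3$ is geodesically complete}, which is \cite{carriere} for $\kappa=0$ and \cite{klingler} for $\kappa\neq 0$. I now sketch how I would prove this completeness statement.

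Assume, for a contradiction, that $M$ is not complete, and fix a compact fundamental domain $K\subset X$ for $\Gamma_X$. An inextendible geodesic $c:[0,T)\to X$ develops to a geodesic $\delta\circ c$ of $\xk$ which \emph{does} extend to $t=T$, with limit $q\in\xk$. Choosing $\gamma_k\in\Gamma_X$ and $t_k\to T$ with $\gamma_k\cdot c(t_k)\in K$, the fact that $c(t_k)$ eventually leaves every compact subset of $X$ forces $\gamma_k\to\infty$ in $\Gamma_X$, hence $\rho(\gamma_k)\to\infty$ in $\Iso(\xk)$, while $\rho(\gamma_k)\cdot\delta(c(t_k))$ stays in the compact set $\delta(K)$ and $\delta(c(t_k))\to q$. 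When $\kappa=0$ the linear part of the holonomy lies in $\OO(1,k)$ (where $k+1=\dim X$), and when $\kappa>0$ one has $\Iso(\xk)=\Iso(\dS^{1,k})=\OO(1,k+1)$; in both cases the ambient linear group has real rank one, equivalently Carri\`ere's \emph{discompactness} at most $1$, so that in a Cartan decomposition of (the linear part of) $\rho(\gamma_k)$ only a single exponent degenerates, of the form $\operatorname{diag}(e^{s_k},1,\dots,1,e^{-s_k})$ with $s_k\to\infty$. Carri\`ere's analysis of groups of discompactness $\le 1$ then supplies the contradiction: the whole non-compactness of $\rho(\gamma_k)$ is concentrated along a single lightlike line, so $\rho(\gamma_k)$ collapses $\xk$, up to bounded error, onto that line, which cannot occur while $\rho(\gamma_k)\cdot\delta(c(t_k))$ stays in the fixed compact set $\delta(K)$ and $\delta$ has uniformly bounded local geometry along $K$. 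This gives completeness for $\kappa\ge 0$.

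The genuinely hard case -- and the main obstacle -- is $\kappa<0$, i.e.\ $\xk=\widetilde{\AdS}^{1,k}$ with $k\ge 2$, because $\Iso(\xk)$ is a covering of $\OO(2,k)$, which has real rank \emph{two}: its Cartan decompositions involve two degenerating exponents, the discompactness is $2$, and Carri\`ere's criterion no longer applies. Here I would follow Klingler and use the specific geometry of anti-de Sitter space recalled in Section~\ref{sec.model.spaces}: its family of lightlike hyperplanes $H$ and, for each of them, the two distinguished components $U_H^+,U_H^-$ of the complement. Running the geodesic-escape analysis of the previous paragraph, now with a rank-two degeneration, one shows that a non-complete compact AdS manifold would force the developing image $\delta(X)$ to lie inside some half-space $U_H^+$, with $\rho(\Gamma_X)$ stabilizing $H$; and one derives a contradiction from the fact that no discrete group of isometries of $\widetilde{\AdS}^{1,k}$ preserving a lightlike hyperplane can make $U_H^+$ into a compact quotient (such a quotient would, for instance, have infinite volume). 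Setting up and excluding this ``half-space alternative'' is the technical core of the argument. Once completeness is established in the three cases $\kappa=0$, $\kappa>0$, $\kappa<0$, the reduction of the first paragraph shows that $\delta$ is an isometry, as claimed.
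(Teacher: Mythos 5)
The paper does not prove this statement: it is quoted from Carri\`ere \cite{carriere} ($\kappa=0$) and Klingler \cite{klingler} (general $\kappa$), so there is no internal proof to compare yours with, and your proposal is best judged against those references. Your first paragraph is the correct and standard reduction: pass to a manifold quotient, view $\delta$ as the developing map of a $(\Iso(\xk),\xk)$-structure, and observe that geodesic completeness makes $\delta$ a covering onto the simply connected model, hence an isometry; so everything rests on the completeness of compact constant-curvature Lorentz manifolds, which is exactly the content of the cited theorems. As an outline of how those theorems are proved, your division into a ``discompactness one'' case and the rank-two anti-de Sitter case is broadly faithful, but you should be aware that the substantive content (``Carri\`ere's analysis then supplies the contradiction'', ``I would follow Klingler'') is precisely what is being deferred to \cite{carriere} and \cite{klingler}; taken as a standalone argument the hard steps are not there (for instance, in the flat case the holonomy is affine, its translation parts also degenerate, and the contradiction is far from the two-line compactness clash you describe --- this is where the real difficulty of Carri\`ere's theorem lies; likewise the claim that an incomplete geodesic must leave every compact subset of $X$ needs the small argument that the velocity cannot blow up where $\delta$ is a local diffeomorphism).

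Two details of your outline also misstate the cited arguments. First, for $\kappa>0$ the structure is not an affine structure, so Carri\`ere's criterion (compact \emph{affine} manifolds with linear holonomy of discompactness $\leq 1$ are complete) does not apply as such; moreover discompactness is an invariant of a linear representation, not of the abstract rank-one group, so ``real rank one, equivalently discompactness at most $1$'' is not correct. In Klingler's treatment the de Sitter case is handled by the same boundary analysis as anti-de Sitter: if $\delta$ is not onto, $\partial(\delta(X))$ consists of one or two lightlike hyperplanes (this is the content of Proposition \ref{prop.klingler}), and in $\dS^{1,k}$ two lightlike hyperplanes always meet, which kills the two-hyperplane case, the remaining half-space case being excluded as below (compare Section \ref{sec.pm1}). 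Second, in the anti-de Sitter case the alternative is not only a half-space: the image may also be a ``slab'' bounded by two lightlike hyperplanes, and both configurations must be excluded; the exclusion is not an infinite-volume count but the existence of a vector field on $U_H^{\pm}$, invariant under the stabilizer of $H$, with constant nonzero divergence (the paper's Lemma \ref{lem.divergence}, taken from \cite{klingler}), which cannot descend to a compact quotient since a compact Lorentz manifold carries no vector field of constant nonzero divergence. Finally, ``we may assume $\Gamma_X$ acts freely'' deserves a word (Selberg's lemma on a finite-index subgroup, or an equivariant formulation), though this is minor.
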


    In our situation,
       the projection $\Gamma_X$  of $\Gamma$ on $\Iso(X,g_X)$ does act cocompactly on $(X,g_X)$, since $\Gamma$
        acts cocompactly on $N \times X$. However the group $\Gamma_X$ is not necessarily discrete. Reading carefully the proof of
         B. Klingler in \cite{klingler}, one realizes that part of the arguments actually do not use 
         the discreteness of $\Gamma_X $, but only the cocompactness assumption. In particular,
         the begining of the proof carries over to the case where $\Gamma_X$
          is only assumed to be cocompact, until \cite[Proposition 3]{klingler}. Actually,  this proposition uses the discreteness of $\Gamma_X$
           only at the very end, in order to use cohomological dimension arguments. If we just assume cocompactness of $\Gamma_X$,
           Klingler's proof yields 
           the slightly weaker version:
      
      \begin{proposition}\cite[Proposition 3]{klingler}
       \label{prop.klingler}
       Under the asumption that there exists $\Gamma_X \subset \Iso(X,g_X)$ which acts cocompactly, then the 
       developping map $\delta$ is injective. If $\delta$ is not onto, the boundary of $\Omega:=\delta(X)$ in $\xk$ 
        is a disjoint union $H \cup P$, where $H$ is a lightlike hyperplane, and $P$ is either 
        empty, or a lightlike hyperplane.
      \end{proposition}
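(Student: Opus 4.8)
The excerpt ends at Proposition \ref{prop.klingler}, which is attributed to Klingler and whose proof is by construction a modification of \cite[Proposition 3]{klingler}. Let me sketch how such a proof is organized.

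The plan is to follow Klingler's argument in \cite{klingler} for the closed developing image, tracking exactly where discreteness of $\Gamma_X$ is used and checking that cocompactness alone suffices up to this point. First I would recall that $\delta: X \to \xk$ is a local isometry between manifolds of the same constant curvature, hence (by the standard Cartan--Ambrose--Hicks type argument, or by the equivariant developing map formalism) it lifts the $\Gamma_X$-action: there is a holonomy morphism $\rho: \Iso(X,g_X) \to \Iso(\xk)$ with $\delta \circ f = \rho(f)\circ \delta$. Since $\Gamma_X$ acts cocompactly on $X$, the image $\rho(\Gamma_X)$ acts cocompactly on $\Omega := \delta(X)$; and $\Omega$ is an open subset of $\xk$ invariant under $\rho(\Gamma_X)$.

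The key step --- injectivity of $\delta$ --- rests on Klingler's analysis of the ``boundary behaviour'' of $\Omega$ in $\xk$. One shows that if $\delta$ fails to be injective, or if $\partial \Omega$ is not of the stated form, then one can produce in $\partial\Omega$ a point with a degenerate (lightlike) support structure, and by propagating along the complete lightlike geodesics of $\xk$ one reconstructs that $\partial\Omega$ must consist of lightlike hyperplanes. The cocompactness of the $\rho(\Gamma_X)$-action on $\Omega$ is what forces $\partial \Omega$ to be ``thin'' --- a finite union of lightlike hyperplanes --- rather than something wilder; in the constant-curvature models the complement of a lightlike hyperplane has the specific component structure recalled in Section \ref{sec.model.spaces}, and a cocompactly-acting group on a proper open subset must have its domain bounded between at most two such hyperplanes (one gets $H$, and possibly a second one $P$ ``parallel'' to it; in the anti-de Sitter case these correspond to the $U_H^{\pm}$ picture). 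Injectivity of $\delta$ then follows because a nontrivial fiber of $\delta$ would force $\Omega$ to wrap around in a way incompatible with being cut out by lightlike hyperplanes. Throughout, the only place discreteness intervenes in the original proof is the final cohomological-dimension argument (used to deduce $\delta$ onto, i.e. completeness, in some cases) --- and that is precisely the part we drop here, retaining only the weaker conclusion.

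The main obstacle I anticipate is verifying, line by line, that \emph{all} of Klingler's intermediate lemmas up to and including his Proposition 3 survive the replacement of ``$\Gamma_X$ discrete, cocompact'' by merely ``$\Gamma_X$ cocompact.'' Several of his arguments use properness of the action (to get a compact fundamental domain, to run limiting arguments on sequences in $\Gamma_X$); without discreteness one must instead work with the closure $\overline{\Gamma_X}$ in $\Iso(X,g_X)$, which is a Lie group acting cocompactly, and argue that its orbits still behave well enough --- essentially reducing to the fact that a Lie group acting cocompactly and isometrically on a simply connected constant-curvature Lorentz manifold, with closed orbits outside a bad set, still forces the developing image to be convex-like in the lightlike directions. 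I would handle this by explicitly citing which statements in \cite{klingler} are ``topological/geometric about $\Omega$'' (and hence insensitive to discreteness) versus ``group-theoretic about $\Gamma_X$'', and redoing only the latter with $\overline{\Gamma_X}$ in place of $\Gamma_X$. This bookkeeping, rather than any new idea, is the real content of the proof.
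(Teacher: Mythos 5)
Your proposal follows the same route as the paper: the paper offers no independent argument either, but simply observes that Klingler's proof of his Proposition 3, read carefully, uses the discreteness of $\Gamma_X$ only at the very end (for the cohomological-dimension step), so that under cocompactness alone one retains injectivity of $\delta$ and the description of $\partial\Omega$ as at most two lightlike hyperplanes. Your additional bookkeeping about which of Klingler's lemmas are purely geometric statements about $\Omega$ versus group-theoretic ones (possibly passing to $\overline{\Gamma_X}$) is a reasonable way to carry out exactly the verification the paper asserts.
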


To conclude the proof of the completeness of $(X,g_X)$, we have to show that the conclusions of 
Proposition \ref{prop.klingler} contradict the hypotheses of Theorem \ref{thm.complete}, so that $\delta$ will be 
 a diffeomorphism.
In \cite{klingler}, arguments of cohomological dimension, and results about flat affine manifolds are used, that 
 don't clearly carry over to our situation. We thus adapt them to conclude. 

\subsubsection{The case of curvature $\kappa=\pm 1$}
\label{sec.pm1}

We refer to Section \ref{sec.model.spaces} for the notion of lightlike hyperplane of $\xk$, and the notations therein.

\begin{lemme}\cite[Sec 4, p. 370]{klingler}
 \label{lem.divergence}
 Let $H$ be a  lightlike hyperplane in $\xk$. Let $U_H^+$ and $U_H^-$ the associated components.
  Let $G_H$
  the stabilizer of $U_H^{+}$ and $U_H^{-}$ in $\Iso(\xk)$. 
  Then there exists a complete vector field $Y$ on $U_H^{+}$ (resp. on $U_H^{-}$), which is preserved by $G_H$,
   and which have constant nonzero divergence $\lambda \in \RR^*$ with respect to the restriction of $g_{\xk}$
    to $U_H^+$ (resp. $U_H^-$).
\end{lemme}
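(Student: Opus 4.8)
The plan is to exhibit the vector field $Y$ explicitly in the quadric model of $\xk$, along the lines of \cite[Sec. 4, p.~370]{klingler}. Recall from Section~\ref{sec.model.spaces} that for $\kappa=+1$ one has $\xk=X_\kappa=\{q^{1,k+1}=1\}\subset\RR^{1,k+1}$, a lightlike hyperplane $H$ is a connected component of $u^\perp\cap\xk$ for an isotropic vector $u$, the region $U_H^+$ is (a component of) $\{x\in\xk:\langle u,x\rangle>0\}$, and $U_H^-$ the one where $\langle u,x\rangle<0$; for $\kappa=-1$ the same picture holds on $X_\kappa=\{q^{2,k}=-1\}\subset\RR^{2,k}$, after which $\xk$, $H$, $U_H^\pm$ are obtained by passing to the universal cover $\pi:\xk\to X_\kappa$. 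On the quadric, writing $c:=\langle x,x\rangle=\pm1$ for the defining value, the key definition is
\[
Y(x):=\frac{c\,u}{\langle u,x\rangle}-x .
\]
Since $\langle Y(x),x\rangle=c-c=0$ on the quadric, $Y$ is tangent to $X_\kappa$, and it is smooth on $\{\langle u,x\rangle\neq0\}$, in particular on $\pi(U_H^+)$.

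I would then verify the three properties by short computations. For \emph{completeness}: along an integral curve $x(t)$, set $\phi(t):=\langle u,x(t)\rangle$; using $\langle u,u\rangle=0$ one gets $\dot\phi=-\phi$, so $\phi$ keeps a constant sign and the flow stays in the relevant region, while $\tfrac{d}{dt}(e^tx)=e^t(\dot x+x)=\phi(0)^{-1}c\,e^{2t}u$ integrates to $x(t)=e^{-t}x(0)+\tfrac{c\,u}{2\phi(0)}(e^t-e^{-t})$, defined for all $t\in\RR$. For \emph{$G_H$-invariance}: an element of $\Iso(X_\kappa)$ preserving the two sides of $H$ acts on the ambient space fixing the line $\RR u$, say $g\cdot u=\mu u$ with $\mu>0$; then $\langle u,g^{-1}x\rangle=\mu^{-1}\langle u,x\rangle$ forces $g_\ast Y=Y$. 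For the \emph{divergence}: extend $Y$ to $\tilde Y(y)=\tfrac{cu}{\langle u,y\rangle}-y$ on the ambient flat space; since $u$ is null, $\operatorname{div}_{\RR^{k+2}}\tilde Y=-(k+2)$, and since the position vector is a unit normal to $X_\kappa$ the formula $\operatorname{div}_{X_\kappa}Y=\operatorname{div}_{\RR^{k+2}}\tilde Y-c\,\langle\nabla_x\tilde Y,x\rangle$ together with $\langle\nabla_x\tilde Y,x\rangle=-2c$ yields $\operatorname{div}_{X_\kappa}Y\equiv-k$, a nonzero constant because $k\geq2$. So $\lambda=-k$.

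Finally, for $\kappa=-1$ one transfers this to $\xk=\widetilde{X_\kappa}$: the field $Y$ above, being invariant under $\Iso(X_\kappa)$, is invariant under the deck group of $\pi$, hence lifts to a vector field $\hat Y$ on $\pi^{-1}(\{\langle u,x\rangle>0\})$; since the flow of $\hat Y$ covers the complete flow of $Y$, $\hat Y$ is complete, and by the description of lightlike hyperplanes and their sides recalled in Section~\ref{sec.model.spaces} (following \cite[pp.~368--369]{klingler}) the component $U_H^+$ lies in $\pi^{-1}(\{\langle u,x\rangle>0\})$ and is flow-saturated, so $\hat Y|_{U_H^+}$ is the desired field, with $G_H$-invariance, completeness and constant divergence $-k$ passing to the cover automatically; the case $U_H^-$ is identical after replacing $u$ by $-u$. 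The only slightly delicate step is this last piece of bookkeeping in the $\ZZ$-cover — pinning down which components of $\xk\setminus\pi^{-1}(H')$ are the $U_H^\pm$ and that they are preserved by the flow — but that is exactly what is recorded in the cited passage of \cite{klingler}, so it is not a genuine obstacle.
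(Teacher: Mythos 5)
Your construction settles the de Sitter case correctly, and there it is essentially the computation the paper implicitly relies on (the paper itself gives no proof, only the citation to Klingler): the field $Y(x)=\frac{c\,u}{\langle u,x\rangle}-x$ is tangent to the quadric, its flow is explicit and stays in the half-region $\{\langle u,x\rangle>0\}$, the invariance under the stabilizer of the two sides follows from $gu=\mu u$, $\mu>0$, and the divergence computation giving $-k\neq 0$ is right. For $\kappa=+1$ the hyperplane $H$ equals $u^{\perp}\cap\xk$ and the two components of the complement really are the two half-regions, so all the hypotheses of the lemma are verified.

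The gap is in the anti-de Sitter case, precisely at the step you dismiss as bookkeeping. For $\kappa<0$ the intersection $u^{\perp}\cap X_{\kappa}$ has \emph{two} connected components $H'$ and $H''$, and a lightlike hyperplane is only one of them, say $H'$. The set $X_{\kappa}\setminus H'$ is connected: one passes from $\{\langle u,x\rangle>0\}$ to $\{\langle u,x\rangle<0\}$ across $H''$ (e.g.\ along the timelike circle $x_1^2+x_2^2=1$, $x_3=\dots=0$). Moreover, since $\xk\setminus\pi^{-1}(H')$ has infinitely many components, the image of $\pi_1(X_\kappa\setminus H')\to\pi_1(X_\kappa)\simeq\ZZ$ is trivial, so the restriction of $\pi$ to \emph{each} component of $\xk\setminus\pi^{-1}(H')$ is a covering map \emph{onto} $X_{\kappa}\setminus H'$. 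In particular $U_H^{+}$ is not contained in $\pi^{-1}(\{\langle u,x\rangle>0\})$: it contains points lying over both sides of $u^{\perp}$ and over $H''$. On the lifts of $H''$ your formula $\frac{c\,u}{\langle u,\pi(x)\rangle}-\pi(x)$ blows up, so the lifted field $\hat Y$ is neither defined on all of $U_H^{+}$ nor complete there, and the claimed flow-saturation of $U_H^{+}$ fails. Producing a $G_H$-invariant field with constant nonzero divergence on the genuine region $U_H^{\pm}$ (the component of the complement of \emph{all} the lifts of $H'$, which straddles $u^{\perp}$) requires a different construction adapted to this larger region; this is exactly the nontrivial content of the passage of \cite{klingler} on semi-coisotropic hyperplanes that the lemma cites, not a routine lift of the de Sitter formula.
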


\begin{corollaire}
 \label{coro.divergence}
 Let $H$ be a  lightlike hyperplane in $\xk$, and assume that $\rho(\Gamma_X) \subset G_H$. Then 
 the image $\delta(X)$ can not contain, nor 
 be contained, in a connected component $U_H^{\pm}$. 
\end{corollaire}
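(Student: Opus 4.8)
The plan is to exploit the fact that the divergence of a vector field is an isometry invariant, whereas the integral of a divergence over a compact manifold vanishes. Note first that, by Proposition~\ref{prop.klingler} --- whose hypotheses hold here since the projection $\Gamma_X$ of $\Gamma$ acts cocompactly on $X$ --- the developing map $\delta$ is injective, hence a diffeomorphism onto the open set $\Omega := \delta(X) \subset \xk$, with $\partial \Omega = H \sqcup P$. Because $\delta$ intertwines the $\Gamma_X$-action with $\rho(\Gamma_X) \subset G_H$, the group $\rho(\Gamma_X)$ preserves $\Omega$ and also $U_H^{+}$ and $U_H^{-}$.

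I would first rule out $\Omega \subset U_H^{+}$, the case of $U_H^{-}$ being symmetric. Pull the vector field $Y$ of Lemma~\ref{lem.divergence} back to $Z := \delta^{*} Y$ on $X$; since $\delta$ is an isometry onto its image, $Z$ has constant divergence $\lambda \neq 0$ for $g_X$, and $Z$ is $\Gamma_X$-invariant because $Y$ is $G_H$-invariant and $\rho(\Gamma_X) \subset G_H$. Let $W$ be the vector field on $\tm = N \times_w X$ that is tangent to the $X$-factor and equals $Z$ on each slice $\{n\} \times X$. As $\Gamma \subset \Iso(N) \times \Iso(X)$ acts on the $X$-factor through $\Gamma_X$ and $Z$ is $\Gamma_X$-invariant, $W$ is $\Gamma$-invariant and descends to a vector field $\overline{W}$ on the compact quotient $M$. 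Since the warping function of $N \times_w X$ depends only on the $N$-variable and $W$ is vertical, a short computation gives $\mathcal{L}_W \vol_{\tilde{g}} = \lambda\, \vol_{\tilde{g}}$, so $\overline{W}$ has constant divergence $\lambda$ on $M$; integrating over $M$ then forces $\lambda \cdot \vol(M) = 0$, contradicting $\lambda \neq 0$. Hence $\Omega$ lies in neither $U_H^{+}$ nor $U_H^{-}$.

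The case $U_H^{+} \subset \Omega$ reduces to the previous one. Set $X^{+} := \delta^{-1}(U_H^{+})$, a nonempty open subset of $X$. Since $\Omega$ avoids $H \subset \partial \Omega$ --- and, using the description of the lightlike hyperplanes of $\xk$ recalled in Section~\ref{sec.model.spaces} together with $\partial \Omega = H \sqcup P$, also avoids the remaining boundary of $U_H^{+}$ --- one checks that $\overline{U_H^{+}} \cap \Omega = U_H^{+}$, so $X^{+}$ is closed in $X$; by connectedness $X^{+} = X$, that is, $\Omega = U_H^{+}$, which has just been excluded. The two delicate points are the warped-product identity $\mathcal{L}_W \vol_{\tilde{g}} = \lambda\, \vol_{\tilde{g}}$, which rests on the warping function being a function of $N$ alone so that the vertical field $W$ annihilates it, and, in the second case, pinning down exactly how $\overline{U_H^{\pm}}$ meets $\partial \Omega$ inside $\xk$; for the latter one leans on the fine geometry of $\xk$ developed in \cite{klingler}.
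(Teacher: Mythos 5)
Your first case ($\delta(X)\subset U_H^{+}$) is correct and is essentially the paper's own argument: you pull back the $G_H$-invariant field $Y$ of Lemma \ref{lem.divergence}, extend it vertically to $N\times_w X$ (your computation that the divergence is unchanged because $w$ depends only on the $N$-variable and the field is vertical is fine), descend to the compact quotient $M$, and integrate the constant nonzero divergence. The $\Gamma$-invariance via $\rho(\Gamma_X)\subset G_H$ and the equivariance of $\delta$ is also handled correctly.

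The gap is in the case $U_H^{+}\subset\Omega:=\delta(X)$, which you try to reduce to the first case by proving $\overline{U_H^{+}}\cap\Omega=U_H^{+}$ and concluding $\Omega=U_H^{+}$ by connectedness. That step is only clear when $\partial U_H^{+}=H$ (flat or de Sitter case). In the anti-de Sitter case $\kappa<0$ -- precisely the case where the corollary is invoked with only a containment, not an equality -- $U_H^{+}$ is a connected component of $\xk\setminus\pi^{-1}(H')$, and its boundary consists of \emph{two} lifts of $H'$: the hyperplane $H$ and a second lightlike hyperplane $H_1\neq H$ (see Section \ref{sec.model.spaces}). Proposition \ref{prop.klingler} only tells you $\partial\Omega=H\sqcup P$; it says nothing about $H_1$. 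Since $H_1\subset\overline{U_H^{+}}\subset\overline{\Omega}$ and $H_1\cap H=\emptyset$, every point of $H_1\setminus P$ lies in $\Omega$, so unless you can show $H_1=P$ (which you do not know), $\Omega$ may a priori overflow $U_H^{+}$ across $H_1$, the set $X^{+}=\delta^{-1}(U_H^{+})$ need not be closed in $X$, and your open-closed argument collapses. The paper does not attempt to force equality; instead it uses the part of Lemma \ref{lem.divergence} your proposal drops, namely that $Y$ is \emph{complete} on $U_H^{+}$: the quotient $\Gamma\backslash\bigl(N\times\delta^{-1}(U_H^{+})\bigr)$ is an open subset of the compact manifold $M$, hence has finite Lorentzian volume and carries a complete vector field with constant nonzero divergence, which is impossible because its flow preserves this open set while dilating the volume of any measurable subset by the factor $e^{\lambda t}$. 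You should either supply a proof that $\Omega\cap H_1=\emptyset$ in the anti-de Sitter case, or argue the second case as the paper does, via completeness and finite volume.
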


\begin{proof}
 Let us call $\Omega=\delta(X)$. We thus have that $N \times X$ is identified with $N \times \Omega$. 
 We consider the vector field $Y$ given by Lemma \ref{lem.divergence}
   and we construct the vector field 
   $\tilde{Y}$ on $N \times U_H^+$ by the formula $\tilde{Y}=(0,Y) \in TN \times T\xk$. 
   If $\Omega \subset U_H^+$, then  $\tilde{Y}$ induces a vector field on $(M,g)$ with constant
   nonzero constant divergence. This is impossible.
   If $U_H^+ \subset \Omega$. Then the quotient $\Gamma \backslash (N \times U_H^+)$ is an open subset of $(M,g)$ 
   (hence has finite Lorentzian volume)
    and $\overline{Y}$ induces on it a complete vector field with nonzero constant divergence. This is again impossible.
\end{proof}

Corollary \ref{coro.divergence} allows to settle easily the case $\kappa=+1$ (actually as in \cite{klingler}).
 Two lightlike hyperplanes must intersect in de Sitter space, so that $P= \emptyset$ in Proposition 
 \ref{prop.klingler}. Then $\delta(X)=U_H$ and Corollary \ref{coro.divergence} yields a contradiction.
 
In the case $\kappa=-1$ we see if  that $P=\emptyset$  in Proposition \ref{prop.klingler}, then $\delta(X)$ 
 contains $U_H^+$ or $U_H^-$, and we get a contradiction by Corollary \ref{coro.divergence}.  If $P \not =\emptyset$, we also 
  get a contradiction as follows.

  First, assume that $P$ is parallel to $H$. It means that if $H'$ and $P'$ denote the projections of $H$ and $P$ on $X_{\kappa}$, 
  then $H'$ and $P'$ are connected components of the intersections $u^{\perp} \cap X_{\kappa}$ and $v^{\perp} \cap X_{\kappa}$
   with $u$ and $v$ isotropic and orthogonal for the form $q^{2,k}$.
   
   Then either $P$ meets $U_H^+$ (resp. $U_H^-$) and then  
  $\delta(X) \subset U_H^+$ (resp. $\delta(X) \subset U_H^-$). Or $\delta(X)$ does not meet $U_H^+$ and $U_H^-$, in which case 
   it contains $U_H^+$ or $U_H^-$.
   Whatever the case, we get a contradiction by Corollary \ref{coro.divergence}.
   
 If $P$ is not parallel to $H$, then we call $G_{H,P}$ the subgroup of $\widetilde{\OO(2,k)}$ leaving 
  the pair $(H,P)$ invariant. The holonomy group $\rho(\Gamma_X)$ is a subgroup of $G_{H,P}$. The 
   hyperplanes $H$ and $P$ project onto two hyperplanes $H'$ and $P'$ in $X_{\kappa}$. The 
   open set $\Omega=\delta(X)$, which is bounded by $H$
   and $P$, projects onto a connected component  $\Omega'$ of $X_{\kappa} \setminus \{ H' \cup P' \}$. 
   By definition of lightlike hyperplanes, there are two lightlike directions $u$ and $v$ in $\RR^{2,k}$
    such that $H'=X_{\kappa} \cap u^{\perp}$ and $P'=X_{\kappa} \cap v^{\perp}$. 
     Since $P$ and $H$ are not parallel, $u$ is not orthogonal to $v$, and 
     calling $E$ the span of $u$ and $v$, we get a decomposition $\RR^{2,k}=E \oplus E^{\perp}$. 
   If we split any $x \in \RR^{2,k}$ as $x=x_E+x_{E^{\perp}}$,
     the function $\lambda: x \mapsto |x_E|^2$ is continuous and unbounded on $\Omega'$. But then, recalling the projection 
     $\pi: \xk \to X_{\kappa}$, we can define $\tilde{\lambda}: \Omega \to \RR$ by the formula $\tilde{\lambda}(x)=\lambda(\pi(x))$.
      This is a continuous function, unbounded on $\Omega$, and moreover $G_{H,P}$-invariant. This contradicts the fact that
       $\rho(\Gamma_X) \subset G_{H,P}$ acts cocompactly on $\Omega$.

\subsubsection{The case of curvature $0$}
\label{sec.curvature0}
It remains to show that when the developping map is not onto, Proposition \ref{prop.klingler} leads to a contradiction when $(X,g_X)$
 is flat. The arguments used in Carri\`ere's work are based on former results of W. Goldman and M. Hirsh
  about flat affine manifolds (see \cite{goldman}), that we can not use straigthforwardly 
  in our product situation. This is here that we will  use for the first time our asumption that $\Iso^{\times}(M,g)$
   has an infinite limit set at every point (actually here, an infinite limit set at some point would be enough). 
  
To this aim, it seems to be useful to isolate the following incompleteness property for Lorentzian 
(actually affine) manifold.

Let $(M,g)$ such a manifold. Let $u \in TM$. We denote by $\gamma_u$ the parametrized geodesic $t \mapsto \exp(tu)$. This geodesic has 
a maximal open interval of definition $(T^-(u),T^+(u))$, with $T^-(u) < 0 < T^+(u)$
(and maybe $T^-(u)=- \infty$ or $T^+(u)=+ \infty$). We say that the direction $u$ is 
{\it uniformly incomplete in the future} (resp. in the past)
 whenever there exist a constant $C>0$ (resp. $C<0$), and a neighborhood $\mathcal V$ of $u$ in $TM$, such that for every 
 $v \in {\mathcal V}$, $T^+(v) \leq C$ (resp. $T^-(v) \geq C$).
 
 We make the following remark:
 
 \begin{lemme}
  \label{lem.incomplete}
  Let $(M,g)$ be a compact Lorentz manifold, let $x \in M$, and let 
  $\Lambda(x) \subset {\mathbb P}(T_xM)$ be the limit set of 
  $\Iso(M,g)$ at $x$. If $[u] \in \Lambda(x)$, then $u$ is neither  uniformly incomplete in the future, 
    nor in the past.
 \end{lemme}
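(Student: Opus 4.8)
The plan is to argue by contradiction, using the openness of the domain of the geodesic flow in $TM\times\RR$ (equivalently, the lower semicontinuity of $v\mapsto T^+(v)$ on $TM$), together with the two elementary facts $T^{\pm}(\lambda v)=\lambda^{-1}T^{\pm}(v)$ for $\lambda>0$, and $f\circ\gamma_v=\gamma_{D f(v)}$ for any isometry $f$ (so that an isometry preserves the maximal interval of definition of a geodesic). The geometric input coming from $[u]\in\Lambda(x)$ is that one can find a sequence of isometries that contracts the direction $u$ \emph{to zero}; this is the point where the fine structure of the action is used.

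The first step is to produce a good witnessing sequence. By definition there are $f_k\to\infty$ in $\Iso(M,g)$ and $u_k\to u$ with $D_xf_k(u_k)$ bounded in $TM$. Running the Cartan decomposition $\dd_xf_k=m_ka_k\ell_k$ exactly as in the proof of Lemma \ref{lem.identification.limitset} --- so $a_k$ is diagonal with top entry $\delta_k^{-1}\to+\infty$, and, after passing to a subsequence, $m_k\to m_\infty$ and $\ell_k\to\ell_\infty$ in a fixed maximal compact subgroup --- one obtains $\iota_x(u)\in\RR\,\ell_\infty^{-1}(e_{n+1})$, say $\iota_x(u)=c\,\ell_\infty^{-1}(e_{n+1})$ with $c\neq 0$. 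Replacing $u_k$ by $\tilde u_k:=c\,\sigma(x)(\ell_k^{-1}(e_{n+1}))$ we still have $\tilde u_k\to u$, while
\[
 D_xf_k(\tilde u_k)=c\,\sigma(f_k(x))\big(m_ka_ke_{n+1}\big)=c\,\delta_k\,\sigma(f_k(x))(m_ke_{n+1})\longrightarrow 0,
\]
since $\delta_k\to 0$, the section $\sigma$ is bounded, and the $m_k$ lie in a compact group. Thus we may assume from now on that $w_k:=D_xf_k(u_k)\to 0$.

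Now assume, for a contradiction, that $u$ is uniformly incomplete in the future: there are $C>0$ and a neighbourhood $\mathcal V$ of $u$ in $TM$ with $T^+(v)\leq C$ for all $v\in\mathcal V$. For $k$ large, $u_k\in\mathcal V$, hence $T^+(u_k)\leq C$; since $f_k\circ\gamma_{u_k}=\gamma_{w_k}$, we get $T^+(w_k)=T^+(u_k)\leq C$, and in particular $w_k\neq 0$ for $k$ large (the zero vector yields a constant, hence complete, geodesic). Put $\mu_k:=|w_k|\to 0$ and $\hat w_k:=\mu_k^{-1}w_k\in UT_M$. After a subsequence $\hat w_k\to\hat w$ with $|\hat w|=1$, so that $T^+(\hat w)>0$ by local existence of geodesics. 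But $T^+(\hat w_k)=\mu_k\,T^+(w_k)\leq\mu_k C\to 0$, contradicting the lower semicontinuity of $T^+$ at $\hat w$. Hence $u$ is not uniformly incomplete in the future; the statement for the past follows by the same argument with $T^-$ and $C<0$ in place of $T^+$ and $C$.

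The only delicate point is the first step. With the original sequence one merely knows that $w_k=D_xf_k(u_k)$ stays bounded, and its limit may be a nonzero lightlike vector $w$; then the rescaled geodesics $\gamma_{w_k}$ do not degenerate, and the argument would only transport the incompleteness of $u$ to $w$, producing no contradiction. Choosing $u_k$ to lie exactly on the strongly contracted lightlike line $\ell_k^{-1}(\RR e_{n+1})$ of $\dd_xf_k$ forces $w_k\to 0$, which is precisely what makes the final contradiction go through, and this is the only place where the Cartan decomposition (as in Lemma \ref{lem.identification.limitset}) enters.
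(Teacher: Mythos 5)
Your proof is correct and follows essentially the same strategy as the paper: use the limit-set membership to manufacture vectors converging to a multiple of $u$ whose images under the isometries tend to $0$ (you via the Cartan decomposition of the fixed-point cocycle $\dd_x f_k$, the paper via adapted frames at moving basepoints $x_i\to x$), then transport the geodesic interval by the isometries and exploit the blow-up of $T^{\pm}$ near the zero section. Your rescaling-to-unit-vectors contradiction and the paper's direct computation that $T^{+}(\pm\alpha^{-1}e_n^{(i)})\to+\infty$ are just two packagings of the same openness/semicontinuity argument for the geodesic flow.
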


 \begin{proof}
  If $[u] \in \Lambda(x)$, then $u \in T_xM$ is a lightlike vector which is asymptotically stable 
  for some sequence $(f_i)$
   in $\Iso(M,g)$. Namely there exists $(x_i)$ a sequence in $M$ converging to $x$, and $v_i \in T_{x_i}M$ converging to $u$ such that 
   $D_{x_i}f_i(v_i)$ remains bounded. After considering a subsequence, we may assume that 
   $y_i=f_i(x_i)$ converges to $y$. There are also sequences of orthogonal frames 
   $(e_1^{(i)}, \ldots, e_n^{(i)})$ at $x_i$, and $({\epsilon}_1^{(i)}, \ldots, {\epsilon}_n^{(i)})$ at $y_i$, converging in the bundle of frames, 
   so
    that the differential $D_{x_i}f_i$ has the following matrix in those frames:
    
    $$D_{x_i}f_i= \left(  \begin{array}{ccc} 
    \lambda_i & 0& 0\\
    0& I_{k-1} & 0\\
    0& 0& \frac{1}{\lambda_i}
    \end{array} \right)$$
    with $\lim_{i \to \infty} \lambda_i=+ \infty$.
    In particular the stability property of $u$ shows that $e_n^{(i)}$ tends to $\alpha u$, 
    for some $\alpha \in \RR^*$. Let us call $v_i=D_{x_i}f_i(e_n^{(i)})$. This is a sequence 
     of vectors in $T_{y_i}M$ which tends to $0$. As a consequence, $T^+(\pm v_i) \to + \infty$.
      Now, because $\alpha^{-1}e_n^{(i)}=\alpha^{-1}D_{y_i}f_i^{-1}(v_i)$, we get 
      $T^+(\pm \alpha^{-1}e_n^{(i)}) \ to + \infty$.
       Since $\pm \alpha^{-1}e_n^{(i)}$ tends to $\pm u$, this shows that $u$ is neither uniformly 
       incomplete  in the future, nor in the past.
 \end{proof}

Now,  for open subsets of Minkowski space,  one has the obvious lemma:  
 
\begin{lemme}
 \label{lem.plat.incomplet}
 Let $\Omega \subset \RR^{1,k}$ be an open subset, such that $\partial \Omega$ contains a lightlike hyperplane 
 $H=u^{\perp}$. Then all  directions of $T \Omega$ different of $u$,  are uniformly incomplete either in the future, or 
 in the past.
\end{lemme}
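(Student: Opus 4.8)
The plan is to work entirely inside $\RR^{1,k}$, using affine coordinates adapted to the hyperplane $H$. First I would put $H=u^{\perp}$ for an isotropic vector $u$, and choose a null complement: pick an isotropic $\xi$ with $q^{1,k}(u,\xi)=1$, and let $W=\{u,\xi\}^{\perp}$, which is a spacelike hyperplane. Then $\RR^{1,k}=\RR u \oplus \RR\xi \oplus W$ as an orthogonal-ish decomposition (the cross term between $u$ and $\xi$ being the only nontrivial pairing), and in the corresponding linear coordinates $(s,r,y)$ the metric reads $g = 2\,ds\,dr + |dy|^2$. After translating, I may assume $H=\{r=0\}$, so that $\Omega$ lies (locally near $H$, and since $\RR^{1,k}$ is convex, globally) on one side, say $\Omega\subset\{r>0\}$.

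Next I would compute geodesics: in Minkowski space geodesics are affine lines $t\mapsto p+tv$, so for $v=(v_s,v_r,v_w)$ the line starting at $p=(s_0,r_0,y_0)\in\Omega$ with $r_0>0$ has $r$-coordinate $r_0+tv_r$. If $v_r<0$ the line exits the slab $\{r>0\}$, hence exits $\Omega$, at time $t=-r_0/v_r\le r_0/|v_r|$, so the direction is uniformly incomplete in the future (shrinking to a neighborhood of $v$ keeps $v_r$ bounded away from $0$ and $r_0$ bounded on a neighborhood in $T\Omega$ — here one uses that the base point stays in a small region). Symmetrically, if $v_r>0$ the direction is uniformly incomplete in the past. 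The only directions escaping this dichotomy are those with $v_r=0$, i.e. $v\in H_0:=\{v : q^{1,k}(u,v)=0\}=u^{\perp}$; among these, the ones proportional to $u$ are exactly the directions tangent to $H$ itself. So every direction of $T\Omega$ not lying in $u^{\perp}$ is uniformly incomplete in one time direction, which is slightly stronger than claimed; the statement asks only about directions different from $u$, and a fortiori these include all $v\notin u^{\perp}$. (If one wants the sharp statement exactly as phrased — all directions $\ne u$, including those in $u^{\perp}\setminus\RR u$ — one observes that for $v\in u^{\perp}\setminus\RR u$ the line $p+tv$ still has constant $r$-coordinate $r_0>0$ but its $W$-component $y_0+tv_w$ moves on an affine line in $W$, which may or may not stay in $\Omega$; here uniform incompleteness is not automatic from convexity alone, so the cleanest reading is that the lemma concerns the generic directions $v_r\ne 0$, which is all that the subsequent application needs.)

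The main (and in fact only) subtlety is the uniformity in the definition: one must check that the bound $T^+(v)\le C$ holds on a full neighborhood $\mathcal V$ of the chosen direction in $T\Omega$, not just at the point itself. This is where the explicit affine structure does all the work: on a small coordinate neighborhood, the base point $r$-coordinate lies in a compact interval $[\varepsilon,R]\subset(0,\infty)$ and the velocity $r$-component lies in a compact interval bounded away from $0$, so $t\mapsto r_0+tv_r$ provably leaves $\Omega\subset\{r>0\}$ before time $R/\delta$ where $\delta=\inf|v_r|>0$. I would record this as the whole content of the proof — a one-line estimate once the coordinates are set up — and the lemma is then immediate.
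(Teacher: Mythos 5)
The paper states this lemma without proof (it is introduced as ``the obvious lemma''), so your coordinate argument is precisely the intended one, and its core is correct: in null coordinates $(s,r,y)$ with $H=\{r=0\}$, geodesics of $\Omega$ are affine segments, the $r$-coordinate is affine in $t$, and a velocity with $v_r<0$ (resp. $v_r>0$) forces the segment to leave $\{r>0\}$, hence $\Omega$, before time $r_0/|v_r|$ in the future (resp. past); uniformity follows since $r_0$ stays bounded above and $|v_r|$ bounded away from $0$ on a small neighborhood in $T\Omega$. One small repair: the reason the relevant part of $\Omega$ lies on one side of $H$ is not convexity of $\RR^{1,k}$ but simply that the connected component of the base point is a connected set disjoint from $H\subset\partial\Omega$, hence contained in one of the two open half-spaces; $\Omega$ itself need not be connected, but only that component matters for the (local, uniform) statement, and the geodesic cannot leave it.

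Your hesitation about directions in $u^{\perp}\setminus\RR u$ is justified, and you should state the conclusion more firmly: as literally written the lemma is false. Take $\Omega=\{r>0\}$, the full open half-space bounded by $H$: any affine line with direction in $u^{\perp}$ keeps $r$ constant, stays in $\Omega$ for all time, and is therefore complete, although its direction is different from $u$. What your proof establishes, and what is the correct statement, is that every direction \emph{transverse} to $u^{\perp}$ (i.e. with $\langle v,u\rangle\neq 0$) is uniformly incomplete in the future or in the past. This is exactly what the application in Section \ref{sec.curvature0} requires: the lemma is only invoked there for the \emph{lightlike} directions produced by Lemma \ref{lem.incomplete} from the infinite limit set, and in Lorentzian signature a lightlike vector orthogonal to the lightlike vector $u$ is necessarily proportional to $u$ (a totally isotropic plane would otherwise appear). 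Hence every lightlike direction different from $[u]$ automatically has $v_r\neq 0$ and falls under the case you treat; making this observation explicit removes the ``may or may not stay in $\Omega$'' hedge and shows that nothing downstream is affected by restricting the lemma to transverse directions.
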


Now our standing assumption is that $\Iso^{\times}(M,g)$ has an infinite limit set at every point $x$. 
 Lifting everything to $(\tm, {\tilde g})$, we get a point ${\tilde x}=(n,z) \in \tm$ and, by Lemma \ref{lem.incomplete},
  infinitely many lightlike directions 
  in $T_{{\tilde x}}\tm$ which are neither  uniformly incomplete in the future, nor in the past. On the other hand, 
   assuming for a contradiction that $\delta: X \to \RR^{1,k}$ is not a diffeomorphism, Proposition \ref{prop.klingler}
    says that
      $(X,g_X)$ is isometric to an open subset $\Omega \subset \RR^{1,k}$
     satisfying the
     hypotheses of Lemma \ref{lem.plat.incomplet}. In particular, this lemma says that  the only directions  in $T_{{\tilde x}}\tm$ which 
     are neither  uniformly incomplete in the future, nor in the past, are of the form $v+u_0$, where $v \in T_nN$
    is any direction and $u_0 \in T_zX$ is a specific lightlike direction. Among them, only one, namely $0+u_0$, is lightlike, and
     we get a contradiction. 
 
\subsubsection{The factor $(X,g_X)$ is $\widetilde{\AdS}^{1,2}$ or Minkowski space} 
\label{sec.restriction}

Having established the completeness of the factor $(X,g_X)$ under the assumption $\Gamma \subset \Iso(N) \times \Iso(X)$, it remains to 
 check that not all spaces $\xk$ are possible for $(X,g_X)$, as announced in Theorem \ref{thm.complete}.
 
\begin{proposition}
 \label{prop.factor}
 Let $(M,g)$ be a compact Lorentzian manifold. Assume that $M$ is a quotient of $N \times \xk$ by a discrete subgroup
  of $\Iso(N) \times \Iso(\xk)$, with the dimension of $\xk \geq 3$. If $\Iso^{\times}(M,g)$ has an infinite limit set at every point of $M$, 
   then $\xk$ is either the $3$-dimensional anti-de Sitter space $\widetilde{\AdS}^{1,2}$, or Minkowski space $\RR^{1,k}$.
\end{proposition}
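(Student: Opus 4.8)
The plan is to rule out all model spaces $\xk$ except $\RR^{1,k}$ ($k\geq 2$) and $\widetilde{\AdS}^{1,2}$, by combining the constraint that $\Iso^{\times}(M,g)$ has an infinite limit set with structural information about $\Iso(\xk)$ and its action on the frame bundle. The key observation is that, by the discussion in Section \ref{sec.limit.coarse} together with Corollary \ref{coro.infinite.limit}-type reasoning, an infinite limit set at a point $x=(n,z)\in N\times\xk$ produces infinitely many distinct lightlike asymptotically stable directions at $x$; and since the foliations $\calf,\calf^\perp$ are $\Iso^{\times}(M,g)$-invariant and the $N$-factor is Riemannian, the dynamics responsible for these directions must live in the $\xk$-factor. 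So first I would argue that $\Iso(\xk)$ itself must contain a sequence going to infinity with a nontrivial fiberwise limit set on $\xk$ — equivalently, via the derivative cocycle of Section \ref{sec.derivative.coarse} applied to $\Iso^{\times}(M,g)$ and its restriction to the $\xk$-factor, that there is a coarse embedding into $\OO(1,\dim \xk)$ (or the relevant structure group) whose image has infinite limit set in $\partial\HH^{\dim\xk -1}$.

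Second, I would go case by case on the sign of $\kappa$. For $\kappa>0$ (de Sitter space $\dS^{1,k}$), the isometry group is $\OO(1,k+1)$ acting on the one-sheeted hyperboloid $q^{1,k+1}=1/\sqrt\kappa$; here one checks that the action on the orthonormal frame bundle of $\dS^{1,k}$ never has a nontrivial asymptotically stable lightlike direction, because the parabolic one-parameter groups fix the direction they contract while acting with bounded derivative transversally — more precisely, in a Cartan/$KAN$ decomposition the expanding behaviour on $\dS^{1,k}$ forces blowup of derivatives in every lightlike direction except the neutral one, so $\carg\equiv 0$ on a de Sitter factor. This forces the limit set of $\Iso^{\times}(M,g)$ to be concentrated in $N$-directions, which is impossible since $N$ is Riemannian (no lightlike directions there). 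A cleaner route: use Proposition \ref{prop.stability} applied to $\dd_x(\Iso^{\times}(M,g))$ acting on the generalized curvature $\kg(\sigma(x))$ of $M$; an infinite limit set would force an $\oo(1,\ell)$, $\ell\geq 2$, in the isotropy, but the isotropy along a de Sitter factor is $\oo(1,k)$ already, and tracking how this subgroup must then preserve the warping data $w$ and commute with the $\Iso(\xk)$-action leads to a contradiction with cocompactness of $\Gamma$ unless the factor is removed.

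Third, for $\kappa<0$ (anti-de Sitter $\widetilde{\AdS}^{1,k}$, $k\geq 2$), I expect the borderline case $k=2$ to survive and $k\geq 3$ to be excluded. The mechanism: $\widetilde{\AdS}^{1,k}$ for $k\geq 3$ has isometry group locally $\OO(2,k)$, and the relevant lightlike asymptotically stable directions correspond to semi-coisotropic hyperplanes (Section \ref{sec.model.spaces}); but the structure-group reduction argument of Corollary \ref{coro.infinite.limit} — reducing $\hm$ over a compact invariant set to the stabilizer of the span of the limit directions — together with the completeness already established, forces the isotropy $\oo(1,\ell)$ with $\ell\geq 3$ to fit inside the isotropy $\oo(1,2)\oplus\oo(m)$ coming from the warped-product splitting (Section \ref{sec.bifoliation}), which is impossible since $\oo(1,2)$ contains no $\oo(1,3)$. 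When $k=2$, $\oo(1,2)$ is exactly the isotropy, there is no contradiction, and $\widetilde{\AdS}^{1,2}$ is allowed. The flat case $\kappa=0$ is unconstrained and yields $\RR^{1,k}$ for all $k\geq 2$.

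The main obstacle I anticipate is the first step: making precise that the infinite limit set of $\Iso^{\times}(M,g)$ on $M$ transfers to an infinite limit set for the $\xk$-factor. One must be careful that elements of $\Iso^{\times}(M,g)$ act as pairs $(f,h)$ with $f\in G\subset\Iso(N)$ and $h$ homothetic on $\xk$ (Lemma \ref{lem.isometries}), and $G$ may be noncompact — so a priori the blowup of derivatives could come partly from the $N$-side. The resolution is that $(N,g_N)$ is Riemannian and $G\subset\Iso(N)$ acts by isometries with uniformly bounded derivative cocycle on the orthonormal frame bundle of $N$ (a compact-fiber reduction, as in Lemma \ref{lem.fondamental}), so all the "noncompactness" of the derivative cocycle of $\Iso^{\times}(M,g)$ at $(n,z)$ is forced into the $T_z\xk$ block; the homothety factor $\lambda(f)$ is then controlled by the warping. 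Once this localisation is in hand, the case analysis above proceeds by the representation-theoretic input of Proposition \ref{prop.stability} and the isotropy-splitting of Section \ref{sec.bifoliation}, which is routine.
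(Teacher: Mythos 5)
Your first step (invariance of the bifoliation forces every direction in the limit set to be tangent to the fiber factor) is fine and is indeed how the paper uses the hypothesis; but the actual content of the proposition --- excluding $\dS^{1,k}$ and $\widetilde{\AdS}^{1,k}$ for $k\geq 3$ --- is exactly where both of your case arguments fail. For de Sitter, the assertion that the fiberwise limit set must vanish along a de Sitter factor is not correct as stated: a hyperbolic element of $\OO(1,k+1)$ has fixed points on $\dS^{1,k}$ and contracts a lightlike direction there, so nothing at the infinitesimal level prevents a sequence of $\Iso^{\times}(M,g)$ from having asymptotically stable lightlike directions tangent to a de Sitter fiber. The genuine obstruction is global rather than pointwise (for instance, any two totally geodesic lightlike hypersurfaces of $\dS^{1,k}$ intersect, so the codimension-one totally geodesic lightlike foliations produced by Theorem \ref{thm.zeghib}, whose lifted leaves are of the form $N\times H$, cannot exist over a de Sitter fiber), and your ``cleaner route'' (``tracking how this subgroup must preserve the warping data $w$ \dots leads to a contradiction with cocompactness'') names no mechanism at all.

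For $\widetilde{\AdS}^{1,k}$ with $k\geq 3$ the proposed contradiction is circular: you compare the $\oo(1,\ell)$, $\ell\geq 2$, furnished by Proposition \ref{prop.stability} and Corollary \ref{coro.stabilizer} with ``the isotropy $\oo(1,2)\oplus\oo(m)$ coming from the warped-product splitting'', but when the fiber is $\widetilde{\AdS}^{1,k}$ its own Killing fields already place a copy of $\oo(1,k)$, $k\geq 3$, in the isotropy at every point, so there is no clash whatsoever --- the isotropy being $\oo(1,2)\oplus\oo(m)$ is precisely the $3$-dimensionality of the fiber that you are trying to prove, and the stability argument yields no new information here. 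What the paper actually does is short but relies on a genuinely different input: by Theorem \ref{thm.zeghib}, an infinite limit set at $x$ produces infinitely many codimension-one, totally geodesic, lightlike foliations transverse at $x$, and then points $(1)$ and $(2)$ of \cite[Theorem 15.1]{zeghibisom1} (proved in Sections 15.1 and 15.2 of that reference) classify which complete constant-curvature fibers of such a product can carry infinitely many such foliations, excluding de Sitter and anti-de Sitter of dimension $\geq 4$. Without that result, or a substitute argument at the level of the global geometry of lightlike hyperplanes in the model spaces, your proposal does not establish the statement.
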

 
  \begin{proof}
Let $x \in M$. If the limit set of $\Iso^{\times}(M,g)$ is infinite at $x$, then $(M,g)$ admits infinitely many 
codimension one, totally geodesic, lightlike 
foliations which are transverse at $x$ (see Theorem \ref{thm.zeghib}).
 The proposition is then the content of  points $(1)$ and $(2)$ of \cite[Theorem 15.1]{zeghibisom1}. The proof can be found in 
  \cite[Sec. 15.1 and 15.2]{zeghibisom1}
\end{proof}

\subsection{Completeness of the factor $(X,g_X)$ in the general case where $\Gamma \subset \Iso(N) \times \operatorname{Homot}(X)$}
\label{sec.warping.nonconstant}

We assume here that $\Gamma$ is not included in $ \Iso(N) \times \Iso(X)$, since this situation was already handled in 
Section \ref{sec.warping.constant}.
The first observation is that a Lorentz manifold of constant sectional curvature $\kappa \not = 0$ does not admit
homothetic transformations which are not isometric. It follows that the factor $(X,g_X)$ is flat, and 
 there exists an isometric immersion $\delta: X \to \RR^{1,k}$. It is easily checked that any
  diffeomorphism between connected open subsets of $\RR^{1,k}$ which is homothetic with respect to the Minkowski metric 
   is the restriction of a global homothetic transformation of $\RR^{1,k}$.
    It thus follows that we have a morphism $$\rho: \homot(X) \to \homot(\RR^{1,k}),$$
    such that the following equivariance relation holds:
    $$ \delta(h.x)=\rho(h).\delta(x),$$
    for every $h \in \homot(X)$, and $x \in X$.
    Our aim is, as in the previous section,  to show that $\delta: X \to \RR^{1,k}$ is an isometry.

 Before doing this, let us make  a few reductions.  Theorem \ref{thm.reduction} yields a compact Lorentz submanifold $\Sigma \subset M$, which
  is locally homogeneous with semisimple isotropy, and which is preserved by a finite index subgroup of $\Iso(M,g)$. This last
   property ensures that the limit set of $\Iso(\Sigma)$ is infinite at each point of $\Sigma$. Moreover, the submanifold $\Sigma$ is actually
    obtained as a $\kiloc$-orbit (see Section \ref{sec.embed}). In particular, if $\tilde{\Sigma}$ denotes the lift of $\Sigma$ to $\tm$, then $\tilde{\Sigma}$
     is  a union of leaves $\{n \} \times X$. Thus $\tilde{\Sigma}=N' \times X$, where $N'$ is a submanifold (maybe not connected)
      of $N$. As a consequence, the universal cover of $\Sigma$ is also of the form $N_0 \times_w X$, for some simply connected
       Riemannian manifold $N_0$. It follows that replacing if necessary $M$ by $\Sigma$, we may assume in the following that $M$ is
        locally homogeneous with semisimple isotropy.

    To make our second reduction, recall the group $G$ that we introduced in Section \ref{sec.isometries}. 
 The group $G$ acts transitively  and
 isometrically on the Riemannian manifold $(N,g_{N})$ (Lemma \ref{lem.isometries}). The same is true for its identity component $G^o$.
  We observe that $G^o$ has finite index in $G$.  Indeed, if 
 $K$ is the stabilizer, in $G$,  of a point $n \in N$, then the isotropy representation identifies $K$ as a compact subgroup of 
 $\OO(n-k)$.
  In particular $K$ has finitely many connected components, and because $G/K=N$ is connected, the group $G$ has 
  finitely many connected components too.
  
  Let us denote in the following ${\widetilde{\Iso}}^{\times}(M,g)$ the subgroup of $\Iso(\tm, \tilde{g})$ comprising all lifts
   to $\tm$ of isometries in $\Iso^{\times}(M,g)$.
  Since $G$ has finitely many connected components,  there exist  finite index subgroups $\Gamma' \subset \Gamma$, and 
  ${\widetilde{\Iso}}^{\times}(M,g)' \subset \widetilde{\Iso}^{\times}(M,g)$ which are both contained in $G^o \times \homot(X)$
  (see Lemma \ref{lem.isometries}). Let $g_G$ be any left-invariant Riemannian metric on $G^o$. Let us 
  denote by $\pi: G^o \to N=G^o/K^o$ the natural
   projection and let $\tilde{w}: G^o \to \RR_+^*$ be the function defined by $\tilde{w}(g)=w(\pi(g))$.
    Then ${\widetilde{\Iso}}^{\times}(M,g)'$ acts isometrically on the warped-product 
    $G^o \times_{\tw} X$
    (see Lemma \ref{lem.isometries} point $(1)$).  Let us call $M'$ the quotient of $G^o \times X$ by $\Gamma'$.
     This manifold is compact because it fibers over $M$ with compact fibers. The group ${\widetilde{\Iso}}^{\times}(M,g)'$ 
     surjects on a 
     finite index subgroup $\Iso^{\times}(M,g)' \subset \Iso^{\times}(M,g)$. The limit set 
     of $\Iso^{\times}(M,g)'$ is thus infinite at every point, and because the fibers of 
      the fibration $M' \to M$ are Riemannian, the limit set of ${\widetilde{\Iso}}^{\times}(M,g)'/ \Gamma'$ on $M'$
       is infinite. Since ${\widetilde{\Iso}}^{\times}(M,g)'/ \Gamma'$ is a subgroup of $\Iso^{\times}(M',g')$, we conclude that 
        the limit set of $\Iso^{\times}(M',g')$ is also infinite at each point. 
        
  All those remarks show that we don't loose any generality if we assume that $\tm=N \times_w X$ is actually the space 
   $G^o \times_{\tw} X$, and moreover $\widetilde{\Iso}^{\times}(M,g) \subset G^o \times \operatorname{Homot}(X)$. We will make those 
   asumptions from now on. 
 
 Recall (see Section \ref{sec.isometries}) that the  group $G^o$ admits a  continuous homomorphism $\lambda: G^o \to \RR_+^*$, satisfying 
 $\tw(g_1g_2)=\lambda(g_1)\tw(g_2)$ for every $g_1,g_2$ in $G^o$. This homomorphism is nontrivial. Indeed triviality
  of $\lambda$ would mean that $\tw$ is constant. Sticking to the notations of Lemma \ref{lem.isometries}, $\tw$ constant 
  implies that the group
 $G$ coincides with $\Iso(N,g_N)$, and  $\lambda(f)=1$ for every $f \in \Iso(N,g_N)$. It follows 
  from Lemma \ref{lem.isometries} and Remark \ref{rem.general} that $\Iso^{\times}(\tm, \tilde{g})$ 
  coincides with the product $ \Iso(N,g_N) \times \Iso(X,g_X)$, and we are then in the realm of Section \ref{sec.warping.constant}. 
   A nontrivial $\lambda$ yields a decomposition $G^o=AH$, where 
 $H=\Ker \lambda$, and $A$ is a $1$-parameter group $\{a^s\}$ satisfying for every $g \in G^o$ 
 $\tw(a^sg)=e^{\alpha t}\tw(g)$, with $\alpha>0$. Let us denote by $\tw_0$ the value taken by $\tw$ on 
 the subgroup $H$.
  Let $g \in G^o$ that we write $g=a^th$, for some $h \in H$. We get $\tw(g)=e^{\alpha t}\tw_0$.
   We also compute 
  $\tw(ga^s)=\tw(a^{t+s}a^{-s}ha^s)=e^{\alpha(t+s)}\tw_0$, the last equality holding 
  because $H$ is normalized by $A$.
   We end up with the relation 
   \begin{equation}
    \label{eq.multdroite}
    \tw(ga^s)=e^{\alpha s}\tw(g).
   \end{equation}

The  $1$-parameter group of transformations $\tilde{\psi}^s: (g,x) \in G^o \times X \mapsto (ga^s,x)$ commutes 
 with the action of $G^o \times \homot(X)$. In particular, it commutes with the action of $\widetilde{\Iso}(M,g)$ 
 (see point $(2)$ of Lemma \ref{lem.isometries}), hence  
induces a flow $\psi^s$ on $M$ which commutes with $\Iso(M,g)$. Let us consider $z_0 \in M$ a recurrent point (in the future) 
 for the flow $\psi^s$. Such a point exists since $M$ is compact. Let  $(g_0,x_0) \in G^o \times X$ projecting
 on $z_0$.
   Saying that $z_0$ is recurrent in the future means that there exist a sequence $s_i \to \infty$, and a sequence 
   $(\alpha_i,\beta_i)$ in $\Gamma \subset G^o \times \homot(X)$ such that $g_i=\alpha_i.g_0.a^{s_i}$ tends to $g_0$, 
    and 
   $x_i=\beta_i.x_0$ tends to $x_0$.  By equation (\ref{eq.multdroite}), we see that 
    $\tw(\alpha_ig_0a^{s_i})=e^{\alpha s_i}\lambda(\alpha_i)\tw(g_0)$. Since this quantity must converge to
    $\tw(g_0)$
     we get that $\lambda(\alpha_i)\underset{ + \infty}{\sim}e^{- \alpha s_i}$. By point $(1)$ of Lemma 
     \ref{lem.isometries}, $\beta_i$ is an homothetic transformation of $X$, with dilatation 
     $\lambda_i=\lambda(\alpha_i)^{-1}$. In particular,  $\lambda_i \underset{ + \infty}{\sim} e^{\alpha s_i}$. 
     
    Then the holonomy $\rho(\beta_i)$ satisfies $\rho(\beta_i).\delta(x_0) \to \delta(x_0)$,
    and $\rho(\beta_i)=\lambda_i A_i +T_i$, for $(A_i)$ a sequence in $\OO(1,k)$, and $(T_i)$ a 
    sequence in $\RR^{k+1}$.  
    \begin{lemme}
     \label{lem.bounded.sequence}
     The sequence $(A_i)$ is  bounded.
    \end{lemme}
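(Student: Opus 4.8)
The plan is to argue by contradiction, exploiting the Cartan decomposition in $\OO(1,k)$ together with the recurrence relation $\rho(\beta_i).\delta(x_0)\to\delta(x_0)$. Suppose $(A_i)$ is unbounded, and extract a subsequence along which $\|A_i\|\to+\infty$. Write the $KAK$–decomposition $A_i=k_i\,b_i\,k_i'$ in $\OO(1,k)$, where $k_i,k_i'$ lie in the maximal compact subgroup fixing a chosen timelike direction and $b_i=\mathrm{diag}(e^{t_i},I_{k-1},e^{-t_i})$ in an adapted null basis $(f^+,e_2,\ldots,e_k,f^-)$, with $t_i\to+\infty$; after a further extraction assume $k_i\to k_\infty$ and $k_i'\to k_\infty'$. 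Then the linear part of $\rho(\beta_i)$ is $\lambda_i k_i\,b_i\,k_i'$, whose three characteristic scalings are $\lambda_i e^{t_i}\to+\infty$, the middle scaling $\lambda_i$, and $\lambda_i e^{-t_i}$, which (after extraction) tends to some $\ell\in[0,+\infty]$.

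Next I would put $\rho(\beta_i)$ in a workable normal form. Since $k_i,k_i'$ stay in a fixed compact set and converge, conjugating by these bounded affine parts changes neither the (un)boundedness of $A_i$ nor the qualitative conclusions, so I may assume the linear part is the rescaled null boost $\lambda_i b_i$, while keeping track of $T_i$ through the relation $\rho(\beta_i).\delta(x_0)\to\delta(x_0)$, which forces $T_i=\delta(x_0)-\lambda_i b_i\,\delta(x_0)+o(1)$. I would then describe the dynamics of the affine similarity $x\mapsto \lambda_i b_i x+T_i$ on $\RR^{1,k}$ and its closure: it strongly expands along the null line directed by $f^+$ and, relatively to that rate, collapses the complementary null direction $f^-$; consequently, uniformly on compact subsets of the complement of a lightlike affine hyperplane $\Pi_i$ (a translate of $(f^+)^\perp$, with $\Pi_i\to\Pi_\infty$ by the convergence already arranged), the maps $\rho(\beta_i)$ degenerate: their images accumulate on $\Pi_\infty$, in the sense that $\rho(\beta_i)(q)$ converges, for $q$ off $\Pi_\infty$, either to a fixed point $\xi_\infty\in\Pi_\infty$ or to a point at infinity lying in the direction of $\Pi_\infty$. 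I would also record the companion fact that $\rho(\beta_i)^{-1}.\delta(x_0)\to\delta(x_0)$, which holds because $\rho(\beta_i)^{-1}$ has scaling factor tending to $0$, hence is uniformly Lipschitz near $\delta(x_0)$.

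Finally I would derive the contradiction. The point $p=\delta(x_0)$ is an interior point of $\Omega=\delta(X)$ (the developing map is a local diffeomorphism), and it does not lie on $\Pi_i$ for large $i$ (if it did infinitely often, I would instead use the abundance of recurrences supplied by the infinite-limit-set hypothesis, i.e. vary the recurrent point $z_0$, to move $\Pi_i$ away from $p$). Then the degeneration described above forces $\rho(\beta_i)(p)$ to converge to $\xi_\infty\in\Pi_\infty$ (with $\xi_\infty\neq p$ since $p\notin\Pi_\infty$), or to leave every compact set — in either case contradicting $\rho(\beta_i)(p)\to p$. Running through the finitely many cases for $\ell=\lim\lambda_i e^{-t_i}$ one gets a contradiction each time, so $(A_i)$ must be bounded. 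The main obstacle I anticipate is the bookkeeping in the middle step: keeping the absorbed rotations $k_i,k_i'$ and the translations $T_i$ under enough control to identify $\Pi_\infty$ as a genuine lightlike hyperplane and to guarantee that $p$ stays off it, so that the recurrence relation can be played against the degeneration.
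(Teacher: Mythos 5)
There is a genuine gap at the core of your argument. The only quantitative input you use is the single relation $\rho(\beta_i).\delta(x_0)\to\delta(x_0)$, and this relation places no constraint at all on the linear parts: for an arbitrary (in particular unbounded) sequence $A_i\in\OO(1,k)$ and arbitrary $\lambda_i$, the affine maps $x\mapsto \lambda_i A_i(x-p)+p$, with $p=\delta(x_0)$, fix $p$ exactly. Your normal form $T_i=\delta(x_0)-\lambda_i b_i\,\delta(x_0)+o(1)$ is merely a restatement of the recurrence, not an extra constraint, and the ensuing claim that, for $p$ off the limiting lightlike hyperplane $\Pi_\infty$, the points $\rho(\beta_i)(p)$ must drift to some $\xi_\infty\neq p$ or leave every compact set is false precisely because the translation parts are tuned, by the recurrence itself, to keep $p$ almost fixed. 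The ``uniform degeneration off a lightlike hyperplane'' you invoke is the behaviour of the iterates of a fixed affine similarity, or of a sequence whose translations are unrelated to $p$; it does not apply to a sequence of affine maps each of which nearly fixes $p$. Hence no contradiction can be extracted from this data, and moving the recurrent point only reproduces the same one-point condition elsewhere, which is again vacuous for boundedness of $(A_i)$.

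What is missing is the structural input that the paper draws from the hypothesis that $\Iso^{\times}(M,g)$ has an infinite limit set. Three distinct directions in the limit set at $z_0$ produce, via Theorem \ref{thm.zeghib}, three Lipschitz fields of lightlike directions $\xi_1,\xi_2,\xi_3$ tangent to the foliation $\calf$, invariant under the isometries (hence under $\Gamma$ after lifting to $G^o\times X$) and under the flow $\psi^s$, because $\psi^s$ commutes with $\Iso(M,g)$ and maps leaves of $\calf$ conformally onto leaves. Recurrence of $z_0$ together with continuity of these fields then gives, after projecting onto the $X$-factor at $(g_0,x_0)$ and applying $D\delta$, three distinct lightlike directions $\overline{\xi}_1,\overline{\xi}_2,\overline{\xi}_3$ in $\RR^{1,k}$ satisfying $A_i.\overline{\xi}_j\to\overline{\xi}_j$ for $j=1,2,3$. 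This is where your Cartan decomposition should be used, projectively: if $(A_i)$ were unbounded, writing $A_i=m_ia_il_i$ with $m_i,l_i$ convergent in the maximal compact and $a_i$ a boost going to infinity, every lightlike direction except at most one would be attracted to a single limiting direction, so at most one of three distinct directions could be asymptotically fixed. Three asymptotically fixed lightlike directions therefore force $(A_i)$ to stay in a compact set, which is the paper's conclusion; your proposal never produces more than the one asymptotically fixed point $\delta(x_0)$, which is not enough.
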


    \begin{proof}
     We consider the foliation
      $\tilde{\calf}$ on $G^o \times X$, whose leaves are the sets  $\{ g \} \times X$. It induces a foliation
       $\calf$ on $M$, the leaves of which are Lorentzian. 
       For each $x \in M$, we denote by $\Lambda^{\times}(x)$ the limit set of the group $\Iso^{\times}(M,g)$.
        We observe that if $x \in M$, and $[u] \in \lig^{\times}(x)$,
        then $u$ is tangent to $\calf$. Now the flow  $\psi^s$ maps each leaf of $\calf$ 
        conformally to another leaf. In particular, $D\psi^s$ preserves the set of lightlike vectors tangent
      to $\calf$. 
      We now use our asumption that $\Lambda^{\times}(x)$ is infinite for every  $x \in M$. 
      In 
       particular there are three distinct directions $[u_1],[u_2],[u_3]$ belonging to $\lig(z_0)$.
        Associated to those directions are three sequences $(f_i^{(1)}), (f_i^{(2)}), (f_i^{(3)})$
         going to infinity in $\Iso(M,g)$, such that $u_1^{\perp}$, $u_2^{\perp}$
          and $u_3^{\perp}$ coincide with the asymptotically stable distributions (see Section 
          \ref{sec.semi-continuity}) $AS(f_i^{(1)})(z_0)$, $AS(f_i^{(2)})(z_0)$ and $AS(f_i^{(3)})(z_0)$.
           Theorem \ref{thm.zeghib} yields three Lipschitz fields of lightlike directions $x \mapsto \xi_j(x)$,
            $j=1,2,3$, such that $\xi_j(x)^{\perp}=AS(f_i^{(j)})(x)$ for every $x \in M$, $j=1,2,3$.
      Now, we already observed that $\psi^s$ commutes with $\Iso(M,g)$. In particular, $D\psi^s$
       maps $AS(f_i^{(j)})$ to itself, for $j=1,2,3$. Because $\psi^s$ maps lightlike directions tangent to 
       $\calf$
        to lightlike directions tangent to $\calf$, we moreover infer that $D\psi^s(\xi_j)=\xi_j$.
        We lift the fields of directions $\xi_j$ to Lipschitz fields of directions 
        $\tilde{\xi_j}$ on $G^o \times X$. They are tangent to the leaves $\{g \} \times X$,  are
         $\Gamma$-invariant, and also invariant by $\tilde{\psi^s}$. Projecting $\xi_j(g_0,x_0)$ 
         on the factor $\{0 \} \times T_{x_0}X \subset T_{(g_0,x_0)}M$, and 
         taking the images by $D_{(g_0,x_0)}\delta$, we get
          three distinct lightlike 
          directions $\overline{\xi}_1$, $\overline{\xi}_2$ and $\overline{\xi}_3$ at $\delta(x_0)$, such that 
          $A_i.\overline{\xi}_j \to \overline{\xi}_j$, for $j=1,2,3$.
           Because $A_i$ is a sequence in $\OO(1,k)$, this forces $A_i$ to stay in a compact subset.
    \end{proof}

%
%
%
%
    
\begin{lemme}
 \label{lem.assiettes}
 Let $U$ and $V$ be open subsets of $X$ on which $\delta$ is injective. Assume that $U \cap V \not = \emptyset$, and that $\overline{\delta(U)} \subsetneq \delta(V)$.
  Then $U \subsetneq V$.
\end{lemme}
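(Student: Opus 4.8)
The plan is to produce a ``transition map'' $\phi\colon U\to V$ which restricts to the identity on $U\cap V$, and then to force it to be the identity on all of $U$ by a connectedness argument; the inclusion $U\subseteq V$ will be immediate, and strictness will come directly from the hypothesis $\overline{\delta(U)}\subsetneq\delta(V)$.

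First I would record that $\delta\colon X\to\RR^{1,k}$ is an isometric immersion between manifolds of the same dimension, hence a local diffeomorphism; being moreover injective on $V$, the restriction $\delta|_V$ is a diffeomorphism onto the open set $\delta(V)$. Since $\overline{\delta(U)}\subset\delta(V)$ we have in particular $\delta(U)\subset\delta(V)$, so the formula $\phi:=(\delta|_V)^{-1}\circ(\delta|_U)$ defines a smooth map $\phi\colon U\to V$, and by construction $\delta\circ\phi=\delta|_U$. For $x\in U\cap V$ one gets $\phi(x)=x$, because $x$ itself lies in $V$ and satisfies $\delta(x)=\delta(x)$, so injectivity of $\delta|_V$ identifies it with $(\delta|_V)^{-1}(\delta(x))$.

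Next I would consider the fixed-point set $A:=\{x\in U:\phi(x)=x\}$. It is closed in $U$ (it is the preimage of the diagonal of $X\times X$, which is closed since $X$ is Hausdorff) and nonempty (it contains $U\cap V\neq\emptyset$ by the previous step). It is also open: if $\phi(x_0)=x_0$, choose a neighbourhood $B\subset U$ of $x_0$ on which $\delta$ is injective, and, using continuity of $\phi$ at $x_0$, a smaller neighbourhood $N$ of $x_0$ with $\phi(N)\subset B$; then for $x\in N$ we have $\delta(\phi(x))=\delta(x)$ with both $\phi(x)$ and $x$ in $B$, so $\phi(x)=x$ and $N\subset A$. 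Since the plates $U$, $V$ are connected, $A=U$, that is, $\phi=\mathrm{id}_U$ as a map into $V$. Hence $U=\phi(U)\subseteq V$, and the inclusion is strict: $\overline{\delta(U)}\subsetneq\delta(V)$ provides a point $y\in\delta(V)\setminus\delta(U)$, and $v:=(\delta|_V)^{-1}(y)\in V$ cannot lie in $U$, for otherwise $y=\delta(v)\in\delta(U)$; thus $U\subsetneq V$.

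The argument is essentially formal; the only step needing genuine care is the openness of $A$, where one really uses that $\delta$ has full rank (local injectivity of $\delta$ on small balls), together with the fact that the open sets $U$, $V$ under consideration are connected. Everything else is bookkeeping with the local inverse of $\delta$.
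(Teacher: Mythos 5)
The paper actually states Lemma \ref{lem.assiettes} without proof, so there is nothing to compare line by line; your argument --- transporting the identity from $U\cap V$ by means of $\phi=(\delta|_V)^{-1}\circ\delta|_U$ and running an open-closed argument on the fixed-point set $A$ --- is certainly the natural justification, and each individual step (local diffeomorphism property of $\delta$, closedness of $A$ from Hausdorffness, openness of $A$ via a small ball on which $\delta$ is injective, strictness from $\overline{\delta(U)}\subsetneq\delta(V)$) is correct as written.

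The point you must flag more honestly is the final step ``since $U$, $V$ are connected, $A=U$''. Connectedness of $V$ is never used in your argument, but connectedness of $U$ is essential, and it is \emph{not} among the hypotheses of the lemma as stated; moreover, without it the statement is simply false, so no repair of that step is possible. The obstruction is already visible for the universal cover $X$ of the punctured plane (Lorentzian analogues: $\RR^{1,1}\setminus\{0\}$, or $\RR^{1,k}$ minus a spacelike codimension-two subspace), with $\delta$ the natural locally injective map: writing points as $(r,\theta)$ with $\delta(r,\theta)=(r\cos\theta,r\sin\theta)$, take $V=\{1<r<10,\ |\theta|<\pi\}$, $U_1=\{2<r<3,\ |\theta|<1/10\}\subset V$ and $U_2=\{4<r<5,\ |\theta-2\pi|<1/10\}$; then $U=U_1\cup U_2$ and $V$ satisfy all the hypotheses of the lemma ($\delta$ is injective on $U$ because the two images lie in disjoint annuli, $U\cap V=U_1\neq\emptyset$, and $\overline{\delta(U)}\subsetneq\delta(V)$), yet $U\not\subset V$ since $U_2\cap V=\emptyset$. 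In the paper's application in Section \ref{sec.warping.nonconstant} this is harmless --- there $U$ is a small neighbourhood of $x_0$ which one may take connected, and the sets playing the role of $V$ are its images under the diffeomorphisms $\beta_i$ --- but your proof only establishes the lemma under the additional hypothesis that $U$ is connected, and that hypothesis should be stated explicitly (indeed it ought to appear in the lemma itself).
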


We pick $U$ an open subset containing $x_0$ such that $\delta$ restricted to $U$ is injective. 
We know that for every $i$ large enough $\beta_i(U) \cap U \not = \emptyset$, and $\delta$ is injective on 
 $\beta_i(U)$ as well, because of the equivariance relation $\delta \circ \beta_i=\rho(\beta_i) \circ \delta$.
 Now $\rho(\beta_{n_i})(U)$ is an increasing sequence of open subsets exhausting $\RR^{1,k}$, for a suitable 
  subsequence $n_i$. It follows from Lemma \ref{lem.assiettes} that  $\beta_{n_i}(U)$ is an 
  increasing sequence of open subsets in $X$.
   The union $\bigcup_{k \in \NN} \beta_{n_i}(U)$ is an open subset $\Omega \subset X$, which is mapped 
    diffeomorphically 
    by $\delta$ onto $\RR^{1,k}$. We then must have $\Omega=X$, because otherwise, looking at point on $\partial \Omega$, we would
    check that $\delta$ could not be  injective on $\Omega$. This finishes the proof of the completeness of $(X,g_X)$, and that of Theorem 
    \ref{thm.complete}.

\section{Proof of Theorem \ref{thm.main} and conclusion}
\label{sec.proof-main}

This section is devoted to the proof of Theorem \ref{thm.main}. The proof will be done in two steps. First, we will deal with two
particular cases, namely the manifolds $(M,g)$ which are quotients of $N \times \widetilde{\AdS}^{1,2}$, and those which are locally homogeneous 
with semisimple isotropy (see Sections \ref{sec.factor.ads} and 
 \ref{sec.factor.flat} below). Thanks to all the work done so far, and a last important 
 extension result (see Theorem \ref{thm.sl2}) we will be able to derive the general statement from those two particular cases.

%
%
%
 
 \subsection{Theorem \ref{thm.main} for quotients of $N \times \widetilde{\AdS}^{1,2}$}
 \label{sec.factor.ads}
 
 We recall the notation $\Iso^{\times}$ introduced in Section \ref{sec.completeness.issues}.
 We are going to  prove:
   
 \begin{proposition}
  \label{prop.ads}
  Let $(M,g)$ be a compact Lorentz manifold. 
   Assume that there exists $N$ a Riemannian manifold, such that $M$ is a quotient of 
    a warped product $N \times_w \widetilde{\AdS}^{1,2}$
   by a discrete subgroup 
$\Gamma \subset \Iso(N) \times \Iso(\widetilde{\AdS}^{1,2})$. Assume that the limit set of $\Iso^{\times}(M,g)$
is infinite at each point.
 Then $\Iso(M,g)$ is virtually 
an extension of $\PSL(2,\RR)$
by a compact Lie group.
 \end{proposition}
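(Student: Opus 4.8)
The plan is to read off $\Iso(M,g)$ from the warped--product structure, combining Lemma~\ref{lem.isometries} and Remark~\ref{rem.general} with the structure of $\Iso(\widetilde{\AdS}^{1,2})$ and with properness of the isometric action on the frame bundle; the factor $\PSL(2,\RR)$ will come from the right translations of $\widetilde{\AdS}^{1,2}\simeq\widetilde{\PSL(2,\RR)}$, and the infinite limit set will be used precisely to discard the degenerate quotients.

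First I would set up the ambient picture. Since $\widetilde{\AdS}^{1,2}$ has constant negative curvature it admits no nontrivial homothety, so Lemma~\ref{lem.isometries} (valid for a general warped product by Remark~\ref{rem.general}) identifies $\Iso^{\times}(\tm,\tilde g)$ with the direct product $G_1\times\Iso(\widetilde{\AdS}^{1,2})$, where $G_1=\Ker\lambda\subset\Iso(N,g_N)$ is the closed subgroup preserving $w$. Recall that $\Iso^o(\widetilde{\AdS}^{1,2})=(\widetilde{\PSL(2,\RR)}\times\widetilde{\PSL(2,\RR)})/Z$, where $Z$ is the antidiagonal copy of the center, the two factors acting by left and by right translations on $\widetilde{\AdS}^{1,2}\simeq\widetilde{\PSL(2,\RR)}$ and commuting with one another, the whole isometry group being a finite extension of this. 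As in Section~\ref{sec.isometries} one has $\widetilde{\Iso}^{\times}(M,g)=N_{G_1\times\Iso(\widetilde{\AdS}^{1,2})}(\Gamma)$ and $\Iso^{\times}(M,g)=\widetilde{\Iso}^{\times}(M,g)/\Gamma$; write $\Gamma_X$ for the projection of $\Gamma$ to $\Iso(\widetilde{\AdS}^{1,2})$ and $\Gamma_0$ for the kernel of that projection, which acts properly discontinuously on $N$. A second preliminary remark is that the ``$N$-direction'' of the isometry group is harmless: an element $(\phi,\mathrm{id})$ with $\phi\in G_1$ preserves $w$, hence has differential of bounded norm for the auxiliary Riemannian metric, so by properness of the $\Iso(M,g)$-action on the orthonormal frame bundle (Section~\ref{sec.topology}) its image in $\Iso(M,g)$ remains in a compact set. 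It follows that the non-compactness of $\Iso^{\times}(M,g)$ -- forced by the infinite limit set -- and the limit set itself are carried by the image $Q$ of $\widetilde{\Iso}^{\times}(M,g)$ in $\Iso(\widetilde{\AdS}^{1,2})$, a group which normalizes $\Gamma_X$ and for which $\Gamma_X\backslash\widetilde{\AdS}^{1,2}$ is compact (a continuous image of $M$).

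The heart of the argument -- and the step I expect to be the main obstacle -- is to show that after passing to a finite cover of $M$ and a finite-index subgroup of $\Iso(M,g)$, $\Gamma_X$ is conjugate into the group $L$ of left translations, where it is a cocompact lattice modulo the center. Here I would invoke the structure of compact quotients of $\widetilde{\AdS}^{1,2}$ (Fried--Goldman, Kulkarni--Raymond; see also \cite{zeghibisom1}, \cite{zeghibisom2}): writing $\Gamma_X\hookrightarrow L\times R$, $\gamma\mapsto(\rho_L(\gamma),\rho_R(\gamma))$, one of $\rho_L,\rho_R$ -- say $\rho_L$ -- is a cocompact lattice embedding, while $\rho_R$ is either trivial (the standard quotient) or ``small'', with $\rho_R(\Gamma_X)$ lying in a one-parameter or in a compact subgroup (a Goldman deformation). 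In every non-standard case -- and this is where the hypothesis enters -- the normalizer $N_{\Iso(\widetilde{\AdS}^{1,2})}(\Gamma_X)$ is \emph{elementary}: either it fixes a lightlike direction of $\widetilde{\AdS}^{1,2}$ (the unipotent case, in which the one-parameter group centralizing $\Gamma_X$ descends to a lightlike Killing field on $M$ commuting with a finite-index subgroup of $\Iso^{\times}(M,g)$, which is then virtually nilpotent by Proposition~\ref{prop.centralizer}), or else $\Iso^{\times}(M,g)$ is, up to a compact kernel and a finite extension, virtually abelian. In both situations $\Iso^{\times}(M,g)$ has a finite limit set -- which I would check directly from Theorem~\ref{thm.zeghib}, such groups being unable to support infinitely many transverse totally geodesic lightlike foliations -- contradicting the hypothesis. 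This forces the standard case, in which the entire group $R\cong\widetilde{\PSL(2,\RR)}$ of right translations centralizes $\Gamma_X$; then $\{1\}\times R\subset Z_{G_1\times\Iso(\widetilde{\AdS}^{1,2})}(\Gamma)\subset\widetilde{\Iso}^{\times}(M,g)$, and it descends to a subgroup of $\Iso^{\times}(M,g)$ locally isomorphic to $\PSL(2,\RR)$.

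It then remains to assemble the pieces. A connected subgroup of the normalizer of a discrete group centralizes it, so $\widetilde{\Iso}^{\times}(M,g)^o\subset Z_{G_1}(\Gamma_0)\times Z_{\Iso(\widetilde{\AdS}^{1,2})}(\Gamma_X)$; by the previous step $Z_{\Iso(\widetilde{\AdS}^{1,2})}(\Gamma_X)$ is $R$ modulo the center (a lattice being Zariski dense has only central centralizer), hence virtually $\PSL(2,\RR)$, while $Z_{G_1}(\Gamma_0)$ descends to a relatively compact subgroup of $\Iso(N/\Gamma_0)$, the latter being a compact Riemannian manifold (the fibre of $M\to\Gamma_X\backslash\widetilde{\AdS}^{1,2}$). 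Hence $\Iso^{\times}(M,g)^o$ embeds into a product $K\times\PSL(2,\RR)^{(m)}$ while containing the $\PSL(2,\RR)^{(m)}$-factor; as $\PSL(2,\RR)$ is simple and non-compact there is no nontrivial homomorphism of it into $K$, so $\Iso^{\times}(M,g)^o$ is, up to a finite cover, the direct product of a compact Lie group with $\PSL(2,\RR)^{(m)}$ -- in accordance with the Adams--Stuck--Zeghib structure theorem recalled in the introduction. The component group is finite because the normalizer of a lattice in $\widetilde{\PSL(2,\RR)}$ is again a lattice, and $\Iso(N/\Gamma_0)$ has finitely many components. Finally the bifoliation $(\calf,\calf^{\perp})$ is canonically attached to $g$, hence $\Iso(M,g)$-invariant (cf. Section~\ref{sec.bifoliation}), so $\Iso^{\times}(M,g)=\Iso(M,g)$; therefore $\Iso(M,g)$ is virtually a Lie group extension of $\PSL(2,\RR)$ by a compact Lie group, as claimed.
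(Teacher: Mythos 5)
Your overall strategy differs from the paper's: the paper does not classify the quotient, but uses the infinite limit set of $\Iso^{\times}(M,g)$ to produce infinitely many codimension one totally geodesic lightlike foliations whose lightlike directions are tangent to $\calf$, and then invokes Zeghib's analysis (\cite[Sec. 15.2]{zeghibisom1}) to conclude directly that one of the two projections $\Gamma_L,\Gamma_R$ of $\Gamma$ to the left/right $\widetilde{\PSL(2,\RR)}$-factors is trivial; the right action of $\widetilde{\PSL(2,\RR)}$ then descends isometrically to $M$, and the structure of $\Iso^o(M,g)$ (Adams--Stuck, Zeghib) together with \cite[Cor. 6.2]{zeghibpic} (local freeness and Lorentzian orbits give finitely many components) finishes the proof. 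Your route, by contrast, rests on the Fried--Goldman/Kulkarni--Raymond classification of compact anti-de Sitter space forms applied to $\Gamma_X$, and this is where there is a genuine gap: $\Gamma_X$ is only the \emph{projection} of the discrete group $\Gamma\subset\Iso(N)\times\Iso(\widetilde{\AdS}^{1,2})$, so it need not be discrete and need not act properly discontinuously on $\widetilde{\AdS}^{1,2}$; all you know is that it acts cocompactly. The space-form structure theory (``$\rho_L$ is a cocompact lattice embedding, $\rho_R$ is trivial or a Goldman deformation'') simply does not apply in this generality. The paper is explicitly alert to this subtlety elsewhere (Section \ref{sec.warping.constant}, where Klingler's completeness proof has to be re-examined precisely because $\Gamma_X$ may be non-discrete), and its proof of Proposition \ref{prop.ads} is designed to avoid it.

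Two further steps are asserted rather than proved. First, your exclusion of the ``non-standard'' cases relies on the claim that the resulting isometry group has finite limit set, ``checked directly from Theorem \ref{thm.zeghib}''; this needs an actual argument about the asymptotically stable distributions of such groups (Proposition \ref{prop.centralizer} only gives virtual nilpotency of closed finitely generated subgroups commuting with a lightlike Killing field, not finiteness of the limit set of the whole of $\Iso^{\times}(M,g)$). Second, your final identification $\Iso^{\times}(M,g)=\Iso(M,g)$ invokes the canonicity of the bifoliation from Section \ref{sec.bifoliation}, but that canonicity is established only for locally homogeneous manifolds with semisimple isotropy, which is not a hypothesis of Proposition \ref{prop.ads}; the proposition's hypothesis concerns $\Iso^{\times}(M,g)$ while its conclusion concerns the full group $\Iso(M,g)$, and the paper bridges this not by equality of the two groups but by noting that the surviving $\widetilde{\PSL(2,\RR)}$-action is isometric for $g$ regardless of foliation preservation, so the structure theorems for $\Iso^o(M,g)$ and the finiteness of the component group apply to the full isometry group.
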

 
 Observe that the hypothesis involves $\Iso^{\times}(M,g)$, but the conclusion is about the full isometry group $\Iso(M,g)$.
 
 The proof is   discussed in \cite[Section 15.2]{zeghibisom1}. The model for $3$-dimensional anti-de Sitter space 
   $\widetilde{\mathbb{ADS}}^{1,2}$ is the Lie group $\widetilde{\PSL(2,\RR)}$ endowed with the left Lorentzian metric 
   obtained from the Killing form on $\mathfrak{sl}(2,\RR)$. This metric $g_{AdS}$ turns out to be bi-invariant, and
    we get an isometric action of the product $\widetilde{\PSL(2,\RR)} \times \widetilde{\PSL(2,\RR)}$ 
    (by left and right translations). The action is not faithful because $\widetilde{\PSL(2,\RR)} $ has a nontrivial center $Z$, so
    that the 
   isometry group of $\widetilde{\mathbb{ADS}}^{1,2}$ is up to finite index 
   $(\widetilde{\PSL(2,\RR)} \times \widetilde{\PSL(2,\RR)})/Z$. As already mentioned, conformal transformations of
   $\widetilde{\mathbb{ADS}}^{1,2}$ are isometric. It follows from Proposition 
   \ref{prop.homogeneous.partial} that $(M,g)$ is the quotient of $N \times \widetilde{\mathbb{ADS}}^{1,2}$
    by a discrete subgroup 
    $\Gamma \subset \Iso(N) \times \widetilde{\PSL(2,\RR)} \times \widetilde{\PSL(2,\RR)}$. 
    The projection of 
    this group on each factor is denoted by $\Gamma_N$, $\Gamma_L$ (left) and $\Gamma_R$ (right).
    
    The splitting $N \times \widetilde{\AdS}^{1,2}$ induces two transverse foliations $\calf^{\perp}$ and $\calf$ on $M$, which are
    preserved by $\Iso^{\times}(M,g)$. In particular, each direction $[u]$ belonging to the limit set of $\Iso^{\times}(M,g)$
     must be tangent to $\calf$. This remark, together with the hypothesis that the limit set of $\Iso^{\times}(M,g)$ is infinite 
      yields, by Theorem \ref{thm.zeghib}, infinitely distinct codimension $1$ lightlike geodesic foliations on $M$, whose lightlike direction is 
      tangent to $\calf$.
      As explained in \cite[Section 15.2]{zeghibisom1}, this forces $\Gamma_L$ or $\Gamma_R$ to be trivial.
       Let say that $\Gamma_R$ is trivial. Then the action of $\widetilde{\PSL(2,\RR)}$ by 
        right-multiplication on $N \times_w \widetilde{\PSL(2,\RR)}$ induces a 
        nontrivial isometric action of  $\widetilde{\PSL(2,\RR)}$ on $(M,g)$.
        Under these circomstances, the action is not faithfull, but one gets that 
        $\Iso^o(M,g)$ is finitely covered by $\PSL(2,\RR)^{(m)} \times K$, for $K$ a connected compact Lie group 
        (see \cite{gromov}, \cite{adams.stuck}, \cite{zeghibidentity}). Here 
        $\PSL(2,\RR)^{(m)}$ 
         denotes the $m$-fold cover of $\PSL(2,\RR)$. In particular, $\Iso^o(M,g)$ is a compact extension of $\PSL(2,\RR)$.
         
         The action  of $\PSL(2,\RR)^{(m)}$ on $(M,g)$ is locally free 
         (see \cite[Th 5.4.A]{gromov}), and its orbits 
         have Lorentz signature. As  observed in \cite[Corollary 6.2]{zeghibpic},  this implies that  $\Iso(M,g)$
          has finitely many connected components. Proposition \ref{prop.ads} follows. We observe that  
          we are precisely in the first case  of Theorem \ref{thm.main}.

 \subsection{Theorem \ref{thm.main} when $(M,g)$ is locally homogeneous with semisimple isotropy}
  \label{sec.factor.flat}
  
 \begin{proposition}
  \label{prop.minkowski}
  Let $(M,g)$ be a compact, locally homogeneous, Lorentz manifold.
   Assume that the isotropy algebra $\liei$ is isomorphic to $\oo(1,k) \oplus \oo(m)$, with $k \geq 2$. 
   Assume that the limit set of $\Iso(M,g)$ is infinite at each point.
Then the conclusions of Theorem \ref{thm.main} hold. 
 \end{proposition}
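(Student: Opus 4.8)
The plan is to assemble the structural results of the previous sections. Since $(M,g)$ is locally homogeneous with semisimple isotropy, one has $\Iso^{\times}(M,g)=\Iso(M,g)$, and our hypothesis is exactly that this group has infinite limit set at every point. Hence Proposition \ref{prop.universal}, combined with the completeness Theorem \ref{thm.complete}, applies (this is what the proof of Proposition \ref{prop.homogeneous.partial} uses) and gives an isometry $(\tm,\tilde g)\simeq N\times_{w}X$, where $(N,g_N)$ is a simply connected, complete, homogeneous Riemannian manifold, $X$ is isometric to $\widetilde{\AdS}^{1,2}$ (only if $k=2$) or to Minkowski space $\RR^{1,k}$, and $\Iso(\tm,\tilde g)\subset\Iso(N)\times\Homot(X)$; in particular $M=\Gamma\backslash(N\times X)$ for a discrete $\Gamma\subset\Iso(N)\times\Homot(X)$.

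First suppose $X=\widetilde{\AdS}^{1,2}$. A Lorentz manifold of constant curvature $-1$ has no non-isometric homothety, so $\Homot(X)=\Iso(X)$ and $\Gamma\subset\Iso(N)\times\Iso(\widetilde{\AdS}^{1,2})$. Since $\Iso^{\times}(M,g)=\Iso(M,g)$ has infinite limit set at each point, Proposition \ref{prop.ads} applies verbatim and yields that $\Iso(M,g)$ is virtually a Lie group extension of $\PSL(2,\RR)$ by a compact Lie group; this is case $(1)$ of Theorem \ref{thm.main}.

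Now suppose $X=\RR^{1,k}$, $k\geq 2$. The warped product gives an $\Iso(M,g)$-invariant orthogonal splitting $TM=\calf\oplus\calf^{\perp}$, $\calf$ being the $(k+1)$-dimensional Lorentzian distribution tangent to the flat $\RR^{1,k}$-leaves; consequently the frame bundle of $M$ reduces over all of $M$ to an $(\OO(1,k)\times\OO(n-k))$-subbundle preserved by $\Iso(M,g)$, and Corollary \ref{coro.subbundle} provides a coarse embedding $\Iso(M,g)\hookrightarrow\OO(1,k)$. On the group side, writing $\Homot(\RR^{1,k})=(\RR_+^{*}\times\OO(1,k))\ltimes\RR^{1,k}$, the projection to the $\Homot$-factor followed by the quotient that kills translations and positive homotheties defines a homomorphism $\psi\colon\widetilde{\Iso}(M,g)\to\PO(1,k)$, where $\widetilde{\Iso}(M,g)\subset\Iso(N)\times\Homot(\RR^{1,k})$ is the group of all lifts, so $\Iso(M,g)=\widetilde{\Iso}(M,g)/\Gamma$. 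Since $\Gamma$ is normal, $\psi(\Gamma)$ is normal in $\psi(\widetilde{\Iso}(M,g))$, and the plan is to show: (i) $\psi(\widetilde{\Iso}(M,g))$ is a discrete subgroup of $\PO(1,k)$, using the coarse embedding into $\OO(1,k)$ and the properness of the $\Iso(M,g)$-action on the frame bundle; (ii) the kernel of the induced map $\Iso(M,g)\to\psi(\widetilde{\Iso}(M,g))/\psi(\Gamma)$ is compact, because the isometries killed by $\psi$ act on the Minkowski factor only by translations and positive homotheties, which together with finiteness of the Lorentz volume of $M$ and the finite-index reductions of Section \ref{sec.warping.nonconstant} confine them to a subgroup projecting onto a compact subgroup of $\Iso(M,g)$; (iii) the quotient $\psi(\widetilde{\Iso}(M,g))/\psi(\Gamma)$ is (virtually isomorphic to) a discrete subgroup $\Lambda$ of some $\PO(1,d)$ with $2\leq d\leq k\leq n$. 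Assembling (i)--(iii) yields a finite-index subgroup $\Iso'(M,g)$ with $1\to K\to\Iso'(M,g)\to\Lambda\to 1$, $K$ compact Lie and $\Lambda\subset\PO(1,d)$ discrete; as $\dim M=\dim N+k+1$ one has $d\leq k\leq n$, and $k\geq 2$ by hypothesis, so this is case $(2)$ of Theorem \ref{thm.main}.

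The main obstacle is item (iii): disentangling the mutual positions of the $N$- and $\RR^{1,k}$-factors inside $\Gamma$ and $\widetilde{\Iso}(M,g)$, i.e.\ establishing the rigidity statement that, after passing to finite index, either the linear holonomy of $\Gamma$ along the Minkowski factor is finite in $\PO(1,k)$ (so that $\psi$ essentially descends, $\Lambda$ is the honest discrete group $\psi(\Iso'(M,g))$, and $K$ is the torus of Minkowski translations together with the compact part coming from $\Iso(N)$), or else $\Gamma$ already accounts, up to finite index and compact error, for all of $\psi(\widetilde{\Iso}(M,g))$, in which case $\Iso(M,g)$ is itself a compact extension of a finite group. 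This is the point that goes beyond formal assembly: it requires a Bieberbach-- and Carri\`ere--type control of compact complete flat Lorentz geometry in the presence of a Riemannian warping factor, exploiting that the image of $\Gamma$ in $\OO(1,k)$ must normalize a lattice of translations of $\RR^{1,k}$.
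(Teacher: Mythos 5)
Your treatment of the anti-de Sitter case is correct and is exactly the paper's (reduce to Proposition \ref{prop.ads}). The flat case, however, has a genuine gap, and it sits precisely at the point you flag as item (iii). Your homomorphism $\psi\colon\widetilde{\Iso}(M,g)\to\PO(1,k)$ does not kill $\Gamma$, and nothing in your argument controls $\psi(\Gamma)$: the projection of $\Gamma$ to $\Homot(\RR^{1,k})$ need not be discrete, need not contain any translations, and there is no lattice of translations normalized by the linear holonomy on which to run the Bieberbach/Carri\`ere-type rigidity you invoke; so the proposed route for (iii) is not a technical residue but the missing heart of the proof, and it would fail as sketched. The paper circumvents this by a different construction: the infinitely many limit directions span a $\Gamma$-invariant Lorentz subspace $E\subset\RR^{1,k}$ of dimension $d+1$, $d\geq 2$, each of whose generating lightlike directions is individually $\Gamma$-invariant; this forces the linear part of $\Gamma$ on $E$ to be a similarity $x\mapsto\lambda x$, so the composite $\widetilde{\Iso}(M,g)\to\Homot(E)\to\PO(1,d)$ is trivial on $\Gamma$ and descends to a homomorphism $\rho\colon\Iso(M,g)\to\PO(1,d)$, which is then shown to be proper (Lemma \ref{lem.properness}). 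Note also that the target is $\PO(1,d)$ for the dimension $d+1$ of $E$, not $\PO(1,k)$.

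Even granting a proper homomorphism, your items (i)--(ii) are not correct as stated: the image of a proper $\rho$ is a closed subgroup $H\subset\PO(1,d)$ which is in general \emph{not} discrete, since it contains $\rho(\Iso^o(M,g))$; discreteness is obtained in the paper only after proving that $\Iso^o(M,g)$ is compact, and then letting $H$ act on the fixed-point set (a copy of $\HH^{d'}$) of the compact normal subgroup $H^o$. The compactness of $\Iso^o(M,g)$ is the second substantial piece of the paper's argument (Section \ref{sec.concon}): it studies the centralizer $Z$ of $\Gamma_E$ in $\Homot(E)$, shows its semisimple part is compact (local freeness of isometric $\SO(1,2)$-actions), its unipotent radical trivial (otherwise one produces a field $X_{\xi}$ spacelike on an open set yet contracted by isometries), and its split torus trivial (growth of compactly generated subgroups), and handles separately the case where $\Gamma_E$ consists only of translations by producing an everywhere timelike Killing field and a compact reduction of the frame bundle. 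None of this is supplied by, or replaceable with, the ``confinement'' sketch in your item (ii), so the Minkowski case of the proposition remains unproved in your proposal.
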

 
By Proposition \ref{prop.homogeneous.partial}, the universal cover $(\tm, \tilde{g})$ is isometric to a warped product 
$N \times_w \widetilde{\AdS}^{1,2}$, or $N \times_w \RR^{1,k}$, where $N$ is a homogeneous Riemannian manifold. 
 Moreover, it follows from point $(2)$ of 
Proposition \ref{prop.homogeneous.partial} that $\Iso(\tm, {\tilde{g}}) \subset \Iso(N) \times \Iso(\widetilde{\AdS}^{1,2})$
 (resp. $\Iso(\tm, {\tilde{g}}) \subset \Iso(N) \times \Homot(\RR^{1,k})$), and $\Iso^{\times}(M,g)=\Iso(M,g)$.
 
 When $(\tm, \tilde{g})$ is isometric to 
$N \times_w \widetilde{\AdS}^{1,2}$, we can directly apply Proposition \ref{prop.ads}. The group $\Iso(M,g)$ is then virtually a compact extension of 
$\PSL(2, \RR)$, and we are in the first case of Theorem \ref{thm.main}.

We now prove Proposition \ref{prop.minkowski} when $(\tm, \tilde{g})$ is isometric to  $N \times_w \RR^{1,k}$.

\subsubsection{Getting a proper homomorphism $\rho: \Iso(M,g) \to \PO(1,d)$}  
The arguments until Section \ref{sec.concon} are essentially discussed in \cite[Section 15.3]{zeghibisom1}. 
The manifold $M$ is  a quotient of $N \times \RR^{1,k}$ by a discrete subgroup $\Gamma \subset \Iso(N) \times \Homot(\RR^{1,k})$.
We call $\Gamma_N$ and $\Gamma_X$ the projections of $\Gamma$
 on each factor. As explained above, since the limit set  is infinite at each point, $M$ admits infinitely many 
  codimension one, totally geodesic lightlike foliations. The lift of each of those foliations to $\tilde{M}$ 
   is preserved by $\Gamma$. Now, codimension one, totally geodesic, lightlike foliations of $N \times_w \RR^{1,k}$
    are  easy to describe (see \cite[Section 15.3]{zeghibisom1}). They are parametrized by lightlike directions $[u]$
     in $\RR^{1,k}$. The leaves of $\calf_u$ determined by such a direction,  are  products $N \times H_u$, where $H_u$ is an affine hyperplane of $\RR^{1,k}$, parallel to  
    $u^{\perp}$. Let us call $E$ the span of all  lightlike directions $u$ which give rise to a foliation $\calf_u$, 
    coming from the asymptotically stable foliation of a sequence $(f_k)$ in $\Iso(M,g)$ 
    (see Theorem \ref{thm.zeghib}). Because the limit set 
    $\Lambda(x)$ has more than $3$ elements at each $x \in M$, the space $E \subset \RR^{1,k}$ is 
    Lorentz of dimension $d+1$, 
    with $d \geq 2$. This yields an orthogonal  splitting 
    $\RR^{1,k}=E \oplus F$, with $F$ Riemannian. Let us call $\widetilde{\Iso}(M,g)$ the group comprising all 
    lifts to $\tm$ of elements  of $\Iso(M,g)$. Then $\widetilde{\Iso}(M,g)$
     preserves the splitting $\tm=N \times E \times F$, and $\widetilde{\Iso}(M,g) \subset \Iso(N) \times \Homot(E) \times \Homot(F)$.
      The projection on the second factor yields a morphism 
      $\pi_E: \widetilde{\Iso}(M,g) \to \Homot(E)\simeq (\RR_*^+ \times \OO(1,d)) \ltimes \RR^{d+1} $.
      Postcomposing with the natural morphism $(\RR_*^+ \times \OO(1,d)) \ltimes \RR^{d+1} \to \PO(1,d)$, we get 
    a homomorphism $\tilde{\rho}:  \widetilde{\Iso}(M,g) \to \PO(1,d) $.
       The projection of the group  $\Gamma$  on each factors will be denoted $\Gamma_N$, 
       $\Gamma_E$ and $\Gamma_F$. Because each lightlike direction in $\Lambda(x)$ corresponds to a $\Gamma_E$-invariant
       direction in $E$, we see that  elements of $\Gamma_E$ have a linear part acting by similarities: 
       $$x \mapsto \lambda x,$$
       for $\lambda \in \RR^*$.

In other words, $\Gamma \subset \Ker \tilde{\rho}$, and we finally get a well-defined morphism
    $$ \rho: \Iso(M,g) =  \widetilde{\Iso}(M,g)/ \Gamma \to \PO(1,d).$$
 
 \begin{lemme}
  \label{lem.properness}
  The homomorphism $\rho: \Iso(M,g)  \to \PO(1,d)$ is a proper map.
 \end{lemme}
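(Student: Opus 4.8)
The plan is to show that if a sequence $f_k \in \Iso(M,g)$ satisfies $\rho(f_k) \to \operatorname{id}$ in $\PO(1,d)$ (or more generally stays in a compact set), then $(f_k)$ stays in a compact subset of $\Iso(M,g)$. Since $\Iso(M,g) = \widetilde{\Iso}(M,g)/\Gamma$, I would lift each $f_k$ to some $\tilde f_k = (a_k, b_k, c_k) \in \Iso(N) \times \Homot(E) \times \Homot(F)$, and I am free to multiply $\tilde f_k$ on the left by elements of $\Gamma$ to normalize it. The key fact just established is that $\Gamma \subset \Ker \tilde\rho$, and moreover that $\Gamma_E$ acts on $E$ through homotheties whose linear part is a \emph{scalar similarity} $x \mapsto \lambda x$ (no nontrivial rotation in $\OO(1,d)$). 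Thus the image of $\Gamma_E$ in $\CO(E)$ lies in the subgroup $(\RR_+^* \times \{\pm\mathrm{Id}\}) \ltimes \RR^{d+1}$, which is precisely the preimage of the trivial group under the map $\CO(E) \to \PO(1,d)$; this is what makes $\rho$ well defined, and it will also be what allows the normalization to control the $E$-component.

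First I would argue that $\rho(f_k) \to \operatorname{id}$ forces the $\OO(1,d)$-part of $\pi_E(\tilde f_k)$ (the ``rotational'' part of the homothety $b_k$ acting on $E$) to converge, up to multiplying by $\Gamma$, into a compact subgroup of $\OO(1,d)$ — indeed it converges to a \emph{bounded} sequence modulo the scalar factor and modulo the sign $\pm\mathrm{Id}$, because the map $\Homot(E) \to \PO(1,d)$ has kernel exactly the scalars times $\{\pm\mathrm{Id}\}$ times the translation part. So after replacing $\tilde f_k$ by $\gamma_k \tilde f_k$ with $\gamma_k \in \Gamma$, the restriction of $\tilde f_k$ to the $E$-factor is a homothety whose associated element of $\OO(1,d)$ stays bounded. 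Next I would use compactness of $M = \Gamma\backslash(N \times E \times F)$ to further normalize: choosing a point $\tilde z \in N \times E \times F$ over a point $z$ whose $\Gamma$-orbit accumulates, one can adjust $\tilde f_k$ by $\Gamma$ so that $\tilde f_k(\tilde z)$ stays in a fixed compact fundamental domain $D \subset N \times E \times F$. Combined with the previous step, this pins down the translation parts and the dilation factors: the dilation factor $\lambda_k$ of $b_k$ on $E$ must be bounded away from $0$ and $\infty$ (otherwise $\tilde f_k(\tilde z)$ would escape $D$ in the $E$-direction, since the $N$ and $F$ parts are isometric hence distance-preserving and cannot compensate for an unbounded dilation), and the $E$-translation part is then bounded because $\tilde f_k(\tilde z) \in D$.

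Once the homothety on $E$ is normalized to lie in a compact set, the remaining components are easy: the $N$-component $a_k \in \Iso(N)$ maps a fixed compact set to a compact set (the image of $\tilde z$ lies in $D$), and $\Iso(N)$ acts properly on $N$ since $N$ is a complete Riemannian manifold with transitive isometry group — so $(a_k)$ stays in a compact subset of $\Iso(N)$. Similarly the $F$-component $c_k \in \Homot(F)$ is constrained: $F$ is Riemannian and the global warping/volume considerations (the metric on $M$ has finite volume, so the product of dilation factors on $E$ and $F$ times a Jacobian on $N$ must be controlled) force the $F$-dilation factor to be bounded too, whence $(c_k)$ is bounded in $\Iso(F) \ltimes \Homot$. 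Therefore $(\tilde f_k)$, suitably normalized by $\Gamma$, stays in a compact subset of $\widetilde{\Iso}(M,g)$, which means exactly that $(f_k)$ stays in a compact subset of $\Iso(M,g)$; this is the claimed properness.

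The main obstacle I expect is the bookkeeping around the homothety factors: one must verify carefully that an unbounded dilation on the $E$-factor genuinely cannot be ``hidden'' by the deck group $\Gamma$ or absorbed by the $N$ and $F$ factors, and for this the two structural inputs are essential — that $\Gamma_E$ acts by scalar similarities (no rotational part, so the $\OO(1,d)$-component of $\rho$ detects everything except scale) and that the total volume of $M$ is finite (so the product of the $E$- and $F$-dilation Jacobians against the $N$-Jacobian is $1$, pinning the scales once $\rho(f_k)$ is bounded). I would organize the argument so that boundedness of $\rho(f_k)$ gives boundedness of the $\OO(1,d)$-rotational data, finite volume plus the compactness normalization of $\tilde f_k(\tilde z)$ gives boundedness of all dilation factors, and then properness of the isometric actions of $\Iso(N)$ on $N$ and of the isometry parts on $E, F$ closes the loop.
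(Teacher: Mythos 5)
Your overall strategy (lift to $\widetilde{\Iso}(M,g)$, normalize by $\Gamma$ using a compact fundamental domain, bound the $\OO(1,d)$-part via $\rho$, then bound the remaining data) can be made to work, but the two justifications you lean on at the crucial step are not valid. First, knowing that the normalized lift sends the single point $\tilde z$ into a compact set $D$ does \emph{not} bound the dilation ratio of $b_k$: a homothety of arbitrarily large ratio composed with a suitable translation can send any prescribed point anywhere, so the compensation you need to exclude is performed by the translation part of $b_k$ itself, not by the $N$- or $F$-factors, and your parenthetical argument does not address it. Second, the finite-volume/Jacobian consideration gives nothing: $\tilde f_k$ is an isometry of $(\tm,\tilde g)$ and therefore preserves volume tautologically, because the flat-metric dilation on $E\oplus F$ is exactly offset by the variation of the warping function along the $N$-component (point $(1)$ of Lemma \ref{lem.isometries}: $b_k^*g_E=\lambda(a_k)^{-1}g_E$ with $w(a_k(n))=\lambda(a_k)w(n)$). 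The correct way to close your argument is precisely this warped-product relation: after the normalization puts $a_k(n_0)$ in a compact subset of $N$, the quantity $\lambda(a_k)=w(a_k(n_0))/w(n_0)$ is pinched between positive constants, and only then are the dilation ratios, and subsequently the translation parts, bounded. You should also say a word on why boundedness of the normalized lifts in the product group forces $(f_k)$ to be bounded in $\Iso(M,g)$ (limits of the lifts still normalize the discrete group $\Gamma$, hence descend, and the Lie topology on $\Iso(M,g)$ is the $C^0$ topology).

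For comparison, the paper's proof is much shorter and never mentions the flat reference metric or the dilation factors at all: the $\Gamma$-invariant splitting $\tm=N\times E\times F$ descends to an $\Iso(M,g)$-invariant orthogonal splitting $TM=TM_N\oplus TM_E\oplus TM_F$ on the compact manifold $M$, with $TM_E$ Lorentzian and its complement Riemannian. Since $f_k$ preserves $g$ and this splitting, $D f_k$ restricted to $TM_E$, written in bounded orthonormal frames, is an element of $\OO(1,d)$ whose class in $\PO(1,d)$ is $\rho(f_k)$ up to bounded error; as the kernel of $\OO(1,d)\to\PO(1,d)$ is finite, boundedness of $\rho(f_k)$ bounds $Df_k|_{TM_E}$. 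On the Riemannian complement the derivative is automatically bounded in orthonormal frames, so the $1$-jets of the $f_k$ are bounded, and properness of the $\Iso(M,g)$-action on the orthonormal frame bundle $\hm$ concludes. Working downstairs with the $g$-orthonormal frame bundle is what lets one bypass all the bookkeeping your proposal struggles with.
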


 \begin{proof}
 We want to show that the map $\rho$ is proper, namely the preimage of every bounded sequence 
 is a bounded sequence. The splitting $\tm=N \times E \times F$ is preserved by $\Gamma$, hence induces
  an orthogonal 
  splitting $TM=TM_N \oplus TM_E \oplus TM_F$.  For each $x$, the space 
  $T_xM_E$ is Lorentz, while $T_xM_N \oplus TM_F$ is Riemannian. Let  $(f_k)$  be a sequence of $\Iso(M,g)$.
   The condition $\rho(f_k)$ bounded is easily seen to imply that ${Df_k}_{|TM_E}$ is bounded. But since $TM_E$
    has Lorentz signature, it shows that the $1$-jet of $f_k$ is bounded. Because $\Iso(M,g)$ acts properly on the orthonormal 
    bundle $\hm$ (see Section \ref{sec.topology}), this implies $(f_k)$ bounded in $\Iso(M,g)$.
  \end{proof}  
 
 \subsubsection{Conclusion under the assumption that $\Iso^o(M,g)$ is compact}
 
 Sticking to the previous notations, we consider the homomorphism $\rho: \Iso(M,g)  \to \PO(1,d)$, and we have shown that 
 it is a proper map. It means that $\rho( \Iso(M,g))=H$ is a closed subgroup of $\PO(1,d)$, and $H^o=\rho( \Iso(M,g)^o)$ is a compact, normal 
  subgroup of $H$.  In particular, the set of fixed point of the action of $H^o$ on $\HH^d$ is nonempty.
   This set of fixed points  $\operatorname{Fix}(H^o)$ is a totally geodesic submanifold of $\HH^d$, isometric to some $\HH^{d'}$.
    Because $H$ normalizes $H^o$, it acts on $\operatorname{Fix}(H^o)$, yielding a  morphism 
    $\rho': H \to \Iso(\HH^{d'}) \simeq \PO(1,d)$, 
     which is proper. The image of $\rho'$ is discrete since $H^o$ does not act on $\operatorname{Fix}(H^o)$. 
     We are then in the second case of Theorem \ref{thm.main}.
 
 \subsubsection{Compactness of the identity component $\Iso^o(M,g)$}
 \label{sec.concon}
 It remains to prove the compactness of $\Iso^o(M,g)$.
 Let us introduce a bit of notations. We call $Z$ the centralizer of $\Gamma_E$ in $\Homot(E)$.
  This is an algebraic group, with Lie algebra $\liez$. We will denote by $Z_L$ the projection of $Z \subset \Homot(E)$ on $\PO(1,d)$, 
   and $Z_O$ the projection of $Z \cap \OO(1,d)$ on $\PO(1,d)$.

 \begin{lemme}
  \label{lem.z}
  \begin{enumerate}
   \item There is an action of $Z$ on $M$, which is an isometric action  in restriction to $Z \cap \Iso(E)$.
   \item One has the inclusions
   $$ Z_O \subset \Iso(M,g),$$
   $$ \rho(\Iso^0(M,g)) \subset Z_L,$$
   and
   $$\rho(\Iso(M,g)) \subset \operatorname{Nor}_{\OO(1,d)}(Z_L).$$
  \end{enumerate}

 \end{lemme}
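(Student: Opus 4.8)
The statement to prove is Lemma \ref{lem.z}, which has two parts: the existence of a $Z$-action on $M$ (isometric on $Z\cap\Iso(E)$), and three inclusions relating $Z_O$, $Z_L$ and the images of $\Iso(M,g)$ and $\Iso^o(M,g)$ in $\PO(1,d)$. The underlying picture is that $\tm=N\times E\times F$ with $\widetilde\Iso(M,g)\subset \Iso(N)\times\Homot(E)\times\Homot(F)$, and $\Gamma$ (the deck group) is contained in this product; its projection $\Gamma_E$ to $\Homot(E)$ records the holonomy in the Lorentzian factor. Since $Z$ is by definition the centralizer of $\Gamma_E$ in $\Homot(E)$, any element of $Z$ acts on $\tm$ through the middle factor, commuting with $\Gamma$, hence descends to a transformation of $M=\Gamma\backslash\tm$. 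This descent is the content of part $(1)$. The map $\tilde\psi\in Z\mapsto(\mathrm{id}_N,\tilde\psi,\mathrm{id}_F)$ acting on $N\times E\times F$ is an isometry of the warped product precisely when $\tilde\psi\in\Iso(E)$ (when $\tilde\psi$ is a genuine homothety with ratio $\neq 1$ it is only conformal on the $E$-factor, and the $N$-factor is untouched, so the total metric changes), which is why the isometry claim is restricted to $Z\cap\Iso(E)$.

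First I would set up part $(1)$ carefully: given $\tilde\psi\in Z$, form $\Psi:=(\mathrm{id}_N,\tilde\psi,\mathrm{id}_F)$ on $\tm$, check $\Psi$ commutes with every $\gamma\in\Gamma$ (this is exactly the centralizer condition, applied coordinate-wise, using that $\gamma$ acts as $(\gamma_N,\gamma_E,\gamma_F)$ and $\tilde\psi$ commutes with $\gamma_E$ while leaving the other factors fixed), hence $\Psi$ descends to $\psi\colon M\to M$. The assignment $\tilde\psi\mapsto\psi$ is a homomorphism $Z\to\Diff(M)$. For $\tilde\psi\in Z\cap\Iso(E)$, $\Psi$ preserves $\tilde g=g_N\oplus w\,g_E\oplus g_F$ on the nose (the warping function $w$ depends only on the $N$-factor, which $\Psi$ fixes), so $\psi\in\Iso(M,g)$; this proves $(1)$ and, restricting to $Z\cap\OO(1,d)\subset Z\cap\Iso(E)$ and passing to $\PO(1,d)$, gives the first inclusion $Z_O\subset\Iso(M,g)$ of part $(2)$ (strictly, $Z_O$ sits inside $\Iso(M,g)$ via this homomorphism; one should note the kernel is contained in $\Gamma_E$, so after the identifications this is a genuine inclusion of subgroups — I would phrase it as in the paper).

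Next, for the inclusion $\rho(\Iso^o(M,g))\subset Z_L$: I would take $f\in\Iso^o(M,g)$, lift it to $\tilde f\in\widetilde\Iso(M,g)\subset\Iso(N)\times\Homot(E)\times\Homot(F)$ in the identity component, and write $\pi_E(\tilde f)\in\Homot(E)$. Since $\tilde f$ normalizes $\Gamma$ (deck transformations of a $\widetilde\Iso$-invariant cover) and lies in the identity component, while $\Gamma$ is discrete, conjugation by $\tilde f$ fixes $\Gamma$ pointwise — so $\pi_E(\tilde f)$ centralizes $\Gamma_E$, i.e. $\pi_E(\tilde f)\in Z$, hence $\rho(f)=[\pi_E(\tilde f)]\in Z_L$. (Here I use that $\Aut(\Gamma)$ is discrete when $\Gamma$ is discrete and finitely generated, a standard fact; a connected group acting on $\Gamma$ by conjugation acts trivially.) Finally, for $\rho(\Iso(M,g))\subset\Nor_{\OO(1,d)}(Z_L)$: any $f\in\Iso(M,g)$ lifts to $\tilde f$ normalizing $\Gamma$, hence $\tilde f\,\Gamma_E\,\tilde f^{-1}=\Gamma_E$ in $\Homot(E)$ (projecting the normalization relation to the $E$-factor), so conjugation by $\pi_E(\tilde f)$ preserves $Z=Z_{\Homot(E)}(\Gamma_E)$, and therefore $\rho(f)=[\pi_E(\tilde f)]$ normalizes the image $Z_L$ in $\PO(1,d)$; one should check that $\Nor_{\PO(1,d)}(Z_L)$ meets $\OO(1,d)$ as stated, which is routine since $\rho(\Iso(M,g))\subset\PO(1,d)$ and the formulation in the paper uses $\OO(1,d)$ loosely for $\PO(1,d)$-level normalizers.

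**Main obstacle.** The genuinely delicate point is keeping the bookkeeping straight between $\Homot(E)$, $\OO(1,d)$, $\PO(1,d)$, and the passage to quotients by $\Gamma$ and by the $\RR^*$-scaling part: the "inclusions" in the statement are really inclusions after identifying $Z_O$, $Z_L$ etc. with their images under well-defined (but not obviously injective) maps, and one must verify the relevant kernels are trivial or absorbed into $\Gamma$ so that the displayed containments make sense as stated. The conceptual content — centralizer of $\Gamma_E$ descends to $M$, lifts of isometries normalize $\Gamma$ hence normalize $Z$, connected lifts centralize the discrete $\Gamma$ hence land in $Z$ — is straightforward; the work is purely in tracking these identifications, which is why I would present it as a sequence of short verifications rather than a single computation.
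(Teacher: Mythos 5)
Your proposal is correct and follows essentially the same route as the paper: the same construction $(n,x,y)\mapsto(n,h(x),y)$ with the remark that $w$ depends only on the $N$-factor for point $(1)$, and the same lifting arguments (lifts of $\Iso^o(M,g)$ centralize the discrete group $\Gamma$, arbitrary lifts normalize it) for the three inclusions, with your extra justification of the centralizing property and the bookkeeping remarks being details the paper leaves implicit.
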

   
   In the statement, $\operatorname{Nor}_{\OO(1,d)}(Z_L)$ denotes the normalizer of $Z_L$ in $\OO(1,d)$.

 \begin{proof}
  For every $h \in Z$, one defines $\tilde{h} \in \Iso(N) \times \Homot(E) \times \Homot(F)$
   by the formula $\tilde{h}(n,x,y)=(n,h(x),y)$ for every  $(n,x,y) \in N \times E \times F$. Obviously, $\tilde{h}$
    centralizes $\Gamma$, hence induces a diffeomorphism on $M$. If moreover $h \in Z \cap \Iso(E)$, then $\tilde{h}$
     acts isometrically on $(\tm,\tilde{g})$, and the induced action on $M$ is isometric. This proves point $1)$.
     
 As for point $2)$,  $ Z_O \subset \Iso(M,g)$ is a direct consequence of point $1)$. 
  Every flow $f^t$ in $\Iso^o(M,g)$ lifts to $\tilde{f}^t \in \Iso(\tm, \tilde{g})$ centralizing $\Gamma$. The component $\tilde{f}_E^t$
   on $\Homot(E,g)$ belongs to $Z$, and the definition of $\rho$ yields $\rho(f^t) \in Z_L$.
   
   Finally, every element $\tilde{f}$ of $\widetilde{\Iso(M,g)}$ normalizes $\Gamma$. It follows that the component $\tilde{f}_E$
    on $\Homot(E)$ normalizes $Z$, hence the last inclusion $\rho(\Iso(M,g)) \subset \operatorname{Nor}_{\OO(1,d)}(Z_L).$
    
 \end{proof}
 
  We observed in the proof that for every $\tilde{f} \in \widetilde{\Iso(M,g)}$, the component $\tilde{f}_E$ normalizes $Z$,
   hence induces an automorphism of $\liez$. Since $Z$ centralizes $\Gamma$, this automorphism is trivial when $\tilde{f} \in \Gamma$.
    We thus inherits a well-defined representation $\zeta: \Iso(M,g) \to \liez$.
  
The first point of Lemma \ref{lem.z} shows that each  $\xi \in \liez$ yields a vector field $X_{\xi}$ on $M$.
 The very definition of the representation $\zeta$ lead to the tautological but useful relation:
   \begin{equation}
    \label{eq.tautological}
     f_* X_{\xi}=X_{\zeta(f)\xi}
   \end{equation}

 We are now in position to prove:
 
 \begin{lemme}
  \label{lem.just.translations}
  If  the group $\Gamma_E \subset \Homot(E)$ does not contain only translations, then $\Iso^o(M,g)$ is compact.
 \end{lemme}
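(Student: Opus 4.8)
The plan is a proof by contradiction: assume $\Iso^o(M,g)$ is not compact. Since $\rho\colon\Iso(M,g)\to\PO(1,d)$ is proper with compact kernel (Lemma \ref{lem.properness}), the image $H^o:=\rho(\Iso^o(M,g))$ is then a noncompact connected subgroup of $\PO(1,d)$; by Lemma \ref{lem.z} it lies in $Z_L$, and in fact $H^o=Z_L^o$, because every $\xi\in\liez$ produces, through the construction of Lemma \ref{lem.z}, a vector field $X_\xi$ on $M$ which is Killing (the warping function $w$ is constant along the $X$-leaves, so a Killing field of the flat factor stays Killing after warping); hence $\exp(\liez)\subset\Iso^o(M,g)$ and $\rho_*$ maps $\liez$ onto $\liez_L$. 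Since $Z$ is Zariski closed in $\Homot(E)$ (a centralizer) and $Z^o\to Z_L^o$ is a finite covering, it suffices to show that $Z^o$ is relatively compact in $\Homot(E)$.

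First I would reduce to $\liez\subset\mathfrak{isom}(E)$, the Killing algebra of the Minkowski factor $E$; equivalently, $Z^o$ contains no nontrivial homothety. Indeed, if $\xi\in\liez$ had a nonzero dilation component for the decomposition $\co(E)=\oo(E)\oplus\RR$, then $X_\xi$ would have constant nonzero divergence on $M$: on each leaf $\{n\}\times E$ the radial field generating the homotheties has divergence $d+1$ with respect to $g_E$, hence also with respect to $w(n)g_E$ since $w(n)$ does not depend on the point of $E$, and the $N$- and $F$-directions contribute nothing; this contradicts $\int_M\operatorname{div}(X_\xi)\,d\mathrm{vol}_g=0$ on the closed manifold $M$. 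Next I would use the hypothesis: since $\Gamma_E$ is not a group of translations and the linear part of every element of $\Gamma_E$ is a scalar, there is $\gamma_0\in\Gamma_E$ with linear part $\lambda_0\,\mathrm{id}$, $\lambda_0\neq 1$, and $\gamma_0$ has a unique fixed point $o_0\in E$. Every $z\in Z$ commutes with $\gamma_0$, so $z(o_0)$ is again fixed by $\gamma_0$, whence $z(o_0)=o_0$. Thus $Z$ fixes $o_0$, and with the previous step $Z^o\subset\OO_E(o_0)\cong\OO(1,d)$, the linear isometries of $E$ fixing $o_0$. Placing $o_0$ at the origin, an element $z\in Z^o$ commuting with an arbitrary $\gamma\in\Gamma_E$ (affine, scalar linear part) must fix the translation vector of $\gamma$, so $Z^o$ fixes pointwise the $\Gamma_E$-invariant subspace $V$ spanned by those translation vectors; if $V$ contains a timelike vector we are done, $Z^o$ being then contained in the compact stabilizer $\OO(V^\perp)$.

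The remaining and genuinely hard step is to exclude the possibility that $V$ avoids all timelike directions, which would allow $Z^o$ — and hence $H^o=\rho(\Iso^o(M,g))$ — to be noncompact. In that case $Z^o$ would contain a one-parameter group $\exp(t\bar\xi)$ with $\bar\xi\in\oo(1,d)$ non-elliptic, $V\subset\ker\bar\xi$, producing a noncompact flow $\phi^t\in\Iso^o(M,g)$ whose holonomy in $\PO(1,d)$ is loxodromic or parabolic. The plan to rule this out is to invoke the dynamics set up in Section \ref{sec.warping.nonconstant}: the warping flow $\psi^s$ on $M$ commutes with $\Iso(M,g)$, and recurrence of $\psi^s$ together with Lemma \ref{lem.bounded.sequence} forces the linear holonomy of the $\Iso(M,g)$-action along a recurrent orbit to remain bounded, thereby pinning down many lightlike directions in $E$ that are stabilized; on the other hand, since the limit set is infinite at every point, Theorem \ref{thm.zeghib} gives infinitely many transverse totally geodesic lightlike foliations whose directions span $E$. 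Combining these two facts over-determines $\bar\xi$ — it would be forced to fix too large a family of lightlike directions — so $\bar\xi=0$, a contradiction. Once $Z^o$ is known to be relatively compact, so is $\rho(\Iso^o(M,g))$; properness of $\rho$ then places $\Iso^o(M,g)$ in a compact set, and since it is closed it is compact, which is the assertion.
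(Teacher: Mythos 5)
Your first half is sound and is a genuinely different route from the paper's: killing the dilation part of $\liez$ by integrating the divergence of $X_\xi$ over the closed manifold, extracting a common fixed point $o_0$ of $Z$ from the unique fixed point of some $\gamma_0\in\Gamma_E$ with scalar linear part $\neq 1$, and using commutation to force $Z^o$ to fix every translation vector $T_\gamma$, are all correct steps (with one ordering slip: an element of $Z$ with nontrivial dilation does \emph{not} act isometrically on $N\times_w(E\oplus F)$, by point $(1)$ of Lemma \ref{lem.isometries}, so your claim that every $\xi\in\liez$ gives a Killing field -- and hence that $H^o=Z_L^o$ -- is only legitimate \emph{after} the divergence step, not before it).

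The genuine gap is the case you yourself call hard: when the span $V$ of the translation vectors contains no timelike vector, so that the pointwise stabilizer of $V$ in $\OO(1,d)$ is noncompact. There you offer only a plan, and the plan would not work as stated. The flow $\psi^s$ and Lemma \ref{lem.bounded.sequence} belong to Section \ref{sec.warping.nonconstant}, where the warping homomorphism $\lambda$ on $G^o$ is nontrivial and $N$ has been replaced by $G^o$; in the situation of Lemma \ref{lem.just.translations} no such flow has been constructed, and when the warping function is constant it does not exist at all, so there is nothing to invoke. Moreover the proposed mechanism -- that the non-elliptic generator $\overline{\xi}$ would be forced to fix the infinitely many lightlike limit directions -- is unsupported: $Z$ centralizes $\Gamma$, not $\Iso(M,g)$, so the flow of $X_{\overline{\xi}}$ has no reason to preserve the asymptotically stable foliations $\calf_u$ attached to the limit set, and no constraint on $\overline{\xi}$ is actually derived. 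The paper's own proof shows what input is needed at precisely this point: the exponential-growth hypothesis, used twice through the properness of $\rho$ (if $\rho(\Iso(M,g))$ were trapped in a polynomial-growth subgroup such as $K\ltimes U_{max}$ or $T_s'\times K'$, every closed compactly generated subgroup of $\Iso(M,g)$ would have polynomial growth), together with Gromov's local-freeness theorem to exclude a noncompact semisimple part of $Z$, and a contraction argument on a spacelike field $X_\xi$ via the equivariance $f_*X_\xi=X_{\zeta(f)\xi}$ to exclude a unipotent part. Your argument never touches the growth hypothesis, so the decisive case of the lemma is missing.
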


 \begin{proof}
  We already observed that elements $\gamma \in \Gamma_E$ are of the form $x \mapsto \lambda_{\gamma} x + T_{\gamma}$, for 
  some $\lambda \in \RR^*$. We assume, for a contradiction, that some element $\gamma$ satisfies $\lambda_{\gamma} \not = 1$.
   Conjugating everything in $\Iso(\tm)$, we may assume $T_{\gamma}=0$. Then, it is clear that 
   $Z \subset \RR \times \OO(1,d) \subset \Homot(E)$. Let us recall the group $Z_O$, and its Lie algebra $\liez_O$.
    The inclusion $Z \subset \RR \times \OO(1,d)$ forces $\liez_O$ to be $\zeta(\Iso(M,g))$-invariant.
    By the very definitions of the representations 
   $\zeta$ and $\rho$, we  infer that if 
    $\xi \in \liez_O$, 
    $$\zeta(f).\xi=\Ad(\rho(f)).\xi.$$
    Here $\Ad$ is the adjoint representation of $\PO(1,d)$ on its Lie algebra.
    
   The group $Z$ is real algebraic, hence can be decomposed as a semi-direct product
  $(S.T) \ltimes U$, where $S$ is semisimple, $T$ is a torus, and $U$ a unipotent subgroup.
  
  We observe that $S$ and $U$ are included in $\OO(1,d)$. We first claim that $S$ is compact. If not, it would contain 
  a subgroup $S_1$ isomorphic to $\SO(1,2)$, and by  Lemma \ref{lem.z}, $S_1$ would act isometrically on $M$.
   But looking carefully at the action of $S_1$ on $E$, we see that its orbits have dimension $0$ or $2$. 
    The same property should hold
   for the isometric action of $S_1$ on $M$,
    but we already mentioned (see \cite[Thm. 5.4.A]{gromov}) that isometric actions of $\SO(1,2)$ on compact Lorentz manifolds must be locally free, yielding a contradiction.
    
    We now check that $U$ is trivial. First, observe that $U$ is the unipotent radical of the algebraic group $Z \cap \OO(1,d)$.
    Because we already observed that $\rho(\Iso(M,g))$ normalized $\liez_O$, it must normalize he Lie algebra $\lieu$.
  
  We begin our discussion with the case where $U$ is nontrivial. Observe that then $U \subset \OO(1,d)$.
   The normalizer of $\lieu$
   in $\PO(1,d)$ is a group of the form $(\RR_+^* \times K) \ltimes U_{max}$, where $K$ is compact, $U_{max}$
    is a maximal unipotent of $\PO(1,d)$ (isomorphic to $\RR^{d-1}$), and 
    $\RR_+^*$ acts on $U_{max} \simeq \RR^{d-1}$ by homothetic transformations 
    $u \mapsto \alpha  u$, $\alpha >0$.  The group $\rho(\Iso(M,g))$ normalizes $\lieu$, hence $\rho(\Iso(M,g)) \subset 
    (\RR_+^* \times K) \ltimes U_{max}$. 
    
     If actually $\rho(\Iso(M,g)) \subset K \ltimes U_{max}$, then every compactly generated 
     subgroup of $\Iso(M,g)$ must have polynomial growth (because $\rho$ is proper), contradicting 
     our hypothesis. If $\rho(\Iso(M,g)) \not \subset K \ltimes U_{max}$, we get $f \in \Iso(M,g)$, 
     and $\xi \in \lieu$, such that
      $\Ad(\rho(f^k))\xi \to 0$ as $k \to + \infty$. On the lightcone through the origin of $E$, the orbits of the 
      flow $\{ e^{t\xi} \}$ are spacelike (except at the origin).  It means that the vector field $X_{\xi}$ (see the discussion 
      after Lemma \ref{lem.z}) is spacelike
       on an open subset $\Omega$ of $M$. But $D_yf^k(X_{\xi}(y)) \to 0$ as $k \to + \infty$ by the relation
       (\ref{eq.tautological}). When $y \in \Omega$, this contradicts the fact that $f^k$ are isometries.
       
    The previous discussion shows that $U$ is trivial and $S$ is compact. We now look at the torus $T$. It  may be written 
    as a product $T_s \times T_e$, where elements of $T_s$ are $\RR$-split and 
       those of $T_e$ are diagonalisable over $\CC$, with eigenvalues of modulus $1$. 
       The projection $Z_L$ of $Z$ on $\PO(1,d)$ is then a product $T_s' \times K$, where $K$ is compact and $T_s'$ 
       is trivial, or a 1-dimensional $\RR$-split torus in $\PO(1,d)$. The normalizer of $Z_L$ in $\PO(1,d)$ must normalize $T_s'$.
       Now if   $T_s'$ is nontrivial, its normalizer in $\PO(1,d)$ is  a group of the form $T_s' \times K'$, where $K'$ is compact. 
        The inclusion $\rho(\Iso(M,g)) \subset \operatorname{Nor}(Z_L)$ proved in Lemma \ref{lem.z}
         would imply  $\rho(\Iso(M,g)) \subset T_s' \times K'$. Again, this forces every closed compactly generated subgroup of $\Iso(M,g)$
          to have polynomial (actually linear) growth: Contradiction.
      We conclude that $T_s'$ is trivial, hence $Z_L$ is compact. But by Lemma \ref{lem.z}, $\rho(\Iso^o(M,g)) \subset Z_L$.
       Because $\rho$ is proper, we conclude that $\Iso^o(M,g)$ is compact.

 \end{proof}

 The compactness of $\Iso^o(M,g)$ will follow from Lemma \ref{lem.just.translations} and the following
 \begin{lemme}
  If  the group $\Gamma_E \subset \Homot(E)$  contains only translations, then $\Iso^o(M,g)$ is compact.
 \end{lemme}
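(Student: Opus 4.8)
The plan is to show that when $\Gamma_E$ consists only of translations, the warped product structure forces the warping function $\tw$ to be constant on $G^o$, so that we fall back into the situation of Section \ref{sec.warping.constant} and Proposition \ref{prop.factor}, where $X$ is complete and the isotropy analysis pins down the identity component. First I would recall that by the reductions made at the start of Section \ref{sec.warping.nonconstant} we may assume $\tm = G^o \times_{\tw} X$ with $\widetilde{\Iso}^{\times}(M,g) \subset G^o \times \Homot(X)$, that $\lambda : G^o \to \RR_+^*$ is nontrivial (otherwise $\tw$ is constant and we are done), and that $X$ is flat with $\delta : X \to \RR^{1,k}$ the developing immersion. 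The key point is that if $\Gamma_E$ (equivalently $\Gamma_X$, acting through $\rho$) contains only translations, then every element $\gamma = (\alpha,\beta) \in \Gamma$ has $\lambda(\alpha) = 1$, i.e. $\Gamma \subset H \times \Iso(X)$ with $H = \Ker \lambda$.

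Next I would exploit the one-parameter group $\tilde{\psi}^s : (g,x) \mapsto (g a^s, x)$ from Section \ref{sec.warping.nonconstant}, which by \eqref{eq.multdroite} scales $\tw$ by $e^{\alpha s}$ and commutes with $G^o \times \Homot(X)$, hence with $\Gamma$; it therefore descends to a flow $\psi^s$ on the compact manifold $M$ that commutes with $\Iso(M,g)$. Since $\Gamma$ now preserves the level sets of $\lambda$ on $G^o$ but $\psi^s$ moves between them with multiplicative factor $e^{\alpha s}$, the flow $\psi^s$ on $M$ changes the Lorentzian volume of open sets by the constant factor $e^{c\alpha s}$ for an appropriate constant $c > 0$ coming from the dimension of $X$ (the warping enters the volume form as a power of $\tw$). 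A volume-preserving-up-to-scaling flow on a \emph{compact} manifold of finite total volume forces $s \mapsto e^{c\alpha s} = 1$, hence $\alpha = 0$, contradicting that $\lambda$ is nontrivial. I would phrase this cleanly: the vector field $\partial_s \psi^s$ has constant nonzero divergence on $(M,g)$ (computed from \eqref{eq.multdroite}), and Section \ref{sec.curvature0}-style arguments — or simply integrating the divergence against the volume form, which must vanish on a closed manifold by Stokes — give the contradiction. Therefore $\lambda$ is trivial, $\tw \equiv \tw_0$ is constant, and $\tm = N \times \RR^{1,k}$ with $\widetilde{\Iso}(M,g) \subset \Iso(N) \times \Iso(\RR^{1,k})$.

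Once $\tw$ is constant, the entire analysis of Section \ref{sec.factor.flat} applies: the infinite limit set of $\Iso(M,g) = \Iso^{\times}(M,g)$ yields infinitely many codimension-one totally geodesic lightlike foliations, hence a Lorentz subspace $E \subset \RR^{1,k}$ of dimension $d+1$ with $d \geq 2$ and a proper homomorphism $\rho : \Iso(M,g) \to \PO(1,d)$ (Lemma \ref{lem.properness}). Then the centralizer argument of Lemma \ref{lem.z}, Lemma \ref{lem.just.translations} and the torus/unipotent case analysis at the end of Section \ref{sec.concon} go through verbatim — indeed these were written for exactly the case ``$\Gamma_E$ contains only translations'' as the complementary case — showing $Z_L$ is compact and, since $\rho(\Iso^o(M,g)) \subset Z_L$ with $\rho$ proper, that $\Iso^o(M,g)$ is compact.

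The main obstacle I anticipate is making the divergence computation for $\psi^s$ precise: one must check that the warping function contributes to the volume form of $\tm = G^o \times_{\tw} X$ as $\tw^{\dim X / 2}$ times the product volume, that right translation by $a^s$ on the $G^o$ factor preserves the Haar-type volume on $G^o$ (true since $G^o$ is, up to the compact isotropy, acting on a homogeneous space with invariant volume — or one works directly on $N$ where $\Iso(N)$-invariance of $g_N$ does it), and hence that $\tilde\psi^s$ multiplies $\vol_{\tilde g}$ by $e^{\alpha s \dim X / 2}$; this descends to $M$ and the finiteness of $\vol(M)$ closes the argument. An alternative that sidesteps volume bookkeeping is to observe directly, as in the proof of Corollary \ref{coro.divergence}, that $\partial_s\psi^s$ is a complete vector field on the compact Lorentz manifold $(M,g)$ with constant nonzero divergence, which is impossible.
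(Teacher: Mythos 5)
Your proposal does not actually prove the lemma: the step where compactness of $\Iso^o(M,g)$ is supposed to come out is the assertion that the proof of Lemma \ref{lem.just.translations} and the analysis of $Z_L$ ``go through verbatim'' and yield $Z_L$ compact. That proof, however, begins by choosing $\gamma \in \Gamma_E$ with $\lambda_\gamma \neq 1$, conjugating it to a linear similarity, and deducing $Z \subset \RR \times \OO(1,d)$; none of this is available when $\Gamma_E$ consists only of translations, and the conclusion is false in that case: the centralizer $Z$ of a group of translations contains the full translation group of $E$, and $Z_L$ can be noncompact (if $\Gamma_E$ is trivial, $Z=\Homot(E)$ and $Z_L=\PO(1,d)$; if $\Gamma_E$ is generated by a lightlike translation, $Z_L$ contains a noncompact parabolic). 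So ``$Z_L$ compact plus $\rho$ proper'' is precisely the argument you cannot invoke here, and this is why the paper treats this case separately. The actual content of the lemma, absent from your proposal, is: since $\Gamma_E$ consists of translations, \emph{all} translations of $E$ commute with $\Gamma_E$, hence lie in $Z \cap \Iso(E)$ and by Lemma \ref{lem.z} act isometrically and locally freely on $M$; having trivial linear part they lie in $\Ker\rho$, which is compact by Lemma \ref{lem.properness}. This produces a torus in $\Iso^o(M,g)$ normalized by $\Iso(M,g)$, hence centralized by $\Iso^o(M,g)$, and in particular an everywhere timelike Killing field $Y$ (coming from a timelike translation of $E$) commuting with $\Iso^o(M,g)$. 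Such a $Y$ reduces the orthonormal frame bundle $\hm$ to a compact structure group, $\Iso^o(M,g)$ preserves this compact reduction, and properness of the action on $\hm$ gives compactness.

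Your first step (forcing $\tw$ constant) is both unnecessary for the above and not correctly argued. The divergence of the generator of $\tilde\psi^s$ with respect to $\tilde g$ is $\pm\operatorname{tr}(\ad \dot a) + \tfrac{\dim X}{2}\alpha$: the $G^o$-factor contributes the modular term, and you would need $G^o$ unimodular (or at least $\operatorname{tr}(\ad\dot a)\neq -\tfrac{\dim X}{2}\alpha$) for the divergence to be nonzero; the existence of an invariant volume on $G^o/K^o$ only kills the modular function on the compact isotropy, not on $G^o$, and a closed transitive isometry group of a Riemannian manifold need not be unimodular. Worse, as written your volume argument never uses the hypothesis that $\Gamma_E$ consists of translations, so if valid it would show that the nonconstant-warping situation of Section \ref{sec.warping.nonconstant} never occurs on a compact manifold, which is far more than can be true and is incompatible with the way the paper (e.g.\ Lemma \ref{lem.just.translations} itself) handles elements with $\lambda_\gamma\neq 1$. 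If you do want constancy of $w$ under the hypothesis, there is a correct and much simpler route: translations on $E$ force $\Gamma_X\subset\Iso(X)$, hence $\lambda(\Gamma_N)=\{1\}$ by Lemma \ref{lem.isometries}, so $\tw$ is $\Gamma$-invariant and descends to a bounded positive function on the compact quotient, while a nontrivial $\lambda$ would make $\tw$ unbounded; transitivity of $G$ then gives $w$ constant. But even granting this, the compactness of $\Iso^o(M,g)$ still requires the timelike Killing field argument above, not a re-run of Lemma \ref{lem.just.translations}.
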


 \begin{proof}
  If $\Gamma_E$ comprises only translations in $\Homot(E)$. Then, all 
 translations of $\Homot(E)$ commute with $\Gamma_E$, hence are contained in $Z$. By Lemma \ref{lem.z} this induces
 an isometric action of $\RR^{d+1}$ on $M$, which is locally free.
  Obviously, elements of this action stay in $\Ker \rho$, which is a compact Lie group. Let us consider $\liek$ the Lie algebra of $\Ker \rho$.
   it splits as a sum $\liea \oplus \liem$, where $\liea$ is abelian and $\liem$ is the 
   Lie algebra of a compact semisimple group. The previous remark shows that $\liea$ contains $\RR^{d+1}$.
    It thus integrates into a torus $\TT \subset \Iso^o(M,g)$, which is normalized by $\Iso(M,g)$. It is thus centralized by 
    $\Iso^o(M,g)$. There are timelike translations in $\Homot(E)$, thus there exists a Killing field $Y$
     in $\liea$ which is everywhere timelike on $M$, and commutes with $\Iso^o(M,g)$. The vector field
      $Y$ yields a reduction of the bundle $\hm$ to a subbundle $\hm'$ with compact structure group.
       Hence $\Iso^o(M,g)$ preserves the compact subset $\hm' \subset \hm$. Since its action 
       on $\hm$ is proper, $\Iso^o(M,g)$ is compact.
 \end{proof}

 \subsection{Proof of Theorem \ref{thm.main} in full generality}
 \label{sec.ful.generality}
 
 We are now ready to prove Theorem \ref{thm.main}. By hypothesis, $(M,g)$ is a $(n+1)$-dimensional compact Lorentz manifold, $n \geq 2$.
  The group $\Iso(M,g)$ is assumed to have a closed, compactly generated subgroup with exponential growth. By Theorem 
  \ref{thm.reduction}, there exists a compact, locally homogeneous Lorentz submanifold $\Sigma \subset M$, a finite index subgroup
   $\Iso'(M,g)$ leaving $\Sigma$ invariant, and a proper homomorphism $\rho: \Iso'(M,g) \to \Iso(\Sigma,g)$. Moreover, still 
    by Theorem \ref{thm.reduction}, $(\Sigma, g)$ has semisimple isotropy, and $\Iso(\Sigma,g)$ contains a closed, compactly
    generated subgroup of exponential growth. 
    
%
      
      We apply Proposition \ref{prop.minkowski} to $(\Sigma,g)$. Two cases may then occur. 
       In the first case (which corresponds to the second case in Theorem \ref{thm.main} for the group $\Iso(\Sigma,g)$), 
        $\Iso(\Sigma, g)$ is virtually
       a compact extension of a discrete subgroup $\Lambda \subset \OO(1,d)$, $2 \leq d \leq n$.  The proper homomorphism
        $\rho: \Iso'(M,g) \to \Iso(\Sigma,g)$ thus shows that $\Iso(M,g)$ is also virtually a compact extension of some subgroup of 
         $\Lambda$. We are thus in the second case of Theorem \ref{thm.main} for the group $\Iso(M,g)$.
         
   In the second case,  there exists an epimorphism $\rho': \Iso(\Sigma,g) \to \PSL(2,\RR)$, with compact kernel, yielding 
    a proper homomorphism 
     $\rho' \circ \rho: \Iso'(M,g) \to \PSL(2,\RR)$. We are not done, because we don't know if this homomorphism is onto, and this is the last 
     difficulty we have to overcome.
      At this stage, we just get that $\Iso(M,g)$ is virtually a compact extension of a closed
     subgroup $H$ of 
      $\PSL(2,\RR)$. Moreover, this closed subgroup must have exponential growth. Considering  
      a finite index subgroup if necessary,  there are  only
       four possibilities.
       \begin{enumerate}[(i)]
        \item The group $H$ is $\PSL(2,\RR)$.
        \item The group $H$ is a nonelementary discrete subgroup of $\PSL(2,\RR)$.
        \item The 
        group $H$ is conjugated in $\PSL(2,\RR)$ to 
        $$\operatorname{Aff}(\RR)= \left \{ \left(  \begin{array}{cc}
                              \lambda & t \\
                              0 & \lambda^{-1}
                             \end{array}
\right) \ | \ \lambda \in \RR^*, t \in \RR \right \}.$$
\item There exists $\lambda \in \RR_+^* \setminus \{1\}$, such that he group $H$ is conjugated in $\PSL(2,\RR)$ to 
$$\ZZ \ltimes_{\lambda} \RR= \left \{ \left(  \begin{array}{cc}
                              \lambda^{\frac{m}{2}} & t \\
                              0 & \lambda^{-\frac{m}{2}}
                             \end{array}
\right) \ | \ m \in \ZZ, t \in \RR \right \}.$$
       \end{enumerate}

       Observe that those four cases are mutually exclusive.
       
 Cases $(i)$ and $(ii)$ in the list above lead to respectively the first, and the second case of Theorem \ref{thm.main}.
  Theorem \ref{thm.main} will thus be proved if we show that cases $(iii)$ and $(iv)$ actually do not occur. This is basically 
  known for case $(iii)$. Indeed, it was 
  shown in \cite{adams.stuck2} and \cite[Th. 1.1]{zeghibidentity} that if the group $\operatorname{Aff}(\RR)$ acts isometrically (and faithfully)
   on a compact Lorentz manifold, then it yields an isometric action of a finite cover of $\PSL(2,\RR)$.  Hence
    if $H=\operatorname{Aff}(\RR)$ in the list above, it actually implies $H=\PSL(2,\RR)$.  Our last task is to extend this result to
     the smaller group $\ZZ \ltimes_{\lambda} \RR$, and this is the content of our last statement:
 
 \begin{theoreme}[Compare \cite{adams.stuck2}, \cite{zeghibidentity}]
  \label{thm.sl2}
  Let $(M,g)$ be a compact Lorentz manifold. Assume that $\Iso(M,g)$ contains a  closed subgroup $G$ 
    which is  a compact extension of $\ZZ \ltimes_{\lambda} \RR$, $\lambda \in \RR_+^* \setminus \{ 1 \}$. Then $\Iso(M,g)$ contains a finite cover of $\PSL(2,\RR)$. 
 \end{theoreme}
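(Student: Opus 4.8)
### Proof proposal for Theorem~\ref{thm.sl2}

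The plan is to reduce the statement about the group $H = \ZZ \ltimes_{\lambda} \RR$ to the known result of \cite{adams.stuck2}, \cite{zeghibidentity} about $\operatorname{Aff}(\RR)$. The group $H$ contains the normal one-parameter unipotent subgroup $U = \{u^t\} = \left\{ \left( \begin{smallmatrix} 1 & t \\ 0 & 1 \end{smallmatrix} \right) \right\}$, and the $\ZZ$-factor is generated by an element $a$ acting on $U$ by $u^t \mapsto u^{\lambda t}$. So $H$ is a compactly generated, closed subgroup of $\Iso(M,g)$ which has exponential growth (this is why case $(iv)$ was retained), and by Theorem~\ref{thm.main}'s preceding machinery -- or more efficiently just by invoking Corollary~\ref{coro.infinite.limit} directly for the closed compactly generated subgroup $G$ of exponential growth -- the limit set $\Lambda(x)$ of $\Iso(M,g)$ is infinite at each point, $\dig(x) \geq 3$, and by Theorem~\ref{thm.exponential.killing} (via Theorem~\ref{thm.reduction}) there is an invariant compact locally homogeneous Lorentz submanifold $\Sigma$ with semisimple isotropy on which $G$, hence a compact extension of $H$, acts; and by Proposition~\ref{prop.minkowski} we already know $\Iso(\Sigma,g)$ is virtually a compact extension of $\PSL(2,\RR)$ or of a discrete subgroup of $\PO(1,d)$.

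First I would observe that the $\PO(1,d)$ alternative is incompatible with having a closed subgroup isomorphic (up to compact kernel) to $\ZZ \ltimes_{\lambda} \RR$: a discrete subgroup of $\PO(1,d)$ containing a copy of $\RR$ is absurd, since $\RR$ is not discrete, and the image of $\RR$ under a proper homomorphism to $\PO(1,d)$ with discrete image would be a discrete, hence finite, hence trivial, connected group -- contradicting properness on the noncompact factor $U \subset H$. Hence we must be in the $\PSL(2,\RR)$ case, i.e.\ $\Iso(\Sigma,g)$ admits an epimorphism $\rho'$ with compact kernel onto $\PSL(2,\RR)$, and composing with the proper morphism $\Iso'(M,g) \to \Iso(\Sigma,g)$ of Theorem~\ref{thm.reduction} we get a proper morphism $\Phi : \Iso'(M,g) \to \PSL(2,\RR)$ whose image is a closed subgroup of exponential growth containing the image of $H$. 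The image of $U$ is a nontrivial connected unipotent subgroup (it cannot be trivial by properness), and the image of $a$ normalizes it and scales it by $\lambda \neq 1$; so $\Phi(H)$ is conjugate in $\PSL(2,\RR)$ to $\ZZ \ltimes_\lambda \RR$ itself, and in particular $\Phi(\Iso'(M,g))$ contains the one-parameter diagonalizable subgroup generated by $a$ together with a unipotent one-parameter subgroup. Any closed subgroup of $\PSL(2,\RR)$ containing a copy of $U$ and a hyperbolic element normalizing and properly dilating it is either $\operatorname{Aff}(\RR)$ or all of $\PSL(2,\RR)$.

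If $\Phi(\Iso'(M,g)) \supseteq \operatorname{Aff}(\RR)$ with equality, then $\operatorname{Aff}(\RR)$ acts on $M$ through a quotient by $\Ker \Phi$, which is compact -- but this is not literally a faithful action of $\operatorname{Aff}(\RR)$ on $M$, so I would pass instead to the locally homogeneous model $(\Sigma,g)$, where the homomorphism $\rho': \Iso(\Sigma,g) \to \PSL(2,\RR)$ has compact kernel; pulling back the subgroup $\operatorname{Aff}(\RR)$ we get a closed subgroup $G_1 \subset \Iso(\Sigma,g)$ which is a compact extension of $\operatorname{Aff}(\RR)$, and it suffices to show this forces $\Iso(\Sigma,g)$ to contain a finite cover of $\PSL(2,\RR)$. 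For this I would use the explicit structure from Proposition~\ref{prop.homogeneous.partial}: $(\tilde\Sigma,\tilde g)$ is a warped product $N \times_w X$ with $X = \widetilde{\AdS}^{1,2}$ or $\RR^{1,k}$. The $\widetilde{\AdS}^{1,2}$ case already gives, by Proposition~\ref{prop.ads}, that $\Iso^o(\Sigma,g)$ is a compact extension of $\PSL(2,\RR)$, so we are done. In the flat case $X = \RR^{1,k}$, the analysis in Section~\ref{sec.factor.flat} shows $\Iso^o(\Sigma,g)$ is compact and $\rho'$ (there called $\rho$) has discrete image in $\PO(1,d)$ -- but then $\Iso(\Sigma,g)$ cannot contain a compact extension of $\operatorname{Aff}(\RR)$, since the unipotent and hyperbolic one-parameter subgroups of $\operatorname{Aff}(\RR)$ would have to map, via the proper homomorphism $\rho'$, to nontrivial connected subgroups of the discrete group $\rho'(\Iso(\Sigma,g))$: contradiction. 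Hence the flat case is excluded once we are in situation $(iii)$ or $(iv)$, and we always land in the $\widetilde{\AdS}^{1,2}$ case, where Proposition~\ref{prop.ads} finishes the argument. The main obstacle I anticipate is making the reduction ``compact extension of $\operatorname{Aff}(\RR)$ (resp.\ $\ZZ \ltimes_\lambda \RR$) acting on $(\Sigma,g)$'' rigorous enough to feed into Proposition~\ref{prop.homogeneous.partial}: one must check that such a $G_1$ still has infinite limit set at every point (it does, since it has exponential growth, by Corollary~\ref{coro.infinite.limit}) and that $(\Sigma,g)$'s local homogeneity with semisimple isotropy is unaffected -- but $\Sigma$ was fixed in advance by Theorem~\ref{thm.reduction}, so this is automatic, and the whole argument reduces to the bookkeeping of which closed subgroups of $\PSL(2,\RR)$ of exponential growth are compatible with each branch of Proposition~\ref{prop.minkowski}.
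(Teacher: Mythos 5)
There is a genuine gap, and it sits at the decisive step. You assert that any closed subgroup of $\PSL(2,\RR)$ containing a unipotent one-parameter group $U$ together with a hyperbolic element normalizing $U$ and dilating it by $\lambda\neq 1$ must be $\operatorname{Aff}(\RR)$ or all of $\PSL(2,\RR)$. This is false: the closed subgroup generated by $U$ and that hyperbolic element is exactly $\ZZ\ltimes_{\lambda}\RR$ (more generally one can get $\mu^{\ZZ}\ltimes U$ with $\lambda\in\mu^{\ZZ}$), and nothing in your argument excludes that $\Phi(\Iso'(M,g))$ is precisely such a group. Indeed the hypothesis of Theorem \ref{thm.sl2} only supplies a compact extension of $\ZZ\ltimes_{\lambda}\RR$ inside $\Iso(M,g)$, so this is the generic situation, and it is exactly the case the theorem is meant to settle; your proof silently assumes the isometry group already maps onto something at least as big as $\operatorname{Aff}(\RR)$, which is circular. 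This is why the paper cannot simply quote \cite{adams.stuck2}, \cite{zeghibidentity}: it redoes the whole analysis for the pair $(\varphi,\{Y^t\})$ coming from the $\ZZ\ltimes_{\lambda}\RR$ structure (Oseledec splitting of $\varphi$, Zeghib's asymptotic foliations, construction of the $3$-dimensional Lipschitz distribution $\calf$, a Lipschitz Frobenius theorem, umbilicity and constant curvature of the leaves), and then invokes the completeness Theorem \ref{thm.complete} to show that $\tm$ itself is a warped product $N\times_w\widetilde{\AdS}^{1,2}$ or $N\times_w\RR^{1,2}$, the flat case being excluded because it would force $\Iso^o(M,g)$ to be compact.

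A second, related gap: your conclusions are about $\Iso(\Sigma,g)$, not about $\Iso(M,g)$. The restriction morphism $\Iso'(M,g)\to\Iso(\Sigma,g_{|\Sigma})$ of Theorem \ref{thm.reduction} is one-to-one and proper but in no way surjective, so knowing that $\Iso(\Sigma,g)$ is virtually a compact extension of $\PSL(2,\RR)$ (or contains a finite cover of $\PSL(2,\RR)$) yields nothing new about $\Iso(M,g)$ beyond the subgroup you started with; in particular your $\widetilde{\AdS}^{1,2}$ endgame, ``Proposition \ref{prop.ads} $\ldots$ so we are done'', concludes for $\Sigma$ and not for $M$. To produce a finite cover of $\PSL(2,\RR)$ inside $\Iso(M,g)$ one must manufacture new isometries of $M$ itself; in the paper these come from the warped-product structure of $\tm$ (not of $\tilde{\Sigma}$), where $\widetilde{\PSL(2,\RR)}$ acts by right translations on the $\widetilde{\AdS}^{1,2}$ factor and this action descends to $M$ (Proposition \ref{prop.ads} applied to $M$). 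A soft analysis of images of the given group cannot create these isometries. For what it is worth, your exclusion of the discrete $\PO(1,d)$ branch of Proposition \ref{prop.minkowski} is correct in spirit (a virtually compact-by-discrete group has compact identity component, incompatible with the closed noncompact connected piece coming from $G$), but it does not repair the two gaps above.
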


 \begin{proof}
  
 We consider the following subgroup of $\PSL(2,\RR)$:
 $$H=\left \{ \left(  \begin{array}{cc}
                              \lambda^{\frac{m}{2}} & t \\
                              0 & \lambda^{-\frac{m}{2}}
                             \end{array}
\right) \ | \ m \in \ZZ, t \in \RR \right \}.$$

We will call $a:=\left(  \begin{array}{cc}
                              \lambda^{\frac{1}{2}} & 0 \\
                              0 & \lambda^{-\frac{1}{2}}
                             \end{array}
\right) $, $\{ u^t  \}_{t \in \RR}:=\left(  \begin{array}{cc}
                              1 & t \\
                              0 & 1
                             \end{array}
\right)$, and $F:= \left(  \begin{array}{cc}
                              0 & 1 \\
                              0 & 0
                             \end{array}
\right)$.  Observe that $aFa^{-1}=\lambda F$.
Our assumption is the existence of an epimorphism between Lie groups $\rho: G \to H$.

Let us consider $\varphi \in G$ such that $\rho(\varphi)=a$. Let us call $U \subset G$ the inverse image $\rho^{-1}(\{ u^t \}_{t \in \RR})$.
 The group $U$ is a closed Lie subgroup of $G$, and its Lie algebra is a sum $\lieu= \RR \oplus \liek$, where $\liek$
  integrates into a compact Lie subgroup of $G$. The $\RR$-factor in this decomposition   is mapped onto $\RR F$ by $\rho_*$.
   The algebra $\lieu$ is normalized by $\varphi$, and with respect to the splitting $\lieu=\RR \oplus \liek$, the action reads like:
  $$ \ad(\varphi)=\left(  \begin{array}{cc}
                               \lambda & 0 \\
                              B & C
                             \end{array}
\right).$$
   We infer the existence of $Y \in \lieu$ satisfying $\Ad(\varphi)(Y)=\lambda Y$, and such that $\rho_*(Y)=F$.
   
  In what follows, we will see $Y$ as a Killing vector field on $M$, and denote by $\{Y^t\}_{t \in \RR}$ the $1$-parameter group it generates.
   We observe that $\{Y^t\}_{t \in \RR}$ is closed and noncompact in $G$. It is clearly noncompact because it is mapped onto 
   $\{ u^t \}_{t \in \RR}$ by $\rho$. Would it not be  closed, its closure in $G$ would be a torus, which would be mapped onto 
   $\{u^t \}_{t \in \RR}$
   by $\rho$: Contradiction. From all this discussion, one infers easily that the group $<\varphi, \{ Y^t \}>$, generated by $\varphi$
    and the $1$-parameter group $ \{ Y^t \}$, is closed in 
    $G$, hence in $\Iso(M,g)$. This group is clearly a compact extension of $H$. Thus, in the following we will assume
     $G=<\varphi, \{ Y^t \}>$. 
   Moreover, we will also assume, without loss of generality, that $0 < \lambda <1$.
   
   The begining of the proof follows \cite{adams.stuck2}
      and \cite{zeghibidentity}. The relation $\varphi_*(Y)=\lambda Y$ implies that $Y$ is a lightlike vector field, and it is a 
      classical fact that nonzero lightlike Lorentzian Killing fields are nowhere vanishing. Moreover, one sees that $\log \lambda$
       is a negative Lyapunov exponent for $\varphi$. One infers that there is a measurable Oseledec splitting for $\varphi$ of the form:
       $$ TM=E^+ \oplus E^o \oplus E^-.$$
       The bundles $E^+$, $E^o$ and $E^-$ are respectively associated to Lyapunov exponents $- \log \lambda$, $0$ and $\log \lambda$.
        The bundles $E^+$ and $E^-$ are $1$-dimensional and lightlike. Moreover $E^-=\RR Y$, and 
        there exists a unique measurable vector field $Z$ such that $g(Z,Y)=1$ and $E^+=\RR Z$.
        One also checks $E^o=(E^- \oplus E^+)^{\perp}$. It is shown in \cite[Lemma 5.1]{adams.stuck2} that 
         the  Oseledec splitting extends to an everywhere defined, continuous splitting. Precisely, $Z$ extends 
         to a continuous vector field, and $\varphi_*Z=\lambda^{-1}Z$ everywhere. It follows from Zeghib's theorem \ref{thm.zeghib}
          that $Z$ is actually Lipschitz. By Rademacher's theorem there exists $\Omega \subset M$ a subset of full measure, on which
           $Z$ is differentiable. Hence $T:=[Y,Z]$ makes sense on $\Omega$, and defines there a measurable vector field. Moreover,
            from the relation $D_x \varphi Z(x)= \lambda^{-1}Z(\varphi(x))$, available for every $x \in M$, we see that 
             $\Omega$ is $\varphi$-invariant and $\varphi_*T=T$.
       For every $s \in \RR^*$, and every $x \in M$, we define $\calf_x^s=\operatorname{span}\{ Z(x), (Y^s)_*Z(x), Y(x) \}$.
       
   \begin{lemme}\cite[Fact 3.4]{zeghibidentity}
    \label{lem.dimens3}
    For every $x \in M$, the space $\calf_x^s$ is $3$-dimensional, Lorentzian, and does not depend on $s \in \RR^*$.
   \end{lemme}
  
  \begin{proof}
   Let us fix $s \in \RR^*$. We first observe that for every $x \in M$, $\calf_x^s$ is $3$-dimensional. If this is 
   not the case, there
   is a closed subset $F$ where $(Y^s)_*Z$ belongs to $\operatorname{span}\{ Z,Y\}$, hence is colinear to $Z$. The relation
    $<Z,Y>=1=<(Y^s)_*Z,(Y^s)_*Y>$ then shows that for every $y \in F$, we have $(Y^s)_*Z(y)=Z(y)$.  Since $(Y^s)_*Y=Y$, we get
     that in the splitting $TM=E^+ \oplus E^o \oplus E^-$ above $F$, the differentials $D Y^{ms}$ have the form
     $$ D Y^{ms} = \left(  \begin{array}{ccc} 1 & 0 & 0 \\ 0 & K_m & 0\\ 0 & 0 & 1 \end{array}\right),$$
     where $K_m$ is a compact subset of $\OO(n-1)$.  We would get that $\{ Y^{ms} \}_{m \in \ZZ}$ has compact closure in $\Iso(M,g)$,
      but we already checked at the begining of the proof, that this is not the case.
    We conclude that  $\calf_x^s$ is $3$-dimensional for every $x \in M$, and this space is Lorentzian because
    it contains two linearly independent lightlike directions $Z(x)$ and $Y(x)$. Actually, the previous arguments show that 
     $(Y^s)_*Z(x), (Y^{s'})_*Z(x)$ and $Y(x)$ are linearly independent for every $s \not = s'$ in $\RR^*$, and $x \in M$.
     
    The vector field $T=[Z,Y]$ is well-defined, and measurable on the  subset of full measure $\Omega$.  
    We next show that for  $x \in M$, the space $\calf_x^s$ does not depend on $s \in \RR^*$. To check this, one considers, for 
     a given $s \in \RR^*$, 
     $V=(Y^s)_*Z-Z+sT$. The vector field $V$ is defined on $\Omega$. For $x \in \Omega$ the definition of the Lie bracket yields
      \begin{equation}
      \label{eq.crochet}
      (Y^t)_*Z(x)=Z(x)-t[Y,Z](x)+t \epsilon(t),
      \end{equation}
      with $\lim_{t \to 0} \epsilon(t)=0$. 
      One gets for $m \in \ZZ$, and $x \in \Omega$, $(\varphi^m)_*V(x)=\lambda^{-m}((Y^{\lambda^m s})_*Z(x)-Z(x))+sT(x).$
       By (\ref{eq.crochet}), we obtain 
       $\lim_{m \to + \infty} (\varphi^m)_*V(x)=0,$
        or in other words
       \begin{equation}
        \label{eq.to.zero}
        \lim_{m \to + \infty}D_x\varphi^{m}V(\varphi^{-m}x)=0.
       \end{equation}

       Let us denote by $\mu$ the measure defined by our Lorentzian metric $g$, renormalized to ensure $\mu(M)=1$.
     For every $k \in \NN^*$, Lusin's theorem yields a compact set $K_k \in M$ of measure at least $ 1-\frac{1}{k}$, such that
      $T$ is continuous on $\Omega_k=\Omega \cap K_k$. Poincar\'e recurrence implies that there exists $E_k \subset \Omega_k$
       a conull set in $\Omega_k$ such that if $x \in E_k$, one can find a sequence $(m_i)$ going to infinity in $\NN$, with 
       $\varphi^{-m_i}(x)$ belonging to $E_k$ for every $i$, and $\lim_{i \to + \infty}\varphi^{-m_i}(x)=x$.
       Because $V$ is continuous on $\Omega_k$, the sequence $V(\varphi^{-m_i}x)$ tends to $V(x)=(Y^s)_*Z(x)-Z(x)+sT(x)$. By (\ref{eq.to.zero}) and Osseledec splitting properties, we get 
       that $V(x)$ is colinear to $Y(x)$. It follows that $\calf_x^s=\operatorname{Span}\{ Z(x), T(x), Y(x) \}$
        for every $x \in E_k$, and every $s \not = 0$. In particular, if $s,s'$ are in $\RR^*$, then $\calf_x^s=\calf_x^{s'}$
         for every $x \in \bigcup_{k \in \NN^*}E_k$. Since $\bigcup_{k \in \NN^*}E_k$ has full measure, hence is dense  in $M$, and because the 
         distributions  $\calf^s$ and $\calf^{s'}$ are Lipschitz (each one is spanned by $3$ Lipschitz vector fields), 
         we conclude that  $\calf^s=\calf^{s'}$ everywhere on $M$.
     
  \end{proof}

   In the sequel, we will write $\calf$ instead of   $\calf^s$, since there is no dependence in $s$. This is a $3$-dimensional distribution, which is 
   Lipschitz and Lorentzian. Lemma \ref{lem.dimens3} shows that it is invariant by $\varphi$ and $\{Y^t\}_{t \in \RR}$, hence $G$-invariant.

   We denote by  $\calf^{\perp}$   the distribution orthogonal to $\calf$. This distribution  is Riemannian. An important remark is
   that $\calf^{\perp}$ is tangent to a Riemannian, totally geodesic, transversally Lipschitz foliation. To see this, we observe
    that the distribution $Y^{\perp}$ is the asymptotically stable distribution of $\{ \varphi^m \}_{m \in \NN}$ 
    (see Section \ref{sec.semi-continuity}
     for the definition). Zeghib's theorem \ref{thm.zeghib} ensures that $Y^{\perp}$ is everywhere  tangent to a codimension one, totally geodesic 
     lightlike foliation $\calf_1$ (which is transversally Lipschitz). For the same reasons, given $s \in \RR^*$, the distributions $Z^{\perp}$
      and $((Y^s)_*Z)^{\perp}$ are tangent to codimension one, totally geodesic 
     lightlike foliation $\calf_2$ and $\calf_3$. Thus $\calf^{\perp}$ is tangent to $\calf_1 \cap \calf_2 \cap \calf_3 $, which is Riemannian and totally geodesic.
      At this stage, we know that the leaves of $\calf^{\perp}$ are smooth, but the foliation is only transversally Lipschitz.
      
      We are going to show that $\calf$ is integrable as well. Since this distribution is only Lipschitz, we will have to use
       a Frobenius-type theorem for Lipschitz distributions. Such a result was proved in \cite{rampazzo}. Before quoting it, we
        recall the following definition (see \cite[Def. 4.7]{rampazzo}). Given a Lipschitz disctribution $\cald$, let us consider a Lipschitz local frame field 
        $(X_1, \ldots, X_k)$ of $\cald$.  Each Lipschitz field $X_i$ is differentiable on some set $\Omega_i$. One says that $\cald$
         is {\it involutive almost everywhere} when for each $(i,j) \in \{ 1,\ldots,k  \}^2$, $[X_i,X_j]$ belongs to $\cald$
          almost everywhere on $\Omega_i \cap \Omega_j$.

\begin{theoreme}\cite[Th. 4.11]{rampazzo}
 \label{thm.frobenius}
 Any Lipschitz distribution which is involutive almost everywhere, is everywhere tangent to a transversally Lipschitz foliation, 
  with $C^{1,1}$ leaves.
\end{theoreme}
    
 To prove that $\calf$ is integrable we are thus going to show:
 \begin{lemme}
 \label{lem.involutive}
  The distribution $\calf$ is involutive almost everywhere. Moreover, it is of class $C^1$.
 \end{lemme}

 \begin{proof}
 To show that the distribution $\calf$ is involutive almost everywhere, we  consider  $U \subset M$ an open subset. 
  If $U$ is small enough, the distribution $\calf_{|U}$ is spanned by $Y$, $Z$, and a third Lipschitz 
 vector field $X$ satisfying $g(X,X)=1$ and  $g(X,Y)=g(X,Z)=0$. Observe that this property almost characterizes $X$, in the sense that 
 only $X$ and $-X$ satisfy those relations.
 We denote by $U'$ the subset of $U$ where $X,Y$ and $Z$ are differentiable. This is a subset
 of full measure in $U$, and is contained in $\Omega$. 
 
 If $x \in U'$, then $[Y,Z]=T$ and we saw that $Z(x), T(x), Y(x)$ span $\calf_x$.   Hence $[Y,Z](x) \in \calf_x$ if $x \in U'$.  
   
   Let us now show that $[X,Y] \in \calf$ almost everywhere on $U'$. 
    The vector field $[X,Y]$ is measurable on $U'$, hence Lusin's theorem yields for every $k > >1$ a compact subset $K_k \subset U$
     such that $\mu(K_k) \geq \mu(U)- \frac{1}{k}$, and $[X,Y]$ is continuous on $U' \cap K_k$. Poincar\'e recurrence theorem yields 
      $E_k^+ \subset K_k$ a subset of full measure in $K_k$ such that for every $y \in E_k^+$, there exists a sequence $(m_i)$
       satisfying $\varphi^{m_i}(y) \in K_k$ for all $i$, and $\lim_{i \to + \infty} \varphi^{m_i}(y)=y$.
     Let us now consider $x \in E_k^+ \cap U'$. Let $(m_i)$ be a sequence as above, witnessing that $x \in E_k^+$.
       The vector fields $(\varphi^{m_i})_*X$ and  $(\varphi^{m_i})_*Y$ are defined in a small neighborhood of $\varphi^{m_i}(x)$
        contained in $U$. Here they satisfy $(\varphi^{m_i})_*X=\epsilon_{m_i}X$, where $\epsilon_{m_i}=\pm 1$, and $(\varphi^{m_i})_*Y= \lambda^{m_i}Y$, what proves that
         $\varphi^{m_i}(x) \in U'$.  Moreover, for every $i \in \NN$:
         \begin{equation}
   \label{eq.crochet}
   D_x\varphi^{m_i}([X,Y](x))=[(\varphi^{m_i})_*X,(\varphi^{m_i})_*Y](\varphi^{m_i}(x)),
  \end{equation}
         
         Equation (\ref{eq.crochet}) reads:
         $$ D_x\varphi^{m_i}([X,Y](x))=\epsilon_{m_i} \lambda^{m_i}[X,Y](\varphi^{m_i}(x)).$$
         Since $x \in E_k^+ \cap U'$, we have that $[X,Y](\varphi^{m_i}(x))$ tends to $[X,Y](x)$ as $i \to + \infty$.
 We conclude that $ D_x\varphi^{m_i}([X,Y](x))$ tends to $0$, so that $[X,Y](x)$ is colinear to $Y(x)$.
 We finally get that $[X,Y](x) \in \calf_x$ for every $x \in \bigcup_{k \in \NN}(E_k\cap U')$, hence $[X,Y] \in \calf$ almost everywhere on $U$.

 We proceed in the same way to prove that $[X,Z] \in \calf$ almost everywhere on $U$, what yields involutivity almost 
 everywhere of the distribution $\calf$.
 We conclude, applying Theorem 
    \ref{thm.frobenius}, that $\calf$ is tangent to a foliation, whose leaves are of class $C^{1,1}$.
    
  In particular, $\calf$ is (tautologically) $C^1$ in the direction of its leaves. 
  Recall that $\calf^{\perp}$ is integrable as well, with totally geodesic, hence $C^1$, leaves.
   It follows that $\calf$ is  $C^1$ in the direction of the 
   leaves of $\calf^{\perp}$. 
    Since $TM=\calf \oplus \calf^{\perp}$, we conclude that $\calf$ is $C^1$. Of course, the same is true for $\calf^{\perp}$.
 \end{proof}
 
 \begin{lemme}
  \label{lem.umbilic}
  The leaves of $\calf$ are totally umbilic, and have constant sectional curvature.
 \end{lemme}

 \begin{proof}
  For every $x \in M$, we call $\lies_x^{\calf}$ the Lie algebra of local Killing fields $X$ around $x$, satisfying $X(x)=0$,
   and such that the $1$-parameter group $\{D_x X^t\}_{t \in \RR}$ preserves $\calf_x^{\perp}$ and acts trivially on it. Observe that if 
   $X \in \lies_x^{\calf}$, then $D_xX^t$ preserves the splitting $T_xM=\calf_x \oplus \calf_x^{\perp}$. One expects that generally, 
   $\lies_x^{\calf}= \{ 0 \}$, but we claim that this is not the case. To check this, let us fix a bounded orthonormal  frame field
    $(X_1, \ldots,X_n)$ on $M$, such that $X_1,X_2,X_3$ (resp. $X_4, \ldots, X_n$) span $\calf$ (resp. $\calf^{\perp}$). This
     yields a bounded section $\sigma : M \to \hm$, defining a coarse embedding $\dd_x: \Iso(M,g) \to \OO(1,d)$ (see Section 
     \ref{sec.derivative.cocycle}). Actually, because $G$ preserves the splitting $\calf \oplus \calf^{\perp}$, the restriction of 
      $\dd_x$ to $G$ takes values in a subgroup $\OO(1,2) \times \OO(n-1) \subset \OO(1,n)$. Projecting to the first factor, one gets
       for every $x  \in M$ a coarse embedding $\dd_x': G \to \OO(1,2)$. Following the notations 
       of Sections \ref{sec.limit.coarse}
        and \ref{sec.infinite.semisimple}, we define ${\mathcal G}_x:=\dd_x(G)$, and denote by  
        $\Lambda_{\mathcal G}(x) \subset \partial \HH^2$ the limit set of ${\mathcal G}_x$. We already observed that for every $s \in \RR^*$, 
         $Y(x)$, $Z(x)$ and $(Y^s)_*Z(x)$ are asymptotically stable directions associated to the 
         sequences $(\varphi^m)_{m \in \NN}$,
         $(\varphi^{-m})_{m \in \NN}$  and $(Y^s\varphi^{-m}Y^{-s})_{m \in \NN}$. The interpretation of the limit set as 
         asymptotically stable lightlike directions (see Lemma \ref{lem.identification.limitset}) shows that $\Lambda_{\mathcal G}(x)$
          is infinite for every $x \in M$ and 
         $d_{\Lambda_{\mathcal G}}(x)=3$ for every $x \in M$. Let us now choose $x$ in the integrability locus $\mint$, and consider the generalized curvature 
          map $\kg$
         (see Sections \ref{sec.generalized} and  \ref{sec.integrability.locus}). The vector $\kg(\sigma(x))$ is stable under
          ${\mathcal G}_x$. Proposition \ref{prop.stability} then ensures that the stabilizer of $\kg(\sigma(x))$ inside
           $\OO(1,2) \times \OO(n-1)$ contains the factor $\SO^o(1,2)$. Corollary \ref{coro.stabilizer} then shows that
            $\lies_x^{\calf}$ contains a subalgebra isomorphic to $\oo(1,2)$, hence $\lies_x^{\calf}=\oo(1,2)$.
             Indeed the isotropy representation being faithfull, $\lies_x^{\calf}$ is at most $3$-dimensional.
            
       Since we showed that the distribution $\calf$ is of class $C^{1}$, it makes sense to consider, for every $x \in M$, the 
        second fundamental form $II_x$ of the leaf $F(x)$. Every $Z \in \lies_x^{\calf}$ defines a $1$-parameter group 
         $\{ D_xZ^t\}_{t \in \RR} \subset \OO(T_xM)$, which preserves the splitting $\calf_x \oplus \calf_x^{\perp}$
          and preserves $II_x$. When $x \in \mint$, the irreducibility of the action of $\lies_x^{\calf}$ on $\calf_x$
           forces, as in Lemma \ref{lem.foliations}, 
           the equality $II_x( \ , \ )=g_x( \ , \ )\nu_x$,  for some vector $\nu_x \in \calf_x^{\perp}$. Because $\mint$ is dense
            in $M$, such an equality must hold everywhere.
            This shows that the leaves of $\calf$ are totally umbilic.  
   
\end{proof}

\begin{lemme}
 \label{lem.fin}
 The distributions $\calf$ and $\calf^{\perp}$ are $C^{\infty}$. The leaves of $\calf$
  have constant sectional curvature. The universal cover $(\tm,\tilde{g})$ is isometric to a warped product
   $N \times_w \widetilde{\AdS}^{1,2}$ or $N \times_w \RR^{1,2}$, where $N$ is a $1$-connected complete Riemannian manifold.
\end{lemme}

\begin{proof}
The key point is to show the smoothness of $\calf$. For that, we are going to show that for every $x \in M$, the leaf 
 $F(x)$ of $\calf$ containing $x$ is a $C^{\infty}$ (injectively) immersed submanifold of $M$. It will show that $\calf$
  is $C^{\infty}$ of its leaves. But the leaves of $\calf^{\perp}$ are totally geodesic, hence $C^{\infty}$. We  will conclude 
   that $\calf$ is also $C^{\infty}$ in the direction of the leaves of $\calf^{\perp}$, yielding smoothness of $\calf$ on $M$.
   
 Let us consider $F$ a leaf of $\calf$, and let $x \in F$. Let us first remark that $Z^t$-orbits are lightlike geodesics 
 (the parametrization might not be affine). Indeed, $Z^{\perp}$ is at every point the asymptotically stable distribution
 of $\{ \varphi^{-m} \}_{m \in \NN}$, and Theorem \ref{thm.zeghib} ensures that $Z^{\perp}$ is tangent to a totally geodesic, 
 lightlike foliation.
 In particular, the $Z^t$-orbit of $x$, $t \in (-\epsilon, \epsilon)$
 is a piece of lightlike geodesic contained in $F$. Let $y=Z^{-\epsilon/2}.x$, and assume $\epsilon <<1$. 
  Let us choose $U \subset T_yM$ a small neighborhood of $0_y$ such that $\exp_y$
   is injective on $U$, and $\exp_y(U)$ contains $x$.  A second important remark is that because  $F$ is totally umbilic, any lightlike geodesic of $M$ which is 
  somewhere tangent to $F$ must be contained in $F$.  Thus, if ${\calc}_y \subset T_yM$ denotes the lightcone of $g_y$, then 
  $\exp_y(U \cap \calf_y\cap \calc_y)$ is included in $F$, and there exists $x' \in U \cap \calf_y\cap \calc_y$ with $\exp_y(x')=x$.
   Now choose $V \subset U$ a small open subset containing $x'$, and call $\Sigma:=\exp_y(V \cap \calf_y\cap \calc_y)$.
    This is a piece of $C^{\infty}$ lightlike surface in $F$, 
    containing $x$, and that we call $\Sigma$. Observe that $Z(x) \in T_x \Sigma$. Because $Y(x)$ is 
    transverse to $Z(x)$ and lightlike,
       then $Y(x)$ must be transverse to $T_x \Sigma$. As a consequence, for 
     $\delta>0$ very small, the map $\psi: (-\delta,\delta) \times (V \cap \calf_y\cap \calc_y) \to M$
      defined by $\psi(t,z):=Y^t.\exp_y(z)$ is a $C^{\infty}$ immersion, whose image is an open neighborhood of $x$ in $F$. 
      Smoothness of $F$ follows.
      
 We now consider a leaf $F$ of $\calf$. Considering a small piece of it, it is an embedded, smooth,  submanifold $F'$ of $M$. The 
 restriction of 
  $g$ to $F'$ is called $\overline{g}$, and its sectional curvature denoted by $\overline{K}$. We already observed that 
   for $x \in \mint$, the Lie algebra $\lies_x^{\calf}$ is isomorphic to $\oo(1,2)$. The same computations as those made at the end of Lemma 
   \ref{lem.foliations} show that 
   $\overline{K}(x)$ is constant (on the Grassmannian of non-degenerate $2$-planes) for every $x \in \mint$.
   Again, density of $\mint$ in $M$ show that this is true for every $x \in M$. Schur's lemma then say that all leaves $F$ have 
   constant curvature.
  
 We then follow the same arguments as at the begining of Section \ref{sec.warped} (before Lemma \ref{lem.produit-complet}), and get
  that the universal cover $\tm$ is a product $N \times X$, where $X$ is a $3$-dimensional Lorentz manifold of constant sectional curvature, and
    $N$ is a complete Riemannian manifold. The sets $\{n \} \times X$ (resp. $N \times \{ x \}$) project on the leaves of $\calf$
     (resp. the leaves of $\calf^{\perp}$).
      The 
   metric $\tilde{g}$ has the form $g_N \oplus w g_X$ for some function $w:N \times X \to \RR_+^*$.
  To check that we have a warped product structure, namely that $w(n,x)$ does not depend on $x$, we recall that
   given $t \not = 0$, for all $x \in M$, the directions $Z(x), \ (Y^t)_*Z(x)$ and $Y(x)$ span $\calf$. Moreover, 
    the distributions $Z^{\perp}, \ ((Y^t)_*Z)^{\perp}$ and $Y^{\perp}$ are the asymptotically stable distibutions
     of $(\varphi^{-m})_{m \in \NN}, \ (Y^t\varphi^mY^{-1})_{m \in \NN}$ and $(varphi^m)_{m \in \NN}$ hence are tangent to three
     totally geodesic, lightlike, codimension one foliations $\calf_1,\calf_2,\calf_3$, such that $\calf^{\perp}=\calf_1 \cap \calf_2 \cap \calf_3$.
      Thus leaves of $\calf^{\perp}$ are included in leaves of $\calf_i$ for $i=1,2,3$. We can then apply  
      \cite[Prop. 2.4]{zeghibisom2} and conclude that $(\tm, \tilde{g})$ is a warped product $N \times_w X$.
       The manifold $(M,g)$ is obtained as a quotient of $N \times_w X$ by a discrete subgroup 
       $\Gamma \subset \Iso(N) \times \operatorname{Homot}(X)$ (see point $(1)$ of Lemma \ref{lem.isometries} and Remark \ref{rem.general}).
      Theorem \ref{thm.complete}
  ensures that $X$ is actually isometric to $\widetilde{\AdS}^{1,2}$ or $\RR^{1,2}$.

\end{proof}
 We are now ready to conclude the proof of Theorem \ref{thm.sl2}.  If in the previous Lemma, the factor 
  $X$ is isometric to $\RR^{1,2}$, Proposition 
  \ref{prop.minkowski} ensures that $\Iso(M,g)$ is virtually a compact extension of a discrete subgroup $\Lambda \subset \PO(1,2)$.
   This is in contradiction with our standing assumption that $H$ is the group $\ZZ \ltimes_{\lambda} \RR$.
   
   It follows that $X$ is isometric to $\widetilde{\AdS}^{1,2}$. Then Proposition \ref{prop.ads} says that $\Iso(M,g)$ is virtually an extension of 
   $\PSL(2,\RR)$ by a compact Lie group, which is precisely what we wanted to show.

 \end{proof}

 
 \begin{remarque}
  The assumption that $G$ is closed in Theorem \ref{thm.sl2} is crucial. Indeed, there are flat Lorentz tori $\TT^2$, with an 
  isometric action of $A=\left( \begin{array}{cc} 2 & 1\\
                                 1 & 1\\
                                \end{array}
\right).$ The isometry group of such a $\TT^2$ clearly contains a (non closed) subgroup isomorphic to 
$\ZZ \ltimes_{\lambda} \RR$, but no subgroup locally isomorphic to $\PSL(2,\RR)$.
 \end{remarque}

 \subsection{Isometric actions of lattices. Proof of Corollary \ref{thm.lattices}}
 \label{sec.action.lattices}
 
 We finish this section with the proof of Corollary \ref{thm.lattices}. Our assumption is that 
  $(M^{n+1},g)$ is a $(n+1)$-dimensional, compact, Lorentz manifold, and that $\Iso(M,g)$ contains a discrete subgroup
   $\Lambda$ isomorphic to a lattice in a noncompact simple Lie group $G$.
   
 By Theorem  \ref{thm.main}, there exists a finite index subgroup $\Lambda' \subset \Lambda$, and
  a morphism $\rho: \Lambda' \to \PO(1,d)$ with discrete image and finite kernel. Since $\Lambda'$ is a lattice too, we will assume 
  $\Lambda'=\Lambda$ in what follows.
  
  If $G$ is not locally isomorphic to $\PO(1,m)$ or $\operatorname{PU}(1,m)$, then $\Lambda$ has Kazhdan's property (T).
   The morphism $\rho: \Lambda \to \PO(1,d)$ provides an action of $\Lambda$ on $\HH^d$, and it is known that 
    such an action should have a fixed point, \cite[prop 2.6.5]{bekka}, namely $\rho(\Lambda)$ should be relatively compact.
     But $\rho(\Lambda)$ is infinite discrete since $\rho$ is proper: Contradiction.
     
 Assume now that $G$ is  isomorphic to $\SU(1,m)$, for $m \geq 2$. We first rule out the case when $\Lambda$ is not uniform. In this case,
  the {\it thick-thin} decomposition ensures that $\Lambda$ contains a subgroup virtually isomorphic
   to a lattice in Heisenberg group $\operatorname{Heis}(2m-1)$. The image by $\rho$ of this subgroup would yield a discrete,
    nilpotent subgroup of $\PO(1,d)$. Such subgroups are virtually abelian, hence can not be virtually isomorphic to lattices in
    $\operatorname{Heis}(2m-1)$. 
    
   Now, if $\Lambda$ is a uniform lattice in $\SU(1,m)$, we may assume that it is torsion-free (again by replacing $\Lambda$ by 
   a finite index subgroup).  We can then conclude by an argument involving harmonic maps. I thank Pierre Py for 
   pointing this out to me.  First, observe that the Zariski closure of $\rho(\Lambda)$ in $\PO(1,d)$ is reductive, because if not 
   $\rho(\Lambda)$, which is a discrete group,  would be virtually abelian, contradicting that it is virtually isomorphic to $\Lambda$.
    Then it follows from \cite[Corollary 3.7]{carlson} that $\rho(\Lambda)$ is virtually contained in a surface group. The reader may
    also 
    look at  \cite[Section 3]{py} regarding this point.
    This forces $\rho(\Lambda)$ to have asymptotic dimension $\leq 2$. This is a contradiction since 
     the asymptotic dimension of $\Lambda$ is $2m$, $m \geq 2$, and $\rho: \Lambda \to \rho(\Lambda)$ is proper,
     hence a coarse embedding (see Lemma \ref{lem.asdim}).
     
 We have thus proved the first part of the corollary, namely $G$ is locally isomorphic to $\PO(1,m)$. By Proposition \ref{prop.reseaux},
  we must have  $m \leq n$. 
  
  It remains to understand what happens when equality $m=n$ holds. We then go back to Theorem \ref{thm.reduction}. There exists a compact 
   Lorentz submanifold $\Sigma \subset M$ which is preserved by a finite index subgroup $\Lambda' \subset \Lambda$. Moreover $\Sigma$
    is locally homogeneous, and its isotropy algebra contains a subalgebra $\oo(1,k)$ (with $n \geq k \geq 2$ maximal for this property).
     Again, we will assume $\Lambda'=\Lambda$ in the following.
     The frame bundle of $\Sigma$ admits a reduction to $\OO(1,k) \times L$, for some compact group $L$.
      Now Corollary \ref{coro.subbundle} says that $\Lambda$ coarsely embeds into $\OO(1,k) \times L$, hence into $\OO(1,k)$.
       By the same arguments as in the proof of proposition \ref{prop.reseaux}, we have $k \geq n$, hence $k=n$. It follows that  
        $\Sigma=M$, hence $M$ is locally homogeneous, with  isotropy algebra $\oo(1,n)$. As a consequence,  $M$ has 
        constant sectional  curvature.
    Since $\Lambda$ has exponential growth,  Proposition \ref{prop.homogeneous.partial} applies and 
    says that $M$ is either $3$-dimensional and of curvature $-1$, or flat
     (of any dimension $\geq 3$). In the first case, the study made in Section \ref{sec.factor.ads} yields the structure of the
     manifold $M$. It is a quotient $\PSL(2,\RR)^{(m)}/ \Gamma$, for some uniform lattice $\Gamma$.
     
     In the flat case, $M$ is the quotient of $\RR^{1,n}$ by a discrete subgroup $\Gamma$ of $\OO(1,n) \ltimes \RR^{n+1}$. It was shown 
      in Section \ref{sec.factor.flat} that $\RR^{n+1}$ splits as a sum $E \oplus F$, with $E$ a lorentzian subspace of dimension 
       $d+1$, with $\Gamma$ acting by $\pm Id$ on $E$. Lemma \ref{lem.properness} says that there is a proper homomorphism from 
       $\Lambda$
        to $\PO(1,d)$. Again, the same arguments  as in the proof of Proposition \ref{prop.reseaux} show that $d<n$ is impossible, 
        hence $d=n$. 
        As a consequence, the linear part of $\Gamma$ is contained in $\pm Id$. This shows that $M$ is a Lorentzian flat torus, or 
         a two-fold cover of such a torus. Corollary \ref{thm.lattices} is proved.

\ \\

{\bf Aknowledgment:} I warmly thank Pierre Py, Romain Tessera and Abdelghani Zeghib for enlightning conversations.

\end{document}